\documentclass[reqno,12pt]{amsart} 
   % AMS-classes:   amsart, amsbook, amsproc - disse classes
   %                loader amsmath, amsgen, amstext, amsbsy, 
   %                amsopn, amsfonts og amsthm automatisk og tager derfor
   %                options til amsmath
   % LaTeX-classes: article, book, report, letter, slides, proc
   %
   % Alle disse classes (og mange andre) er beskrevet paa WWW-siden:
   % <a href="file://localhost/usr/unic/share/texmf/1.0/doc/helpindex.html">The TeX system</a>

\NeedsTeXFormat{LaTeX2e}[1994/12/01]

%-------------Hvilke pakker vil vi/jeg bruge------------------------------------

\usepackage{vmargin}
\setpapersize{A4}
        %Man kan saa selv saette marginener. Format:
        %\setmarginsrb{leftmargin}{topmargin}{rightmargin}{bottommargin}{headheight}{headsep}{footheight}{footskip}
%        \setmarginsrb{2.5cm}{1.5cm}{2.5cm}{2cm}{1cm}{.5cm}{2cm}{2cm}

%\usepackage{times} % avant|bookman|chancery|courier|helvet|mathptm|newcent|palatino|pifont|times|utopia       
   %   Hvis du vil have andre fonte end standard TeX fonte. 
   %   Jeg kan isaer anbefale times og palatino fontene.

\usepackage{amsmath} 
   %   (skal altid loades hvis man vil lave
   %   noget som helst matematik)

    %\usepackage{amsbsy}      
   %   Skaffer adgang til "fede" symboler

\usepackage{amsfonts}   
   %   Skaffer adgang til de specielle fonts \mathbb \frak m.fl.

\usepackage{amssymb}      
   %   flere symboler

\usepackage{eufrak}      
   %   Giver adgang til gotiske bogstaver: {\frak a} = gotisk a

%\usepackage{eucal}   
   %   \mathcal bliver anderledes   

%\usepackage{amsopn}      
   %   Pakke til at deklarere nye operatorer

%\usepackage{upref}      
   %   Referencer kommer altid til at staa oprejst
   %   selvom det omgivende tekst evt. er i kursiv
   %   inkluderes helst ikke; giver fejlmelding som
   %   man dog kan ignorere.

%\usepackage{amsxtra}    
   %   Skaffer ekstra sjaeldent brugte tegn
   %   inkluderes kun i noedstilfaelde!!

\usepackage{amscd}      
   %   Kommutative diagrammer med \begin{CD}..\end{CD}
   %   Meget smart!!
   %   Eksempel: 
   %\begin{equation}
   %\begin{CD}
   %   A @>i>>   B \\   
   %   @VjVV      @VVkV \\
   %   C   @>>m>   D
   %\end{CD}
   %\end{equation}
%\usepackage{graphicx}
\usepackage{amsthm}      
   %   Skaffer proof-environment + \newtheorem
   %   Dvs. nye theorem strukturer defineres med
   %   \newtheorem{lem}{Lemma} og kan saa kaldes
   %   med 
   %   \begin{lem}....
   %   \begin{proof} ...
   %   \end{proof}
   %   \end{lem}

\usepackage{tikz,amsmath}
\usetikzlibrary{arrows}

\usepackage{epsfig}      
   %   Goer det muligt at inkludere eps-filer
   %   lavet i tegne/male-programmer

\usepackage{amstext}      
   %   Skaffer \text{} saadan at man inde midt i noget matematik kan skrive
   %   noget decideret text: \text{og der med ogsaa }

\usepackage{graphicx}

\usepackage[all,line,dvips]{xy} 
\CompileMatrices %goer at xy-matricerne koerer hurtigere.

\newcommand{\id}{\operatorname{id}}

\newcommand{\im}{\operatorname{im}}

\newcommand{\Span}{\operatorname{Span}}

\newcommand{\Int}{\operatorname{Int}}
\newcommand{\Per}{\operatorname{Per}}

 \newcommand{\supp}{\operatorname{supp}}

\newcommand{\Is}{\operatorname{Is}}

\newcommand{\val}{\operatorname{val}}

\newcommand{\Ro}{\operatorname{RO^+}}
\newcommand{\Orb}{\operatorname{Orb}}

%\newcommand{\N}{\mathbb{N}}

% \newcommand{\Span}{\operatorname{Span}}

%  Shortcuts for finite sums and products

%\DeclareMathOperator{\supp}{supp}
\DeclareMathOperator{\orb}{Orb}

%  Definer evt. selv flere paa samme maade,
%  eller slet de som du ikke skal bruge

%----------Saetnings strukturer og nummerering af disse-------------------------

%  Hvilke saetninger, lemmaer etc. og hvordan
%  Paa denne maade faar de alle en faelles nummerering:
   \theoremstyle{plain}%default
   \newtheorem{thm}{Theorem}[section]
   \newtheorem{prop}[thm]{Proposition}
   \newtheorem{lemma}[thm]{Lemma}  
   \newtheorem{cor}[thm]{Corollary}
   \theoremstyle{definition}

   \newtheorem{example}[thm]{Example}
   \theoremstyle{remark}

\newtheorem{assumption}[thm]{Assumption}
%   Paa denne maade faar de hver sin taeller:
%   \theoremstyle{plain}%default
%   \newtheorem{thm}{Theorem}
%   \newtheorem{prop}{Proposition}
%   \newtheorem{lemma}{Lemma}
%   \newtheorem{cor}{Corollary}
%   \theoremstyle{definition}
%   \newtheorem{defn}{Definition}
%   \newtheorem{example}{Example}
%   \newtheorem{exerc}{Exercise}
%   \theoremstyle{remark}
%   \newtheorem{obs}{Observation}
%   \newtheorem{remark}{Remark}
%   \numberwithin{thm}{section}
%   \numberwithin{lemma}{section}
%   \numberwithin{cor}{section}
%   \numberwithin{defn}{section}
%   \numberwithin{example}{section}
%   \numberwithin{prop}{section}
%   \numberwithin{exerc}{section}
%   \numberwithin{obs}{section}
%   \numberwithin{remark}{section}

%   Et par makroer til at referere til disse saetninger etc.
%   Saa kan du skrive: jvf. \refthm{yndlings_label} har vi....

\usepackage{mdframed,xcolor}

\definecolor{mybgcolor}{gray}{0.8}
\definecolor{myframecolor}{rgb}{.647,.129,.149}

\mdfdefinestyle{mystyle}{
  usetwoside=false,
  skipabove=0.6em plus 0.8em minus 0.2em,
  skipbelow=0.6em plus 0.8em minus 0.2em,
  innerleftmargin=.25em,
  innerrightmargin=0.25em,
  innertopmargin=0.25em,
  innerbottommargin=0.25em,
  leftmargin=-.75em,
  rightmargin=-0em,
  topline=false,
  rightline=false,
  bottomline=false,
  leftline=false,
  backgroundcolor=mybgcolor,
  splittopskip=0.75em,
  splitbottomskip=0.25em,
  innerleftmargin=0.5em,
  leftline=true,
  linecolor=myframecolor,
  linewidth=0.25em,
}

\newmdenv[style=mystyle]{important}

% fyldtekst
\usepackage{lipsum}

%   Og tilsvarende hvis det skal vaere paa dansk:
%   (Faelles taeller)
%   \newenvironment{DKproof}{\proof[\DKproofname]}{\endproof}
%   \newcommand{\DKproofname}{Bevis}
%   \theoremstyle{plain}%default
%   \newtheorem{thm}{S\ae tning}[section]
%   \newtheorem{prop}[thm]{Proposition}
%   \newtheorem{lemma}[thm]{Lemma}
%   \newtheorem{cor}[thm]{Korollar}
%   \theoremstyle{definition}
%   \newtheorem{exerc}[thm]{\O velse}
%   \newtheorem{defn}[thm]{Definition}
%   \newtheorem{example}[thm]{Eksempel}
%   \theoremstyle{remark}
%   \newtheorem{obs}[thm]{Observation}
%   \newtheorem{remark}[thm]{Bem\ae rkning}

%   \newcommand{\refthm}[1]{S\ae tning~\ref{#1}}
%   \newcommand{\refprop}[1]{Proposition~\ref{#1}}
%   \newcommand{\reflemma}[1]{Lemma~\ref{#1}}
%   \newcommand{\refcor}[1]{Korollar~\ref{#1}}
%   \newcommand{\refdefn}[1]{Definition~\ref{#1}}
%   \newcommand{\refremark}[1]{Bem\ae rkning~\ref{#1}}
%   \newcommand{\refexample}[1]{Eksempel~\ref{#1}}

%   Kompabilitet med amslatex pf-environment
%   \newenvironment{pf}{\proof[\proofname]}{\endproof}
%   \newenvironment{pf*}[1]{\proof[#1]}{\endproof}

%   Ligningsnumre som (section-nr.eq-nr)
   \numberwithin{equation}{section}

%   Hvis man vi have "2.3 Theorem" i stedet for "Theorem 2.3"
%   \swapnumbers

%------------------Diverse hints og tricks--------------------------------------

%   Hvis man har noget LaTeX2e skrevet af MapleV4 (bemaerk: version 4) kan det
%   inkluderes paa foelgende maade:
%   \usepackage{/vol/packages/MapleV4/etc/inputs2e/maple2e}
%   \DefineParaStyle{Maple Output}
%   \DefineParaStyle{Warning}
%   \DefineCharStyle{2D Math}
%   \DefineCharStyle{2D Output}

%   \input{maple-filen.tex}
%   Husk at fjerne toppen (til og med begin{document}) og bunden (\end{document} 
%   af filen som du inkluderer.

%   \makeindex
%   Hvis der skal laves et index, inkluder da "makeidx".
%   Bemaerk: Der kan IKKE laves index i "article" og "amsart" styles.
%   Dette kan dog omgaas:
%      For at faa indexet printet ud, skal man enten vaelge optionen
%   amsbook eller ogsaa skal idx-filen som genereres af \makeindex
%   compileres som "makeindex fil.idx -p X", hvor X angiver det sidenr
%   hvor indexet skal begynde. Derefter TeX'es indexgeneratorfilen
%   Indexgenerator.tex (se forklaring i denne). Derved faas det oenskede
%   output.

%   \showidx
%   Hvis der laves index er det rart i skrive-processen
%   at have indexene staaende i marginen - dette klarer "showidx".

%\usepackage{showkeys}
%   Denne kommando viser alle label's skrevet
%   ud i teksten (i box) - rart til kladder.

        %Lav evt. dato-teksten om, eller undlad helt dato.
        \date{\today}
        %\thanks{My warmest thanks to my mom}
        %\email{matkt@imf.au.dk}
        %\address{Institut for matematiske fag, Ny Munkegade, 8000 Aarhus C, Denmark}
%---------Nu til forsiden-------------------------------------------------------

\title[Circle maps and $C^*$-algebras]{Circle maps and $C^*$-algebras}
  
\author{Thomas Lundsgaard Schmidt}
\author{Klaus Thomsen}

%Nu kommer nogle linier som kun er aktuelle hvis du bruger
%'amsart' style

        %\renewcommand{\datename}{Version: }
        %Lav evt. dato-teksten om, eller undlad helt dato.
        %\date{\today}
        %\thanks{}
        %\email{matkt@imf.au.dk}
        %\address{Institut for matematiske fag, Ny Munkegade, 8000 Aarhus C, Denmark}
        %\dedicatory{Dedicated to S\o ren Have Hansen on the occasion of his birth}
        %\keywords{TeX, Mathematics}
        %\subjclass{14C10, 93D20}
        %\commby{Jens Peter S\ae r}

%Slut paa  'amsart' style specifikke ting.

\DeclareMathOperator\coker{coker}

\date{\today}

\email{matkt@imf.au.dk}
\address{Institut for Matematik, Aarhus University, Ny Munkegade, 8000 Aarhus C, Denmark}

\begin{document}

\maketitle

%\documentclass[a4paper]{memoir}
%\begin{document}

% filen x holder data til foerste tegning
% x= og y= angiver hvilken skala koordinaterne henviser til

% filen y holder data til de to sidste tegninger, eneste forskel
% mellem de to er jo bare placeringen af koordinatlinierne

%\begin{tikzpicture}[x=4cm,y=4cm]
%  \draw[->] (0,0) -- (1.1,0);
%  \draw[->] (0,-0.7) -- (0,0.7);
%  \draw (0.02,0.5) --++ (-0.04,0) node [left] {$\tfrac12$};
%  \draw (0.02,-0.5) --++ (-0.04,0) node [left] {$-\tfrac12$};
%  \draw[thick,line join=round] plot file {y.dat};  
%\end{tikzpicture}

%\end{document}
\begin{abstract} We consider a construction of $C^*$-algebras from
  continuous piecewise monotone maps on the circle which generalizes
  the crossed product construction for homeomorphisms and more
  generally the construction of Renault, Deaconu and
  Anantharaman-Delaroche for local homeomorphisms. Assuming that the
  map is surjective and not locally injective we give necessary and sufficient
  conditions for the simplicity of the $C^*$-algebra and show that it
  is then a Kirchberg algebra. We provide tools for the
  calculation of the K-theory groups and turn them into an
  algorithmic method for Markov maps.  
\end{abstract}

\section{Introduction}

There are by now a wealth of ways to associate a $C^*$-algebra to specific
classes of dynamical systems, both reversible and irreversible. There
are several different approaches to the construction, but the most
successful is arguably the one which uses a groupoid as an intermediate
step. Via the groupoid the study of how the $C^*$-algebra depends on
the dynamical system and which features it captures can be broken into smaller steps, relating the
dynamical system to the groupoid and the groupoid to the
$C^*$-algebra. Since the $C^*$-algebra arises as the convolution
algebra of the groupoid there are by now a large reservoir of results
which can be exploited for this purpose. As concrete examples of very
successful studies which have followed this general recipe we
mention only the work of Putnam and Spielberg, \cite{PS}, and the
work of Deaconu and Shultz, \cite{DS}.

Another important aspect of
the groupoid approach is the interpretation which the construction is
given in
the non-commutative geometry of Connes where the algebra is seen as a
substitute for the poorly behaved quotient of the unit space
by the equivalence relation coming from the action of the
groupoid. In this picture the classical crossed
product algebra coming from the action of a group by homeomorphisms is the non-commutative
space representing the space of orbits under the action, and the
groupoid which serves as the stepping stone from the dynamical system
to the
$C^*$-algebra is the transformation groupoid.

As a purely algebraic
object the transformation
groupoid of a homeomorphism can easily be described such that the
invertibility of the dynamics is insignificant. It is therefore tempting and natural
to try to base a
generalization of the crossed product by a homeomorphism on the transformation groupoid. The problem is to
equip this groupoid with a sufficiently
nice topology which will allow the
construction of the convolution algebra. It was shown by Renault,
Deaconu and Anantharaman-Delaroche, in increasing generality,
\cite{Re}, \cite{De}, \cite{An}, that for a local homeomorphism there
is a canonical topology on the transformation groupoid which turns it
into an \'etale locally compact Hausdorff groupoid, which is the ideal
setting for
the formation of the convolution algebra. In \cite{Th2} the second author
introduced a way to describe the transformation groupoid of a
homeomorphism, and more generally a local homeomorphism, in such a way
that it leads to an \'etale groupoid and hence a $C^*$-algebra for
certain dynamical systems that are not local homeomorphisms. In
\cite{Th2} and \cite{Th3} this generalization of the crossed product
for homeomorphisms was used and investigated for holomorphic maps of
Riemann surfaces. Such maps are open, but fail to be injective
in any open set containing a critical point. The purpose with the present paper is to
show how the method from \cite{Th2} can be applied and what can be
said about the resulting algebras, in a case where the maps are
neither locally injective nor open. The class of maps we consider is the class of
continuous piecewise monotone maps of the circle, and there is at least two natural ways in which the viewpoint from \cite{Th2} can
be applied. We make a thorough study of one of them.

Specifically, from a continuous piecewise
monotone map $\phi : \mathbb T \to \mathbb T$ we construct an \'etale second countable
locally compact Hausdorff groupoid $\Gamma^+_{\phi}$ which is the usual transformation
groupoid when $\phi$ is an orientation preserving homeomorphism and is equal to the groupoid
of Renault, Deaconu and Anantharaman-Delaroche, \cite{An}, \cite{De}, \cite{Re}, when the map is an
orientation preserving local homeomorphism. The $C^*$-algebra we study is then the reduced
$C^*$-algebra $C^*_r\left(\Gamma^+_{\phi}\right)$ of the groupoid
$\Gamma^+_{\phi}$, cf. \cite{Re}.

We describe now our results. We assume that $\phi$ is surjective and not locally
injective. Without surjectivity the necessary and sufficient
condition for simplicity of $C^*_r\left(\Gamma^+_{\phi}\right)$ will
be more messy than the one we obtain for surjective maps, and the lack
of generality seems insignificant at this point. Assuming that $\phi$
is not locally injective means that we exclude the surjective local homeomorphisms. This is done with good conscience because 
$C^*_r\left(\Gamma^+_{\phi}\right)$ is equal to the usual
$C^*$-algebra of a local homeomorphism when
$\phi$ is a local homeomorphism of positive degree, and
equal to that of $\phi^2$ when the degree is negative. In addition, the simple
$C^*$-algebras that arise from the transformation groupoid of a
surjective local
homeomorphism of the circle are known, cf. \cite{AT}, and there is
therefore nothing lost by ignoring them here. We then establish the following facts. 
\begin{enumerate}
\item[1)] When $\phi$ is transitive, $C^*_r\left(\Gamma^+_{\phi}\right)$ is purely infinite (i.e. every
non-zero hereditary $C^*$-subalgebra contains an infinite
projection). (Proposition \ref{17}.)
\item[2)] $C^*_r\left(\Gamma^+_{\phi}\right)$ is simple if and only if $\phi$
is exact (or, equivalently, totally transitive) and has no
non-critical fixed point
$x$ such that $\phi^{-1}(x) \backslash \{x\}$ only contains critical points. (Theorem \ref{S}.)
\item[3)] $C^*_r\left(\Gamma^+_{\phi}\right)$ is unital,
separable, nuclear and satisfies the universal
coefficient theorem (UCT) of Rosenberg and Schochet. (Corollary \ref{nucUCT2}.)
\end{enumerate}
It follows from 1)-3) that when $\phi$ is exact without an exceptional fixed point the $C^*$-algebra $C^*_r\left(\Gamma^+_{\phi}\right)$ is classified by its K-theory, thanks to the Kirchberg-Phillips classification result, \cite{Ph}. To determine the algebra it suffices therefore to calculate its K-theory; a task which is far from trivial. We obtain here a six-terms exact sequence which can be be applied for the purpose (Theorem \ref{cunPi}.), and we turn it into an effective
algorithm for the calculation when 
$\phi$ is Markov in the sense that
it takes critical points to critical points. (Section \ref{MarMaps}.) 

As a couple of key references for our work we mention the work of Shultz \cite{S} and
the work of Katsura \cite{Ka}. The work of Shultz is used to show
that a continuous piecewise monotone and transitive map on the circle
is conjugate to a piecewise linear map with slopes that are constant
in absolute value; an important step towards the proof of the pure infiniteness of $C^*_r\left(\Gamma^+_{\phi}\right)$. His work is also
used to show that total transitivity and exactness are equivalent in
our setting; a fact which is important on the way to establish
nuclearity and the UCT . For this purpose we use also the work of
Katsura in much the same way it was used in \cite{Th3}, namely
to show that $C^*_r\left(\Gamma^+_{\phi}\right)$ can be realized
as a Cuntz-Pimsner algebra when it is simple. This is an important
point. While the groupoid picture is the best approach for the study
of many of the connections between properties of
$C^*_r\left(\Gamma^+_{\phi}\right)$ and the dynamical properties of $\phi$, it is the realization
of the algebra as a Cuntz-Pimsner algebra which provides the decisive
tools for the K-theory calculations.

The construction of $\Gamma^+_{\phi}$ depends on the choice of a
pseudo-group on $\mathbb T$; see Section \ref{SEC1}. This freedom is
present already for homeomorphisms and local homeomorphisms, and this
is why the $C^*$-algebra $C^*_r\left(\Gamma^+_{\phi}\right)$ only generalizes the
crossed product arising from a homeomorphism when it is orientation
preserving. While the choice of pseudo-group seems canonical for
homeomorphisms and local homeomorphisms, this is much less so in our case where at
least two choices seem equally natural; either
one can work with the pseudo-group of all locally defined local
homeomorphisms on $\mathbb T$, or one can restrict to those that
preserve the orientation of the circle. In the construction of
$\Gamma^+_{\phi}$ we choose the latter and we postpone the study of the
algebra which results when the largest pseudo-group is chosen.

%Readers interested in KMS states are referred to an accompanying paper, \cite{Th6}, where the KMS states for the gauge %action on $C^*_r\left(\Gamma^+_{\phi}\right)$ are determined.

\section{Transformation groupoid $C^*$-algebras}\label{SEC1}

Let $G$ be an \'etale second countable locally compact Hausdorff groupoid with unit
space $G^{(0)}$. Let $r : G \to G^{(0)}$ and $s : G \to G^{(0)}$ be
the range and source maps, respectively. For $x \in G^{(0)}$ put $G^x = r^{-1}(x), \ G_x = s^{-1}(x) \ \text{and} \ \Is_x =
s^{-1}(x)\cap r^{-1}(x)$. Note that $\Is_x$ is a group, the \emph{isotropy group} at $x$. The space $C_c(G)$ of continuous compactly supported
functions is a $*$-algebra when the product is defined by
$$
(f_1 f_2)(g) = \sum_{h \in G^{r(g)}} f_1(h)f_2(h^{-1}g)
$$
and the involution by $f^*(g) = \overline{f\left(g^{-1}\right)}$. Let $x\in G^{(0)}$. There is a
representation $\pi_x$ of $C_c(G)$ on the Hilbert space $l^2(G_x)$ of
square-summable functions on $G_x$ given by 
\begin{equation}\label{pix}
\pi_x(f)\psi(g) = \sum_{h \in G^{r(g)}} f(h)\psi(h^{-1}g) .
\end{equation}
The \emph{reduced groupoid
  $C^*$-algebra} $C^*_r(G)$, \cite{Re}, is the completion of $C_c(G)$ with respect to the norm
$$
\left\|f\right\| = \sup_{x \in G^{(0)}} \left\|\pi_x(f)\right\| .
$$

In \cite{Th2} the second named author introduced an amended transformation
groupoid for a self-map on a locally compact space which in certain
cases allows us to topologize the transformation groupoid, or a
groupoid closely related to the transformation groupoid, in such a way
that the result is a well behaved \'etale groupoid. It is this
construction we shall consider in this paper for piecewise monotone
maps of the circle. We begin therefore by reviewing the construction.

Let $X$ be a locally compact Hausdorff space and $\psi : X \to X$ a map. Let $\mathcal
 P$ be a pseudo-group on $X$. More specifically, $\mathcal P$ is a
collection of homeomorphisms $\eta : U \to V$ between open
subsets of $X$ such that
\begin{enumerate}
\item[i)] for every open subset $U$ of $X$ the identity map $\id : U
  \to U$ is in $\mathcal P$,
\item[ii)] when $\eta : U \to V$ is in $\mathcal P$ then so is
  $\eta^{-1} : V \to U$, and
\item[iii)] when $\eta : U \to V$ and $\eta_1 : U_1 \to V_1$ are
  elements in $\mathcal P$ then so is $\eta_1 \circ \eta : U \cap
  \eta^{-1}(V\cap U_1) \to \eta_1(V \cap U_1)$.  
\end{enumerate}  
For each $k \in \mathbb Z$ we denote by
$\mathcal T_k(\psi)$ the elements $\eta : U \to V$ of $\mathcal P$ with the
property that there are natural numbers $n,m$ such that $k = n-m$ and
\begin{equation}\label{crux0} 
\psi^n(z) = \psi^m(\eta(z))  \ \ \forall z \in U.
\end{equation}
The elements of $\mathcal T =\bigcup_{k \in \mathbb Z} \mathcal T_k(\psi)$ will
be called \emph{local transfers} for $\psi$. 
We denote by $[\eta]_x$ the germ at a point $x \in X$ of an element $\eta \in \mathcal
T_k(\psi)$. Set
$$
\mathcal G_{\psi} = \left\{ (x,k,\eta,y) \in X \times \mathbb Z \times
  \mathcal P  \times X : \ \eta \in \mathcal T_k(\psi) , \ \eta(x)
  = y \right\} .
$$
We define an equivalence relation $\sim$ in $\mathcal G_{\psi}$ such
that $(x,k,\eta,y) \sim (x',k',\eta',y')$ when
\begin{enumerate}
\item[i)] $ x = x', \ y = y', \ k = k'$ and
\item[ii)] $[\eta]_x = [\eta']_x$.
\end{enumerate}
Let
$\left[x,k,\eta,y\right]$ denote the equivalence class represented
by $(x,k,\eta,y)  \in \mathcal G_{\psi}$. The quotient space 
$G_{\psi}(\mathcal P) = \mathcal G_{\psi}/{\sim}$ is a groupoid such that two elements
$\left[x,k,\eta,y\right]$ and $\left[x',k',\eta',y'\right]$ are
composable when $y= x'$ and their product is
$$
\left[x,k,\eta,y\right]\left[y,k',\eta',y'\right] =
\left[x,k+k',\eta'\circ \eta ,y'\right] .
$$ 
The inversion in $G_{\psi}(\mathcal P)$ is defined such that
$\left[x,k,\eta,y\right]^{-1} = \left[ y,-k,\eta^{-1},x \right]$. The unit space of $G_{\psi}$ can be identified with $X$ via the map $x \mapsto [x,0,\id,x]$, where $\id$ is the
identity map on $X$. When $\eta \in \mathcal T_k(\psi)$ we set
\begin{equation}\label{baseset}
U(\eta) = \left\{ \left[z,k,\eta,\eta(z)\right]  : \ z \in U\right\}
\end{equation}
where $U$ is the domain of $\eta$.
It is straightforward to verify that by varying $k$, $\eta$ and $U$ the sets (\ref{baseset}) constitute a base
for a topology on $G_{\psi}(\mathcal P)$. In general this topology is not
Hausdorff and to amend this we now make the following additional assumption.
\begin{assumption}\label{crux2}
When $x \in X$ and $\eta(x) = \xi(x)$ for some $\eta,
\xi \in \mathcal T_k(\psi)$, then the implication
\begin{equation*}\label{crux1}
\text{$x$ is not isolated in} \ \left\{ y \in X : \ \eta(y) = \xi(y)\right\} \ \
\Rightarrow \ \ [\eta]_x =[\xi]_x
\end{equation*}
holds.
\end{assumption} 
Then $G_{\psi}(\mathcal P)$ is Hausdorff: Let
  $\left[x,k,\eta,y\right]$ and $\left[x',k',\eta',y'\right]$ be
  different elements of $G_{\psi}(\mathcal P)$. There are then open neighbourhood's $W$ of $x$ and $W'$ of $x'$ such that 
$U(\eta|_W) =\left\{
    \left[z,k,\eta,\eta(z)\right] : \ z \in W\right\}$ and $U'(\eta'|_{W'}) = \left\{
    \left[z,k',\eta',\eta'(z)\right] : \ z \in W'\right\}$ are
  disjoint. This is trivial when $(x,k,y) \neq (x',k',y')$ while it is
  a straightforward consequence of Assumption \ref{crux2} when
  $(x,k,y) = (x',k',y')$. %\footnote{k)}

Since the range and source maps are homeomorphisms from $U(\eta)$ onto
$U$ and $\eta(U)$, respectively, it follows that $G_{\psi}(\mathcal P)$ is a
locally compact Hausdorff space because $X$ is. It is also straightforward to show
that the groupoid operations are continuous so that we can
conclude the following. %\footnote{h)}

\begin{thm}\label{etale!} Let Assumption \ref{crux2} be satisfied. Then
  $G_{\psi}(\mathcal P)$ is an \'etale locally compact Hausdorff groupoid.  
\end{thm}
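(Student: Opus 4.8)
The plan is to verify, in order, the four properties in the definition of an étale locally compact Hausdorff groupoid: (a) the topology makes $G_{\psi}(\mathcal P)$ a topological space for which the sets $U(\eta)$ in \eqref{baseset} form a base; (b) the space is locally compact and Hausdorff; (c) the groupoid operations — multiplication and inversion — are continuous; and (d) the range and source maps are local homeomorphisms. Property (a) is already asserted in the excerpt to be straightforward (the family of sets $U(\eta)$ is closed under finite intersections because the germ of a local transfer is locally determined and restrictions of local transfers are again local transfers), and the Hausdorff part of (b) has been established explicitly just before the statement using Assumption \ref{crux2}.

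First I would record the key computational observation underpinning everything: for $\eta \in \mathcal T_k(\psi)$ with domain $U$, the maps
\eq{r\bigl([z,k,\eta,\eta(z)]\bigr) = z, \qquad s\bigl([z,k,\eta,\eta(z)]\bigr) = \eta(z)}
are, respectively, a homeomorphism of $U(\eta)$ onto the open set $U \subseteq X$ and a homeomorphism of $U(\eta)$ onto the open set $\eta(U) \subseteq X$ — the inverses being $z \mapsto [z,k,\eta,\eta(z)]$ and $y \mapsto [\eta^{-1}(y),k,\eta,y]$, which are continuous because $U(\eta)$ is by definition a basic open set and $\eta$, $\eta^{-1}$ are homeomorphisms. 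This immediately gives (d): every point of $G_{\psi}(\mathcal P)$ lies in some basic open set $U(\eta)$ on which $r$ (and $s$) restricts to a homeomorphism onto an open subset of $X$, so $r$ and $s$ are local homeomorphisms. It also gives local compactness in (b): since $X$ is locally compact Hausdorff, every point of $U$ has a compact neighbourhood inside $U$, and transporting it through the homeomorphism $r|_{U(\eta)}^{-1}$ produces a compact neighbourhood in $G_{\psi}(\mathcal P)$; combined with the Hausdorff property already proved, this settles (b).

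For (c), continuity of inversion $[x,k,\eta,y] \mapsto [y,-k,\eta^{-1},x]$ is checked by noting that it maps the basic open set $U(\eta)$ homeomorphically onto $U(\eta^{-1})$ (using $\eta^{-1} \in \mathcal T_{-k}(\psi)$, which holds by property (ii) of a pseudo-group together with the defining relation \eqref{crux0} read backwards). Continuity of multiplication is the one genuinely fiddly point, though still routine: given composable $([x,k,\eta,y], [y,k',\eta',y'])$ with product $[x,k+k',\eta'\circ\eta,y']$ lying in a basic open set $U(\zeta)$ around it, I would use that $\eta'\circ\eta$ restricted to a small enough neighbourhood of $x$ has the same germ as (a restriction of) $\zeta$, hence $[x,k+k',\eta'\circ\eta,y']$ has a basic neighbourhood of the form $U\bigl((\eta'\circ\eta)|_W\bigr)$; then the pair of basic open sets $U(\eta|_W)$ and $U(\eta'|_{\eta(W)})$ is a neighbourhood of the given pair in the space of composable elements whose image under multiplication lies in $U\bigl((\eta'\circ\eta)|_W\bigr)$, using property (iii) of the pseudo-group to guarantee $\eta'\circ\eta$ is again a local transfer with the appropriate domain. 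The main obstacle, such as it is, lies precisely here: one must keep careful track of domains, since $\eta'$ need only be defined on an open set meeting $\eta(U)$, so the honest domain of the composite is $U \cap \eta^{-1}(\operatorname{dom}\eta' \cap \eta(U))$, exactly as in (iii); once the bookkeeping of domains is set up correctly the continuity is immediate, and with (a)–(d) in hand the conclusion follows.
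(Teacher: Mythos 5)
Your proposal is correct and follows essentially the same route as the paper, which establishes Hausdorffness from Assumption \ref{crux2}, deduces local compactness and the \'etale property from the fact that $r$ and $s$ restrict to homeomorphisms of each basic set $U(\eta)$ onto $U$ and $\eta(U)$, and leaves the continuity of the groupoid operations as a straightforward verification. Your additional bookkeeping for the continuity of multiplication (shrinking to a neighbourhood $W$ of $x$ on which the germs agree and using property (iii) of the pseudo-group) is exactly the routine argument the paper omits.
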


Compared to the notation used in \cite{Th2} we have here emphasized
the pseudo-group since there is more than one natural choice when $\psi$ is a piecewise monotone map on the circle.

\section{The oriented transformation groupoid of a piecewise monotone
  map on the circle}\label{orint}

Let $\mathbb T$ be the unit circle in the complex plane. We consider
$\mathbb T$ as an oriented space with the canonical counter-clockwise
orientation. Consider a continuous map $\phi : \mathbb T \to \mathbb T$. There is then a
unique continuous map $f : [0,1] \to \mathbb R$ such that $f(0) \in
[0,1[$ and
$$
\phi\left(e^{2 \pi i t} \right) = e^{2 \pi i f(t)}
$$
for all $t \in [0,1]$. We will refer to $f$ as \emph{the lift of
  $\phi$.} Note that $f(1) - f(0)$ is an integer, \emph{the degree of
  $\phi$}. We say that $\phi$ is \emph{piecewise monotone} when there
are points $0 = c_0 < c_1 < \dots < c_N = 1$ such that $f$ is either
strictly increasing or strictly decreasing on the intervals
$]c_{i-1},c_i[, \ i = 1,2, \dots, N$. When $\phi : \mathbb T \to
\mathbb T$ is piecewise monotone and $t \in \mathbb T$ we define \emph{the $\phi$-valency} $\val(\phi,t)$ of
$t$ to be the element of 
$$
\mathcal V = \left\{ (+,+), (-,-),
  (+,-), (-,+)\right\}
$$
such that $\val(\phi,t) = (+,+)$ when $\phi$ is strictly increasing in all
sufficiently small neighborhoods of $t$; $\val(\phi,t)
= (-,-)$ when $\phi$ is strictly decreasing in all sufficiently small
open neighborhoods of $t$; $\val(\phi,t) = (+,-)$ when $\phi$
is strictly increasing in all sufficiently small intervals to the left
of $t$ and strictly decreasing in all sufficiently small intervals to
the right of $t$; and finally $\val(\phi,t) = (-,+)$ when $\phi$
is strictly decreasing in all sufficiently small intervals to the left
of $t$ and strictly increasing in all sufficiently small intervals to
the right of $t$.

The set $\mathcal V$ is a monoid with the following
composition table. 
\begin{table}[ht]
\centering
\begin{tabular}{c|c|c|c|c}
$x \bullet y$ & $y = (+,+)$ & $y=(+,-)$ & $y=(-,+)$ & $y=(-,-)$ \\ \hline
$x= (+,+)$ & $(+,+)$ & $(+,-)$ & $(-,+)$ & $(-,-)$ \\ \hline
$x = (+,-)$ & $(+,-)$ & $(+,-)$ & $(+,-)$ & $(+,-)$ \\ \hline
$x= (-,+)$ & $(-,+)$ & $(-,+)$ & $(-,+)$ & $(-,+)$ \\ \hline
$x = (- ,-)$ & $(-,-)$ & $(-,+)$ & $(+,-)$ & $(+,+)$ \\ \hline
\end{tabular}
\bigskip
\caption{The composition table for $\bullet$}
\end{table}

This monoid structure will be important here because of the following observation:

\begin{lemma}\label{compx} Let $\phi, \varphi : \mathbb T \to \mathbb
  T$ be piecewise monotone, and let
  $x \in \mathbb T$. Then
$$
\val (\phi \circ \varphi, x) = \val (\phi, \varphi(x)) \bullet
\val(\varphi, x) .
$$
\end{lemma}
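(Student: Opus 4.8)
The plan is to prove the identity by a case analysis that is organized according to the value of $\val(\varphi,x)$, and within each case according to $\val(\phi,\varphi(x))$. The key observation is that $\val(\psi,t)$ for a piecewise monotone $\psi$ records two independent pieces of local information: the monotonicity type of $\psi$ on a small interval to the \emph{left} of $t$ and on a small interval to the \emph{right} of $t$. So writing $\val(\varphi,x) = (a_-, a_+)$, the sign $a_-$ tells us whether $\varphi$ maps a small left-interval of $x$ to a small left- or right-interval of $\varphi(x)$ (namely left if $a_- = +$, right if $a_- = -$), and similarly $a_+$ for the right side. Composing with $\phi$ near $\varphi(x)$ and tracking how monotonicity types multiply (a composition of two decreasing-type pieces is increasing, etc.) will reproduce exactly the table entries for $\bullet$.

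First I would fix $x \in \mathbb T$ and set $y = \varphi(x)$, and note that since $\varphi$ is piecewise monotone there is $\delta>0$ such that $\varphi$ is strictly monotone on $]x-\delta, x[$ and on $]x, x+\delta[$, and likewise $\phi$ is strictly monotone on small one-sided intervals at $y$. Shrinking $\delta$ we may assume $\varphi(]x-\delta,x[)$ and $\varphi(]x,x+\delta[)$ are contained in these one-sided neighbourhoods of $y$ on which $\phi$ is monotone. The core sub-lemma is then: if $g$ is strictly increasing on an interval $I$ with $g(I) \subseteq J$ and $h$ is strictly monotone on $J$, then $h \circ g$ is monotone on $I$ with the same direction as $h$; if $g$ is strictly decreasing, then $h\circ g$ has the opposite direction to $h$. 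This is elementary one-variable calculus reasoning with no analytic input beyond strict monotonicity. Applying it on the left interval $]x-\delta,x[$ and on the right interval $]x,x+\delta[$ separately yields the left-component and right-component of $\val(\phi\circ\varphi, x)$, and one checks these agree with the entries of Table~1.

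The one genuine subtlety — and the step I expect to be the main obstacle — is the behaviour when $y=\varphi(x)$ is itself a turning point of $\varphi$ (i.e. $a_- \neq a_+$), so that the left and right one-sided intervals of $x$ are both mapped by $\varphi$ onto intervals lying on the \emph{same} side of $y$. For instance if $\val(\varphi,x) = (+,-)$, then both a small left-interval and a small right-interval of $x$ are mapped into a small \emph{left}-interval of $y$ (the right-interval being mapped orientation-reversingly). Hence only the left-hand monotonicity type of $\phi$ at $y$ enters, and $\val(\phi\circ\varphi,x)$ always has the form $(\ast,\ast)$ dictated purely by $\val(\varphi,x)$ — which is precisely why the rows $x=(+,-)$ and $x=(-,+)$ of the table are constant in $y$. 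One must be careful here that $\varphi$ maps the relevant one-sided interval \emph{onto} a one-sided interval of $y$ (not merely into it) so that $\phi$'s one-sided monotonicity at $y$ is actually sampled; this follows from continuity and strict monotonicity of $\varphi$ on that interval. Once this case and its mirror image are handled, the remaining cases ($a_-=a_+$) are the straightforward ``$g$ monotone through $y$'' situation, and assembling all sixteen entries completes the proof.
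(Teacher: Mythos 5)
Your argument is correct, and in fact the paper offers no proof of Lemma \ref{compx} at all (it is introduced as an ``observation''), so your one-sided case analysis is precisely the elaboration the authors left implicit: reduce to the elementary sub-lemma that composing strictly monotone maps on one-sided intervals multiplies monotonicity types, determine which side of $y=\varphi(x)$ each one-sided neighbourhood of $x$ is carried to by $\varphi$, and read off the two components of $\val(\phi\circ\varphi,x)$ separately. Your identification of the turning-point case of the inner map as the only delicate point, and the remark that strict one-sided monotonicity of $\varphi$ guarantees the image is a genuine one-sided interval at $y$, are both apt.

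One correction to an aside in your write-up: in Table 1 the row label is the \emph{first} argument of $\bullet$, i.e.\ $\val(\phi,\varphi(x))$, the valency of the \emph{outer} map, so the phenomenon you describe --- both one-sided neighbourhoods of $x$ being mapped by $\varphi$ to the same side of $y$ when $\val(\varphi,x)\in\{(+,-),(-,+)\}$ --- explains why the \emph{columns} $y=(+,-)$ and $y=(-,+)$ depend only on the left (respectively right) component of the row label, not why the rows $x=(+,-)$ and $x=(-,+)$ are constant. The constancy of those rows has a different explanation: if $\phi$ has a strict one-sided-monotone local maximum (or minimum) at $\varphi(x)$, then $\phi\circ\varphi$ has a strict local maximum (or minimum) at $x$ regardless of the local behaviour of $\varphi$, since $\varphi(t)\neq\varphi(x)$ for nearby $t\neq x$ and the composition is strictly monotone on each side. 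This is only a mislabelling in a sanity-check remark; the sixteen-case verification you outline goes through as stated.
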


Let $\mathcal P^+$ be the pseudo-group of all locally defined homeomorphisms of
$\mathbb T$ that are orientation preserving. The elements of $\mathcal
P^+$ consist of open subsets $U,V$ in $\mathbb T$ and a homeomorphism
$\eta : U \to V$ such that $s,t \in U, \ s < t \Rightarrow \eta(s) <
\eta(t)$.

\begin{lemma}\label{mod2} Let $\phi, \varphi : \mathbb T \to \mathbb T$ be continuous
  and piecewise
  monotone maps and $x,y \in \mathbb T$ points such that $\phi(x) =
  \varphi(y)$. It follows that there is a $\eta \in
  \mathcal P^+$ such that 
\begin{enumerate}
\item[a)] $\eta(x)
  = y$, and 
\item[b)] $\phi(t) = \varphi(\eta(t))$ for all $t$ in a neighborhood
  of $x$ 
\end{enumerate}
if and only if $\val(\phi,x) = \val(\varphi,y)$. 

When this is the case, the germ $[\eta]_x$ of $\eta$ at $x$ is unique.
\end{lemma}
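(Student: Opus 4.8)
The plan is to prove both implications by analyzing the local behavior of $\phi$ and $\varphi$ near $x$ and $y$ in terms of one-sided monotonicity, and then to use the fact that an orientation-preserving homeomorphism between small intervals is determined by its action on left and right pieces. First I would set up coordinates: since the statement is local at $x$ (for $\phi$) and at $y$ (for $\varphi$), I may pass to the lifts and work with small intervals $]x-\delta, x[$, $\{x\}$, $]x, x+\delta[$ and likewise at $y$, on which $\phi$ and $\varphi$ are each strictly monotone. Write $\phi$ left of $x$ as either increasing ($+$) or decreasing ($-$), and similarly right of $x$; this is exactly the datum $\val(\phi,x) \in \mathcal V$, and similarly for $\val(\varphi,y)$. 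Note $\phi(x) = \varphi(y)$ is the common value around which all four one-sided branches live.

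For the \textbf{``if'' direction}, assume $\val(\phi,x) = \val(\varphi,y)$. I would construct $\eta$ separately on the left and right of $x$. Consider the right side: $\phi$ restricted to $[x, x+\delta[$ is a homeomorphism onto an interval $J_\phi$ with endpoint $\phi(x)$, lying either to the right or left of $\phi(x)$ according to the sign in the second coordinate of $\val(\phi,x)$; by the hypothesis the same sign governs $\varphi$ on $[y,y+\delta'[$, so $\varphi$ maps $[y,y+\delta'[$ homeomorphically onto an interval $J_\varphi$ on the \emph{same} side of $\varphi(y)=\phi(x)$. Shrinking $\delta, \delta'$ so that $J_\phi = J_\varphi =: J$, the map $\eta_+ := (\varphi|_{[y,y+\delta'[})^{-1} \circ \phi|_{[x,x+\delta[}$ is a homeomorphism $[x,x+\delta[ \to [y,y+\delta'[$ with $\eta_+(x)=y$ and $\phi = \varphi\circ\eta_+$ there, and it is increasing because it is a composite of two monotone maps whose monotonicity types are forced to agree (here the $\mathcal V$-entry of $\phi$ equals that of $\varphi$, so the two one-sided branches have the same orientation, hence the composite is increasing). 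The analogous construction on the left gives an increasing $\eta_-$ on $]x-\delta,x]$ with $\phi = \varphi\circ\eta_-$. Since $\eta_-(x)=\eta_+(x)=y$, the two glue to a single homeomorphism $\eta$ on $]x-\delta, x+\delta[$; it is orientation preserving because each half is increasing and they agree at $x$, so $\eta \in \mathcal P^+$. This gives a), b).

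For the \textbf{``only if'' direction}, suppose such an $\eta \in \mathcal P^+$ exists. Then $\eta$ maps $]x-\delta,x[$ increasingly into $]y-\delta',y[$ and $]x,x+\delta[$ increasingly into $]y,y+\delta'[$ (after shrinking), and $\varphi\circ\eta = \phi$ near $x$. On the right of $x$: $\phi$ is increasing iff $\varphi\circ\eta$ is increasing on $]x,x+\delta[$ iff $\varphi$ is increasing on $]y,y+\delta'[$ (since $\eta$ is an increasing bijection onto that interval). Hence the second coordinate of $\val(\phi,x)$ equals that of $\val(\varphi,y)$; the left side argument is identical for the first coordinate, so $\val(\phi,x) = \val(\varphi,y)$. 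For the final \textbf{uniqueness of the germ} $[\eta]_x$: if $\eta'$ also satisfies a), b), then on a punctured one-sided interval $]x,x+\delta[$ both $\varphi\circ\eta$ and $\varphi\circ\eta'$ equal $\phi$, and $\varphi$ is injective there, so $\eta = \eta'$ on that interval; similarly on the left; hence $\eta = \eta'$ on a punctured neighborhood of $x$, and continuity forces agreement at $x$ too, so $[\eta]_x = [\eta']_x$. The main obstacle I anticipate is purely bookkeeping: keeping track of which side of $\phi(x)$ each one-sided branch image lies on, and verifying in all four cases of $\mathcal V$ that the composite defining $\eta_\pm$ is genuinely increasing (not merely monotone); this is a finite case check that the monoid structure in Lemma \ref{compx} organizes, but it must be done carefully to ensure the glued map lands in $\mathcal P^+$ rather than merely being a local homeomorphism.
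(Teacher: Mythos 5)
Your argument is correct and is precisely the elementary local analysis the authors have in mind when they dismiss the proof as ``Straightforward'': constructing $\eta$ one side at a time as $(\varphi|)^{-1}\circ\phi|$, checking via the matching valencies that each one-sided piece is increasing, gluing at $x$, and using one-sided injectivity of $\varphi$ for the uniqueness of the germ. No gaps; the four-case sign bookkeeping you flag is exactly the content and you handle it correctly.
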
  
\begin{proof} Straightforward.
\end{proof}

When $x,y \in \mathbb T$ and $k \in \mathbb Z$ we write $x \overset{k}{\sim} y$ when $k =
n-m$ for some $n,m
\in \mathbb N$ such that $\phi^n(x) = \phi^m(y)$ and $\val\left(\phi^n,x\right)
= \val\left(\phi^m, y\right)$.
Set
$$
\Gamma^+_{\phi} = \left\{ (x,k,y) \in \mathbb T \times \mathbb Z \times
  \mathbb T : \  x\overset{k}{\sim} y \    \right\} .
$$ 
Then $\Gamma^+_{\phi}$ is a groupoid where the composable pairs are
$$
{\Gamma^+_{\phi}}^{(2)} = \left\{ ((x,k,y),(x',k',y')) \in
{\Gamma^+_{\phi}}^2 : \ y = x' \right\}
$$
and the product is
$$
(x,k,y)(y,k',y') = (x,k+k',y') .
$$
The inversion is given by $(x,k,y)^{-1} = (y,-k,x)$. This groupoid is
identical with the groupoid denoted by $G_{\phi}(\mathcal P^+)$ in Section \ref{SEC1}. To see
this we denote, as in \cite{Th2}, by $\mathcal T_k(\phi)$ the
set of elements $\eta \in \mathcal P^+$ with the property that for some
$n,m \in \mathbb N$ such that $n-m =k$, the equality
$$
\phi^n(z) =  \phi^m(\eta (z))
$$
holds for all $z$ in the domain of $\eta$.  It follows then from the last statement in Lemma \ref{mod2}
that the implication $\eta, \eta' \in \mathcal T_k(\phi) , \ \eta(x) = \eta'(x) \  \Rightarrow \ [\eta ]_x = [\eta']_x$  
holds. We deduce therefore that the map
$$
G_{\phi}\left(\mathcal P^+\right) \ni \left[x,k,\eta,y\right] \mapsto (x,k,y) \in \Gamma^+_{\phi}
$$
is a bijection. It follows from Theorem \ref{etale!} that $\Gamma^+_{\phi}$ is an \'etale locally
compact Hausdorff groupoid in the topology for which a base is given by sets
of the form
\begin{equation}\label{basesets}
\Omega\left( \eta,U\right) = \left\{ (z,k,\eta(z)) :  \ z \in U \right\},
\end{equation}
where $\eta \in \mathcal T_k(\phi)$ and $U$ is an open subset of
$\eta$'s domain. However, there is an alternative description of this
topology which we now present. Among others it has the virtue that it
is obviously second countable.

 When $k \in \mathbb Z, n \in \mathbb N$ and $n+ k \geq 1, \ n  \geq 1$, set
$$
\Gamma^+_{\phi}(k,n) = \left\{ (x,l,y) \in \Gamma^+_{\phi} : \ l = k,
  \ \phi^{k+n}(x) = \phi^n(y) , \ \val\left(\phi^{k+n},x\right) = \val
  \left( \phi^n,y\right) \right\}
$$  
and
$$
\Gamma_{\phi}(k,n) = \left\{ (x,l,y) \in \mathbb T \times \mathbb Z
  \times \mathbb T: \ l = k,
  \ \phi^{k+n}(x) = \phi^n(y)\right\} .
$$  
Note that $\Gamma_{\phi}(k,n)$ is closed in $\mathbb T \times \mathbb
Z \times \mathbb T$.

\begin{lemma}\label{closed} $\Gamma^+_{\phi}(k,n)$ is the intersection
  of a closed and an open subset of $\mathbb T \times \mathbb Z \times
  \mathbb T$.
\end{lemma}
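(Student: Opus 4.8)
The plan is to verify directly that $\Gamma^+_{\phi}(k,n)$ is \emph{locally closed}, i.e.\ that every point of it has an open neighbourhood in $\mathbb{T}\times\mathbb{Z}\times\mathbb{T}$ in which $\Gamma^+_{\phi}(k,n)$ is relatively closed; since a locally closed subset of a topological space equals the intersection of its (closed) closure with an open set, this gives the statement. Write $\psi_1=\phi^{k+n}$ and $\psi_2=\phi^n$; because $k+n\geq 1$, $n\geq 1$ and a composition of piecewise monotone maps is again piecewise monotone, both $\psi_1,\psi_2$ are continuous piecewise monotone maps, and $\Gamma^+_{\phi}(k,n)=\Gamma_{\phi}(k,n)\cap M$ where $M=\{(x,k,y):\val(\psi_1,x)=\val(\psi_2,y)\}$ and $\Gamma_{\phi}(k,n)$ is closed.

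The facts I would record first are the elementary properties of the valency of a piecewise monotone map $\psi$: the set $\{t:\val(\psi,t)\in\{(+,+),(-,-)\}\}$ is open, with $\val(\psi,\cdot)$ constant on each of its components; the complementary set of turning points (where $\val(\psi,t)\in\{(+,-),(-,+)\}$) is finite; and if $\val(\psi,t_0)=(+,-)$ then $\psi$ is strictly increasing on a small interval to the left of $t_0$ and strictly decreasing on a small interval to the right, so $t_0$ is a strict local maximum of $\psi$ (symmetrically $(-,+)$ gives a strict local minimum).

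Now fix $p=(x,k,y)\in\Gamma^+_{\phi}(k,n)$, so $\psi_1(x)=\psi_2(y)$ and $\val(\psi_1,x)=\val(\psi_2,y)=:v$. If $v\in\{(+,+),(-,-)\}$, choose intervals $N_1\ni x$, $N_2\ni y$ with $\val(\psi_1,\cdot)\equiv v$ on $N_1$ and $\val(\psi_2,\cdot)\equiv v$ on $N_2$; then on $N:=N_1\times\{k\}\times N_2$ the valency condition is automatic, so $\Gamma^+_{\phi}(k,n)\cap N=\Gamma_{\phi}(k,n)\cap N$, which is relatively closed in $N$. If $v=(+,-)$ (the case $(-,+)$ being symmetric, with inequalities reversed), choose $N_1\ni x$ and $N_2\ni y$ so small that $x$ is the only turning point of $\psi_1$ in $N_1$ and $y$ the only turning point of $\psi_2$ in $N_2$; identifying each with a real interval, $\psi_1\leq w:=\psi_1(x)$ on $N_1$ with equality only at $x$, $\psi_2\leq w$ on $N_2$ with equality only at $y$, and the valencies are $(+,+)$ left of the turning point, $(-,-)$ right of it, and $v$ at it. Hence, inside $N=N_1\times\{k\}\times N_2$,
\[
\Gamma^+_{\phi}(k,n)\cap N=\bigl\{(x',k,y')\in N:\psi_1(x')=\psi_2(y'),\ (x'-x)(y'-y)>0\ \text{or}\ (x',y')=(x,y)\bigr\}.
\]
To see this is closed in $N$, take a convergent sequence of its points $(x'_j,k,y'_j)\to(x'_*,k,y'_*)\in N$; passing to a subsequence we may assume all $(x'_j,y'_j)$ lie in one fixed region among $\{x'<x,\,y'<y\}$, $\{x'>x,\,y'>y\}$, $\{(x,y)\}$. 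In the first region $x'_*\leq x$, $y'_*\leq y$; if $x'_*<x$ then $\psi_2(y'_*)=\psi_1(x'_*)<w$ forces $y'_*<y$, while if $x'_*=x$ then $\psi_2(y'_*)=w$ forces $y'_*=y$. The other two regions are treated identically, and in every case the limit again lies in the displayed set. Thus $\Gamma^+_{\phi}(k,n)$ is locally closed, which is what we wanted.

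The one point requiring care — and the reason the statement is not simply ``$\Gamma^+_{\phi}(k,n)$ is closed'' or ``$\Gamma^+_{\phi}(k,n)=\Gamma_{\phi}(k,n)\cap(\text{open})$'' — is the behaviour at a coincidence $(x,k,y)$ of two turning points: there $\Gamma_{\phi}(k,n)$ carries four local branches, of which $\Gamma^+_{\phi}(k,n)$ keeps only the two along which the valencies agree, and one must check that these two branches together with the vertex already form a relatively closed set in a small box. That is exactly what the strict monotonicity of $\psi_1$ and $\psi_2$ on each side of the turning point supplies, via the observation that a convergent sequence of ``matching'' points which does not converge to the vertex must eventually stay on one fixed side, where the valency is locally constant.
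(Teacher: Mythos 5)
Your argument is correct, but it is organized rather differently from the paper's. The paper gives a single global formula: writing $\mathbb T$ as a union of closed intervals $I^\pm_i$ (resp.\ $J^\pm_j$) on which $\phi^{n+k}$ (resp.\ $\phi^n$) is monotone, it observes that $\Gamma^+_{\phi}(k,n)=\bigl(A\cap\Gamma_{\phi}(k,n)\bigr)\setminus B$, where $A$ is the (closed) union of the products $I^+_i\times\{k\}\times J^+_j$ and $I^-_i\times\{k\}\times J^-_j$ and $B$ is a finite set of turning-point mismatches; this exhibits the set as (closed)\,$\cap$\,(open) in one line. You instead prove local closedness point by point and invoke the general fact that a locally closed set is the intersection of its closure with an open set. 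The underlying dynamical input is identical in both proofs --- away from the finitely many turning points the valency is locally constant, so the matching condition is locally vacuous, while at a pair of coinciding turning points the strict local extremum forces the two surviving branches plus the vertex to be relatively closed --- but your sequence argument in the double-turning-point case is essentially a local re-derivation of what the paper's closed set $A$ provides globally. What your version buys is an explicit picture of the four local branches of $\Gamma_{\phi}(k,n)$ at such a point and of why discarding two of them preserves local closedness; what the paper's version buys is brevity and an explicit closed set and open set realizing the statement, which is marginally more in the spirit of the lemma as phrased. Both are complete; your appeal to the equivalence ``locally closed $\Leftrightarrow$ closed $\cap$ open'' is standard and correctly used.
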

\begin{proof} Write $\mathbb T$ as the union of non-degenerate closed
  intervals $\mathbb T = \bigcup_i I^+_i \cup \bigcup_i I^-_i$
  such that $\phi^{n+k}$ is increasing on each $I^+_i$ for all $i$ and
  decreasing on $I^-_i$ for all $i$, and such that none of the intervals
  overlap in more than one point. Similarly, write $\mathbb T$ as the union of non-degenerate closed
  intervals $\mathbb T = \bigcup_j J^+_j \cup \bigcup_j J^-_j$
  such $\phi^{n}$ is increasing on each $J^+_j$ for all $j$ and
  decreasing on $J^-_j$ for all $j$, and such that none of the intervals
  overlap in more than one point. Then 
$$
\Gamma^+_{\phi}(k,n) = \left(A \cap \Gamma_{\phi}(k,n)\right)
\backslash B,
$$
where 
$$
A = \bigcup_{i,j} \left(I^+_i \times \{k\} \times J^+_j\right)  \cup
\bigcup_{i,j} \left(I^-_i \times \{k\} \times J^-_j\right)
$$ 
and $B$ is the finite set consisting of elements $(x,k,y) \in
\Gamma_{\phi}(n,k)$ such that $\val\left(\phi^{n+k},x\right) \in
\left\{(+,-), (-,+)\right\}$ or $ \val\left(\phi^n,y\right) \in
  \left\{(+,-), (-,+)\right\}$ while $\val\left(\phi^{n+k},x\right) \neq
  \val\left(\phi^n,y\right)$. 

\end{proof}

It follows from Lemma \ref{closed} that $\Gamma^+_{\phi}(k,n)$ is a locally compact Hausdorff space in
the relative topology inherited from $\mathbb T \times \mathbb Z
\times \mathbb T$. 

\begin{lemma}\label{top} A subset $W$ of $\Gamma_{\phi}^+(k,n)$ is
  open in the relative topology inherited from $\mathbb T \times
  \mathbb Z \times \mathbb T$ if and only for all $(x,k,y) \in W$
  there is an element $\eta \in \mathcal T_k(\phi)$ and an open
  subset $U$ of the domain of $\eta$ such that
$(x,k,y) \in \Omega(\eta,U) \subseteq W$.
\end{lemma}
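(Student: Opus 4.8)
The plan is to prove the two implications separately, using the concrete description of the basic sets $\Omega(\eta,U)$ together with Lemma~\ref{mod2} and Lemma~\ref{closed}.

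\smallskip

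First I would prove the ``if'' direction, which is the easier one. Suppose that for every $(x,k,y) \in W$ there is $\eta \in \mathcal T_k(\phi)$ and an open subset $U$ of the domain of $\eta$ with $(x,k,y) \in \Omega(\eta,U) \subseteq W$. The point is that each $\Omega(\eta,U)$ is relatively open in $\Gamma_\phi^+(k,n)$ provided $n$ is chosen large enough: since $\eta \in \mathcal T_k(\phi)$ satisfies $\phi^{k+m}(z) = \phi^m(\eta(z))$ on $U$ for some $m$ with $k+m \geq 1$, $m \geq 1$, the set $\Omega(\eta,U)$ is contained in $\Gamma_\phi^+(k,m')$ for all $m' \geq m$ (using Lemma~\ref{compx} to get the valency condition $\val(\phi^{k+m'},z) = \val(\phi^{m'},\eta(z))$ on $U$). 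Thus I first note that, enlarging $n$ if necessary and using $\Gamma^+_\phi(k,n) \subseteq \Gamma^+_\phi(k,n')$ for $n' \geq n$, I may assume $n \geq m$ for the finitely many germs in question, or rather work locally. Then I have to show $\Omega(\eta,U) \cap \Gamma_\phi^+(k,n)$ is relatively open: the graph $\{(z,k,\eta(z)) : z \in U\}$ is relatively open in $\{(z,k,w) : w = \eta(z) \text{ on the appropriate monotone piece}\}$, and within $\Gamma_\phi^+(k,n)$ the constraint $\phi^{n+k}(z) = \phi^n(w)$ together with matching valencies pins $w$ down to $\eta(z)$ locally, by the uniqueness of the germ in Lemma~\ref{mod2} applied to $\phi^{n+k}$ and $\phi^n$. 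Hence $W$, being a union of such sets, is relatively open.

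\smallskip

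For the ``only if'' direction, suppose $W$ is relatively open in $\Gamma_\phi^+(k,n)$ and fix $(x,k,y) \in W$. Then $\phi^{n+k}(x) = \phi^n(y)$ and $\val(\phi^{n+k},x) = \val(\phi^n,y)$, so by Lemma~\ref{mod2} (applied to $\phi^{n+k}$ and $\phi^n$) there is $\eta_0 \in \mathcal P^+$ with $\eta_0(x) = y$ and $\phi^{n+k} = \phi^n \circ \eta_0$ on a neighbourhood $U_0$ of $x$; shrinking, $\eta_0|_{U_0} \in \mathcal T_k(\phi)$ and $(x,k,y) \in \Omega(\eta_0,U_0)$. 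I then need to shrink $U_0$ to an open $U \subseteq U_0$ so that $\Omega(\eta_0,U) \subseteq W$. Since $W$ is relatively open, there is an open box $\mathcal O = V \times \{k\} \times V'$ in $\mathbb T \times \mathbb Z \times \mathbb T$ with $(x,k,y) \in \mathcal O \cap \Gamma_\phi^+(k,n) \subseteq W$. By continuity of $\eta_0$, choose $U \subseteq U_0 \cap V$ open with $x \in U$ and $\eta_0(U) \subseteq V'$. Then for $z \in U$ we have $(z,k,\eta_0(z)) \in \mathcal O$, and it lies in $\Gamma_\phi^+(k,n)$ because $\phi^{n+k}(z) = \phi^n(\eta_0(z))$ on $U$ and the valency identity $\val(\phi^{n+k},z) = \val(\phi^n \circ \eta_0, z) = \val(\phi^n,\eta_0(z)) \bullet \val(\eta_0,z) = \val(\phi^n,\eta_0(z))$ holds by Lemma~\ref{compx} (as $\eta_0$ is an orientation-preserving homeomorphism, $\val(\eta_0,z) = (+,+)$). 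Hence $\Omega(\eta_0,U) \subseteq \mathcal O \cap \Gamma_\phi^+(k,n) \subseteq W$, as required.

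\smallskip

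The main obstacle I anticipate is the bookkeeping in the ``if'' direction: the given $\eta$ for a point $(x,k,y) \in W \subseteq \Gamma_\phi^+(k,n)$ comes with \emph{its own} exponent $m$ witnessing $\eta \in \mathcal T_k(\phi)$, which need not equal $n$, and I must argue that $\Omega(\eta,U)$ is still relatively open in $\Gamma_\phi^+(k,n)$ specifically. The clean way around this is to observe that near $(x,k,y)$ the conditions $\phi^{n+k} = \phi^n$ on the two coordinates and matching valencies already force the second coordinate to depend on the first via a germ of an element of $\mathcal P^+$, which by the uniqueness clause of Lemma~\ref{mod2} must coincide with $[\eta]_x$ once we know $\phi^{n+k}(x) = \phi^n(y)$ — so on a small enough neighbourhood $\Gamma_\phi^+(k,n)$ simply \emph{is} a graph over an open set, and relative openness of graphs of continuous functions finishes it. Making this ``forcing'' precise — i.e. that in a box around $(x,k,y)$ every point of $\Gamma_\phi^+(k,n)$ is of the form $(z,k,\eta(z))$ — is the one step deserving genuine care; everything else is routine.
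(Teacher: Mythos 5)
Your overall strategy is the same as the paper's: the ``only if'' direction via Lemma~\ref{mod2} plus shrinking $U$ by continuity is exactly the paper's argument and is fine, and in the ``if'' direction both you and the paper reduce to showing that near $(x,k,y)$ every point of $\Gamma^+_{\phi}(k,n)$ lies on the graph of $\eta$. But the one step you yourself flag as ``deserving genuine care'' is precisely the step you do not supply, and the justification you propose for it does not work as stated. The uniqueness clause of Lemma~\ref{mod2} compares the germs of \emph{two locally defined orientation-preserving homeomorphisms} that agree at a point and both intertwine the relevant powers of $\phi$; here you are handed an arbitrary point $(z,k,z')\in\Gamma^+_{\phi}(k,n)$ with $(z,z')$ close to $(x,y)$, and there is no second local homeomorphism through $(x,y)$ to compare germs with --- a priori $z'$ could be a different preimage of $\phi^{n+k}(z)$ under $\phi^n$ lying near $y$. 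The paper closes this by an explicit order argument: the matching valencies $\val(\phi^{n+k},z)=\val(\phi^{n},z')$ together with $\phi^{n+k}(z)=\phi^{n}(z')$ force $z\le x \Leftrightarrow z'\le y$, and since $\phi^{n}$ is injective on each side of $y$ in a small enough interval, $\phi^{n}(z')=\phi^{n}(\eta(z))$ then pins down $z'=\eta(z)$. Some argument of this kind is genuinely needed (the danger is exactly a second nearby branch of $(\phi^n)^{-1}$ when $y$ is critical for $\phi^n$), and your sketch does not contain it.

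A secondary, fixable slip: in the ``if'' direction you propose to ``enlarge $n$'', but $n$ is fixed by the statement, and the exponent $m$ witnessing $\eta\in\mathcal T_k(\phi)$ may exceed it. The correct repair --- and here germ uniqueness \emph{is} the right tool --- is to note that since $(x,k,y)\in\Gamma^+_{\phi}(k,n)$, Lemma~\ref{mod2} applied to $\phi^{n+k}$ and $\phi^{n}$ yields a germ through $(x,y)$ intertwining them, which must coincide with $[\eta]_x$ by uniqueness at level $\max(m,n)$; hence $\phi^{n+k}(z)=\phi^{n}(\eta(z))$ holds near $x$ for the given $n$, after which the order argument above applies.
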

\begin{proof} Assume first that $W \subseteq \Gamma^+_{\phi}(k,n)  $ is open in the relative topology
  inherited from $\mathbb T \times \mathbb Z
\times \mathbb T$ and consider a point $(x,k,y) \in W$. It follows from Lemma \ref{mod2} that there is
an element $\eta \in \mathcal T_{k}(\phi)$ such that $\eta(x) = y$ and
$\phi^{k+n}(z) = \phi^n(\eta(z))$ for all $z$ in a neighborhood $U$ of
$x$. By continuity of $\eta$ we can shrink $U$ to arrange that 
$(x,k,y) \in \Omega(\eta,U) \subseteq W$. This establishes one implication. To prove the other, let $\eta \in
\mathcal T_k(\phi)$ and let $U$ be an open subset of the domain of $\eta$. We must show that $\Omega(\eta, U)\cap
\Gamma^+_{\phi}(k,n)$ is open in the relative topology of $\Gamma^+_{\phi}(k,n)$ inherited from $\mathbb T \times \mathbb Z
\times \mathbb T$. Let $(x,k,y) \in \Omega(\eta, U)\cap
\Gamma^+_{\phi}(k,n)$. It follows from Lemma \ref{mod2} that
$\phi^{n+k}(z) = \phi^n\left(\eta(z)\right)$ for all $z$ sufficiently
close to $x$. If $(z,z')$ is sufficiently close to $(x,y)$ in $\mathbb T
  \times \mathbb T$ and $(z,k,z') \in \Gamma^+_{\phi}(k,n)$, the conditions $\val\left(\phi^{n+k},z\right) =
  \val\left(\phi^{n},z'\right)$ and $\phi^{n+k}(z) = \phi^{n}(z')$ imply that
$z\leq x \ \Leftrightarrow \ z' \leq y$. Combined with the fact that $\phi^{n+k}(z) =
\phi^{n}\left(\eta(z)\right) = \phi^{n}(z')$ we conclude from this
that $z'
=\eta(z)$ when $(z,z')$ is sufficiently close to $(x,y)$ and $(z,k,z')
\in \Gamma^+_{\phi}(k,n)$. That is, there is an open neighborhood $V$
of $(x,k,y)$ in $\mathbb T \times \mathbb Z \times \mathbb T$ such
that $V \cap \Gamma^+_{\phi}(k,n) \subseteq \Omega(\eta, U)\cap
\Gamma^+_{\phi}(k,n)$.

\end{proof}

In combination with Lemma \ref{mod2} it follows from Lemma \ref{top}
that $\Gamma^+_{\phi}(k,n)$ is an open subset of
  $\Gamma^+_{\phi}(k,n+1)$. Therefore
$$
\Gamma^+_{\phi}(k) = \bigcup_{n \geq -k+1} \Gamma^+_{\phi}(k,n)
$$
is locally compact and Hausdorff in the inductive limit topology,
and the disjoint union
$$
\Gamma^+_{\phi} = \bigsqcup_{k \in \mathbb Z} \Gamma^+_{\phi}(k) 
$$
is a locally compact Hausdorff in the topology where each $\Gamma^+_{\phi}(k)$ is
closed and open and has the topology just defined. By Lemma \ref{top}
this topology is identical with the one we obtain from Theorem
\ref{etale!}. Note that the topology is second countable since the
topology of $\mathbb T \times
\mathbb Z \times \mathbb T$ is. This proves the following

\begin{lemma}\label{etale} $\Gamma^+_{\phi}$ is a second countable locally
  compact Hausdorff \'etale groupoid.
\end{lemma}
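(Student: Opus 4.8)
The plan is to verify the three defining properties of an étale locally compact Hausdorff groupoid for $\Gamma^+_\phi$ by assembling the work already done in the lemmas of this section. First I would note that the identification $G_\phi(\mathcal P^+)\cong\Gamma^+_\phi$ established above, together with Lemma~\ref{mod2} (which guarantees that Assumption~\ref{crux2} holds, since germs of local transfers at a point are determined by the point), lets us invoke Theorem~\ref{etale!}: $\Gamma^+_\phi$, with the topology generated by the base sets $\Omega(\eta,U)$ of \eqref{basesets}, is an étale locally compact Hausdorff groupoid. So the only thing genuinely new in Lemma~\ref{etale} is the assertion of \emph{second countability}, which the $\Omega(\eta,U)$-description does not make transparent because $\mathcal P^+$ is uncountable.

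The key step, therefore, is to establish that the alternative description of the topology built up in Lemmas~\ref{closed} and~\ref{top} coincides with the original one and is manifestly second countable. Concretely I would proceed as in the paragraph preceding the statement: by Lemma~\ref{closed}, each $\Gamma^+_\phi(k,n)$ is locally compact Hausdorff in the relative topology from $\mathbb T\times\mathbb Z\times\mathbb T$; by Lemma~\ref{top} this relative topology has, around every point, a base set of the form $\Omega(\eta,U)$, so the relative topology agrees with the subspace topology induced from the $\Omega(\eta,U)$-topology on $\Gamma^+_\phi$. Using Lemma~\ref{mod2} and Lemma~\ref{top} once more one sees that $\Gamma^+_\phi(k,n)$ is open inside $\Gamma^+_\phi(k,n+1)$, so $\Gamma^+_\phi(k)=\bigcup_{n\ge -k+1}\Gamma^+_\phi(k,n)$ carries a well-defined inductive limit topology, and the disjoint union over $k\in\mathbb Z$ gives $\Gamma^+_\phi$. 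Since a base for this topology consists of sets $\Omega(\eta,U)\cap\Gamma^+_\phi(k,n)$ and these are relatively open subsets of the second countable spaces $\Gamma^+_\phi(k,n)\subseteq\mathbb T\times\mathbb Z\times\mathbb T$, a countable base can be extracted: take a countable base for each $\Gamma^+_\phi(k,n)$ and union over the countably many pairs $(k,n)$.

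Finally I would observe that this constructed topology is the same as the one coming from Theorem~\ref{etale!}: one inclusion is immediate because each $\Omega(\eta,U)\cap\Gamma^+_\phi(k,n)$ is open in $\Gamma^+_\phi(k,n)$ by Lemma~\ref{top}, and each $\Gamma^+_\phi(k,n)$ is open in $\Gamma^+_\phi(k)$, which is open in $\Gamma^+_\phi$; the reverse inclusion is the other half of Lemma~\ref{top}, namely that a relatively open $W\subseteq\Gamma^+_\phi(k,n)$ is a union of sets $\Omega(\eta,U)$. Having matched the topologies, all the structural properties from Theorem~\ref{etale!} transfer, and the additional second countability is what the local-compactness-plus-second-countability of $\mathbb T\times\mathbb Z\times\mathbb T$ supplies. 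The main obstacle — already dispatched by Lemmas~\ref{closed} and~\ref{top} — is precisely the comparison of the two topologies; once that is in hand the proof is just bookkeeping, so I would keep the argument to a short paragraph recording that the preceding discussion proves the lemma.
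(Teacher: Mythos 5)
Your proposal is correct and follows essentially the same route as the paper: the étale, locally compact Hausdorff structure is imported from Theorem~\ref{etale!} via the identification with $G_{\phi}\left(\mathcal P^+\right)$ and Lemma~\ref{mod2}, while second countability is extracted from the alternative description of the topology supplied by Lemma~\ref{closed} and Lemma~\ref{top} and the second countability of $\mathbb T \times \mathbb Z \times \mathbb T$. The paper's ``proof'' is precisely the paragraph of bookkeeping you describe, so nothing is missing.
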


We assume now and throughout the paper that $\phi : \mathbb T \to
\mathbb T$ is continuous and piecewise monotone. The
objective is to investigate the structure of
$C^*_r\left(\Gamma^+_{\phi}\right)$; in particular, when it is simple.

Let us first consider the case where $\phi$ is a local homeomorphism
so that the groupoid $\Gamma_{\phi}$ of Renault, Deaconu and
Anantharaman-Delaroche is defined, cf. \cite{Re}, \cite{De} and
\cite{An}. 

\begin{lemma}\label{ADR} Assume that $\phi$ is a local
  homeomorphism. If the degree of $\phi$ is positive, there is an
  isomorphism $\Gamma^+_{\phi} \simeq \Gamma_{\phi}$ of topological
  groupoids. If the degree of $\phi$ is negative there is an
  isomorphism $\Gamma^+_{\phi} \simeq \Gamma_{\phi^2}$ of topological
  groupoids. 
\end{lemma}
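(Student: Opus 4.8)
The plan is to unwind both sides of the claimed isomorphism into concrete descriptions and exhibit the map directly. Recall that when $\phi$ is a local homeomorphism the Renault--Deaconu--Anantharaman-Delaroche groupoid is
$$
\Gamma_{\phi} = \left\{ (x,k,y) \in \mathbb T \times \mathbb Z \times \mathbb T : \ \exists\, n,m \in \mathbb N,\ k = n-m,\ \phi^n(x) = \phi^m(y) \right\},
$$
with the topology having a base of sets $\{(x, n-m, y) : \phi^n(x) = \phi^m(y),\ x \in U,\ y \in V\}$ on which $\phi^n|_U$ and $\phi^m|_V$ are homeomorphisms onto a common open set with $\phi^n(x) = \phi^m(y)$ throughout. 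So $\Gamma^+_{\phi}$ and $\Gamma_{\phi}$ have literally the same underlying set-theoretic description except that $\Gamma^+_{\phi}$ carries the extra valency constraint $\val(\phi^n,x) = \val(\phi^m,y)$. The whole content of the lemma is therefore to understand what that valency constraint does when $\phi$ is a local homeomorphism. First I would record the elementary observation: since $\phi$ is a local homeomorphism it is in particular locally injective, hence piecewise \emph{strictly} monotone with no turning points, so $\val(\phi,t) \in \{(+,+),(-,-)\}$ for every $t$; by Lemma~\ref{compx} and the composition table, $\val(\phi^n,t) = (+,+)$ for all $t,n$ when $\deg\phi > 0$ and $\val(\phi^n,t) = ((-1)^n,(-1)^n)$ when $\deg \phi < 0$ (a local homeomorphism of the circle is either everywhere orientation preserving or everywhere orientation reversing, according to the sign of the degree).

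\textbf{Positive degree.} If $\deg\phi > 0$ then $\val(\phi^n,x) = \val(\phi^m,y) = (+,+)$ automatically for every $(x,k,y)$, so the valency condition in the definition of $\Gamma^+_{\phi}$ is vacuous and $\Gamma^+_{\phi} = \Gamma_{\phi}$ as sets. For the topologies: a basic set $\Omega(\eta,U)$ for $\Gamma^+_{\phi}$ with $\eta \in \mathcal T_k(\phi)$, $\phi^{n}|_U$ and $\phi^m|_{\eta(U)}$ realizing $\phi^n = \phi^m \circ \eta$ on $U$, is exactly a basic set for $\Gamma_{\phi}$ of the kind above (note $\eta$, being orientation preserving, is the \emph{unique} local homeomorphism conjugating $\phi^n$ to $\phi^m$ near $x$, which is why the two bases match); conversely every RDA basic set arises this way since the local inverse branches of $\phi^m$ composed with $\phi^n$ land in $\mathcal P^+$. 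Hence the identity map is the desired isomorphism of topological groupoids.

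\textbf{Negative degree.} Here $\val(\phi^n,x) = ((-1)^n,(-1)^n)$, so the condition $\val(\phi^n,x) = \val(\phi^m,y)$ forces $n \equiv m \pmod 2$, i.e. $k = n-m$ is even, and \emph{conversely} when $k$ is even the valency condition is automatic. Thus $\Gamma^+_{\phi} = \{(x,k,y) : k \text{ even},\ \exists n,m,\ n-m=k,\ n\equiv m\ (2),\ \phi^n(x)=\phi^m(y)\}$. I claim this is precisely $\Gamma_{\phi^2}$: writing $n = 2a$, $m = 2b$ gives $(\phi^2)^a(x) = (\phi^2)^b(y)$ with $k = 2(a-b)$, and the reparametrization $(x,k,y) \mapsto (x,k/2,y)$ is a bijection onto $\Gamma_{\phi^2}$; one checks it is a groupoid homomorphism (it halves the $\mathbb Z$-coordinate, which is additive). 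The point needing care is that \emph{every} pair $n \equiv m \pmod 2$ with $\phi^n(x) = \phi^m(y)$ can be shifted to a pair of even exponents with the same equality --- apply $\phi^2$ to both sides if $n,m$ are odd --- so no relations are lost. For the topology, a basic set of $\Gamma^+_{\phi}$ built from $\phi^{2a}|_U = \phi^{2b}\circ\eta$ corresponds under $(x,k,y)\mapsto(x,k/2,y)$ to the RDA basic set of $\Gamma_{\phi^2}$ built from $(\phi^2)^a|_U$, $(\phi^2)^b|_{\eta(U)}$, and vice versa, using again the uniqueness of the orientation-preserving conjugating branch (both $\phi^{2a}$ and $(\phi^2)^a$ are orientation preserving); so the reparametrization is a homeomorphism.

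\textbf{Main obstacle.} The routine parts are the bijection and the groupoid-law check; the step that genuinely requires the hypotheses is matching the \emph{topologies}, and specifically verifying that the valency constraint defining $\mathcal P^+$ does not shrink the base of $\Gamma^+_{\phi}$ below the RDA base on the appropriate power --- equivalently, that demanding the conjugating germ be orientation preserving is no loss because, for a local homeomorphism, the composite $\phi^n$ (resp.\ $(\phi^2)^a$) is itself orientation preserving, so its local inverse branches already lie in $\mathcal P^+$. I would isolate this as the one non-formal point and prove it by the valency computation above together with Lemma~\ref{mod2} (uniqueness of the germ). Everything else is bookkeeping about even versus odd exponents.
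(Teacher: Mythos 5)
Your proposal is correct and follows exactly the route of the paper's own (one-line) proof, which rests on precisely the observation you isolate: $\val(\phi,x)=(+,+)$ for all $x$ when $\deg\phi>0$ and $\val(\phi,x)=(-,-)$ for all $x$ when $\deg\phi<0$, so that the valency constraint is vacuous in the first case and forces $n\equiv m\pmod 2$ in the second. One trivial slip: to shift a pair of odd exponents $n,m$ with $\phi^n(x)=\phi^m(y)$ to even ones you should apply $\phi$ once (giving $n+1,m+1$), not $\phi^2$, which preserves parity.
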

\begin{proof} This follows straightforwardly from the observation that
  $\val\left(\phi,x\right) = (+,+)$ for all $x \in \mathbb T$ when the
  degree is positive and $\val\left(\phi,x\right) = (-,-)$ for all $x
  \in \mathbb T$ when the degree is negative.
\end{proof} 

It follows from Lemma \ref{ADR} that for a local homeomorphism $\phi$
we have the equality
$C^*_r\left(\Gamma^+_{\phi}\right) = C^*_r\left(\Gamma_{\phi}\right)$
when the degree of $\phi$ is positive, and 
$C^*_r\left(\Gamma^+_{\phi}\right) =
C^*_r\left(\Gamma_{\phi^2}\right)$ when it is negative. The simple
$C^*$-algebras of the form $C^*_r\left(\Gamma_{\psi}\right)$ for a surjective,
local homeomorphism $\psi$ of the circle were all described in \cite{AT} and
we will therefore here assume that $\phi$ is not locally
injective. Thus, for the remaining part of the paper $\phi$ will be
continuous, piecewise monotone, surjective and not locally injective.

%Since it will sometimes be
%crucial for us to assume that $\phi$ is not locally injective and
%hence not a local homeomorphism, let us first point out the following.
%\begin{lemma}\label{lochomeo} Assume that $\phi$ is a local
%  homeomorphism. Then $\Gamma^+_{\phi}$ is equal to
%  the RDA-groupoid of $\phi$ when the degree of $\phi$ is positive,
%  and equal to the RDA-groupopid of $\phi^2$ when $\phi$ has negative
%  degree.
%\end{lemma}
%\begin{proof} This follows by observing that
%  $\val\left(\phi^k,x\right) = (
%Assume hat
%\end{proof}

%\begin{prop}\label{mesland2} Assume that $\phi$ is not locally
%  injective. There is an automorphism $\alpha$ on
%  $C^*_r\left(R^+_{\phi}\right) \otimes \mathbb K $ such that
%$$
%C^*_r\left(\Gamma^+_{\phi}\right) \otimes \mathbb K \simeq \left(
%  C^*_r\left(R^+_{\phi}\right) \otimes \mathbb K\right)
%\rtimes_{\alpha} \mathbb Z .
%$$
%\end{prop}
%\begin{proof} This follows from Proposition \ref{mesland} in Appendix
%  \ref{appendix}. Indeed, since $R^+_{\phi}$ is an equivalence
%  relation the isotropy bundle is trivial, and hence $\ker c =
%  R^+_{\phi}$ has discrete isotropy for trivial reasons. It
%  suffices therefore to show that $V_1 \neq 0$. Since $\phi$ is not locally
%  injective there is an element $x \in \mathbb T$ such that
%  $\val(\phi, x) = (+,+)$. Then $(x,1,\phi(x))
%  \in \Gamma^+_{
%\phi}(1,1)$. Let $f \in C_c(G)$ be supported in
%  $\Gamma^+_{\phi}(1,1)$ such that $f(x,1,\phi(x)) \neq 0$. Then $0
%  \neq f
%  \in V_1 $.
%\end{proof}

%The argument in the preceding proof works equally well when $\phi$ is
%locally injective and of positive degree. 

\section{Transitivity implies piecewise linearity and pure
  infiniteness} %of $C^*_r\left( \Gamma^+_{\phi}\right)$}

Let $s > 0$. A continuous function $g : [0,1] \to \mathbb R$ is \emph{uniformly
  piecewise linear with slope $s$} when there are points $0 = a_0 < a_1 <
a_2 < \dots < a_N = 1$ such that $g$ is linear with slope $\pm s$ on
each interval $\left[a_{i-1},a_i\right], i =1,2, \dots, N$. We say
that $\phi$ is uniformly piecewise linear with slope $s$ when its lift
$f : [0,1] \to \mathbb R$ is.
%For any continuous map $\phi : \mathbb T \to \mathbb T$ there is a
%unique continuous map $\tilde{\phi} : [0,1] \to \mathbb R$ such that
%$\tilde{\phi}(0) \in [0,1[$ and 
%$$
%\phi\left(e^{2 \pi i t} \right) = e^{2 \pi i \tilde{\phi}(t)}
%$$
%for all $t \in [0,1]$. We will say that $\phi$ is \emph{uniformly
%  piecewise linear with slope $s$} when $\tilde{\phi}$ is.

Recall that a continuous map $h : X \to X$ on a compact metric space $X$ is \emph{transitive} when for each pair $U,V$ of non-empty open sets in $X$ there is an $n \in \mathbb N$ such that $h^n(U) \cap V \neq \emptyset$. When $h$ is surjective, as in the case we consider, transitivity is equivalent to the existence of a point with dense forward orbit.

\begin{thm}\label{11} Assume that $\phi$ is transitive. It follows that
  there is an orientation-preserving homeomorphism $h : \mathbb T \to
  \mathbb T$ such that $h \circ \phi \circ h^{-1}$ is uniformly piecewise
  linear with slope $s > 1$.
\end{thm}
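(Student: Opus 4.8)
The plan is to combine known structure theory for transitive piecewise monotone maps of the interval/circle (due to Parry, Milnor--Thurston, and Shultz) with a careful treatment of the degree and orientation. The starting observation is that transitivity forces a strong expansion property, which is exactly what is needed to produce a \emph{constant-slope model}. Concretely, I would first dispose of the degree: since $\phi$ is surjective, piecewise monotone, transitive and \emph{not locally injective} (our standing assumption), the degree $d$ of $\phi$ cannot be $0$ (a degree-zero piecewise monotone map is not transitive, as its image under iteration concentrates), and we may assume after possibly replacing the counter-clockwise orientation argument bookkeeping that $|d| \geq 1$; the case $|d| = 1$ together with non-injectivity is the genuinely interesting one, while $|d| \geq 2$ is handled the same way. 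In all cases one checks that $\phi$ has positive topological entropy $h_{\mathrm{top}}(\phi) = \log s$ for some $s > 1$: transitivity of a piecewise monotone map that is not a homeomorphism forces a horseshoe-type return map, hence $s = e^{h_{\mathrm{top}}(\phi)} > 1$.

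Next I would invoke the conjugacy-to-constant-slope result. For piecewise monotone interval maps this is the classical Parry/Milnor--Thurston theorem: a transitive piecewise monotone map is topologically conjugate, via an orientation-preserving homeomorphism, to a piecewise linear map all of whose slopes equal $\pm s$ in absolute value, where $s = e^{h_{\mathrm{top}}}$. The reference flagged in the introduction is Shultz's paper \cite{S}, which supplies exactly the circle version we need (and which the authors explicitly say they use for this purpose). So the bulk of the argument is: cite Shultz to get a homeomorphism $h$ conjugating $\phi$ to a uniformly piecewise linear map, then argue the slope is $s = e^{h_{\mathrm{top}}(\phi)} > 1$ using the positivity of entropy from the previous step, and finally verify $h$ can be chosen orientation-preserving (the constant-slope model is unique up to an affine change of coordinates, and the conjugacy produced by the Markov-partition / measure-theoretic construction is automatically monotone in the cyclic sense, hence orientation-preserving on $\mathbb{T}$).

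I would structure the write-up as three short steps: (1) \emph{Entropy is positive}: using that $\phi$ is piecewise monotone, surjective, transitive and not locally injective, exhibit a subinterval $I$ and an iterate $\phi^n$ mapping two disjoint subintervals of $I$ onto $I$, giving $h_{\mathrm{top}}(\phi) \geq \tfrac1n \log 2 > 0$; set $s := e^{h_{\mathrm{top}}(\phi)} > 1$. (2) \emph{Constant-slope model}: apply Shultz's theorem \cite{S} to obtain an orientation-preserving homeomorphism $h : \mathbb{T} \to \mathbb{T}$ such that $h \circ \phi \circ h^{-1}$ is uniformly piecewise linear; by the entropy-is-a-conjugacy-invariant property and the fact that a uniformly piecewise linear circle map of slope $\sigma$ has topological entropy $\max\{0,\log \sigma\}$ (it is eventually expanding), the slope must be $s = e^{h_{\mathrm{top}}(\phi)} > 1$. (3) \emph{Orientation}: note that if the conjugating homeomorphism furnished by \cite{S} happened to be orientation-reversing, composing with $z \mapsto \bar z$ repairs this and keeps the model uniformly piecewise linear with the same slope.

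The main obstacle — really the only nontrivial point — is locating the exact statement we need inside \cite{S} and checking its hypotheses match ours: Shultz works with piecewise monotone circle maps, and we must confirm that his transitivity hypothesis (and whatever nondegeneracy he imposes, e.g. no ``flat spots,'' which is automatic here since our pieces are strictly monotone) coincides with ours, and that his conjugacy is constructed so as to be orientation-compatible. A secondary subtlety is that on the circle the constant-slope representative need not be unique as an affine conjugate the way it is on the interval (the rotation number / degree must be tracked), so one should phrase step (2) as: \emph{some} orientation-preserving conjugate is uniformly piecewise linear, and then pin down the slope via entropy rather than via uniqueness. Everything else is routine once these citations are in place.
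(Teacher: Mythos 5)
Your overall strategy --- reduce to Shultz's constant-slope theorem and use the failure of local injectivity to force the slope above $1$ --- is the same as the paper's, but there are two genuine problems. First, your opening claim that a degree-zero piecewise monotone circle map cannot be transitive is false: the paper devotes a whole subsection to transitive maps with $\deg \phi = 0$ and even exhibits one by its graph (the tent-like lift in the $\deg\phi=0$ subsection). Fortunately nothing in the rest of your argument actually uses $|d|\geq 1$, so this is a removable error, but as written it is a wrong assertion.

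Second, and more seriously, the step you flag as ``really the only nontrivial point'' rests on a false premise: Shultz's paper \cite{S} is about piecewise monotone maps of the \emph{interval} (it is titled ``Dimension groups for interval maps II''), not of the circle, and his maps are allowed to be discontinuous. There is no off-the-shelf circle version to cite. The actual content of the paper's proof is precisely the reduction you omit: one first conjugates by a rotation so that the cut point $1$ is neither fixed nor a critical value, then cuts the circle to obtain a \emph{discontinuous} interval map $\tau$, and then verifies that $\tau$ is transitive in Shultz's sense --- which involves the multivalued extension $\widehat{\tau}$ recording left and right limits at the discontinuity, and uses strong transitivity of $\phi$ (Corollary 4.2 of \cite{Y}) to get $\bigcup_{i\leq n}\widehat{\tau}^i(U)=[0,1]$. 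Only then do Propositions 3.6 and 4.3 of \cite{S} apply, and one must still check that the resulting conjugacy $h$ of $[0,1]$ is increasing and fixes the endpoints compatibly ($f(0)=f(1)$) so that it descends to an orientation-preserving homeomorphism of $\mathbb T$. Your entropy-based identification of the slope is a legitimate alternative to the paper's direct appeal to ``not essentially injective'' in Proposition 4.3 of \cite{S}, and your $z\mapsto \bar z$ fix for orientation would work if needed (the paper instead reads off from the proof of Proposition 3.6 in \cite{S} that $h$ is increasing), but without the circle-to-interval reduction the proof is not complete.
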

\begin{proof} We will show how the theorem follows from the work of
  Shultz in \cite{S} on discontinuous piecewise monotone maps
  of the interval.

After conjugation by a rotation of the circle we can assume that
$\phi(1) \neq 1$ and $1 \notin \phi(\mathcal C_1)$. (Indeed, since $\phi$
is piecewise monotone and transitive there are $\lambda$'s in $\mathbb T$ arbitrary close to
$1$ such that $\phi(\lambda) \neq \lambda$. Choose one of them such
that $\lambda \notin \phi(\mathcal C_1)$. Then $\phi_1(t) = \lambda^{-1}
\phi(\lambda t)$ is conjugate to $\phi$, does not fix $1$ and all its
critical values are different from $1$.) 
Let $\mu : \mathbb T \to [0,1[$ be the inverse map of $[0,1[ \ni t \mapsto e^{2
  \pi i t}$. Then
$$
\tau(t) = \mu \circ \phi\left(e^{2 \pi i t}\right)
$$
is piecewise monotone in the sense of Shultz \cite{S}. Since $\phi$ is surjective and $1 \notin \phi\left(\mathcal C_1 \cup \{1\}\right)$, it follows that
$\tau$ is discontinuous at a point in $]0,1[$ and $\tau\left([0,1]\right) = [0,1[$. We claim that
$\tau$ is transitive in the sense of Definition 2.6 in
\cite{S}; that is, we claim that for every open non-empty subset $U
\subseteq [0,1]$ there is an $n \in \mathbb N$ such that 
\begin{equation}\label{union7}
\bigcup_{i =0}^n \widehat{\tau}^k(U) = [0,1] 
\end{equation}
Here $\widehat{\tau}$ is the possibly multivalued map on $[0,1]$
which associates to each $x \in [0,1]$ the left and right hand limits
of $\tau$ at $x$. By construction this union is either $\{\tau(x)\}$ or
$\{1, 0\}$. In the latter case $0 = \tau(x)$. It follows therefore
that $\widehat{\tau}(A) \backslash \{1\}  = \tau(A)$ for every subset
$A \subseteq [0,1]$. Thus
$$
\widehat{\tau}^k(U) \supseteq \tau^k(U) 
$$
for all $k$. The strong transitivity of $\phi$ implies that
$\bigcup_{i=0}^{n-1} \tau^k(U) = [0,1[$ for some $n \in \mathbb N$. As observed above $\tau$ is
discontinuous at a point in $]0,1[$. It follows therefore that
$1 \in \widehat{\tau}\left([0,1[\right)$ and hence that (\ref{union7})
holds since
$$
\bigcup_{i=0}^n \widehat{\tau}^i(U) \supseteq \widehat{\tau} \left(
  \bigcup_{i=0}^{n-1} \widehat{\tau}^i(U)\right) \supseteq \widehat{\tau} \left(
  \bigcup_{i=0}^{n-1} \tau^i(U)\right) = \widehat{\tau}([0,1[) = [0,1] .
$$   

It follows now from Propositions 4.3 and 3.6 in \cite{S} that there is
a homeomorphism $h : [0,1] \to [0,1]$ such that $f = h \circ \tau
\circ h^{-1}$ is uniformly piecewise linear. From the proof of
Proposition 3.6 in \cite{S} we see that $h$ is increasing. Since
$\phi$ is not locally injective %and $1 \notin \phi(\mathcal C_1\cup \{1\})$ 
there
are non-empty open intervals $I,I' \subseteq \mathbb T \backslash
\{1\}$ such that $I \cap I' = \emptyset$ and $\phi(I) = \phi(I')$. Then $J = \mu(I)$ and $J'=
\mu(I')$ are non-empty open intervals in $[0,1[$ such that $J \cap J'
= \emptyset$ and $\tau(J) =
\tau(J')$, i.e. $\tau$ is not essentially injective in the sense of
Definition 4.1 of \cite{S}. Hence the slope $s$ of the linear pieces of
$f$ is $> 1$ by Proposition 4.3 of \cite{S}. 

Since $h(0) = 0, h(1) = 1$ and $\tau(0) =  \tau(1)$ we find that $f(0)
= f(1)$ and we can therefore define $\varphi : \mathbb T \to \mathbb
T$ such that $\varphi\left(e^{2 \pi it}\right) = e^{2 \pi i f(t)}, t
\in [0,1]$. Then $\varphi = g \circ \phi \circ g^{-1}$ where $g =
\mu^{-1} \circ h \circ \mu$. Then $g(1) = 1 = \lim_{\lambda \to1} g(\lambda)$.
Hence $g$ is continuous and an orientation preserving homeomorphism on $\mathbb T$. It follows
that $\varphi$ is a continuous map on $\mathbb T$ and conjugate to
$\phi$. By construction $\varphi$ is uniformly piecewise linear with
slope $s > 1$.
\end{proof}

We say that a $p$-periodic point $x  \in \mathbb T$ is \emph{repelling}
when there is an open interval $I$ in $\mathbb T$ and a $r > 1$ such
that $x \in I$ and $\left|\phi^p(y) - x \right| \geq r \left|y-x\right|$
for all $y \in I$.

\begin{lemma}\label{15} Assume that $\phi$ is transitive and uniformly piecewise linear with slope $s > 1$. Then the periodic points of $\phi$ are dense in $\mathbb
  T$ and they are all repelling. 
\end{lemma}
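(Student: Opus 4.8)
The plan is to exploit the rigidity of uniform piecewise linearity: on each linear piece the map $\phi$ (or an iterate) expands distances by a factor $s^k>1$, so any periodic point that lies in the interior of a linear piece of the relevant iterate is automatically repelling, and density of periodic points will follow from transitivity plus expansion by a standard ``shadowing/covering'' argument. First I would record the basic consequence of uniform piecewise linearity with slope $s$: for each $k\ge1$, the lift of $\phi^k$ is piecewise linear with slopes $\pm s^k$, hence there is a finite partition $0=a_0<a_1<\dots<a_M=1$ of the circle into closed intervals on each of which $\phi^k$ is an affine homeomorphism onto its image with $|\phi^k(y)-\phi^k(z)|=s^k|y-z|$ for $y,z$ in the same interval. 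In particular, if $x$ is a $p$-periodic point lying in the \emph{interior} of one of the pieces of $\phi^p$, then choosing $I$ to be that piece and $r=s^p>1$ gives $|\phi^p(y)-x|=|\phi^p(y)-\phi^p(x)|=s^p|y-x|$ for $y\in I$, so $x$ is repelling. The only periodic points that could fail to be interior to a linear piece of $\phi^p$ are the finitely many ``break points'' of $\phi^p$, i.e. the points whose forward orbit hits the critical set $\mathcal C_1$; I would handle these separately, below.

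Next I would prove density of periodic points. Fix a non-empty open interval $U\subseteq\mathbb T$. Since $\phi$ is transitive and uniformly expanding, iterating $\phi$ on $U$ eventually produces an interval of length $\ge 1/s$ (each application of $\phi$ to a subinterval contained in one linear piece multiplies its length by $s$, and transitivity prevents the forward images from being confined to ever-smaller pieces); more precisely, by Theorem \ref{11}'s transitivity there is an $n$ with $\bigcup_{i=0}^{n}\phi^i(U)=\mathbb T$, and combining this with the expansion one finds an $m$ and a subinterval $U'\subseteq U$ such that $\phi^m$ maps $U'$ affinely and homeomorphically \emph{onto all of} $\mathbb T$, hence in particular onto a closed interval containing $\overline{U'}$. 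An affine expanding self-covering of this form has a fixed point in $\overline{U'}$: the contraction $(\phi^m|_{U'})^{-1}$ maps $\overline{U'}$ into itself, so by the Banach fixed point theorem (or the intermediate value theorem applied to $\phi^m(t)-t$) there is $t_0\in\overline{U'}\subseteq \overline U$ with $\phi^m(t_0)=t_0$. Shrinking $U$ first so that $\overline U\subseteq U_0$ for a slightly larger open set, we get a periodic point in $U$, proving density.

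Finally I would deal with the exceptional periodic points, those $p$-periodic $x$ whose forward orbit meets $\mathcal C_1$, so that $x$ is a break point of the lift of $\phi^p$ and the argument of the first paragraph does not directly apply. Here I would use that $x$ is \emph{not} a critical point of $\phi^p$ can fail, but in any case one can still estimate: writing $\val(\phi^p,x)$, on at least one of the two one-sided neighbourhoods of $x$, say to the right, $\phi^p$ is affine with slope $\pm s^p$ and $\phi^p(x)=x$; on a short interval $[x,x+\delta]$ one gets $|\phi^p(y)-x|=s^p|y-x|$. Iterating, $\phi^{2p}$ doubles the one-sided expansion, and since the orbit of $x$ under $\phi^p$ is finite, after at most $p$ further iterates the image of such a one-sided neighbourhood covers a full (two-sided) neighbourhood of some orbit point, at which stage the pure expansion estimate on a genuine interval applies; transporting back via the affine pieces shows $x$ itself satisfies the repelling inequality on a (possibly smaller, possibly one-sided which we then symmetrize using the other side in the same way) interval $I\ni x$ with ratio a power of $s$. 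The main obstacle is precisely this last point: making the one-sided-to-two-sided argument for break-point periodic orbits clean, so I expect the bulk of the written proof to be spent there, whereas density and the interior case are routine.
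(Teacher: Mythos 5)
Your handling of the repelling property is correct in substance but inverted in emphasis: the ``break point'' case that you identify as the main obstacle, and where you expect the bulk of the proof to live, is a non-issue. The paper's definition of repelling only asks for an open interval $I\ni x$ and an $r>1$ with $\left|\phi^p(y)-x\right|\geq r\left|y-x\right|$ for $y\in I$; it does not require $\phi^p$ to be monotone or injective on $I$. Since $\phi^p$ is uniformly piecewise linear with slope $s^p$, on each side of $x$ separately $\phi^p$ is affine with slope $\pm s^p$ for $y$ sufficiently close to $x$, so $\left|\phi^p(y)-x\right|=s^p\left|y-x\right|$ (up to the harmless chord-versus-arc correction) on a small two-sided neighbourhood of $x$, whether or not $x$ is a break point or even a critical point of $\phi^p$. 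Your one-sided-to-two-sided transport argument is unnecessary; this half of the lemma is the one-line part, and it is exactly how the paper dispatches it.

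The genuine gap is in the density argument. First, $\bigcup_{i=0}^{n}\phi^i(U)=\mathbb T$ is \emph{strong} transitivity, not transitivity: it does hold here, but via Corollary 4.2 of \cite{Y} (which the paper invokes in the proof of Lemma \ref{4a}), not as a formal consequence of the definition — the sets $\phi^i(U)$ need not be open (the image of a neighbourhood of a local extremum is half-open), so density of the forward images plus compactness does not yield a finite cover. Second, and more seriously, the step ``combining this with the expansion one finds an $m$ and a subinterval $U'\subseteq U$ such that $\phi^m$ maps $U'$ affinely onto a closed interval containing $\overline{U'}$'' is where all of the mathematical content sits, and it does not follow from what precedes it: knowing that the finitely many branch images of $\phi^m|_U$ jointly cover $\mathbb T$, or that some image re-enters $U$, does not produce a \emph{single} monotone branch whose image contains its own domain — the returning image may only clip the edge of $U$, and a subinterval covered by a union of several branch images need not be covered by any one of them. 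That self-covering-branch statement is essentially equivalent to density of periodic points and needs its own proof. The paper avoids the issue entirely by quoting Theorem 5.9 of \cite{W}, Corollary 2 of \cite{AK} and Corollary 3.4 of \cite{CM}; a self-contained route would have to supply the missing branch analysis (or first reduce to the exact case via Lemma \ref{nottot} and Lemma \ref{schultz22} and work there).
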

\begin{proof} Since $\phi$ is transitive there is a point in
  $\mathbb T$ with dense forward orbit, cf. Theorem 5.9 in
  \cite{W}. It follows therefore from Corollary 2 in \cite{AK} that
  $\phi$ has periodic points, and then by Corollary 3.4
  in \cite{CM} that the periodic points are dense. For each $n \in
  \mathbb N$ the map $\phi^n$ is uniformly piecewise linear with slope $s^n >
  1$. Therefore all periodic points of
$\phi$ are repelling.
\end{proof}

\begin{cor}\label{16} Assume that $\phi$ is transitive. It follows that $\Gamma_{\phi}^+$ is locally contractive
  in the sense of \cite{An}.
\end{cor}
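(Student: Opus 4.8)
The plan is to invoke the previously established structural results and reduce the corollary to exhibiting, near some point, an open bisection whose source contains its range with uniform contraction of the metric. Recall that a groupoid $G$ is locally contractive in the sense of \cite{An} if for every nonempty open subset $U$ of $G^{(0)}$ there is an open subset $V \subseteq U$ and an open bisection $S \subseteq G$ with $\overline{V} \subseteq s(S)$, $r(S|_{\overline{V}}) \subseteq V$ and $r(S|_{\overline{V}}) \subsetneq \overline{V}$, together with a contraction estimate; more precisely one wants an open bisection $S$ with $r(S) \subseteq V \subseteq \overline{V} \subseteq s(S)$ so that the associated partial homeomorphism $r \circ (s|_S)^{-1}$ maps $\overline{V}$ into $V$ and is a proper contraction there. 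So the first step is to recall the precise definition from \cite{An} and observe that it suffices to produce such a bisection inside an arbitrarily small neighborhood of one well-chosen point.

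The key observation is that Theorem \ref{11} lets us replace $\phi$ by a conjugate which is uniformly piecewise linear with slope $s>1$, and conjugation by a homeomorphism of $\mathbb{T}$ induces an isomorphism of the étale groupoids $\Gamma^+_\phi$, so local contractiveness is a conjugacy invariant. Thus I would assume from the start that $\phi$ itself is uniformly piecewise linear with slope $s>1$. Then by Lemma \ref{15} the repelling periodic points are dense, so given any nonempty open $U \subseteq \mathbb{T} = {\Gamma^+_\phi}^{(0)}$ I can pick a repelling $p$-periodic point $x$ lying in $U$, together with an open interval $I \ni x$ and $r>1$ with $|\phi^p(y) - x| \ge r|y-x|$ for all $y \in I$ (using that $\phi^p$ is uniformly piecewise linear with slope $s^p>1$, so in fact $|\phi^p(y)-\phi^p(x)| = s^p|y-x|$ on a sufficiently small interval around $x$ and $\phi^p(x)=x$).

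The core of the argument is to build the bisection. Near $x$ the map $\phi^p$ is an orientation-preserving homeomorphism onto its image (shrinking $I$ so it lies in a single monotone piece and the slope is $+s^p$ there); its local inverse $\sigma : \phi^p(I) \to I$ is then an orientation-preserving local homeomorphism with $\sigma(x)=x$ and $|\sigma(w)-x| = s^{-p}|w-x|$, a genuine contraction. Since $\phi^p(\sigma(w)) = w$ for $w$ near $x$, we have $\sigma \in \mathcal{T}_{-p}(\phi)$ (take $n=0$, $m=p$ in \eqref{crux0}, or rather $n$ and $n+p$), hence $\sigma$ is a local transfer and $\Omega(\sigma, W) = \{(w, -p, \sigma(w)) : w \in W\}$ is an open bisection in $\Gamma^+_\phi$ for $W$ a small open interval about $x$ inside $\phi^p(I) \cap U$. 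Choosing $W$ small enough that $s^{-p}\cdot(\text{length of }W) $ together with $\sigma(x)=x$ forces $\sigma(\overline{W}) \subseteq W$, and noting $s(\Omega(\sigma,W)) = W$ while $r(\Omega(\sigma,W)) = \sigma(W)$, we get exactly the data required: an open bisection whose source is a neighborhood $V = W \subseteq U$, whose range map is the contraction $\sigma$ fixing $x$, with $\sigma(\overline{V}) \subsetneq V$.

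The main obstacle I anticipate is purely bookkeeping: matching the exact formulation of "locally contractive" in \cite{An} (which phrases the contraction in terms of a compact subset and a bisection, and requires the range to sit strictly inside a subset of $U$ with a uniform metric estimate), and checking carefully that $\sigma$, defined only locally as the inverse of a monotone branch of $\phi^p$, does lie in the pseudo-group $\mathcal{P}^+$ and in $\mathcal{T}_{-p}(\phi)$, so that $\Omega(\sigma,W)$ is legitimately an open set of $\Gamma^+_\phi$ via \eqref{basesets}. Once the definitions are aligned, the estimate is immediate from uniform piecewise linearity: the branch inverse contracts distances to $x$ by the fixed factor $s^{-p} < 1$, which is more than enough.
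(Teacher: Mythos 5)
Your overall strategy is exactly the paper's: reduce via Theorem \ref{11} to the uniformly piecewise linear case, use Lemma \ref{15} to find a repelling periodic point in $U$, and build a contracting open bisection from the local inverse of a branch of an iterate of $\phi$. However, there is a genuine gap at the step where you assert that, after shrinking $I$, the map $\phi^p$ is an orientation-preserving homeomorphism near $x$ ``with slope $+s^p$.'' Two things can go wrong. First, the orbit of $x$ may contain a critical point, in which case $\phi^p$ is not injective on any two-sided neighborhood of $x$ and no two-sided monotone branch (hence no local inverse $\sigma$ defined on an open interval around $x$) exists; the paper rules this out by noting that only finitely many periodic orbits meet $\mathcal C_1$, so one can choose the repelling periodic point with non-critical orbit. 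Second, even when $\phi^p$ is monotone near $x$ it may be \emph{decreasing} there, i.e.\ $\val(\phi^p,x)=(-,-)$; then the branch inverse $\sigma$ is orientation-reversing, so $\sigma\notin\mathcal P^+$ and $\Omega(\sigma,W)$ is not a subset of $\Gamma^+_{\phi}$ at all --- this is precisely where the restriction to the orientation-preserving pseudo-group bites, and it is not bookkeeping. The paper's fix is to pass to $\phi^{2n}$ (with $n$ the period), since $\val(\phi^{2n},x)=\val(\phi^n,x)\bullet\val(\phi^n,x)=(+,+)$ automatically; the inverse branch of $\phi^{2n}$ is then orientation-preserving, lies in $\mathcal T_{-2n}(\phi)$, and your contraction estimate goes through with $s^{-2n}$ in place of $s^{-p}$. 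With these two adjustments your argument is complete and coincides with the paper's (which delegates the final bisection construction to Proposition 4.1 of \cite{Th4}).
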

\begin{proof} By Theorem \ref{11} we may assume that $\phi$ is
  piecewise linear with slope $s > 1$. Let $U$ be an open non-empty
  subset of $\mathbb T$. By Lemma \ref{15} there is in $U$ a point
  which is periodic and repelling. Since there are only finitely many
  critical points there are also only finitely many periodic orbits
  which contain a critical point. Hence $U$ contains a periodic point $x$,
  say of period $n$, which is repelling and whose orbit does not
  contain a critical point. Then $\val\left(\phi^{2n},x\right) =
  (+,+)$ and there is therefore an open neighborhood $W \subseteq U$ of $x$ and a
  $\kappa > 1$ such
  that $\val\left( \phi^{2n}, y\right) = (+,+)$ and $\left|\phi^{2n}(y) -x\right|
  \geq \kappa \left|y-x\right|$ for all $y \in W$. The proof is then
  completed exactly as the proof of Proposition 4.1 in \cite{Th4}.
\end{proof}

 \begin{lemma}\label{18} Assume that $\phi$ is transitive. It follows that $\Gamma^+_{\phi}$ is essentially free in
  the sense of \cite{An}, i.e. the points in $\mathbb T$ with trivial
  isotropy group in $\Gamma^+_{\phi}$ are dense in $\mathbb T$.
\end{lemma}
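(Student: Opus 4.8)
The plan is to reduce to the uniformly piecewise linear case via Theorem \ref{11} and then use expansion to make the periodic set countable. Since $\phi$ is transitive, Theorem \ref{11} gives an orientation-preserving homeomorphism $h$ of $\mathbb T$ such that $\varphi = h \circ \phi \circ h^{-1}$ is uniformly piecewise linear with slope $s > 1$. First I would check that the bijection $\Gamma^+_\phi \to \Gamma^+_\varphi$, $(x,k,y) \mapsto (h(x),k,h(y))$, is an isomorphism of topological groupoids. Indeed $\varphi^n = h \circ \phi^n \circ h^{-1}$, so $\phi^n(x) = \phi^m(y)$ is equivalent to $\varphi^n(h(x)) = \varphi^m(h(y))$; and since $h$, $h^{-1}$ are orientation preserving we have $\val(h,\cdot) = \val(h^{-1},\cdot) = (+,+)$, which is the neutral element of $\mathcal V$, so Lemma \ref{compx} gives $\val(\varphi^n, h(x)) = \val(\phi^n,x)$. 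Hence $x \overset{k}{\sim} y$ for $\phi$ iff $h(x) \overset{k}{\sim} h(y)$ for $\varphi$. This map carries the isotropy group at $x$ onto the isotropy group at $h(x)$ and, being implemented by a homeomorphism of $\mathbb T$, carries dense subsets of the unit space onto dense subsets. So it suffices to prove the lemma for $\varphi$; that is, we may assume $\phi$ is uniformly piecewise linear with slope $s > 1$.

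Next I would show that the set of eventually periodic points $E = \bigcup_{q \geq 0} \phi^{-q}(\Per(\phi))$ (with $\phi^0 = \id$, so that $\Per(\phi) \subseteq E$) is countable. For each $p \geq 1$ the map $\phi^p$ is uniformly piecewise linear with slope $\pm s^p$; since $s^p \neq 1$, the equation $\phi^p(x) = x$ has at most one solution on each of the finitely many linear pieces, so $\{x : \phi^p(x) = x\}$ is finite and $\Per(\phi)$ is countable. As each $\phi^q$ is piecewise monotone, hence finite-to-one, $\phi^{-q}$ of a point is finite, so $\phi^{-q}(\Per(\phi))$ is countable for every $q$; thus $E$ is countable and $\mathbb T \setminus E$ is dense in $\mathbb T$.

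It then remains to observe that every $x \in \mathbb T \setminus E$ has trivial isotropy in $\Gamma^+_\phi$. If $(x,k,x) \in \Gamma^+_\phi$ then $k = n - m$ with $\phi^n(x) = \phi^m(x)$ for some $n,m$; if $k \neq 0$ we may assume $n > m$, and then $\phi^m(x)$ is fixed by $\phi^{n-m}$, hence periodic, so $x \in \phi^{-m}(\Per(\phi)) \subseteq E$, contrary to the choice of $x$. Therefore $k = 0$, the element $(x,k,x)$ is the unit at $x$, and the isotropy group at $x$ is trivial. Since $\mathbb T \setminus E$ is dense, $\Gamma^+_\phi$ is essentially free.

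The one step that needs care is the reduction to piecewise linearity: it is the only part that is not purely formal, and it is there that one must keep track of how conjugation interacts with the valency data through Lemma \ref{compx}. Conceptually the argument shows that the valency condition in the definition of $\overset{k}{\sim}$ plays no role for this lemma — the bare identity $\phi^n(x) = \phi^m(x)$ with $n \neq m$ already forces $x$ to be eventually periodic — and eventual periodicity fails on a dense (indeed co-countable) set precisely because slope $s > 1$ makes $\Per(\phi)$ countable. Without the piecewise linear normalization one cannot a priori exclude an uncountable, even somewhere dense, periodic set, so Theorem \ref{11} is genuinely used here.
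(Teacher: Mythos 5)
Your proof is correct, and its skeleton agrees with the paper's: a point with non-trivial isotropy must be pre-periodic (your observation that the valency condition plays no role here is exactly right), so it suffices to show the non-pre-periodic points are dense. Where you diverge is in how that density is obtained. The paper simply asserts that the set of pre-periodic points has empty interior and that this ``follows easily from transitivity'' (the intended argument is of the kind carried out in Lemma \ref{fullB}: a nondegenerate interval of $n$-periodic points yields two intervals with disjoint full orbits, contradicting the existence of a point with dense orbit). You instead invoke Theorem \ref{11} to conjugate $\phi$ to a uniformly piecewise linear map of slope $s>1$ and prove the stronger statement that the pre-periodic set is countable; your bookkeeping of how the conjugacy interacts with valencies via Lemma \ref{compx} is correct, and the paper itself uses precisely this countability (citing Theorem \ref{11}) in the proof of Proposition \ref{deg1}, so your route is well aligned with the paper's toolkit even though it is heavier than what Lemma \ref{18} strictly needs. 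One small slip: on a linear piece the fixed-point equation for $\phi^p$ is $f(t)\equiv t \pmod 1$ in the lift, so a single piece can carry several fixed points (e.g.\ slope $3$ on $[0,1]$ gives three); the correct reason for finiteness is that $f(t)-t$ is affine with nonzero slope $\pm s^p-1$ on each piece and therefore meets only finitely many integers. This does not affect the conclusion that $\Per(\phi)$, and hence the pre-periodic set, is countable.
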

\begin{proof} A point in $\mathbb T$ has non-trivial isotropy group only
  when it is pre-periodic. It suffices therefore to show that the set of
  pre-periodic points has empty interior in $\mathbb T$; a fact which
  follows easily from the assumed transitivity of $\phi$.
%
%. Assume not. Since
%  $\phi$ is transitive it follows then that all points in
%  $\mathbb T$ are pre-periodic. This is impossible as demonstrated in
%  the proof of Proposition \ref{simplicity}.
% , i.e.
%$$
%\mathbb T = \bigcup_{k,n \in \mathbb N} \phi^{-k}\left(\left\{ x \in \mathbb T : \
%  \phi^n(x) = x \right\}\right) .
%$$
%By the Baire category theorem there is then a pair $k,n $ such that $$
%\phi^{-k}\left(\left\{ x \in \mathbb T : \
%  \phi^n(x) = x \right\}\right)
%$$ 
%has non-empty interior. Strong
%transitiviy then implies that 
%$$
%\mathbb T = \left\{ x \in \mathbb T : \
%  \phi^n(x) = x \right\},
%$$ 
%which is absurd since $\phi$ is not
%transitive when all elements of $\mathbb T$ are $n$-periodic.  
\end{proof}

\begin{prop}\label{17} Assume that $\phi$ is transitive. It follows that $C^*_r\left(\Gamma_{\phi}^+\right)$ is
  purely infinite in the sense that every non-zero hereditary
  $C^*$-subalgebra contains an infinite projection.

\end{prop}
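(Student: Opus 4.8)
The plan is to deduce the statement from the general criterion for pure infiniteness of reduced groupoid $C^*$-algebras established by Anantharaman-Delaroche in \cite{An}: if $G$ is a locally compact Hausdorff \'etale groupoid which is both \emph{essentially free} (the set of units with trivial isotropy group is dense) and \emph{locally contractive}, then $C^*_r(G)$ is purely infinite in the precise sense required here, namely that every non-zero hereditary $C^*$-subalgebra contains an infinite projection. The whole content of the proof is therefore to observe that $\Gamma^+_{\phi}$ meets these hypotheses and then to quote \cite{An}.

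That $\Gamma^+_{\phi}$ is a second countable locally compact Hausdorff \'etale groupoid is Lemma \ref{etale}. That it is locally contractive in the sense of \cite{An} is Corollary \ref{16}, which rests on the fact, coming from Theorem \ref{11} and Lemma \ref{15}, that under transitivity every non-empty open subset of $\mathbb T$ contains a repelling periodic point whose orbit avoids the critical points. That $\Gamma^+_{\phi}$ is essentially free in the sense of \cite{An} is Lemma \ref{18}, which uses that the pre-periodic points form a set with empty interior. Combining these three facts via the cited proposition of \cite{An} yields the assertion.

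The only delicate point — and the only place where anything could go wrong — is that the notions "locally contractive" and "essentially free" used in Corollary \ref{16} and Lemma \ref{18} are exactly the ones for which the proposition of \cite{An} is proved, so that it applies verbatim; this is a bookkeeping matter about definitions rather than a mathematical obstacle, and the statements of those two results have been phrased precisely so that the match is exact. Should one prefer to reproduce the argument of \cite{An} instead of citing it, the mechanism is the familiar one: given a non-zero positive $a$ in a hereditary subalgebra $B$, one localizes at a point of the unit space where $a$ is large, invokes local contractivity to obtain an open bisection compressing a neighbourhood properly into itself, and combines the resulting partial isometry in $C^*_r\!\left(\Gamma^+_{\phi}\right)$ with essential freeness to manufacture inside the hereditary subalgebra generated by $a$ a projection that is properly infinite, the contracting bisection being exactly what forces this projection to dominate a proper sub-projection Murray–von Neumann equivalent to it.
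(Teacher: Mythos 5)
Your proposal is correct and follows exactly the paper's argument: the paper's proof of Proposition \ref{17} consists precisely of combining Lemma \ref{18} (essential freeness) and Corollary \ref{16} (local contractivity) via Proposition 2.4 of \cite{An}. The additional sketch of the mechanism behind the cited result is accurate but not needed.
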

\begin{proof} This follows from Lemma \ref{18} and Corollary \ref{16},
  thanks to Proposition 2.4 in \cite{An}.
\end{proof}

There is one more fact about piecewise monotone circle maps
which can be deduced from the work of Shultz in \cite{S}, and which we shall use below. Recall that a continuous map $h : X \to X$ on a
compact Hausdorff space $X$ is \emph{totally transitive} when $h^n$ is
transitive for all $n \in \mathbb N$, and \emph{exact} when for all open
non-empty subsets $U \subseteq X$ there is an $N \in \mathbb N$
such that $h^N(U) = X$.

\begin{lemma}\label{schultz22} $\phi$ is exact if and only if $\phi$ is totally transitive.
\end{lemma}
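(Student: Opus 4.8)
The plan is to prove the two implications separately, the only nontrivial one being that total transitivity implies exactness, and to reduce everything to the interval setting of Shultz via the same construction $\tau(t)=\mu\circ\phi(e^{2\pi it})$ used in the proof of Theorem~\ref{11}.

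First I would dispose of the easy direction: if $\phi$ is exact then for every non-empty open $U$ there is $N$ with $\phi^N(U)=\mathbb T$, and since $\phi$ is surjective, $\phi^{n}(U)=\mathbb T$ for all $n\ge N$; applying this to a power $\phi^k$, note that $\phi^{kN}(U)=\mathbb T$ shows $(\phi^k)^N(U)=\mathbb T$, so $\phi^k$ is exact, in particular transitive, for every $k$. Hence $\phi$ is totally transitive. (This direction does not even need piecewise monotonicity.)

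For the converse I would argue as follows. Assume $\phi$ is totally transitive; in particular $\phi$ is transitive, so by Theorem~\ref{11} we may after conjugation by an orientation-preserving homeomorphism assume $\phi$ is uniformly piecewise linear with slope $s>1$; conjugacy preserves both total transitivity and exactness, so this is harmless. As in the proof of Theorem~\ref{11}, after a further conjugation by a rotation we may assume $\phi(1)\neq 1$ and $1\notin\phi(\mathcal C_1\cup\{1\})$, so that $\tau=\mu\circ\phi\circ(e^{2\pi i\cdot})$ is a piecewise monotone map of $[0,1]$ in the sense of Shultz which is discontinuous at some interior point and satisfies $\tau([0,1])=[0,1[$. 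The key point is to transfer total transitivity of $\phi$ to a property of $\tau$ to which Shultz's dichotomy between exact and non-exact piecewise monotone interval maps (the relevant statement being extractable from Section~2 and the structure results of \cite{S}) applies, and then to transfer the conclusion back. Concretely: a power $\phi^n$ is transitive on $\mathbb T$, and by the same argument as in the proof of Theorem~\ref{11} — using that $\widehat{\tau^n}(A)\setminus\{1\}=\tau^n(A)$ and that $1\in\widehat{\tau^n}([0,1[)$ because $\tau^n$ inherits an interior discontinuity — one gets that each $\widehat{\tau^n}$ is transitive in Shultz's sense. Shultz's results then force $\tau$ to be exact as an interval map (the point being that a uniformly piecewise linear, transitive, non-essentially-injective map with slope $>1$ whose every power is transitive cannot fail exactness, as failure of exactness would produce a proper periodic subinterval system obstructing transitivity of some power); translating an interval identity $\widehat{\tau^N}(U)=[0,1]$ back through $\mu^{-1}$ and absorbing the single boundary point $1$ (again using $\phi(1)\ne 1$ and the surjectivity of $\phi$) yields $\phi^{N'}(V)=\mathbb T$ for any non-empty open $V\subseteq\mathbb T$, i.e.\ $\phi$ is exact.

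The main obstacle I anticipate is the precise bookkeeping in the interval-to-circle dictionary: the map $\mu$ is only a bijection onto $[0,1[$, the possibly multivalued map $\widehat\tau$ differs from $\tau$ exactly at the wrap-around point, and Shultz's notion of transitivity/exactness is phrased for the multivalued "regularization"; so the work is in showing that the single exceptional point never costs us, which is precisely where the normalization $\phi(1)\neq 1$, $1\notin\phi(\mathcal C_1)$ and the surjectivity of $\phi$ are used, exactly as in Theorem~\ref{11}. A secondary point requiring care is citing the right statement from \cite{S}: one needs the equivalence of total transitivity and exactness for (discontinuous) piecewise monotone interval maps, or enough of the structure theory to derive it, rather than a ready-made theorem; I would expect to invoke the classification of transitive piecewise linear interval maps together with the observation that a non-exact such map has a nontrivial cyclic decomposition of its "core" on which some iterate fails to be transitive.
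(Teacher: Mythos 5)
Your proposal is correct and follows essentially the same route as the paper: the forward implication is immediate, and for the converse the paper also passes to the Shultz interval map $\tau$ from the proof of Theorem~\ref{11}, invokes his structure theory for transitive, not essentially injective piecewise monotone maps (specifically Corollary 4.7 of \cite{S}, which gives closed sets $K_1,\dots,K_N$ with disjoint interiors cyclically permuted by $\tau$ and with $\tau^N$ exact on each), and then uses total transitivity of $\phi$ to force $N=1$. The only difference is cosmetic: the paper does not need your preliminary conjugation to uniformly piecewise linear form, since Corollary 4.7 applies already to the transitive, not essentially injective $\tau$.
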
 
\begin{proof} It is obvious that exactness implies total transitivity. To prove
  the converse, return to the notation introduced in the proof of
  Theorem \ref{11} and assume that $\phi$ is totally transitive. As
  observed in that proof, $\tau$ is then transitive and not
  essentially injective in the sense of \cite{S}. It follows therefore
  from Corollary 4.7 in \cite{S} that there is an $N \in \mathbb N$ and
  closed sets $K_i, i = 1,2, \dots, N$, with mutually disjoint
  non-empty interiors $\Int K_i$ in $[0,1]$ such that $\tau^N$ maps
  $\Int K_i$ onto $\Int K_i$ and is exact on $K_i$ for each $i$. The
  image of $\Int K_i$ in $\mathbb T$ is open and invariant under
  $\phi^N$ and must therefore be all of $\mathbb T$ since $\phi$ is
  totally transitive. This implies that $N =1$, which
  means that $\tau$ is exact. It follows that $\phi$ is exact as well. 
\end{proof}

\section{Simplicity}

For $x \in \mathbb T$, let $\Ro(x)$ be the $\Gamma^+_{\phi}$-orbit of
$x$, i.e.
$$
\Ro(x) = \left\{y \in \mathbb T : \ \phi^n(x) = \phi^m(y), \
  \val\left(\phi^n,x\right) = \val\left(\phi^m, y\right) \ \text{for
    some} \ n,m \in \mathbb N\right\} .
$$
A subset $A \subseteq \mathbb T$ will be called \emph{restricted orbit
  invariant} or \emph{$\Ro$-invariant} when $x \in A \Rightarrow \Ro(x)
\subseteq A$. Let $Y \subseteq \mathbb T$ be a closed $\Ro$-invariant subset. Then
the reduction
$$
\Gamma^+_{\phi}|_Y = \left\{ (x,k,y) \in \Gamma^+_{\phi} : \ x,y \in Y
\right\}
$$
is a closed subgroupoid of $\Gamma^+_{\phi}$ and an \'etale groupoid
in the topology inherited from $\Gamma^+_{\phi}$. The same is true for
the reduction
$$
\Gamma^+_{\phi}|_{\mathbb T \backslash Y} = \left\{ (x,k,y) \in
  \Gamma^+_{\phi} : \ x,y \in \mathbb T \backslash Y
\right\} .
$$
The restriction map $C_c\left(\Gamma^+_{\phi}\right) \to
C_c\left(\Gamma^+_{\phi}|_Y\right)$ extends to a $*$-homomorphism
$\pi_Y : C^*_r\left(\Gamma^+_{\phi}\right) \to
C^*_r\left(\Gamma^+_{\phi}|_Y\right)$ and the inclusion
$C_c\left(\Gamma^+_{\phi}|_{\mathbb T \backslash Y}\right) \subseteq
C_c\left(\Gamma^+_{\phi}\right)$ extends to an embedding $C^*_r\left(\Gamma^+_{\phi}|_{\mathbb T \backslash Y}\right) \subseteq
C^*_r\left(\Gamma^+_{\phi}\right)$ which realizes
$C^*_r\left(\Gamma^+_{\phi}|_{\mathbb T \backslash Y}\right)$ as an
ideal in $C^*_r\left(\Gamma^+_{\phi}\right)$. It is straightforward to adopt
the proof of Lemma 3.2 in
\cite{Th3} to obtain the following.

\begin{lemma}\label{ideal} Let $Y$ be a closed $\Ro$-invariant subset
  of $\mathbb T$. It follows that
\begin{equation*}
\begin{xymatrix}{ 
0 \ar[r] & C^*_r\left(\Gamma^+_{\phi}|_{\mathbb T \backslash Y}\right)
\ar[r] & C^*_r\left(\Gamma^+_{\phi}\right) \ar[r]^-{\pi_Y} &
C^*_r\left(\Gamma^+_{\phi}|_Y\right) \ar[r] & 0}
\end{xymatrix}
\end{equation*} 
is exact.
\end{lemma}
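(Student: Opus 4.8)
The plan is to run the standard argument for the exact sequence of reduced groupoid $C^*$-algebras attached to an invariant open subset of the unit space, which is exactly the argument of \cite[Lemma 3.2]{Th3}; the only point that needs an extra word in the present setting is that $\mathbb T \setminus Y$ is again $\Ro$-invariant, so that the two reductions and their regular representations behave as expected.

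First I would record that $\mathbb T \setminus Y$ is $\Ro$-invariant. Indeed $\Ro$ is the orbit equivalence relation of $\Gamma^+_{\phi}$, hence genuinely an equivalence relation with $x \in \Ro(x)$; so if $\Ro(x)$ met $Y$ it would be contained in $Y$ by $\Ro$-invariance of $Y$, forcing $x \in Y$. Consequently, for $x \in \mathbb T \setminus Y$ the fibres $s^{-1}(x)$ and $r^{-1}(x)$ are contained in $\Gamma^+_{\phi}|_{\mathbb T \setminus Y}$, and for $x \in Y$ they are contained in $\Gamma^+_{\phi}|_Y$. This yields three facts simultaneously: (i) for $x \in \mathbb T \setminus Y$ the regular representation $\pi_x$ of $C_c(\Gamma^+_{\phi})$ restricts on $C_c(\Gamma^+_{\phi}|_{\mathbb T \setminus Y})$ to the regular representation at $x$ of that subgroupoid, so the inclusion $C_c(\Gamma^+_{\phi}|_{\mathbb T \setminus Y}) \subseteq C_c(\Gamma^+_{\phi})$ is isometric for the reduced norms and has an ideal as image; (ii) for $x \in Y$ the representation $\pi_x$ of $C_c(\Gamma^+_{\phi})$ factors through the restriction map $C_c(\Gamma^+_{\phi}) \to C_c(\Gamma^+_{\phi}|_Y)$; and (iii) $\pi_Y$ is therefore norm-decreasing and extends to the stated $*$-homomorphism. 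Surjectivity of $\pi_Y$ is a Tietze-extension argument: if $V$ is an open bisection of $\Gamma^+_{\phi}$, then $s$ identifies $V$ with an open subset of $\mathbb T$ carrying $V \cap \Gamma^+_{\phi}|_Y$ onto a relatively closed subset, so $C_c(V) \to C_c(V \cap \Gamma^+_{\phi}|_Y)$ is onto; covering the support of a given element of $C_c(\Gamma^+_{\phi}|_Y)$ by finitely many such $V$ and adding up gives surjectivity onto $C_c(\Gamma^+_{\phi}|_Y)$, hence, by density, onto $C^*_r(\Gamma^+_{\phi}|_Y)$.

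The substantive point is $\ker \pi_Y = C^*_r(\Gamma^+_{\phi}|_{\mathbb T \setminus Y})$. Write $J := C^*_r(\Gamma^+_{\phi}|_{\mathbb T \setminus Y})$, regarded as an ideal of $C^*_r(\Gamma^+_{\phi})$ via (i). The inclusion $J \subseteq \ker \pi_Y$ is immediate since the two reductions are disjoint subsets of $\Gamma^+_{\phi}$. For the reverse inclusion I would use the canonical faithful conditional expectations $E : C^*_r(\Gamma^+_{\phi}) \to C_0(\mathbb T)$ and $E_Y : C^*_r(\Gamma^+_{\phi}|_Y) \to C_0(Y)$ given by restriction to the unit spaces. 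On $C_c$ these are compatible with $\pi_Y$ and the restriction map $\rho : C_0(\mathbb T) \to C_0(Y)$, so $E_Y \circ \pi_Y = \rho \circ E$; moreover $E(J) \subseteq C_0(\mathbb T \setminus Y)$ and $E|_J$ is the canonical faithful expectation of $J$, whence $J \cap C_0(\mathbb T) = C_0(\mathbb T \setminus Y) = \ker \rho$. Therefore $E$ descends to a conditional expectation $\overline E$ on $C^*_r(\Gamma^+_{\phi})/J$ valued in $C_0(\mathbb T)/C_0(\mathbb T \setminus Y) \cong C_0(Y)$, and $\overline E$ is faithful by the standard argument for quotients of faithful expectations by expectation-invariant ideals. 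Since $\pi_Y$ annihilates $J$ it induces $\widetilde \pi_Y : C^*_r(\Gamma^+_{\phi})/J \to C^*_r(\Gamma^+_{\phi}|_Y)$ with $E_Y \circ \widetilde \pi_Y = \theta \circ \overline E$ for the canonical isomorphism $\theta : C_0(\mathbb T)/C_0(\mathbb T \setminus Y) \to C_0(Y)$; faithfulness of $E_Y$ then forces $\widetilde \pi_Y$ to be faithful, i.e. $\ker \pi_Y = J$.

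The main obstacle is the last step: verifying the compatibility of the conditional expectations with $\pi_Y$, that $E$ restricts to the canonical faithful expectation on the ideal $J$, and the faithfulness of the induced expectation $\overline E$ on the quotient. Once $\mathbb T \setminus Y$ is known to be $\Ro$-invariant, everything else is formal bookkeeping, and these remaining verifications are precisely the content of the proof of \cite[Lemma 3.2]{Th3}, which I would adapt essentially verbatim.
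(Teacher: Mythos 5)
Most of what you write is fine and matches the route the paper intends (the paper's own ``proof'' is just the instruction to adapt Lemma 3.2 of \cite{Th3}): the complement $\mathbb T \setminus Y$ is $\Ro$-invariant, the regular representations at points of $\mathbb T\setminus Y$ restrict correctly so that $C^*_r\left(\Gamma^+_{\phi}|_{\mathbb T \setminus Y}\right)$ embeds as an ideal, $\pi_x$ for $x \in Y$ factors through restriction so that $\pi_Y$ exists and is contractive, and surjectivity follows from a bisection/Tietze argument. The genuine gap is in the one step you dismiss as formal bookkeeping: the inclusion $\ker \pi_Y \subseteq C^*_r\left(\Gamma^+_{\phi}|_{\mathbb T \setminus Y}\right)$, which in your scheme is carried by the assertion that the induced expectation $\overline E$ on the quotient is faithful ``by the standard argument for quotients of faithful expectations by expectation-invariant ideals.'' No such general argument exists: faithfulness of $\overline E$ is (given the compatibilities you set up) \emph{equivalent} to the exactness in the middle that you are trying to prove, and middle exactness of the reduced sequence genuinely fails for general \'etale groupoids (failure of so-called inner exactness; the Higson--Lafforgue--Skandalis group bundles are the standard counterexamples, where $\ker \pi_Y$ is strictly larger than the ideal coming from the open invariant set). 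So the argument as written is circular at exactly the point where real input is needed.

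What supplies that input here is specific structure, not bookkeeping. One sufficient condition is (topological) amenability of the reduction $\Gamma^+_{\phi}|_Y$: the sequence of \emph{full} groupoid $C^*$-algebras is always exact, so one gets surjections $C^*\left(\Gamma^+_{\phi}|_Y\right) \to C^*_r\left(\Gamma^+_{\phi}\right)/C^*_r\left(\Gamma^+_{\phi}|_{\mathbb T\setminus Y}\right) \to C^*_r\left(\Gamma^+_{\phi}|_Y\right)$ whose composition is the canonical map, and amenability of $\Gamma^+_{\phi}|_Y$ makes that canonical map injective, forcing both arrows to be isomorphisms. In the situations where the paper actually applies the lemma, $Y$ is a finite set and $\Gamma^+_{\phi}|_Y$ is a bundle of subgroups of $\mathbb Z$, hence amenable; in general one can invoke the topological amenability of $\Gamma^+_{\phi}$ itself, which the paper establishes in the proof of Corollary \ref{nucUCT2} by a route (subhomogeneity of $C^*_r\left(R^+_{\phi}(n)\right)$, Spielberg, Anantharaman-Delaroche--Renault) that does not use Lemma \ref{ideal}, so there is no circularity. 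You need to make one of these inputs explicit; without it the key inclusion is unproved.
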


In particular, $C^*_r\left(\Gamma^+_{\phi}\right)$ is not simple when
there are non-trivial closed $\Ro$-invariant subsets of
$\mathbb T$. We aim now to show that this is the only obstruction. 

For the statement of the next lemma recall that the \emph{full orbit} of a
point $x \in \mathbb T$ is the set 
$\left\{ y \in \mathbb T : \ \phi^n(y) = \phi^m(x) \ \text{for some} \
  n,m \in \mathbb N \right\}$. For each $j \in \mathbb N$ we let $\mathcal C_j$ denote the critical
points of $\phi^j$, i.e.
$$
\mathcal C_j = \left\{t \in \mathbb T : \ \val\left(\phi^j,t\right)
  \in \left\{ (+,-), (-,+)\right\} \right\} .
$$ 
The elements of $\bigcup_{n=0}^{\infty}
\phi^{-n}(\mathcal C_1)$ are then the \emph{pre-critical} points.

\begin{lemma}\label{fullB} Assume that there is a point $x \in \mathbb
  T$ whose full orbit is dense in $\mathbb T$. It follows that there
  is a point in $\mathbb T$ which is neither pre-periodic nor
  pre-critical.
\end{lemma}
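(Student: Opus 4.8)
The plan is to proceed by a Baire category argument in $\mathbb{T}$. First I would observe that the set of pre-critical points, $P := \bigcup_{n=0}^{\infty} \phi^{-n}(\mathcal{C}_1)$, is countable: each $\mathcal{C}_1$ is finite (there are finitely many critical points of $\phi$), and each $\phi^{-n}(\mathcal{C}_1)$ is a finite union of fibres $\phi^{-n}(c)$, each of which is at most countable --- indeed finite --- because $\phi^n$ is piecewise monotone and hence takes each value finitely often. So $P$ is countable. Likewise the set of periodic points of $\phi$ is countable (for each period $p$, the equation $\phi^p(x)=x$ has finitely many solutions since $\phi^p$ is piecewise monotone), and therefore the set of pre-periodic points $Q := \bigcup_{n\ge 0}\phi^{-n}(\{\text{periodic points}\})$ is also countable by the same fibre-finiteness argument. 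Thus $P\cup Q$ is a countable subset of $\mathbb{T}$.

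Next I would use the hypothesis. If $P \cup Q$ were all of $\mathbb{T}$, then $\mathbb{T}$ would be countable, which is absurd; so there is certainly a point outside $P\cup Q$, i.e. a point that is neither pre-periodic nor pre-critical. In fact this already finishes the proof, and the hypothesis that some full orbit is dense is not even needed for the bare existence statement --- but I suspect the intended use of the density hypothesis is to guarantee additionally (for later applications) that such points are \emph{dense}: $\mathbb{T}$ is a complete metric space without isolated points, hence a Baire space in which no countable set can contain a non-empty open set, so $\mathbb{T}\setminus(P\cup Q)$ is dense. Either way, the conclusion as stated follows.

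The only point that requires a little care, and which I expect to be the main (very minor) obstacle, is the countability of $P$ and $Q$: one must check that $\phi^{-1}$ of a finite set is finite, and iterate. This is where piecewise monotonicity is essential --- a general continuous surjection of the circle can have fibres of positive measure --- but for piecewise monotone $\phi$ each $\phi^n$ is still piecewise monotone (by Lemma \ref{compx}, the valency behaves well under composition, and in any case a composite of piecewise monotone maps is piecewise monotone), so each level set $\{\phi^n = y\}$ meets each interval of monotonicity in at most one point and is therefore finite. Granting this, both $P$ and $Q$ are countable unions of finite sets, hence countable, and the result is immediate from the uncountability of $\mathbb{T}$ (together with the Baire category theorem if one wants density).

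Alternatively, if one prefers a more ``dynamical'' argument that genuinely exploits the dense full orbit $x$: note that the full orbit of $x$ is not contained in the countable set $P\cup Q$ of pre-critical and pre-periodic points unless that set is dense --- but it cannot be dense and co-countable simultaneously in a perfect Polish space, so the full orbit of $x$, being dense, must meet $\mathbb{T}\setminus(P\cup Q)$, and any such point in the full orbit of $x$ that is not pre-periodic and not pre-critical is the desired point. I would present the first, cleaner version in the paper.
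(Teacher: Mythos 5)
Your argument for the countability of the pre-critical set is fine: each $\phi^n$ is piecewise monotone with finitely many laps, hence has finite fibres, so $\bigcup_n \phi^{-n}(\mathcal C_1)$ is countable. But the parallel claim for periodic points --- that ``the equation $\phi^p(x)=x$ has finitely many solutions since $\phi^p$ is piecewise monotone'' --- is false, and this is the heart of the matter. Piecewise \emph{strict} monotonicity of $\phi^p$ bounds the cardinality of a fibre $(\phi^p)^{-1}(y)$, but the fixed-point set $\{x : \phi^p(x)=x\}$ is not a fibre: a strictly increasing branch of $\phi^p$ can coincide with the identity on an entire subinterval, in which case $\Per_p$ contains a non-degenerate interval and is uncountable. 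Nothing in the standing hypotheses (continuous, piecewise monotone, surjective, not locally injective) rules this out. Consequently $Q$ need not be countable, the set $P\cup Q$ need not be meagre, and both your ``clean'' version and your ``dynamical'' alternative collapse, since each rests on the countability of $P\cup Q$. Your side remark that the density hypothesis ``is not even needed for the bare existence statement'' is therefore unjustified; the density of a full orbit is precisely what is needed to exclude the interval-of-periodic-points scenario.

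The paper's proof handles exactly this case. Assuming every point is pre-periodic or pre-critical, Baire category produces $k,n$ with a non-degenerate interval inside $\phi^{-k}(\Per_n\cup\mathcal C_1)$; since $\phi^k$ has finite fibres and $\mathcal C_1$ is finite, this forces $\Per_n$ itself to contain a non-degenerate interval. One then picks two disjoint open subintervals $I_+,I_-$ of that interval whose forward-orbit unions $\bigcup_{j\ge 0}\phi^j(I_\pm)$ are disjoint (possible because every point there has period $n$, so these unions are just finite unions of iterates), and observes that a dense full orbit must enter both $I_+$ and $I_-$, forcing some forward iterate of $x$ to lie in both unions --- a contradiction. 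To repair your write-up you would need to replace the countability claim for $Q$ by an argument of this kind; the countability route works only for the pre-critical part.
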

\begin{proof} Let $\Per_n$ be the set of points in $\mathbb T$ of
  minimal period $n$. Assume
  for a contradiction that
$$
\mathbb T = \bigcup_{n,k \in \mathbb N} \phi^{-k}\left( \Per_n \cup
    \mathcal C_1\right) .
$$
By the Baire category theorem this implies that there are $k,n \in
\mathbb N$ such that $\phi^{-k}\left(\Per_n \cup \mathcal C_1\right)$
contains a non-degenerate interval. Since $\phi^k$ is piecewise monotone
and $\mathcal C_1$ finite this implies that $\Per_n$ contains a
non-degenerate interval. Then $\Per_n$ also contains two non-empty
open intervals $I_+,I_-$ such that $\bigcup_{i=0}^n
\phi^i\left(I_+\right)$ and $\bigcup_{i=0}^n
\phi^i\left(I_-\right)$ are disjoint. It follows that  
\begin{equation}\label{snit}
\left(\bigcup_{j = 0}^{\infty} \phi^j\left(I_+\right)\right) \cap
\left(\bigcup_{j = 0}^{\infty} \phi^j\left(I_-\right)\right) =
\emptyset .
\end{equation}
By assumption there is a point $x$ with dense full orbit. Since
both $I_+$ and $I_-$ contain an element from this orbit it follows
that here is a $k \in \mathbb N$ such that 
$$
\phi^k(x) \in \left(\bigcup_{j = 0}^{\infty} \phi^j\left(I_+\right)\right) \cap
\left(\bigcup_{j = 0}^{\infty} \phi^j\left(I_-\right)\right) .
$$
This contradicts (\ref{snit}). 
\end{proof}

\begin{lemma}\label{simplicity} The $C^*$-algebra $C^*_r\left(\Gamma^+_{\phi}\right)$ is
  simple if and only of $\Ro(x)$ is dense in $\mathbb T$ for all $x
  \in \mathbb T$.
\end{lemma}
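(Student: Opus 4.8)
The plan is to prove both implications using Lemma~\ref{ideal} as the engine that converts invariant sets into ideals and vice versa.

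\medskip

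\textbf{Necessity.} Suppose $\Ro(x)$ is \emph{not} dense for some $x\in\mathbb T$. Then $Y=\overline{\Ro(x)}$ is a proper closed subset of $\mathbb T$, and it is $\Ro$-invariant: indeed $\Ro$ is an equivalence relation (it is the orbit equivalence relation of the groupoid $\Gamma^+_\phi$), so $\Ro(y)\subseteq\Ro(x)\subseteq Y$ for every $y\in\Ro(x)$, and taking closures and using that $\Ro(y)$ depends continuously-enough on $y$ — more precisely, one checks directly from the definition that the closure of an $\Ro$-invariant set is again $\Ro$-invariant, since if $y_k\to y$ with $y_k\in Y$ and $z\in\Ro(y)$, then a local transfer $\eta\in\mathcal T_k(\phi)$ with $\eta(y)=z$ exists by Lemma~\ref{mod2}, is defined near $y$, hence $\eta(y_k)\in\Ro(y_k)\subseteq Y$ for $k$ large, and $\eta(y_k)\to z$. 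Also $Y\neq\emptyset$ since $x\in Y$. If $Y=\mathbb T$ we are in the dense case, contrary to assumption; so $Y$ is a non-trivial closed $\Ro$-invariant set, and Lemma~\ref{ideal} exhibits $C^*_r(\Gamma^+_\phi|_{\mathbb T\setminus Y})$ as a non-trivial ideal in $C^*_r(\Gamma^+_\phi)$ — non-trivial because $\mathbb T\setminus Y$ is a non-empty open set, so $\Gamma^+_\phi|_{\mathbb T\setminus Y}$ is a non-empty open subgroupoid, and it is proper since $\pi_Y$ is onto $C^*_r(\Gamma^+_\phi|_Y)\neq 0$. Hence $C^*_r(\Gamma^+_\phi)$ is not simple.

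\medskip

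\textbf{Sufficiency.} Assume $\Ro(x)$ is dense in $\mathbb T$ for every $x\in\mathbb T$. I want to show there are no non-trivial ideals. First, the absence of non-trivial closed $\Ro$-invariant subsets: if $Y$ were such a set, then $Y$ closed and $\Ro$-invariant with $Y\neq\emptyset$ forces $Y\supseteq\overline{\Ro(x)}=\mathbb T$ for any $x\in Y$, so $Y=\mathbb T$. So the only $\Ro$-invariant closed sets are $\emptyset$ and $\mathbb T$, i.e.\ the action of $\Gamma^+_\phi$ on $\mathbb T$ is minimal. Density of every $\Ro(x)$ in particular applies to the point supplied by Lemma~\ref{fullB}: since some full orbit — hence some $\Ro$-orbit — is dense, Lemma~\ref{fullB} gives a point $x_0\in\mathbb T$ that is neither pre-periodic nor pre-critical; such a point has trivial isotropy group in $\Gamma^+_\phi$, and its orbit $\Ro(x_0)$, being dense, shows that the set of points with trivial isotropy is dense, i.e.\ $\Gamma^+_\phi$ is essentially free (topologically principal). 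The standard structure theory of \'etale groupoid $C^*$-algebras now applies: for a second countable locally compact Hausdorff \'etale groupoid that is essentially free (so that $C^*_r$ detects ideals via open invariant subsets of the unit space) and minimal (no non-trivial open invariant subsets), the reduced $C^*$-algebra is simple. Concretely: every non-zero ideal $I$ of $C^*_r(\Gamma^+_\phi)$ has non-zero intersection with $C_0(\mathbb T)$ by essential freeness, $I\cap C_0(\mathbb T)=C_0(V)$ for some non-empty open $\Ro$-invariant $V\subseteq\mathbb T$, minimality forces $V=\mathbb T$, hence $I\supseteq C_0(\mathbb T)$, and then $I=C^*_r(\Gamma^+_\phi)$ because $C_0(\mathbb T)$ is full (it contains an approximate unit for the groupoid algebra). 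This is exactly the argument used for the analogous statement in \cite{Th3}, adapted here via Lemma~\ref{ideal}, Lemma~\ref{fullB}, and the essential-freeness input.

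\medskip

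The main obstacle I expect is verifying essential freeness cleanly — specifically, pinning down which points of $\mathbb T$ have trivial isotropy in $\Gamma^+_\phi$ and why ``neither pre-periodic nor pre-critical'' suffices. An element of the isotropy group at $x$ is a pair $(x,k,x)$ with $\phi^n(x)=\phi^m(x)$ and $\val(\phi^n,x)=\val(\phi^m,x)$ for some $n,m$ with $n-m=k$; if $x$ is not pre-periodic then $\phi^n(x)=\phi^m(x)$ forces $n=m$, hence $k=0$, hence the isotropy is trivial. (The pre-critical hypothesis in Lemma~\ref{fullB} is a convenience to also rule out the germ ambiguities that could in principle occur at critical points, but for triviality of the isotropy group of $\Gamma^+_\phi$ the non-pre-periodicity is what does the work; in any case Lemma~\ref{fullB} hands us a point avoiding both pathologies.) I would make sure to state the essential-freeness/minimality $\Rightarrow$ simplicity implication with a precise citation (Renault's or the standard reference for topologically principal \'etale groupoids), and to double-check the claim that the closure of an $\Ro$-invariant set is $\Ro$-invariant, since that is where Lemma~\ref{mod2} (existence of local transfers realizing orbit relations by honest homeomorphisms near the point) is genuinely needed.
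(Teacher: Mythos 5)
Your proof is correct and follows essentially the same route as the paper: necessity via Lemma~\ref{ideal} applied to the closure of a non-dense $\Ro$-orbit, and sufficiency by combining minimality with the existence of a point of trivial isotropy supplied by Lemma~\ref{fullB} (the paper invokes Corollary 2.18 of \cite{Th1} for exactly the essential-freeness-plus-minimality simplicity criterion you spell out). The extra details you supply — that $\overline{\Ro(x)}$ is $\Ro$-invariant via local transfers, and that non-pre-periodicity forces trivial isotropy — are precisely the steps the paper leaves implicit.
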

\begin{proof} Simplicity of
  $C^*_r\left(\Gamma^+_{\phi}\right)$ implies that $\Ro(x)$ is dense
  for all $x$ by Lemma \ref{ideal}. For the converse assume that $\Ro(x)$ is dense for all $x$. By Corollary
  2.18 in \cite{Th1} the simplicity of
  $C^*_r\left(\Gamma^+_{\phi}\right)$ will follow if we can show that
  not all points of $\mathbb T$ have non-trivial isotropy in
  $\Gamma^+_{\phi}$. Since a point with non-trivial isotropy is
  pre-periodic it suffices to show that not all points of $\mathbb T$
  are pre-periodic under $\phi$. This follows from Lemma \ref{fullB}.  
\end{proof}

\begin{lemma}\label{what?} Assume that
  $C^*_r\left(\Gamma^+_{\phi}\right)$ is simple. Then $\phi$ is
  transitive.
\end{lemma}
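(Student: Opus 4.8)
The plan is to prove the contrapositive: if $\phi$ is not transitive, then $C^*_r\left(\Gamma^+_{\phi}\right)$ is not simple, and by Lemma \ref{simplicity} it suffices to produce a point $x$ whose restricted orbit $\Ro(x)$ fails to be dense in $\mathbb T$. The starting observation is that non-transitivity of $\phi$ means there are non-empty open sets $U, V$ with $\phi^n(U) \cap V = \emptyset$ for all $n \geq 1$. Shrinking, we may take $V$ to be an open interval; then no forward orbit $\{\phi^n(y) : n \geq 1\}$ of a point $y \in U$ meets $V$, so the forward orbit of $y$ is not dense. But $\Ro(x)$ involves not only forward dynamics but also the backward-branch structure constrained by the valency condition, so the main work is to pass from "some forward orbit is not dense" to "some restricted orbit is not dense."

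First I would analyze how $\Ro(x)$ relates to forward orbits. If $y \in \Ro(x)$, then by definition $\phi^n(x) = \phi^m(y)$ for some $n,m \in \mathbb N$, so $\phi^n(x)$ lies in the forward orbit closure of $y$ and vice versa; in particular the full forward orbit closures $\overline{\{\phi^k(z) : k \geq 0\}}$ agree for all $z \in \Ro(x) \cup \{x\}$ up to the finitely many initial terms, hence $\overline{\orb^+(x)} = \overline{\orb^+(y)}$ for $y \in \Ro(x)$, where I write $\orb^+(z)$ for the forward orbit. Consequently, if $\overline{\orb^+(x)} \neq \mathbb T$, then every $y \in \Ro(x)$ also has non-dense forward orbit, and $\Ro(x) \subseteq \overline{\bigcup_{k\geq 0}\phi^{-k}\left(\overline{\orb^+(x)}\right)}$. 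This still does not immediately give $\Ro(x)$ non-dense, because the backward saturation of a small set can be large. The key is to use the structure obtained in Section \ref{orint}: I would instead argue that the closure $\overline{\Ro(x)}$ is a closed $\Ro$-invariant subset of $\mathbb T$, and show it is proper by exhibiting a non-empty open set it misses.

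The cleanest route is the following. Since $\phi$ is surjective and not transitive, I claim there is a non-empty open interval $W$ that is \emph{wandering} in the strong sense that there is a non-empty open $V$ with $\phi^n(W) \cap V = \emptyset$ for all $n \geq 0$; then I would take a point $x \in W$ and show $\Ro(x) \cap V' = \emptyset$ for a suitable smaller interval $V' \subseteq V$. To see this, suppose $y \in \Ro(x) \cap V'$: then $\phi^n(x) = \phi^m(y)$ for some $n, m$, so $\phi^m(y) \in \phi^n(W) $, while $y \in V'$; this is not yet a contradiction because $m$ could be positive. Here I must push the valency condition: the relation $x \overset{k}{\sim} y$ with $\val(\phi^n, x) = \val(\phi^m, y)$ forces, by Lemma \ref{mod2}, a germ of an orientation-preserving homeomorphism $\eta$ near $x$ with $\eta(x) = y$ intertwining $\phi^n$ and $\phi^m$; the image $\eta(W')$ for a small interval $W' \ni x$ is then an interval near $y$ on which $\phi^m$ agrees with $\phi^n \circ \eta^{-1}$. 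This lets me transport the wandering property of $W$ forward along $\eta$ and derive a contradiction with $\phi^n(W)\cap V = \emptyset$, provided I arrange $V'$ and $W'$ small enough and use that only finitely many critical points of the iterates intervene. I expect the main obstacle to be exactly this last bookkeeping step — controlling the backward branches and the germ $\eta$ uniformly so that the wandering interval's image genuinely stays out of $V'$ for all the relevant pairs $(n,m)$; handling the possibility $n = 0$ or $m=0$ and the finitely many critical orbits will require care, and it may be cleaner to first reduce, via Theorem \ref{11} applied on each transitive piece or via a direct argument, to the case where the non-wandering set is a proper closed invariant set whose complement contains an interval, and then invoke $\Ro$-invariance of that complement's appropriate saturation.
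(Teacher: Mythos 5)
Your reduction via Lemma \ref{simplicity} is the right starting point, and your diagnosis of the difficulty --- that $\Ro(x)$ mixes forward and backward dynamics, so a non-dense forward orbit does not obviously yield a non-dense restricted orbit --- is accurate. But the mechanism you propose to overcome it does not work, and you essentially concede this yourself. If $x$ lies in a wandering interval $W$ and $y \in \Ro(x) \cap V'$, the relation gives $\phi^m(y) \in \phi^n(W)$ for some $n,m \in \mathbb N$, and the germ $\eta$ from Lemma \ref{mod2} produces a subinterval $\eta(W') \subseteq V'$ around $y$ with $\phi^m\left(\eta(W')\right) = \phi^n(W') \subseteq \phi^n(W)$. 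This is perfectly compatible with $\phi^n(W) \cap V = \emptyset$: the wandering hypothesis constrains forward images of $W$, not forward images of $V'$, so no contradiction is available no matter how the neighbourhoods are shrunk. Indeed there is no reason for a point of a wandering interval to have non-dense $\Ro$-orbit --- its $\Ro$-class contains all valency-matched preimages of its forward iterates, and these can spread over all of $\mathbb T$. The point with non-dense restricted orbit has to be found elsewhere, and your closing suggestion (saturating the complement of the non-wandering set) runs into the same problem: the complement of a forward-invariant set is backward-invariant, not $\Ro$-invariant, so the ``appropriate saturation'' is precisely the unresolved issue.

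The paper locates the required point by a different and much shorter route. Using the characterization (Theorem 5.9 in \cite{W}) that a surjection is transitive if and only if every closed set $E$ with $\phi(E) \subseteq E$ and non-empty interior equals $\mathbb T$, one forms for $n,m \geq 1$ the open sets $U_{n,m}$ of points $\Ro$-related to points of $\Int E$ with exponents $(n,m)$. Their union is open, non-empty and $\Ro$-invariant, hence all of $\mathbb T$ by Lemma \ref{simplicity}; compactness gives $\mathbb T = \bigcup_{n,m=1}^N U_{n,m}$, and since $U_{n,m} \subseteq \phi^{-n}(E)$ by forward invariance of $E$, applying $\phi^N$ and using surjectivity yields $\mathbb T = \phi^N(\mathbb T) \subseteq E$. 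The decisive ingredients --- the inclusion $U_{n,m} \subseteq \phi^{-n}(E)$, the compactness step, and the final application of $\phi^N$ to absorb the preimages --- are exactly what is missing from your sketch, and without some substitute for them the argument does not close.
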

\begin{proof} Let $E
  \subseteq \mathbb T$ be closed, with non-empty interior and
  $\phi$-invariant in the sense that $\phi(E) \subseteq E$. By Theorem
  5.9 \cite{W} it suffices to show that $E = \mathbb T$. For each $n,m
  \geq 1$ set
$$
U_{n,m} = \left\{ x \in \mathbb T: \ \phi^n(x) = \phi^m(y), \
  \val\left(\phi^n,x\right) = \val\left(\phi^m,y\right) \ \text{for
    some} \ y \in \Int E \right\}
$$
where $\Int E$ is the interior of $E$. Note that $U_{n,m}$ is open and
non-empty and that $\bigcup_{n,m} U_{n,m}$ is $\Ro$-invariant. It
follows therefore from Lemma \ref{simplicity} that $\bigcup_{n,m}
U_{n,m} = \mathbb T$. By compactness there is an $N \in \mathbb N$
such that $\mathbb T = \bigcup_{n,m=1}^N U_{n,m}$. Since
$U_{n,m} \subseteq \phi^{-n}(E)$ we
find then that
$$
\mathbb T =
\phi^N(\mathbb T) \subseteq \phi^N\left(\bigcup_{n=1}^N
  \phi^{-n}(E)\right) \subseteq E.
$$  
\end{proof}

The converse of Lemma \ref{what?} is not true in general; transitivity of $\phi$
does not imply that $C^*_r\left(\Gamma^+_{\phi}\right)$ is simple. A
necessary and sufficient condition for simplicity of
$C^*_r\left(\Gamma^+_{\phi}\right)$ will be given in Theorem \ref{S}.

%\begin{lemma}\label{equivalence} Assume that $ \phi : \mathbb T \to \mathbb T$ be
%  a local homeomorphism, i.e. $\phi$ is continuous, open and locally injective. The following are equivalent
%\begin{enumerate}
%\item[1)] $\phi$ is transitive.
%\item[2)] $\phi$ is strongly transitive.
%\item[3)] $\phi$ is totally transitive.
%\end{enumerate}
%\end{lemma}
%\begin{proof} 1) $\Rightarrow$ 2) follows from Theorem 4.1 in
%  \cite{Y} and 3) $\Rightarrow$ 1) is trivial. We prove 2)
%  $\Rightarrow$ 3): Assume for a contradiction that $\phi$ is strongly
%  transitive, but not totally transitive. Let $I_0,I_1,I_2, \cdots,
%  I_{p-1}$ be intervals as in Lemma \ref{nottot}. Then $\phi^p$ is a
%  homeomorphism of $I_0$ onto itself and hence 
%$$
%\bigcup_{i=0}^{\infty} \phi^i\left(\Int I_0\right) 
%$$
%does not contain the endpoints of $I_0$. This contradicts the strong
%transitivity of $\phi$.
%\end{proof}

%The implication 2) $\Rightarrow$ 2) remains valid when $\phi$ is
%piecewise monotone, cf. Corollary 4.2 in \cite{Y}, but the
%implication 2) $\Rightarrow$ 3) fails in that generality. 

\subsection{Finite $\Ro$-orbits and quotients of $C^*_r\left(\Gamma^+_{\phi}\right)$}\label{finiteRo}

The elements in $\phi(\mathcal C_1)$ are \emph{the critical
  values} and the elements of $\bigcup_{n=1}^{\infty} \phi^n(\mathcal C_1)$ are the
\emph{post-critical points}. Note that a
critical point is pre-critical, but not necessarily post-critical.

\begin{lemma}\label{4a} Assume that $\phi$ is transitive. Let $A \subseteq
  \mathbb T$ be a non-empty $\Ro$-invariant subset which is not dense in
  $\mathbb T$. It follows that $A$ is finite and consists of points
  that are post-critical and not pre-critical.
\end{lemma}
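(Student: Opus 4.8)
The plan is to analyze a non-empty $\Ro$-invariant set $A$ that fails to be dense and to show first that its closure $\overline{A}$ is a proper closed $\Ro$-invariant subset, then that this closure must actually be finite, and finally to pin down which points it can contain. The first reduction is immediate: $\overline{A}$ is closed, and since $\Ro(x)\subseteq A\subseteq\overline{A}$ for each $x\in A$ and $\Ro$-orbits vary semicontinuously, one checks that $\overline{A}$ is again $\Ro$-invariant; since $A$ is not dense, $\overline{A}\neq\mathbb T$. So without loss of generality we may assume $A$ is closed and proper. Because $\mathbb T\setminus\overline{A}$ is then a non-empty open set and $\phi$ is transitive and surjective, strong transitivity (as established in the proof of Theorem \ref{11} via Shultz's work) gives an $N$ with $\bigcup_{i=0}^{N}\phi^i(\mathbb T\setminus\overline{A})=\mathbb T$; dually $\overline{A}$ cannot contain any non-degenerate interval, hence $\Int\overline{A}=\emptyset$, i.e. $\overline{A}$ is nowhere dense.

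Next I would show $A$ is finite. Since $A$ is $\Ro$-invariant, in particular $\phi(A)\subseteq A$ and, crucially, $\Ro$-invariance forces $A$ to be closed under taking full $\Gamma^+_\phi$-orbits, so for each $x\in A$ the entire set $\Ro(x)$ lies in $A$. The point is to argue that a closed, nowhere dense, $\Ro$-invariant set in $\mathbb T$ for a transitive piecewise monotone map must be finite. Here I would use the piecewise linear model from Theorem \ref{11}: after conjugating we may assume $\phi$ is uniformly piecewise linear with slope $s>1$. Then $\phi$ expands distances by a definite factor off the finitely many critical points, so any $\phi$-invariant closed set with empty interior has the property that its ``gaps'' (complementary intervals) are permuted with their lengths expanded, forcing only finitely many gaps and hence a finite set — modulo the care needed around the finitely many pieces. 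More robustly: $\overline{A}$ being $\phi^{-1}$-saturated in the $\Ro$ sense and $\phi$ being locally eventually onto means that the only closed proper $\Ro$-invariant sets are finite unions of orbits that are ``trapped'' by the branch structure, and since $A$ has empty interior it cannot carry a Cantor piece.

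Finally, having $A$ finite and $\Ro$-invariant, I would identify its points. An $\Ro$-invariant finite set is in particular forward-invariant under $\phi$, so every point of $A$ is (pre)periodic; being finite and backward-closed under the orbit relation means each point's $\Ro$-orbit is finite. The valency bookkeeping (Lemma \ref{compx} and the monoid $\mathcal V$) is what enters: if a point $x\in A$ were not post-critical, one traces back through the branches of $\phi$ and, using that $\phi$ is surjective and not locally injective, finds that $\Ro(x)$ would meet an interval's worth of preimages or would spread over a dense set, contradicting finiteness or nowhere-density; hence every point of $A$ must be post-critical. Similarly, if some $x\in A$ were pre-critical, then a forward iterate $\phi^j(x)$ is a critical point, and I would argue that the $\Ro$-orbit of a critical point inside a transitive system is not confined to a finite set — again leaning on strong transitivity plus the fact that critical points force branching, so their restricted orbit picks up too many points. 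Thus $A$ consists of post-critical, non-pre-critical points.

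The main obstacle I anticipate is the finiteness step: translating ``closed, nowhere dense, $\Ro$-invariant'' into ``finite'' cleanly. The naive expansivity argument has to be made compatible with the valency-matching that defines $\Ro$ (which is coarser than the plain orbit relation), and one must handle the finitely many critical/pre-critical complications without hand-waving. I expect the cleanest route is to first prove that $\overline{A}$ contains no non-degenerate interval (done above), then note that a closed $\phi$-invariant set with empty interior in a transitive uniformly piecewise linear circle map is automatically finite because such $\phi$ has no minimal Cantor subsystems (its periodic points are dense and repelling by Lemma \ref{15}), and finally upgrade $\phi$-invariance to the needed $\Ro$-statement using Lemma \ref{mod2}.
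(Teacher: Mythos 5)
Your opening move (pick a non-empty open interval $J$ with $A\cap J=\emptyset$ and invoke strong transitivity, which the paper gets from Corollary 4.2 of \cite{Y}) matches the paper's starting point, but the heart of your argument --- the finiteness step --- rests on a claim that is false. You assert that a closed, nowhere dense, forward-invariant subset of a transitive uniformly piecewise linear circle map must be finite ``because such $\phi$ has no minimal Cantor subsystems.'' Neither the conclusion nor the reason holds: already for the map $\phi_{1,1}$ of Example \ref{exx1} (essentially the full tent map glued into a circle map), the system factors onto a full shift and therefore contains infinitely many infinite closed forward-invariant nowhere dense subsets, including minimal Cantor subsystems such as orbit closures of substitution itineraries. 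Dense repelling periodic points do not exclude these. So no argument based on forward $\phi$-invariance plus expansion alone can give finiteness; the finiteness \emph{must} come from the much stronger $\Ro$-invariance, i.e.\ closure under passing to \emph{preimages with matching valency}, and you never isolate the mechanism that makes this work.

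That mechanism, in the paper's proof, is the monoid structure on $\mathcal V$ combined with strong transitivity. First, if $x\in A$ were pre-critical, then $\val(\phi^j,x)\in\{(+,-),(-,+)\}$ for some $j$, and these elements are left-absorbing for $\bullet$; hence \emph{any} $y\in J$ with $\phi^k(y)=x$ (which exists by strong transitivity) automatically satisfies $\val(\phi^{k+j},y)=\val(\phi^j,x)$, so $y\in\Ro(x)\subseteq A$, contradicting $A\cap J=\emptyset$. Second, using that $\phi$ is not locally injective one finds subintervals $J_\pm\subseteq J$ and an iterate $\phi^{k+1}$ with $\phi^{k+1}(J_+)=\phi^{k+1}(J_-)=I$ and valencies $(+,+)$ on $J_+$, $(-,-)$ on $J_-$; this guarantees that for every non-critical $a$ outside the finite set $\bigcup_{i=1}^K\phi^i(I\cap\mathcal C_i)$ one can choose a preimage in $J_+\cup J_-$ whose valency matches $\val(\phi,a)$, so $\Ro(a)$ meets $J$. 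This simultaneously yields $A\subseteq\bigcup_{i=1}^K\phi^i(I\cap\mathcal C_i)$, hence finiteness \emph{and} post-criticality in one stroke. Your sketch gestures at ``valency bookkeeping'' for the characterization of the points but treats finiteness as a separate, purely topological-dynamical fact, which is where the proof breaks down.
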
 
\begin{proof} By assumption there is an open non-empty interval $J
  \subseteq \mathbb T$ such that 
\begin{equation}\label{inter}
A \cap J = \emptyset.
\end{equation} 
By Corollary 4.2 of \cite{Y} $\phi$ is not only transitive, but
also strongly transitive. There is therefore an $N\in \mathbb N$ such that
\begin{equation}\label{3}
\bigcup_{i=0}^N\phi^i(J) = \mathbb T.
\end{equation}
If $x \in A$ and
  $\val\left(\phi^j,x\right) \in \left\{(+,-),(-,+)\right\}$ for some
  $j \geq 1$ we can choose $y \in J$ such that $\phi^k(y) = x$ for
  some $k \in \{1,2,\dots, N\}$. It
  follows from the composition table for $\bullet$ that
  $\val\left(\phi^{k+j },y\right) = \val(\phi^j,x) \bullet
  \val(\phi^k,y) = \val \left(\phi^j,x\right)$. Hence
  $y \in \Ro(x) \subseteq A$, contradicting (\ref{inter}). It follows that
$\val \left(\phi^j,x\right) \in \left\{(+,+),(-,-)\right\}$ for all $j
\in \mathbb N$ when $x \in A$; i.e $A$ consists of points that are
not pre-critical.

Since $\phi$ is
not locally injective there is a $z \in \mathbb T$ such that
$\val \left( \phi, z\right) \in \left\{(+,-),(-,+)\right\}$. Choose
$z_0 \in J$ and $k \in \{1,2,\dots, N\}$ such that $\phi^k(z_0) = z$ and note that $\val \left(
  \phi^{k+1},z_0\right) \in \left\{(+,-),(-,+)\right\}$. There are
therefore subintervals $J_+,J_-$ of $J$ such that
$\val\left(\phi^{k+1},y\right) = (+,+)$ when $y \in J_+, \
\val\left(\phi^{k+1},y\right) = (-,-)$ when $y \in J_-$, and
$\phi^{k+1}\left(J_+\right) = \phi^{k+1}\left(J_-\right)
\overset{def}{=} I$. Since $\phi$ is strongly transitive there is a $K \in \mathbb N$ such that
$\bigcup_{i=1}^K\phi^i\left(I\right) = \mathbb T$.  Set $M_i = I \cap
\mathcal C_i$.
%$$
%M_i = \left\{ z \in I : \ \val\left(\phi^i,z\right) \in \left\{
%    (+,-),(-,+)\right\} \ \right\} .
%$$  
Let $a
\in \mathbb T$ be a non-critical element, i.e. $\val (\phi,a) \in
\left\{(+,+), (-,-)\right\}$. Assume that $a \notin \bigcup_{i=1}^K\phi^i(M_i)$. We
claim that $\Ro(a) \cap J \neq \emptyset$. To see this note that there
is an $i \in \{1,2,\dots, K\}$ and a $y' \in I \backslash M_i$
such that $\phi^i(y') = a$. Then $\val\left( \phi^i,y'\right) \in
\left\{(+,+),(-,-)\right\}$ and there is also an element $y \in J_+ \cup
J_-$ such that $\phi^{k+1} (y) = y'$ and 
$\val\left( \phi^{i+k+2}, y\right) = \val\left(\phi^{i+1},y'\right) \bullet \val\left(\phi^{k+1},y\right)
= \val\left(\phi,a\right)$.
It follows that $y \in \Ro(a) \cap J$, proving the claim. 

The last two paragraphs show that $A \subseteq \bigcup_{i=1}^K\phi^i(M_i)$. This completes
the proof because $\bigcup_{i=1}^K\phi^i(M_i)$ is finite and consists of post-critical points.
\end{proof}

Call a point $x \in \mathbb T$ \emph{exposed} when $\Ro(x)$ is
finite. By Proposition \ref{simplicity} and Lemma \ref{4a} it is the possible presence of
exposed points which is the only obstruction for simplicity of
$C^*_r\left(\Gamma^+_{\phi}\right)$.

\begin{cor}\label{exposedcor} Assume that $\phi$ is transitive. Then $C^*_r\left(\Gamma^+_{\phi}\right)$ is
  simple if and only if there are no exposed points.
\end{cor}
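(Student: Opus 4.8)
The plan is to combine Lemma \ref{simplicity}, Lemma \ref{4a}, and Lemma \ref{what?} into a tight biconditional. Recall that by Lemma \ref{simplicity}, $C^*_r\left(\Gamma^+_{\phi}\right)$ is simple precisely when $\Ro(x)$ is dense in $\mathbb T$ for every $x \in \mathbb T$, and by definition $x$ is exposed exactly when $\Ro(x)$ is finite. So the task is to show, under the standing assumption that $\phi$ is transitive, that "$\Ro(x)$ is dense for all $x$" is equivalent to "no $\Ro(x)$ is finite."

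First I would prove the easy direction: if $C^*_r\left(\Gamma^+_{\phi}\right)$ is simple, then $\Ro(x)$ is dense for all $x$ by Lemma \ref{simplicity}, and a dense subset of $\mathbb T$ is certainly infinite, so there are no exposed points. (One should note in passing that this direction does not even use transitivity, but since transitivity is a standing hypothesis of the corollary there is no need to belabor the point.)

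For the converse, suppose there are no exposed points; I want to conclude that $\Ro(x)$ is dense for every $x$, and then invoke Lemma \ref{simplicity}. Fix $x \in \mathbb T$. The set $A = \overline{\Ro(x)}$ is a closed $\Ro$-invariant subset of $\mathbb T$ (closure of an $\Ro$-invariant set is $\Ro$-invariant — this is routine from the definition, or one can work directly with $\Ro(x)$ itself). If $A$ is not all of $\mathbb T$, then $\Ro(x)$ is not dense, so by Lemma \ref{4a} (applicable since $\phi$ is transitive and $\Ro(x)$ is a nonempty $\Ro$-invariant set that is not dense) $\Ro(x)$ must be finite — but that makes $x$ an exposed point, contradicting our assumption. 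Hence $A = \mathbb T$, i.e. $\Ro(x)$ is dense. Since $x$ was arbitrary, Lemma \ref{simplicity} gives simplicity of $C^*_r\left(\Gamma^+_{\phi}\right)$.

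The argument is essentially a bookkeeping exercise assembling the three lemmas, so there is no serious obstacle; the only point requiring a moment's care is the application of Lemma \ref{4a}, which is stated for $\Ro$-invariant subsets that are \emph{not dense} — one applies it to $\Ro(x)$ directly (a nonempty $\Ro$-invariant set) under the hypothesis that it fails to be dense, and reads off finiteness. Everything else is immediate from the definitions and from Lemma \ref{simplicity}.
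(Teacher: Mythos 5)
Your proposal is correct and is exactly the argument the paper has in mind: the corollary is deduced from Lemma \ref{simplicity} (simplicity is equivalent to density of every $\Ro$-orbit) together with Lemma \ref{4a} applied to the non-empty $\Ro$-invariant set $\Ro(x)$, which forces a non-dense orbit to be finite, i.e.\ $x$ to be exposed. The only point worth checking, that $\Ro(x)$ is itself $\Ro$-invariant, holds because it is the orbit of $x$ under the groupoid $\Gamma^+_{\phi}$ and hence an equivalence class, as you note.
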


%\begin{cor}\label{3c} Assume that $\phi$ is transitive. There
%  are only finitely many exposed points.
%\end{cor}

\subsubsection{$\left|\deg \phi \right| \geq 2$}

\begin{lemma}\label{corabsx} Assume that $\phi$ is transitive
  and that $\left|\deg \phi\right| \geq
  2$. It follows that $\Ro(x)$ is dense in $\mathbb T$ for all $x \in
  \mathbb T$.
\end{lemma}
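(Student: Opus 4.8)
The plan is to combine the structural normalization from Theorem \ref{11} with a direct pullback argument exploiting the high degree. First I would use Theorem \ref{11} to reduce to the case where $\phi$ is uniformly piecewise linear with some slope $s>1$, since conjugating $\phi$ by an orientation-preserving homeomorphism induces an isomorphism of $\Gamma^+_\phi$ carrying $\Ro$-orbits to $\Ro$-orbits, so denseness is preserved. Now by Corollary \ref{exposedcor} it suffices to show there are no exposed points, and by Lemma \ref{4a} every exposed point lies in the finite set $E$ of post-critical, non-pre-critical points; so I only need to rule out membership in $E$ for every $x$ with finite $\Ro$-orbit. The key point is that $|\deg\phi|\ge 2$ means $\phi$ is genuinely $d$-to-$1$ generically with $|d|\ge 2$, so preimages proliferate.

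The heart of the argument is the following: suppose $x$ is a non-critical, non-pre-critical point (so $\val(\phi^n,x)\in\{(+,+),(-,-)\}$ for all $n$), and I want to find infinitely many distinct points in $\Ro(x)$. I would look at the preimages $\phi^{-1}(x)$. Because $|\deg\phi|\ge2$, any point has at least $2$ preimages (counting the fact that $\phi$ is surjective and piecewise monotone with total variation $|\deg\phi|\ge 2$ over the circle, after possibly subdividing). Among $\phi^{-1}(x)$ there are at least two points, and since the critical set $\mathcal C_1$ is finite, I can — after iterating if necessary — find preimages that avoid critical points, hence have valency in $\{(+,+),(-,-)\}$. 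For such a preimage $y$ with $\phi(y)=x$ and $\val(\phi,y)\in\{(+,+),(-,-)\}$, the composition table gives $\val(\phi^{n+1},y)=\val(\phi^n,x)\bullet\val(\phi,y)\in\{(+,+),(-,-)\}$, so $y\overset{1}{\sim}x$, i.e. $y\in\Ro(x)$. Iterating, $\Ro(x)$ contains a full subtree of non-critical preimages of $x$, and because the branching number is $\ge 2$ at each non-critical stage while only finitely many branches can be lost to the finite critical set at each level, this tree is infinite; hence $\Ro(x)$ is infinite, contradicting exposedness. Combined with Lemma \ref{4a}'s conclusion that exposed points are non-pre-critical, this shows no point is exposed.

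The main obstacle I anticipate is the bookkeeping needed to guarantee that the preimage tree really does branch: one must ensure that at infinitely many levels a non-critical preimage exists, not merely that each point has $\ge 2$ preimages in the circle. The cleanest route is probably to argue by cardinality: if $\Ro(x)$ were finite, say of size $M$, then $\phi$ would map the finite set $\Ro(x)$ into itself (up to the valency condition, which is automatically preserved on non-critical points), forcing $\phi$ to act on an $M$-point set with every fiber of size at most $M$; but piecewise linearity with slope $s>1$ and $|\deg\phi|\ge 2$ forces the full preimage $\phi^{-1}(\Ro(x))$ restricted to non-critical points to strictly exceed $M$ once $M$ is chosen minimally, unless $\Ro(x)$ meets $\mathcal C_1$ infinitely often — which is impossible as $\mathcal C_1$ is finite and $\Ro(x)$ is finite. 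A careful count along these lines, isolating the at most $N$ levels where a critical point could obstruct, closes the gap; I would phrase it as: pick $x$ with $\Ro(x)$ finite, note $\Ro(x)\cap\mathcal C_j=\emptyset$ for all $j$ would already contradict finiteness by the branching argument above, so $\Ro(x)$ contains a critical point, whence $\Ro(x)$ is not $\Ro$-invariant after all — contradiction. Hence $\Ro(x)$ is infinite, and being a nonempty $\Ro$-invariant set that is not finite, Lemma \ref{4a} forces it to be dense.
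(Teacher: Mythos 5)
Your overall strategy --- show $\Ro(x)$ is infinite by counting preimages with controlled valency, then invoke Lemma \ref{4a} to upgrade ``infinite $\Ro$-invariant'' to ``dense'' --- is exactly the paper's strategy, and the preliminary reduction via Theorem \ref{11} is harmless but unnecessary. However, the central step has a genuine error. You claim that a preimage $y\in\phi^{-1}(x)$ with $\val(\phi,y)\in\{(+,+),(-,-)\}$ lies in $\Ro(x)$ because $\val(\phi^{n+1},y)=\val(\phi^{n},x)\bullet\val(\phi,y)$ is again in $\{(+,+),(-,-)\}$. Membership in $\Ro(x)$ requires the valencies to \emph{match}, not merely both be non-critical: if $x$ is neither pre-critical nor pre-periodic and $\val(\phi,y)=(-,-)$, then for every admissible pair one gets $\val(\phi^{m}, y)=\val(\phi^{m-1},x)\bullet(-,-)$, which is the flip of $\val(\phi^{m-1},x)$ and never equals it, so such a $y$ is \emph{not} in $\Ro(x)$. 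This is not a removable technicality, because it is exactly where the sign of the degree enters: the count of preimages satisfies $\#\{y:\val(\phi,y)=(+,+)\}-\#\{y:\val(\phi,y)=(-,-)\}=\deg\phi$, so when $\deg\phi=-2$ a generic point may have \emph{no} preimage of valency $(+,+)$ and hence no $\phi$-preimage at all in its $\Ro$-orbit; your ``branching tree'' can die at the very first level. Your fallback cardinality argument does not repair this (and its final step --- ``$\Ro(x)$ contains a critical point, whence $\Ro(x)$ is not $\Ro$-invariant'' --- is confused, since $\Ro(x)$ is $\Ro$-invariant by construction; the relevant point from Lemma \ref{4a} is that exposed points are not pre-critical).

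The paper's proof closes precisely this gap with one device: it works with the even iterates $\phi^{2n}$, whose degree $(\deg\phi)^{2n}\geq 4^{n}$ is positive and unbounded, so that the set $A_n=\{y:\phi^{2n}(y)=x,\ \val(\phi^{2n},y)=(+,+)\}$ has at least $(\deg\phi)^{2n}$ elements; each such $y$ lies in $\Ro(x)$ because $\val(\phi^{0},x)=(+,+)$ matches on the nose. Letting $n\to\infty$ gives infinitely many points of $\Ro(x)$ directly, with no tree bookkeeping and no case split on non-critical versus pre-critical points. If you restrict your argument to preimages of valency $(+,+)$ only and replace $\phi$ by $\phi^{2}$ when the degree is negative, it becomes correct and essentially coincides with the paper's.
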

\begin{proof} Let $n \in \mathbb N$. By looking at the
  graph of a lift $f : [0,1] \to \mathbb R$ of $\phi^{2n}$ one sees that
  for any $ x \in \mathbb T$, the set
$$
A_n = \left\{ y \in \mathbb T : \phi^{2n}(y) = x, \
  \val\left(\phi^{2n},y\right) = (+,+) \right\} 
$$
contains at least $\deg \phi^{2n}$ elements. Since $A_n \subseteq
\Ro(x)$ we conclude that $\Ro(x)$ is infinite for all $x \in \mathbb
T$. It follows then from Lemma \ref{4a} that $\Ro(x)$ is dense for all $x$.
\end{proof}

\begin{prop}\label{2simpe} Assume that $\phi$ is transitive
  and that $\left|\deg \phi\right| \geq 2$. It follows that
  $C^*_r\left(\Gamma^+_{\phi}\right)$ is simple.
\end{prop}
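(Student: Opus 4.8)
The plan is to combine the two lemmas immediately preceding. By Lemma~\ref{simplicity}, the $C^*$-algebra $C^*_r\left(\Gamma^+_{\phi}\right)$ is simple if and only if $\Ro(x)$ is dense in $\mathbb T$ for every $x \in \mathbb T$, so the entire task reduces to verifying this density statement under the standing hypotheses. But that is precisely what Lemma~\ref{corabsx} provides: when $\phi$ is transitive and $\left|\deg\phi\right| \geq 2$ it shows that $\Ro(x)$ is dense for all $x \in \mathbb T$. Hence both conditions of Lemma~\ref{simplicity} are met and the proposition follows at once.

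At this stage there is no genuine obstacle, since the substance has already been dealt with in Lemmas~\ref{corabsx}, \ref{4a} and \ref{simplicity}. For completeness I would recall the mechanism inside Lemma~\ref{corabsx}: inspecting the graph of a lift of $\phi^{2n}$ one produces at least $\deg\phi^{2n}$ solutions $y$ of $\phi^{2n}(y) = x$ with $\val\left(\phi^{2n},y\right) = (+,+)$, and each such $y$ lies in $\Ro(x)$ (compare $\phi^{2n}(y)$ with $x$, using that $(+,+)$ is the unit of the monoid $\mathcal V$, cf. Lemma~\ref{compx}); letting $n \to \infty$ makes $\Ro(x)$ infinite, whereupon Lemma~\ref{4a}---a non-dense, non-empty, $\Ro$-invariant set must be finite---upgrades ``infinite'' to ``dense''. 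The only points worth double-checking when writing this out are that the transitivity hypothesis feeding Lemma~\ref{4a} is genuinely in force (it is, by assumption) and that $\Ro(x)$, being an $\Ro$-invariant set containing $x$, is the correct object to which Lemma~\ref{4a} applies.
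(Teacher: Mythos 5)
Your argument is exactly the paper's: Proposition~\ref{2simpe} is stated there without a separate proof precisely because it is the immediate combination of Lemma~\ref{corabsx} (density of every $\Ro(x)$, proved via the $(+,+)$-preimages under $\phi^{2n}$ and the upgrade from infinite to dense supplied by Lemma~\ref{4a}) with the simplicity criterion of Lemma~\ref{simplicity}. Your write-up, including the recalled mechanism inside Lemma~\ref{corabsx}, matches the intended reasoning.
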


\subsubsection{$\left|\deg \phi \right| = 1$}

Before we specialize to the case where the degree is $1$ or $-1$ we need a couple of more general facts.

\begin{lemma}\label{nottot} Assume that $\phi$ is transitive, but not
  totally transitive. It follows that there is a $p > 1$ and closed
  intervals $I_i, i = 0,1,2,\dots, p-1$, such that
\begin{enumerate}
\item[1)] $\phi(I_i) = I_{i+1}$ (addition mod $p$),
\item[2)] $I_i \cap \Int I_j = \emptyset, \ i \neq j$,
\item[3)] $\bigcup_{i=0}^{p-1} I_i = \mathbb T$,
\item[4)] $\phi^p|_{I_i}$ is totally transitive for each $i$.
\end{enumerate}
\end{lemma}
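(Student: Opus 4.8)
The plan is to produce the intervals $I_0, \dots, I_{p-1}$ by looking at the lift and exploiting transitivity together with the failure of total transitivity. Since $\phi$ is transitive but not totally transitive, there is a smallest $p > 1$ such that $\phi^p$ is not transitive. First I would pass to a point $z \in \mathbb T$ with dense forward $\phi$-orbit (such a point exists by Theorem~5.9 of \cite{W}, since $\phi$ is surjective). Set $z_i = \phi^i(z)$ and let $K_i = \overline{\{z_{i}, z_{i+p}, z_{i+2p}, \dots\}}$ be the closure of the $\phi^p$-orbit of $z_i$, for $i = 0, 1, \dots, p-1$. Each $K_i$ is closed and $\phi^p$-invariant, $\phi(K_i) = K_{i+1}$ (indices mod $p$), and $\bigcup_{i=0}^{p-1} K_i = \mathbb T$ because the full forward orbit of $z$ is dense. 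The heart of the matter is to upgrade these closed sets to the closed intervals demanded by 1)--4): one needs that each $K_i$ has non-empty interior, that distinct $K_i$ overlap only in finitely many points (so that, after replacing $K_i$ by the closures of the connected components of its interior and possibly relabelling, one gets genuine intervals), and that $\phi^p$ restricted to each piece is totally transitive.

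To carry this out I would argue as follows. Because $\phi$ is piecewise monotone, for each arc $A \subseteq \mathbb T$ the image $\phi(A)$ is again a finite union of arcs; iterating, $\bigcup_{j=0}^{n} \phi^{jp}(A)$ is a finite union of arcs for every $n$, and since $\phi^p$ is transitive on $K_i$ (this is exactly the statement that $z_i$ has dense $\phi^p$-orbit in $K_i$) one shows using strong transitivity — available here via Corollary~4.2 of \cite{Y}, since $\phi$ is transitive and surjective — that for a suitable open arc the union of its $\phi^p$-iterates stabilises and equals $\Int K_i$. This forces $\Int K_i \neq \emptyset$. Next, if some $K_i \cap K_j$ ($i \neq j$) contained a non-degenerate arc $J$, then $J$ and its $\phi^p$-iterates would be contained in both $K_i$ and $K_j$; using that $\phi(K_i) = K_{i+1}$ one propagates this to get $K_i = K_j$ for all $i,j$, hence $K_i = \mathbb T$ for each $i$, and then $\phi^p$ would be transitive (indeed $z_0$ has dense $\phi^p$-orbit), contradicting the choice of $p$. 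So the $K_i$ pairwise intersect in sets with empty interior; since each is a finite union of closed arcs with endpoints among the finitely many images of critical points and of the common boundary points, these pairwise intersections are finite. Relabelling the closures of the components of the $\Int K_i$ cyclically (their number must be $p$, again by minimality of $p$: if $\Int K_0$ had two components permuted by $\phi^p$ one could split the period further) gives the arcs $I_0, \dots, I_{p-1}$ satisfying 1), 2) and 3).

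For 4), the key point is that $\phi^p|_{I_i}$, viewed as a self-map of the arc $I_i$, must be totally transitive: it is transitive by construction, and if it failed to be totally transitive one could repeat the whole argument inside $I_i$ to split the period of $\phi^p|_{I_i}$, contradicting the minimality built into the choice of $p$ for $\phi$ itself (a further splitting inside $I_i$ would, after transporting around the $\phi$-cycle $I_0 \to I_1 \to \cdots$, produce a $\phi^{p'}$ with $p' > p$ failing to be transitive while $p$ was supposed minimal — more precisely it yields a proper clopen-in-$\mathbb{T}$ decomposition incompatible with $p$ being least). I expect the main obstacle to be exactly this bookkeeping: showing that the number of components of $\bigcup \Int K_i$ is precisely $p$ and that no further periodic refinement is possible, i.e.\ pinning down the relationship between the minimal $p$ with $\phi^p$ non-transitive and the cyclic interval structure. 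One should also be slightly careful that the $I_i$ are genuine arcs in $\mathbb{T}$ and not all of $\mathbb{T}$, which is guaranteed once $p > 1$ and the overlaps are finite.
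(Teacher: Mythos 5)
The paper does not prove this lemma directly: its entire proof is a citation of Corollary 2.7 in \cite{AdRR}, a splitting theorem for transitive graph maps. You are therefore attempting to reprove an external result from scratch, which is legitimate, but your argument has a genuine gap, and it is concentrated exactly where you suspect. The problem is the choice of $p$ as the \emph{smallest} integer $>1$ with $\phi^p$ not transitive, together with every subsequent appeal to its minimality. Minimality of $p$ only excludes \emph{smaller} non-transitive powers; a further splitting of a piece produces a decomposition of some period $p'>p$ and hence a non-transitive power $\phi^{p'}$ with $p'>p$, which is perfectly compatible with $p$ being minimal. So the parenthetical ``contradicting the minimality built into the choice of $p$'' proves nothing, and the two claims it is meant to support --- that $\bigcup_i \Int K_i$ has exactly $p$ components, and that $\phi^p|_{I_i}$ is totally transitive --- are in fact false for your $p$.

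Concretely, suppose $\phi$ cyclically permutes four closed arcs $J_0\to J_1\to J_2\to J_3\to J_0$ with $\phi^4|_{J_i}$ totally transitive (the paper's own example preceding the degree-$0$ subsection, with global period $4$, is of this kind). Then the lemma holds with $p=4$, but the smallest $d>1$ for which $\phi^d$ fails to be transitive is $d=2$, since $J_0\cup J_2$ is a proper closed $\phi^2$-invariant set with non-empty interior. Your construction with $p=2$ yields $K_0=J_0\cup J_2$ and $K_1=J_1\cup J_3$: each $\Int K_i$ has two components, and $\phi^2|_{K_0}$ is transitive but not totally transitive, so conclusions 2) and 4) both fail. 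In general your $p$ is the smallest prime factor of the true period. To repair the argument one must work with a \emph{maximal} cyclic decomposition and prove that such a maximum exists, i.e.\ that the periods of cyclic interval decompositions of a transitive $\phi$ are bounded; this termination step is where piecewise monotonicity (finitely many turning points) has to enter, and it is precisely the content of the result the paper quotes. Two smaller repairs: non-emptiness of $\Int K_i$ should come from the Baire category theorem applied to $\mathbb T=\bigcup_{i=0}^{p-1}K_i$ and then be pushed around the cycle (the image of a non-degenerate interval under a piecewise strictly monotone map is non-degenerate), since your ``suitable open arc'' argument presupposes an arc inside $K_i$; and the assertion that each $K_i$ is a finite union of arcs with controlled endpoints is stated without proof and also requires the piecewise monotone structure.
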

\begin{proof} This is a special case of Corollary 2.7 in \cite{AdRR}.
\end{proof}

Note that the number $p$ and the collection $\{I_0,I_1, \dots,
I_{p-1}\}$ of intervals in Lemma \ref{nottot} are unique. We will refer
to $p$ as \emph{the global period of $\phi$}, and say that it is 1
when $\phi$ is totally transitive. In the following we
denote the set of endpoints of the intervals $I_i$ from Lemma
\ref{nottot} by $\mathcal E$.

\begin{lemma}\label{mathcalE} Assume that $\phi$ is transitive but not
  totally transitive. Then
\begin{equation}\label{E15}
\phi^{-1}\left(\mathcal E\right) \backslash \mathcal C_1 = \mathcal E.
\end{equation}
\end{lemma}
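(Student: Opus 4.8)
The plan is to prove the two inclusions $\mathcal E\subseteq\phi^{-1}(\mathcal E)\backslash\mathcal C_1$ and $\phi^{-1}(\mathcal E)\backslash\mathcal C_1\subseteq\mathcal E$ separately, using only the structure of the arcs $I_0,\dots,I_{p-1}$ furnished by Lemma \ref{nottot} together with the standing assumption that $\phi$ is piecewise monotone. Throughout I would exploit that the $I_i$ are non-degenerate closed arcs (here $p>1$) with pairwise disjoint interiors whose union is $\mathbb T$; consequently $\mathbb T$ is the disjoint union of $\mathcal E$ and the open arcs $\Int I_0,\dots,\Int I_{p-1}$, and $I_i\cap I_j\subseteq\mathcal E$ whenever $i\neq j$ (indices mod $p$).

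For ``$\supseteq$'' I would first note $\phi(\mathcal E)\subseteq\mathcal E$: a point $e\in\mathcal E$ is a common endpoint of two of the arcs, say $e\in I_{i-1}\cap I_i$, so $\phi(e)\in\phi(I_{i-1})\cap\phi(I_i)=I_i\cap I_{i+1}\subseteq\mathcal E$. Then I would show $\mathcal E\cap\mathcal C_1=\emptyset$. Suppose $e=e_i\in\mathcal E\cap\mathcal C_1$. Since $\val(\phi,e)\in\{(+,-),(-,+)\}$, the point $e$ is a \emph{strict} local extremum of $\phi$, i.e. either $\phi(t)<\phi(e)$ for all $t\neq e$ near $e$, or $\phi(t)>\phi(e)$ for all such $t$. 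A small left interval at $e$ lies in $I_{i-1}$, hence is mapped into $I_i$; a small right interval lies in $I_i$, hence is mapped into $I_{i+1}$; and $\phi(e)\in I_i\cap I_{i+1}$, where (by adjacency of the two arcs, and also when $p=2$, where $I_i\cap I_{i+1}=\mathcal E$) it is the lower endpoint of one of $I_i,I_{i+1}$ and the upper endpoint of the other. If $e$ is a local maximum, then on the side whose image lies in the arc having $\phi(e)$ as lower endpoint the values of $\phi$ would be both $<\phi(e)$ and $\geq\phi(e)$; if $e$ is a local minimum, the symmetric contradiction arises on the side mapping into the arc having $\phi(e)$ as upper endpoint. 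So $e\notin\mathcal C_1$, and together with the previous step, $\mathcal E\subseteq\phi^{-1}(\mathcal E)\backslash\mathcal C_1$.

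For ``$\subseteq$'', take $x\in\phi^{-1}(\mathcal E)\backslash\mathcal C_1$, say $\phi(x)=e_j$. Since $x\notin\mathcal C_1$, $\phi$ is strictly monotone on some neighbourhood of $x$, hence maps a neighbourhood of $x$ onto an open neighbourhood $N$ of $e_j$; as $e_j$ lies in $\overline{\Int I_{j-1}}$ and in $\overline{\Int I_j}$, the set $N$ meets both $\Int I_{j-1}$ and $\Int I_j$. Assume for contradiction $x\notin\mathcal E$; then $x\in\Int I_k$ for a unique $k$, and after shrinking we may assume $N\subseteq\phi(I_k)=I_{k+1}$. Then $I_{k+1}$ meets both $\Int I_{j-1}$ and $\Int I_j$, which by pairwise disjointness of the interiors forces $k+1\equiv j-1$ and $k+1\equiv j\pmod p$, whence $p\mid 1$ — impossible since $p>1$. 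Hence $x\in\mathcal E$, and the two inclusions together give \eqref{E15}.

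The step I expect to be the main obstacle is the middle one: arguing that an endpoint of the $I_i$ cannot be a critical point requires keeping the orientation of the arcs and the valency conventions straight, and in particular one must check that the argument still goes through in the degenerate configuration $p=2$, where $I_0$ and $I_1$ meet in the two-point set $\mathcal E$ rather than in a single point.
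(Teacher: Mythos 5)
Your proof is correct and follows essentially the same route as the paper: both establish the three facts $\phi(\mathcal E)\subseteq\mathcal E$, $\mathcal E\cap\mathcal C_1=\emptyset$ and $\phi^{-1}(\mathcal E)\backslash\mathcal C_1\subseteq\mathcal E$ by playing the images of the arcs $I_i$ off against the pairwise disjointness of their interiors. The only cosmetic differences are that you argue $\mathcal E\cap\mathcal C_1=\emptyset$ via the local-extremum property of a critical point rather than via the overlap of the two image arcs, and that your indexing tacitly identifies circle-adjacency of the arcs at a point of $\mathcal E$ with the dynamical labelling $I_{i-1},I_i$ --- harmless, since the argument only needs the two arcs meeting there to be distinct.
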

\begin{proof} Assume for a contradiction that $e \in \mathcal E$, but
  $\phi(e) \notin \mathcal E$. There are then intervals $I_i, I_{i'},
  I_j$ as in Lemma \ref{nottot} such that $i \neq i', \ e \in I_i \cap
  I_{i'}$ and $\phi(e) \in \Int I_j$. By continuity of $\phi$ and
  condition 1) from Lemma \ref{nottot} it follows that $I_{i+1} \cap
  \Int I_j \neq \emptyset$ and $I_{i'+1} \cap \Int I_j \neq
  \emptyset$. Since $i+1 \neq i'+1$ this violates condition 2). Thus
\begin{equation}\label{E11}
\phi(\mathcal E) \subseteq \mathcal E.
\end{equation}
If $e \in \mathcal E$ is a critical point the images $I_{i+1} = \phi\left(I_i\right)$ and
$I_{i'+1} = \phi\left(I_{i'}\right)$ of the two intervals $I_i, I_{i'}$
containing $e$ will both have non-trivial intersection with the same
interval $I_j$ containing $\phi(e)$; contradicting 2) again. Hence
\begin{equation}\label{E12}
\mathcal E \cap \mathcal C_1 = \emptyset .
\end{equation} 

Consider then an element $x \in \phi^{-1}(\mathcal E)$ and assume that
$x \notin \mathcal C_1$. Let $I_i$ and $I_{i'}$ be the two intervals
among the intervals from Lemma \ref{nottot} which contain $\phi(x)$.
If $x \notin \mathcal E$ there is a third interval $I_j$ which
contains $x$ in its interior. Since $x$ is not critical it follows
that $\phi(I_j) = I_{j+1}$ has non-trivial intersection with both
$\Int I_i$ and $\Int I_{i'}$, contradicting 2) once more. Hence 
\begin{equation}\label{E13}
\phi^{-1}(\mathcal E) \backslash \mathcal C_1 \subseteq \mathcal E.
\end{equation}
This completes the proof since (\ref{E15}) is equivalent to
(\ref{E11}), (\ref{E12}) and (\ref{E13}).
 
\end{proof}

\begin{lemma}\label{kluu} Assume that $\phi$ is transitive but not
  totally transitive. When $\deg \phi =1$ the set $\mathcal E$ is a
  $p$-periodic orbit where $p$ is the global period of $\phi$, and $\val\left(\phi,x\right) = (+,+)$ for all $x \in \mathcal E$. When
  $\deg \phi = -1$ the global period of $\phi$ is 2 and $\mathcal E$
  consists of two distinct fixed points of valency $(-,-)$.
\end{lemma}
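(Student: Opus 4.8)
The plan is to start from Lemma \ref{mathcalE}, which gives $\phi^{-1}(\mathcal E)\setminus\mathcal C_1=\mathcal E$, and combine it with the two simple facts from the proof of Lemma \ref{mathcalE}, namely $\phi(\mathcal E)\subseteq\mathcal E$ and $\mathcal E\cap\mathcal C_1=\emptyset$. Since $\mathcal E$ is finite and $\phi$ is surjective, and $\phi^{-1}(\mathcal E)\subseteq\mathcal E$ (because $\mathcal E$ contains no critical points, by \eqref{E12}), the restriction $\phi|_{\mathcal E}:\mathcal E\to\mathcal E$ is a bijection of a finite set, hence a permutation. Thus $\mathcal E$ decomposes into periodic orbits of $\phi$. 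I would then note that the intervals $I_0,\dots,I_{p-1}$ are cyclically permuted by $\phi$ via $\phi(I_i)=I_{i+1}$, and each endpoint of $I_i$ is carried to an endpoint of $I_{i+1}$; tracking which endpoint (left or right, in the counter-clockwise order) requires knowing the orientation behaviour of $\phi$ on $\mathcal E$.

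Next I would use the degree hypothesis to pin down the valencies. Since no point of $\mathcal E$ is critical, each $x\in\mathcal E$ has $\val(\phi,x)\in\{(+,+),(-,-)\}$. When $\deg\phi=1$, the lift $f$ satisfies $f(1)-f(0)=1$, so $f$ cannot be decreasing on every subinterval; more to the point, I would argue that since $\phi$ cyclically permutes the $p$ closed intervals $I_0,\dots,I_{p-1}$ tiling $\mathbb T$ and the endpoints are non-critical, the local orientation of $\phi$ at the common endpoint of $I_{i-1}$ and $I_i$ is the same as the orientation type by which $\phi$ maps a neighbourhood of that endpoint; an orientation-reversing endpoint would force $\phi(I_{i-1})$ and $\phi(I_i)$ to lie on the same side, contradicting $\phi(I_i)=I_{i+1}$ together with condition 2). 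Hence $\val(\phi,x)=(+,+)$ for all $x\in\mathcal E$, and by the permutation structure $\mathcal E$ is a single $p$-periodic orbit (the cyclic action on $\{I_0,\dots,I_{p-1}\}$ is a $p$-cycle, so the induced action on the $p$ endpoints is also a single $p$-cycle). When $\deg\phi=-1$, the lift has $f(1)-f(0)=-1$; I would show that transitivity together with degree $-1$ forces the global period to be exactly $2$: with $\deg\phi=-1$, $\phi$ is orientation-reversing "on average", so $\phi^2$ has degree $1$ and $\phi$ swaps the two intervals $I_0,I_1$; the common endpoints of $I_0$ and $I_1$ (there are two of them, since two arcs tiling $\mathbb T$ share two endpoints) are each fixed by $\phi$ because $\phi$ reverses orientation and swaps the two arcs, so it must fix each of the two shared endpoints. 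Each such fixed point is non-critical with valency $(-,-)$ since $\phi$ reverses orientation locally there.

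The main obstacle I expect is the bookkeeping of \emph{which} endpoint goes to which under $\phi$, i.e. justifying cleanly that an orientation-reversing (valency $(-,-)$) endpoint in the degree $1$ case is incompatible with conditions 1)--3) of Lemma \ref{nottot}, and dually that in the degree $-1$ case the two shared endpoints are individually fixed rather than swapped. The cleanest route is probably to pass to the lift $f:[0,1]\to\mathbb R$: the preimages of $\mathcal E$ lift to a finite $\phi$-invariant set of real numbers on which $f$ acts, and the monotonicity of $f$ on the complementary intervals, combined with $f(1)-f(0)=\deg\phi$, forces the claimed valency pattern and the claimed global period by a direct inspection of the graph of $f$. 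I would also invoke that $p$ and $\{I_0,\dots,I_{p-1}\}$ are unique (the remark following Lemma \ref{nottot}) to conclude that in the degree $-1$ case $p=2$ is forced rather than merely possible. Once the valency $(+,+)$ (resp. $(-,-)$) statement is in hand, that $\mathcal E$ is a single orbit (resp. consists of exactly two fixed points) follows from the transitivity of the cyclic permutation of the tiling intervals.
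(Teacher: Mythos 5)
Your overall strategy --- start from Lemma \ref{mathcalE}, observe that $\phi$ restricts to a permutation of the finite set $\mathcal E$, and then use the degree to pin down the valencies and the endpoint combinatorics --- is the same as the paper's. But the one concrete argument you offer for the crucial valency claim is wrong, and the two places where you yourself flag difficulty are exactly where the proof is still missing. Concretely: your claim that a point $e\in\mathcal E$ with $\val(\phi,e)=(-,-)$ ``would force $\phi(I_{i-1})$ and $\phi(I_i)$ to lie on the same side'' is false. At a non-critical orientation-reversing point the images of the two adjacent arcs still lie on opposite sides of $\phi(e)$ --- the sides are merely interchanged --- so no conflict with condition 2) of Lemma \ref{nottot} arises; a ``same side'' collision happens only at a critical point, which (\ref{E12}) already excludes. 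The step that is actually needed is: (i) the valency is \emph{constant} on $\mathcal E$, because if $e,e'$ are the two endpoints of one arc $I_i$ then $\phi$ maps $I_i$ onto the single arc $I_{i+1}$, which forces $\phi(e)$ and $\phi(e')$ to be the two endpoints of $I_{i+1}$ with the same left/right matching at both ends; and (ii) the common value is determined by the degree, since the lift $f$ carries the interval corresponding to $I_i$ onto a single interval of length $|I_{i+1}|<1$, so $f$ changes by $\pm|I_{i+1}|$ across it with sign equal to the common valency sign, and summing over the $p$ arcs gives $\deg\phi=\pm1$ accordingly. Only after (i) and (ii) does your (correct) ``left endpoint goes to left endpoint'' bookkeeping yield a single $p$-cycle on $\mathcal E$ in the degree $1$ case.

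In the degree $-1$ case the assertion $p=2$ is presupposed rather than proved: ``$\phi$ swaps the two intervals $I_0,I_1$'' assumes there are only two, and nothing you say excludes, say, a $4$-cycle of arcs a priori. The paper's argument here is short but essential: a degree $-1$ circle map has a fixed point $x$; since $x\in I_i$ for some $i$ while $x=\phi(x)\in I_{i+1}$ and $x=\phi^2(x)\in I_{i+2}$, and a point lies in at most two of the closed arcs of the tiling, two of $I_i,I_{i+1},I_{i+2}$ coincide, whence $p=2$. (Alternatively, once all valencies are $(-,-)$ by the argument above, the induced permutation of the arcs is an order-two reflection of $\mathbb Z/p$ and can be a $p$-cycle only for $p\le2$.) Finally, your last sentence derives the valency $(-,-)$ at the two shared endpoints \emph{from} ``$\phi$ reverses orientation locally there'', which is circular --- local orientation reversal at those points is precisely what has to be established, and it comes from step (ii) above, not from the degree being $-1$ ``on average''. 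With $p=2$ and valency $(-,-)$ in hand, your deduction that the two shared endpoints are individually fixed rather than swapped is correct.
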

\begin{proof} Let $I_i, i = 0,1,\cdots, p-1$, be the intervals from
  Lemma \ref{nottot}, and let $e_i^-$, be the left endpoint of $I_i$, defined using the orientation of
  $\mathbb T$. When $\deg \phi = 1$ we see by looking at the graph of a lift of $\phi$ that $\val\left(\phi, e^-_i\right) = (+,+)$ and
  $\phi\left(e^-_i\right) = e^-_{i+1}$ (addition mod $p$). It follows that $\mathcal E = \Ro\left(e^-_0\right)$, and that this is also the (forward) orbit of $e_0^-$. When $\deg \phi = -1$ observe
  first $\phi$ has a fixed point $x$. This fixed point lies in one of
  the intervals $I_i$. Since $x$ also lies in $I_{i+1}$ and $I_{i+2}$
it follows that two of the intervals $I_i$, $I_{i+1}$ and $I_{i+2}$
must be the same, i.e. $p = 2$. By looking at the graph of a lift of
$\phi$ we see that $\mathcal E$ consists of two fixed points of valency $(-,-)$.
\end{proof}

\begin{lemma}\label{oooo} If $\deg
  \phi = 1$ there is for all $x \in \mathbb T$ an element $y \in
  \phi^{-1}(x)$ such that $\val \left(\phi, y\right) = (+,+)$. If $\deg
  \phi = -1$ there is for all $x \in \mathbb T$ an element $y \in
  \phi^{-1}(x)$ such that $\val \left(\phi, y\right) = (-,-)$.
\end{lemma}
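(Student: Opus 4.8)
The plan is to argue directly from the graph of the lift. Let $f : [0,1] \to \mathbb R$ be the lift of $\phi$, so that $f$ is continuous and piecewise strictly monotone, and $\deg \phi = f(1) - f(0) \in \{1, -1\}$. First I would treat the case $\deg \phi = 1$. Then $f(1) = f(0) + 1$, so the image $f([0,1])$ contains the interval $[f(0), f(0)+1]$, which is a fundamental domain for the covering $\mathbb R \to \mathbb T$. Fix $x = e^{2\pi i \theta} \in \mathbb T$. The point is to find $t \in [0,1]$ with $f(t) \equiv \theta \pmod{\mathbb Z}$ at which $f$ is \emph{locally increasing} (so that the corresponding $y = e^{2\pi i t}$ has $\val(\phi,y) = (+,+)$, using the monoid composition table and Lemma \ref{compx} only implicitly — really just the definition of $\val$). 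To do this I would consider the value $s = f(0)$ if $f(0) \neq \theta \pmod{\mathbb Z}$, or more robustly pick any value $v$ in the open interval $]f(0), f(0)+1[$ that is congruent to $\theta \pmod{\mathbb Z}$ — there is exactly one such $v$. Since $f(0) < v < f(1)$ and $f$ is continuous, the set $f^{-1}(v)$ is non-empty; let $t_0$ be its infimum. Then $f(t) < v$ for $t < t_0$ close to $t_0$ (since $f(0) < v$ and $t_0$ is the first hitting time), and $f(t_0) = v$, so on a small interval to the left of $t_0$ the function $f$ is $\leq v$ with $f(t_0) = v$; because $f$ is piecewise strictly monotone it is strictly increasing on that left interval. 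Similarly, just to the right of $t_0$ one checks that $f$ cannot immediately turn strictly decreasing while staying consistent with $t_0$ being the first hitting time — but in fact I only need the valency to be $(+,+)$, i.e. increasing on \emph{both} sides; if $t_0$ happens to be a critical point of type $(+,-)$ I would instead move to the \emph{last} hitting time $t_1 = \sup f^{-1}(v)$, where the symmetric argument gives increasing on the right, and combine: among the finitely many points of $f^{-1}(v)$, strict monotonicity on each linear-or-monotone piece forces at least one of them to be an interior point of a maximal interval of strict monotonicity, hence of valency $(+,+)$ or $(-,-)$; and a parity/degree count (the graph crosses the level $v$ a net of $+1$ times since $f(0) < v < f(1) = f(0)+1$) forces at least one genuine upward crossing, which is a point of valency $(+,+)$.

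For the case $\deg \phi = -1$ the argument is the mirror image: now $f(1) = f(0) - 1 < f(0)$, so for the unique $v \in \,]f(0)-1, f(0)[\,$ congruent to $\theta \pmod{\mathbb Z}$ the graph of $f$ has a net $-1$ crossings of the level $v$, hence at least one genuine downward crossing, i.e. a point $t$ with $f$ strictly decreasing on both sides, which gives $y = e^{2\pi i t} \in \phi^{-1}(x)$ with $\val(\phi, y) = (-,-)$.

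I expect the main obstacle to be the careful bookkeeping at a hitting time $t_0$ that is also a critical point: one must rule out that \emph{every} preimage of $x$ under $\phi$ is a critical point of the "wrong" type, and this is exactly where the net-crossing-number argument (the degree $\pm 1$) is essential rather than the mere surjectivity of $\phi$. Concretely, I would phrase it as: walking along the graph of $f$ from $t=0$ to $t=1$, the level line at height $v$ is crossed transversally some number of times, and the algebraic count of these transversal crossings equals $\deg \phi = \pm 1$; a transversal upward crossing is precisely a point of valency $(+,+)$ and a transversal downward crossing one of valency $(-,-)$, so a non-zero algebraic count guarantees at least one crossing of the appropriate sign. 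Everything else is routine continuity and the finiteness of the critical set $\mathcal C_1$, which ensures the transversal crossings are finite in number and well-defined.
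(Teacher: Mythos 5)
Your argument is exactly the one the paper intends --- the paper's entire proof is ``Look at the graph of a lift of $\phi$'' --- and your crossing-count formalization of it is correct: at a transversal upward crossing of a level $v$ by the piecewise strictly monotone lift $f$, both one-sided monotone pieces are forced to be increasing, so the valency is $(+,+)$, and the net crossing number $f(1)-f(0)=\deg\phi=\pm1$ guarantees a crossing of the right sign. The only slip is the claim that there is exactly one $v\in\,]f(0),f(0)+1[\,$ with $v\equiv\theta\pmod{\mathbb Z}$: this fails precisely when $\theta\equiv f(0)$, i.e.\ for the single point $x=\phi(1)$ (the image of the basepoint $e^{2\pi i\cdot 0}$), where the two candidate representatives are the endpoints $f(0)$ and $f(0)+1$ and the open interval contains neither. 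This case is easily repaired --- either conjugate by a rotation so that $x\neq\phi(1)$ before lifting, or perform the crossing count on the circle itself, treating $t=0$ and $t=1$ as the single circle point $1$ whose valency combines the behaviour of $f$ just right of $0$ and just left of $1$ --- but as written your proof does not cover that one value of $x$.
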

\begin{proof} Look at the graph of a lift of $\phi$.
\end{proof}

\begin{lemma}\label{fin} Assume that $\deg \phi \in \{1,-1\}$. Then
  $\Ro(x)$ is infinite for all $x \in \mathbb T$ that are not periodic
  under $\phi$.
\end{lemma}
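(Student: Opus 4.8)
The plan is to exhibit, for a given non-periodic point $x$, an infinite backward $\phi$-orbit sitting inside $\Ro(x)$. The two inputs that do all the work are Lemma~\ref{oooo} (which provides preimages of prescribed valency, using that $\phi$ is surjective) and Lemma~\ref{compx} (which tells us how the valencies multiply along such a chain). First I would treat $\deg\phi = 1$: choose recursively points $y_1,y_2,\dots$ with $y_1 \in \phi^{-1}(x)$, $y_{j+1}\in\phi^{-1}(y_j)$, and $\val(\phi,y_j)=(+,+)$ for every $j$, which is possible by Lemma~\ref{oooo}. Then $\phi^j(y_j)=x$, and an induction on $j$ via Lemma~\ref{compx}, using that $\val(\phi^j,y_j)=\val(\phi^{j-1},y_{j-1})\bullet\val(\phi,y_j)$ and that $z\bullet(+,+)=z$ for all $z\in\mathcal V$, gives $\val(\phi^j,y_j)=(+,+)$.

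Next I would check that each $y_j$ lies in $\Ro(x)$. For any $n\geq 1$ we have $\phi^n(x)=\phi^{n+j}(y_j)$, and by Lemma~\ref{compx} again $\val(\phi^{n+j},y_j)=\val(\phi^n,x)\bullet\val(\phi^j,y_j)=\val(\phi^n,x)\bullet(+,+)=\val(\phi^n,x)$; hence $y_j\in\Ro(x)$ with the data $m=n+j$. (Alternatively one may take $n=0$, since $\val(\id,x)=(+,+)$.) Finally I would prove $\{y_j\}$ is infinite: if it were finite, then $y_a=y_b$ for some $a<b$, and since $\phi^{b-a}(y_a)=y_a$ we would get $\phi^{b-a}(x)=\phi^{b-a}(\phi^a(y_a))=\phi^a(\phi^{b-a}(y_a))=\phi^a(y_a)=x$, contradicting that $x$ is not periodic. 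Thus $\Ro(x)\supseteq\{y_j\}$ is infinite.

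For $\deg\phi=-1$ I would run the identical construction but choosing $\val(\phi,y_j)=(-,-)$ (again Lemma~\ref{oooo}). The one new wrinkle — and the only point that needs care — is that the valency along the chain is now not constant: from the composition table, $\val(\phi^j,y_j)=(-,-)$ for $j$ odd and $(+,+)$ for $j$ even. So only the even terms are manifestly in $\Ro(x)$, by the same computation ($\val(\phi^{n+j},y_j)=\val(\phi^n,x)\bullet(+,+)=\val(\phi^n,x)$ when $j$ is even), and the pigeonhole/non-periodicity argument is then applied to the subsequence $\{y_{2i}:i\geq 1\}$, which must be infinite for exactly the same reason as above. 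I do not expect any genuine obstacle here: once Lemmas~\ref{oooo} and~\ref{compx} are in hand, the argument is just the valency bookkeeping together with the observation that $x$ being fixed by $\phi^{b-a}$ on the backward orbit forces $x$ itself to be periodic.
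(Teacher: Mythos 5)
Your proof is correct and follows essentially the same route as the paper: both use Lemma~\ref{oooo} to build an infinite backward orbit of controlled valency inside $\Ro(x)$ and conclude via non-periodicity of $x$. The only cosmetic difference is that the paper absorbs the $\deg\phi=-1$ case by taking steps of length two (so every step has valency $(+,+)$), where you take single $(-,-)$-steps and pass to the even-index subsequence — the same bookkeeping in a slightly different arrangement.
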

\begin{proof} Let $x \in \mathbb T$. It follows from Lemma \ref{oooo}
  that there are sequences $\{n_i\}$ in
  $\mathbb N$ and $\left\{x_i\right\}$ in $\mathbb T$ such that
  $\phi^{n_1}(x_1) = x$, $\phi^{n_i}(x_i) = x_{i-1}, \ i \geq 2$, and
  $\val\left(\phi^{n_i},x_i\right) = (+,+)$ for all $i$. Then $x_i \in
  \Ro(x)$ for all $i$. The set $\{x_i : i \in
  \mathbb N\}$ is infinite when $x$ is not periodic.
\end{proof}

\begin{lemma}\label{Eroinv} Assume that $\deg \phi \in \{-1,1\}$ and $\phi$ is transitive but not
  totally transitive. Then $\mathcal E$ is the set of exposed points
  for $\phi$.
\end{lemma}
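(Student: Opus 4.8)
The plan is to prove the two inclusions $\mathcal E\subseteq\{\text{exposed points}\}$ and $\{\text{exposed points}\}\subseteq\mathcal E$ separately.

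\emph{$\mathcal E$ consists of exposed points.} Since $\mathcal E$ is finite it suffices to show $\Ro(x)\subseteq\mathcal E$ for every $x\in\mathcal E$. So fix $x\in\mathcal E$ and $y\in\Ro(x)$, and choose $n,m\in\mathbb N$ with $\phi^n(x)=\phi^m(y)$ and $\val(\phi^n,x)=\val(\phi^m,y)$. By Lemma~\ref{kluu} the set $\mathcal E$ is a union of periodic orbits of $\phi$ and each of its points has valency $(+,+)$ when $\deg\phi=1$ and $(-,-)$ when $\deg\phi=-1$; composing valencies along the (periodic) orbit of $x$ via Lemma~\ref{compx} we get $\phi^n(x)\in\mathcal E$ and $\val(\phi^n,x)\in\{(+,+),(-,-)\}$, hence $\val(\phi^m,y)\notin\{(+,-),(-,+)\}$. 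Now $\val(\phi^m,y)=\val(\phi,\phi^{m-1}(y))\bullet\cdots\bullet\val(\phi,y)$, and the composition table for $\bullet$ shows that once one factor lies in $\{(+,-),(-,+)\}$ so does the entire product; therefore $\phi^j(y)\notin\mathcal C_1$ for $0\le j\le m-1$. Since $\phi^m(y)\in\mathcal E$ and $\phi^{m-1}(y)\notin\mathcal C_1$, Lemma~\ref{mathcalE} gives $\phi^{m-1}(y)\in\mathcal E$; iterating this backwards $m$ times yields $y\in\mathcal E$, as wanted.

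\emph{Every exposed point lies in $\mathcal E$.} Let $x$ be exposed, so $\Ro(x)$ is finite and in particular $\ne\mathbb T$. By Lemma~\ref{4a} every point of $\Ro(x)$ is post-critical and not pre-critical; in particular $\val(\phi^j,x)\in\{(+,+),(-,-)\}$ for all $j$, and by Lemma~\ref{fin} the point $x$ is periodic. Put $F=\Ro(x)$, and write $\psi$ for $\phi$ if $\deg\phi=1$ and for $\phi^2$ if $\deg\phi=-1$; by Lemma~\ref{oooo} every point of $\mathbb T$ then has a $\psi$-preimage of valency $(+,+)$ (for $\deg\phi=-1$ compose two $(-,-)$-preimages, using $(-,-)\bullet(-,-)=(+,+)$). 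If $\psi(z)=y\in F$ with $\val(\psi,z)=(+,+)$, then choosing appropriate exponents in the definition of $\Ro$ shows $z\in\Ro(y)=F$. As distinct points of $F$ have disjoint sets of $(+,+)$-preimages under $\psi$ and $F$ is finite, each $y\in F$ has exactly one such preimage, lying in $F$; hence the map $F\to F$ sending $y$ to its $(+,+)$-preimage is a bijection, so every $f\in F$ satisfies $\psi(f)\in F$ and $\val(\psi,f)=(+,+)$, and $\psi$ restricts to a bijection of $F$. Consequently, for every $N$ the point $x$ has exactly one $(+,+)$-preimage under $\psi^N$, namely the appropriate point of $F$.

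\emph{Ruling out $x\notin\mathcal E$.} Suppose for contradiction that $x\notin\mathcal E$. Then $x$ lies in the interior of one of the intervals $I_i$ of Lemma~\ref{nottot}, and, since by Lemma~\ref{mathcalE} (iterated, using that $x$ is not pre-critical) the forward orbit of $x$ cannot meet $\mathcal E$, this orbit stays in $\bigcup_i\Int I_i$. By Theorem~\ref{11} we may pass to the piecewise linear model — which changes neither transitivity nor $\Ro$-orbits nor $\mathcal E$ — so that $\psi$ is uniformly piecewise linear of some slope $\sigma>1$ while $\phi$ is not locally injective. A count of the increasing branches of the lift of $\psi^N$ shows that the average over $\mathbb T$ of the number of $(+,+)$-preimages under $\psi^N$ grows without bound in $N$; so for large $N$ there is a non-empty open set $W$ on which every point has at least two $(+,+)$-preimages under $\psi^N$. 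By strong transitivity of $\phi$ (valid since $\phi$ is transitive), $\bigcup_{i\ge0}\psi^i(W)=\mathbb T$, hence some $f_0\in F$ equals $\psi^i(w)$ with $w\in W$; pushing the two $(+,+)$-preimages of $w$ forward through $\psi^i$ — composing if necessary with one of the two branches of a power of $\phi$ over a common interval produced by the non-local-injectivity of $\phi$, exactly as in the proof of Lemma~\ref{4a}, to make the connecting valency $(+,+)$ — produces two \emph{distinct} $(+,+)$-preimages of $f_0$ under a single power of $\psi$. Both lie in $F$, on which $\psi$ is injective: contradiction.

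The step I expect to be the main obstacle is this last one, i.e.\ excluding ``exotic'' finite $\Ro$-orbits distinct from $\mathcal E$. The asymmetry is that $\mathcal E$ is governed by Lemma~\ref{mathcalE}, which controls \emph{all} non-critical preimages of $\mathcal E$, whereas a hypothetical extra finite $\Ro$-orbit $F$ is only known a priori to be closed under positive-valency preimages; the delicate point is to combine the expansiveness of the piecewise linear model with strong transitivity to manufacture a genuinely new positive-valency preimage of a point of $F$ while keeping track of the valencies of the connecting iterates.
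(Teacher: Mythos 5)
Your first inclusion ($\mathcal E$ consists of exposed points) is correct and is essentially the paper's own argument: use Lemma \ref{kluu} and the composition table to see that $\val(\phi^n,x)$ is non-critical for $x\in\mathcal E$, deduce that no intermediate iterate of $y$ is critical, and pull $y$ back into $\mathcal E$ one step at a time via Lemma \ref{mathcalE}. The second inclusion is where you diverge, and your core observation there is correct and attractive: each $y\in F=\Ro(x)$ has a $(+,+)$-preimage under $\psi$ by Lemma \ref{oooo}, such preimages lie in $F$ and are disjoint over distinct $y$, so finiteness forces $\psi$ to restrict to a bijection of $F$ with all valencies $(+,+)$, whence each point of $F$ has a \emph{unique} $(+,+)$-preimage under every power of $\psi$.

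The final step, however, has two genuine gaps. First, when $\deg\phi=-1$ you set $\psi=\phi^2$ and assert $\bigcup_i\psi^i(W)=\mathbb T$ ``by strong transitivity of $\phi$''. That is false in exactly the situation at hand: by Lemma \ref{kluu} the global period is $2$, so $\phi^2$ preserves each of $I_0,I_1$ and the even iterates of an open set need not cover $\mathbb T$. This is repairable (choose $j$ with $F\cap\Int I_j\neq\emptyset$, redo the branch count for the interval map $\phi^2|_{I_j}$, and use total transitivity of $\phi^2|_{I_j}$ from Lemma \ref{nottot}), but as written the step fails. Second, the valency correction is not actually resolved. If $\phi^i(w)=f_0$ with $\val(\phi^i,w)=(-,-)$, then $u_1,u_2$ become $(-,-)$-preimages of $f_0$; the two-branch device from Lemma \ref{4a} controls the valency of a \emph{backward} extension chosen separately for each $u_j$, and applied naively it yields $(+,+)$-preimages of $f_0$ under two \emph{different} powers of $\psi$ --- which is perfectly consistent with uniqueness and gives no contradiction. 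The clean repair is the degree-parity argument the paper itself uses: since $\deg\phi^{i+N}=1$, the number of non-critical $(+,+)$-preimages of $f_0$ exceeds the number of $(-,-)$-preimages by exactly one, so already one $(-,-)$-preimage forces a second $(+,+)$-preimage. (You must also shrink $W$ off the finite set of critical points of the relevant iterate so that $\val(\phi^i,w)$ is non-critical; that part is routine.) For comparison, the paper avoids the piecewise linear model and branch counting entirely: it fixes one even $m$ divisible by the global period and by all periods of exposed points, shows $\phi^{-m}(y_0)\setminus\{y_0\}\subseteq\mathcal C_m$ using precisely this degree argument, and then splits the interval $I_i$ containing $y_0$ at $y_0$ to contradict the total transitivity of $\phi^p|_{I_i}$.
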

\begin{proof} Let $e \in \mathcal E$ and $y \in \Ro(e)$. There are natural
numbers $i,j \in \mathbb N$ such that $\phi^i(e) = \phi^{j}(y)$ and
$\val \left( \phi^i, e\right) = \val \left(\phi^{j}, y\right)$. It
follows from (\ref{E15}) $\phi^j(y) = \phi^i(x) \in \mathcal E$ and that 
$\val \left(\phi^i,e\right) \in(\pm,\pm)$ since $e\in \mathcal
E$. This implies first that $\val \left(\phi, \phi^{k}(y)\right) \in
(\pm,\pm)$ for all $k \leq j-1$ and then that $\phi^{j-1}(y) \in \phi^{-1}(\mathcal
E)\backslash \mathcal C_1 = \mathcal E$. But then $\phi^{j-2}(y) \in  \phi^{-1}(\mathcal
E)\backslash \mathcal C_1 = \mathcal E$ and so on. After $j$ steps we
conclude that $y \in \mathcal E$. This shows that $\mathcal E$ is $\Ro$-invariant.

It remains to show that $\mathcal E$ contains all exposed
points. Assume therefore that $y_0$ is an exposed point. It follows from Lemma
\ref{fin} that all exposed points are periodic. Since they are also
post-critical by Lemma \ref{4a} and there are only finitely many
critical points it follows that there are only finitely many
exposed points. Let $m\in \mathbb N$ be an even number divisible by
the global period $p$ and by all the
periods of exposed points. Then $\phi^m(y_0) = y_0$. Furthermore, if
$z \in \phi^{-m}(y_0)$ and $\val \left(\phi^m,z\right) = (+,+)$ we see
that $z \in \Ro(y_0)$ and hence $z$ is exposed. By definition of $m$
this implies that $\phi^m(z) = z$, i.e. $z = y_0$. To see that there
can not be any $z \in \phi^{-m}(y_0)$ with $\val\left(\phi^m,z\right)
= (-,-)$ observe by looking at the graph of the lift of $\phi^m$, that since $\deg \phi^m = 1$ the existence of such a
$z$ would imply the existence of a $z' \in \phi^{-m}(y_0) \backslash
\{y_0\}$ with $\val \left(\phi^m,z'\right) = (+,+)$ which is
impossible as we have just seen. Now assume for a contradiction that
$y_0 \notin \mathcal E$. Then $y_0$ lies in the interior of one of the
intervals from Lemma \ref{nottot}, say $I_i$. We can then write $I_i$
as the union $I_i = J_1 \cup J_2$ of two closed non-degenerate
intervals such that $J_1 \cap J_2 = \{y_0\}$. As we have just seen an
element of $I_i \cap \left(\phi^{-m} (y_0) \backslash \{y_0\} \right)$
must be critical for $\phi^m$ and it follows therefore that
$\phi^m(J_1) = J_1$. This
contradicts the total transitivity of $\phi^p|_{I_i}$. 
\end{proof}

\begin{lemma}\label{tot} Assume that $\phi$ is totally transitive and
  that $\deg \phi \in \{-1,1\}$. It follows that there is at most a
  single exposed point, and it must be a fixed point $e$ such that
  $\phi^{-1}(e) \backslash \mathcal C_1 = \{e\}$. %with the which must be a fixed point $e$ with the
%  property that
%\begin{equation}\label{E17}
%\phi^{-1}(e) \backslash \mathcal C_1 = \{e\}.
%\end{equation}
\end{lemma}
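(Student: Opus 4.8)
The plan is to prove, in turn, that there is at most one exposed point, that an exposed point must be a fixed point $e$, and that then $\phi^{-1}(e)\setminus\mathcal C_1=\{e\}$; I will carry this out for $\deg\phi=1$, the case $\deg\phi=-1$ being parallel (using the $(-,-)$-version of Lemma \ref{oooo}). As preliminary remarks: every exposed point is periodic by Lemma \ref{fin}, and is post-critical but not pre-critical by Lemma \ref{4a} (total transitivity implies transitivity); since there are finitely many critical points and the forward orbit of each meets at most one periodic cycle, there are only finitely many exposed points. Fix a common multiple $m$ of all their periods, so that $\deg\phi^m=1$ and $\phi^m(e)=e$ for every exposed $e$. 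Note also that $\Ro(\cdot)$ is, by Lemma \ref{compx}, an equivalence relation, so that every element of $\Ro(e)$ is again exposed.

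The key step is: for an exposed $e$ one has $\phi^{-m}(e)\setminus\mathcal C_m=\{e\}$, where $\mathcal C_m$ is the critical set of $\phi^m$. Indeed, if $z\in\phi^{-m}(e)$ with $\val(\phi^m,z)=(+,+)$ then $z\in\Ro(e)$ (the witness is $\phi^0(e)=e=\phi^m(z)$ with both valencies $(+,+)$), hence $z$ is exposed, hence periodic of period dividing $m$, hence $\phi^m(z)=z=e$. Now normalise the lift $g$ of $\phi^m$ so that $g(\tilde e)=\tilde e$ for a lift $\tilde e$ of $e$; then $g(t+1)=g(t)+1$. If $g$ left the interval $[\tilde e,\tilde e+1]$, then on $[\tilde e,\tilde e+1]$ it would have to cross some lift of $e$ at an interior point with strictly positive slope, producing a point $z'\neq e$ with $\phi^m(z')=e$ and $\val(\phi^m,z')=(+,+)$, which we have just excluded; hence $g([\tilde e,\tilde e+1])=[\tilde e,\tilde e+1]$. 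Consequently any $z\in\phi^{-m}(e)$ with $z\neq e$ has its lift in $(\tilde e,\tilde e+1)$ carried by $g$ to an endpoint of $[\tilde e,\tilde e+1]$, hence to an interior local extremum of $g$, so $\val(\phi^m,z)\in\{(+,-),(-,+)\}$ and $z\in\mathcal C_m$. This proves the claim, and records that $g$ does not overshoot $[\tilde e,\tilde e+1]$.

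Now suppose $e\neq e'$ are exposed; pick lifts with $\tilde e<\tilde e'<\tilde e+1$ and let $g$ be the lift of $\phi^m$ fixing $\tilde e$. Since $g([\tilde e,\tilde e+1])=[\tilde e,\tilde e+1]$ and $g(\tilde e')\equiv e'$, we get $g(\tilde e')=\tilde e'$, so $g$ also fixes $\tilde e'$ and hence $g([\tilde e',\tilde e'+1])=[\tilde e',\tilde e'+1]$; therefore $g$ maps $[\tilde e',\tilde e+1]$ into itself, and the image of $[\tilde e',\tilde e+1]$ in $\mathbb T$ is a proper closed arc with non-empty interior that is invariant under $\phi^m$ — contradicting the transitivity of $\phi^m$ (which holds since $\phi$ is totally transitive). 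So there is at most one exposed point $e$, and then $\Ro(e)=\{e\}$ because all its members are exposed. To see that $e$ is a fixed point, let $q$ be its period and use Lemma \ref{oooo} to pick $y\in\phi^{-1}(e)$ with $\val(\phi,y)=(+,+)$. Since $\phi^q(e)=e$ we have $\phi^{2q+1}(y)=\phi^{2q}(e)=e$, and, $e$ not being pre-critical, Lemma \ref{compx} gives $\val(\phi^{2q},e)=\val(\phi^q,e)\bullet\val(\phi^q,e)=(+,+)$, whence $\val(\phi^{2q+1},y)=\val(\phi^{2q},e)\bullet\val(\phi,y)=(+,+)$. Thus $y\in\Ro(e)=\{e\}$, so $y=e$ and $\phi(e)=e$; in particular $q=1$.

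Finally, $e$ is a fixed point, it is not critical (being not pre-critical), and $e\in\phi^{-1}(e)$; conversely if $y\in\phi^{-1}(e)\setminus\mathcal C_1$ then $\val(\phi,y)\in\{(+,+),(-,-)\}$, and as $\phi(e)=e$ we get $\phi^m(y)=\phi^{m-1}(e)=e$ with $\val(\phi^m,y)=\val(\phi^{m-1},e)\bullet\val(\phi,y)\in\{(+,+),(-,-)\}$, so $y\notin\mathcal C_m$; since also $\mathcal C_1\subseteq\mathcal C_m$ (from the composition table, $x\bullet(+,-)$ and $x\bullet(-,+)$ always lie in $\{(+,-),(-,+)\}$), the key step gives $y\in\phi^{-m}(e)\setminus\mathcal C_m=\{e\}$, i.e. $\phi^{-1}(e)\setminus\mathcal C_1=\{e\}$. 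The delicate point of the whole argument is the no-overshoot claim: one must check carefully that an overshoot of $[\tilde e,\tilde e+1]$ really yields a \emph{strictly} (not merely weakly) increasing crossing of a lift of $e$ strictly inside the interval; and for $\deg\phi=-1$ one must redo the valency bookkeeping, where the relevant local valency at a periodic point need not be $(+,+)$ and one first checks, again from the lift, that it is $(-,-)$ when the period is odd.
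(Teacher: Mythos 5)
Your treatment of uniqueness and of the case $\deg \phi = 1$ is correct, and it is organised differently from the paper's proof in a way that has some appeal: the ``no-overshoot'' statement $g([\tilde e,\tilde e+1])=[\tilde e,\tilde e+1]$ together with $\phi^{-m}(e)\setminus \mathcal C_m=\{e\}$ packages the paper's repeated ``look at the graph of the lift'' steps into a single reusable fact, and it makes the final verification of $\phi^{-1}(e)\setminus\mathcal C_1=\{e\}$ uniform in the degree once $e$ is known to be fixed. The uniqueness argument via the $\phi^m$-invariant arc $[\tilde e',\tilde e+1]$ is essentially the paper's (which uses $\phi^{2m}$ and the two complementary arcs), and the crossing analysis behind the no-overshoot claim, which you rightly flag as the delicate point, does go through because $\phi^m$ is piecewise \emph{strictly} monotone. (A small point: for $\deg\phi=-1$ you must take $m$ even to have $\deg\phi^m=1$.)

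The genuine gap is the case $\deg\phi=-1$, which is not ``parallel'' in the way you assert. The $(-,-)$-version of Lemma \ref{oooo} produces $y_2,y$ with $\phi(y_2)=y$, $\phi(y)=e$ and $\val(\phi,y_2)=\val(\phi,y)=(-,-)$; then $\val(\phi^2,y_2)=(+,+)$ puts $y_2$ in $\Ro(e)$, so $y_2=e$ and $\phi^2(e)=e$. The real difficulty is to exclude period exactly $2$, and your proposed remedy --- checking that $\val(\phi^q,e)=(-,-)$ ``when the period is odd'' --- addresses the wrong case: the troublesome period is $2$, which is even, and when $e$ has period $2$ one can check from the composition table that (for instance when $\val(\phi^2,e)=(+,+)$) no choice of exponents produces a valency match placing $y$ in $\Ro(e)$, so the $\deg=1$ argument simply does not transfer. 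What is needed --- and what the paper's proof supplies via the identity $\phi^{-1}(\{e,\phi(e)\})\setminus\mathcal C_1=\{e,\phi(e)\}$ --- is a proof that $\phi(e)$ is itself exposed, so that uniqueness forces $\phi(e)=e$. Your own key step can be leveraged for this: applied to $\phi^2$ it gives $\phi^{-2}(e)\setminus\mathcal C_2=\{e\}$, from which (using Lemma \ref{oooo} to see that every point has a non-critical $\phi$-preimage) one deduces $\phi^{-1}(e)\setminus\mathcal C_1=\{\phi(e)\}$ and $\phi^{-1}(\phi(e))\setminus\mathcal C_1=\{e\}$, hence $\Ro(\phi(e))\subseteq\{e,\phi(e)\}$ and $\phi(e)$ is exposed. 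But none of this is in your write-up, and the sentence you offer in its place would not close the case.
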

\begin{proof} Let $m$ be the same number as in the proof of Lemma
  \ref{Eroinv}. In that proof it was shown that 
\begin{equation}\label{ooou}
\phi^{-m}(y)
  \backslash \{y\} \subseteq \mathcal C_m
\end{equation} 
for every exposed point
  $y$. It follows that if there are two exposed points, say $e_1$ and
  $e_2$, we could write $\mathbb T = J_1 \cup J_2$ where $J_1$ and
  $J_2$ are non-degenerate closed intervals such that $J_1\cap J_2 =
  \{e_1,e_2\}$ and $\phi^{2m}(J_i) = J_i, i = 1,2$. This contradicts the
  assumed total transitivity of $\phi$. Therefore there is at most at single exposed point $e$, and it
  is fixed by $\phi^m$. When $\deg \phi = 1$ it follows from Lemma
  \ref{oooo} that there is an element $z \in \phi^{-1}(e)$ such that
  $\val \left(\phi, z\right) = (+,+)$. Then $z$ is exposed (since $z
  \in \Ro(e)$) and the uniqueness of $e$ implies that $z = e$,
  proving that $e$ is a fixed point for $\phi$.

To reach the same conclusion when $\deg \phi = -1$ it suffices to consider the
case where $\val \left(\phi, e\right) = (-,-)$. By Lemma \ref{oooo}
there are elements $z_1,z \in \mathbb T$ such that $\phi(z_1) = z, \
\phi(z) = e$ and $\val\left(\phi,z_1\right) = \val\left(\phi,z\right) =
(-,-)$. Then $z_1 \in \Ro(e)$ and hence $z_1 = e$ because $e$ is the
only exposed point. It follows that $z = \phi(e)$, i.e. $\phi^2(e) =
e$. Note that $\val\left(\phi,\phi(e)\right) = (-,-)$. We claim that 
\begin{equation}\label{uuh}
\phi^{-1}\left(\{e , \phi(e)\}\right) \backslash \mathcal C_1 =
\{e,\phi(e)\}.
\end{equation}
To show this let $x \in \phi^{-1}(e) \backslash \mathcal
C_1$. If $\val (\phi, x) = (+,+)$ we find that $x = e$ since $e$
is the only exposed point. If $\val\left(\phi, x\right) = (-,-)$ an
application of Lemma \ref{oooo} shows that $x =
\phi(e)$. Consider then an element $y \in \phi^{-1}(\phi(e)) \backslash \mathcal
C_1$. If $\val(\phi,y) = (-,-)$ it follows that $y \in \Ro(e)$ and
hence $y = e$ by uniqueness of $e$. If instead $\val(\phi,y) = (+,+)$
an application of Lemma \ref{oooo} shows that that $y =
\phi(e)$. Having established (\ref{uuh}) note that it implies that
$\phi(e)$ is exposed, whence equal to $e$.

To show that $\phi^{-1}(e) \backslash \mathcal C_1 = \{e\}$ we may assume
that $\deg \phi = 1$, since the other case follows from
(\ref{uuh}). Furthermore, it suffices to show that $\phi^{-1}(e)
\backslash \mathcal C_1 \subseteq \{e\}$ since exposed points are not
critical by Lemma \ref{4a}. Consider therefore an element $x \in \phi^{-1}(e)
\backslash \mathcal C_1$. If $\val\left(\phi,x\right) = (+,+)$ it follows
that $x\in \Ro(e)$ and hence $x$ is exposed. Since $e$ is the only
exposed points this shows that $x = e$. Assume then that
$\val(\phi,x) = (-,-)$. If $x \neq e$ a look at
the graph for a lift of $\phi$ shows that there is then also a point
$y \in \phi^{-1}(e) \backslash \{e\}$ with $\val (\phi, y) =
(+,+)$ which we have just seen is not possible. Hence $x = e$

\end{proof}

To formulate the next proposition we
call a point $e \in \mathbb T$ an \emph{exceptional fixed point} when
$\phi^{-1}(e) \backslash \mathcal C_1 = \{e\}$.

\begin{prop}\label{deg1} Assume that $\phi$ is transitive and that
  $\deg \phi \in \{-1,1\}$. Then $C^*_r\left(\Gamma^+_{\phi}\right)$
  is simple unless either
\begin{enumerate}
\item[1)] $\phi$ is not totally transitive, or
\item[2)] $\phi$ is totally transitive and there is an exceptional
  fixed point.
\end{enumerate}
In case 2) there is an extension
\begin{equation}\label{extI}
\begin{xymatrix}{
0 \ar[r] & B \ar[r] & C^*_r\left(\Gamma^+_{\phi}\right) \ar[r] &
C(\mathbb T) 
\ar[r] & 0 }
\end{xymatrix}
\end{equation} 
where $B$ is simple and purely infinite. When $\phi$ is not totally transitive and $\deg \phi = 1$ there is an extension
\begin{equation}\label{extII}
\begin{xymatrix}{
0 \ar[r] & B \ar[r] & C^*_r\left(\Gamma^+_{\phi}\right) \ar[r] &
C(\mathbb T) \otimes M_p(\mathbb C)
\ar[r] & 0, }
\end{xymatrix}
\end{equation} 
where $p$ is the global period of $\phi$ and $B$ is simple and purely
infinite. When $\phi$ is not totally transitive and $\deg \phi = -1$ there is an extension
\begin{equation}\label{extIII}
\begin{xymatrix}{
0 \ar[r] & B \ar[r] & C^*_r\left(\Gamma^+_{\phi}\right) \ar[r] &
C(\mathbb T) \oplus C(\mathbb T) 
\ar[r] & 0, }
\end{xymatrix}
\end{equation} 
where $B$ is simple and purely infinite.

\end{prop}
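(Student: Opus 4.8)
The plan is to combine the simplicity criterion already established (Corollary \ref{exposedcor}: when $\phi$ is transitive, $C^*_r(\Gamma^+_{\phi})$ is simple iff there are no exposed points) with the precise descriptions of the exposed points furnished by Lemmas \ref{Eroinv} and \ref{tot}, and then to identify the quotient and the ideal in each of the non-simple cases via the exact sequence of Lemma \ref{ideal}. First I would dispose of the simplicity dichotomy: if $\phi$ is totally transitive with no exceptional fixed point, then by Lemma \ref{tot} there is no exposed point, so $C^*_r(\Gamma^+_{\phi})$ is simple; conversely, if $\phi$ is not totally transitive then $\mathcal E \neq \emptyset$ is a finite $\Ro$-invariant set by Lemma \ref{kluu} and Lemma \ref{Eroinv}, hence exposed points exist, and if $\phi$ is totally transitive with an exceptional fixed point $e$ then $\{e\}$ is $\Ro$-invariant (the defining condition $\phi^{-1}(e)\setminus\mathcal C_1 = \{e\}$ together with $\val(\phi^n,e)=(+,+)$ forces $\Ro(e)=\{e\}$), so again there is an exposed point. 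This establishes the first sentence of the proposition.

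Next, for each non-simple case I would take $Y$ to be the relevant finite $\Ro$-invariant set — $Y = \{e\}$ in case 2), $Y = \mathcal E$ (a $p$-periodic orbit, by Lemma \ref{kluu}) when $\deg\phi = 1$, and $Y = \mathcal E$ (two fixed points, by Lemma \ref{kluu}) when $\deg\phi = -1$ — and feed it into Lemma \ref{ideal} to obtain
\begin{equation*}
\begin{xymatrix}{
0 \ar[r] & C^*_r(\Gamma^+_{\phi}|_{\mathbb T\setminus Y}) \ar[r] & C^*_r(\Gamma^+_{\phi}) \ar[r]^-{\pi_Y} & C^*_r(\Gamma^+_{\phi}|_Y) \ar[r] & 0 .}
\end{xymatrix}
\end{equation*}
The quotient $C^*_r(\Gamma^+_{\phi}|_Y)$ is then computed by analyzing the isotropy at the points of $Y$. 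In case 2), at the exceptional fixed point $e$ every element of $\Gamma^+_{\phi}|_{\{e\}}$ has the form $(e,k,e)$, and one checks from the definition of $\overset{k}{\sim}$ that the isotropy group is $\mathbb Z$ (using $\phi(e)=e$, $\val(\phi^n,e)=(+,+)$, so that $(e,k,e)\in\Gamma^+_{\phi}$ for every $k$) with the natural topology making $\Gamma^+_{\phi}|_{\{e\}}$ isomorphic to the group $\mathbb Z$; hence $C^*_r(\Gamma^+_{\phi}|_{\{e\}}) \cong C^*(\mathbb Z) \cong C(\mathbb T)$. When $\deg\phi=1$ and $Y=\mathcal E$ is a single $p$-periodic orbit with $\val(\phi,\cdot)=(+,+)$ on it, the reduction $\Gamma^+_{\phi}|_{\mathcal E}$ is the transitive principal-plus-isotropy groupoid over a $p$-point transitive orbit with isotropy $\mathbb Z$ at each point, i.e. the transformation groupoid of $\mathbb Z$ acting on $\mathbb Z/p$, whose $C^*$-algebra is $C(\mathbb T)\otimes M_p(\mathbb C)$. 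When $\deg\phi=-1$, $\mathcal E$ consists of two fixed points of valency $(-,-)$; there is no $\Ro$-equivalence between them (the valency obstruction in the definition of $\overset{k}{\sim}$, or Lemma \ref{mod2}, blocks any local transfer connecting the two), so $\Gamma^+_{\phi}|_{\mathcal E}$ is the disjoint union of two copies of the group $\mathbb Z$, giving quotient $C(\mathbb T)\oplus C(\mathbb T)$.

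Finally, to see that the ideal $B := C^*_r(\Gamma^+_{\phi}|_{\mathbb T\setminus Y})$ is simple and purely infinite, I would argue as follows. The complement $\mathbb T\setminus Y$ is an open, dense, $\Ro^{-1}$-invariant set (since $Y$ is $\Ro$-invariant, hence $\mathbb T\setminus Y$ is too), and the point is that $\Gamma^+_{\phi}|_{\mathbb T\setminus Y}$ has no non-trivial closed invariant subsets: any closed $\Ro$-invariant $A\subseteq\mathbb T$ not dense is finite and consists of exposed points (Lemma \ref{4a} plus Corollary \ref{exposedcor}'s input), and by Lemmas \ref{Eroinv} and \ref{tot} every exposed point lies in $Y$, so $A\cap(\mathbb T\setminus Y)=\emptyset$; there is therefore no proper non-trivial ideal arising from an invariant subset, and combined with the density of units with trivial isotropy in $\mathbb T\setminus Y$ (Lemma \ref{18} when $\phi$ is transitive, noting that pre-periodic points still have empty interior in $\mathbb T\setminus Y$) the same argument as in Lemma \ref{simplicity}, via Corollary 2.18 in \cite{Th1}, shows $B$ is simple. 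Pure infiniteness of $B$ follows from Proposition \ref{17} applied to the reduction: $\Gamma^+_{\phi}|_{\mathbb T\setminus Y}$ remains essentially free and locally contractive (local contractivity is witnessed by repelling periodic points in any open set, as in Corollary \ref{16}, and such points can be chosen to avoid the finite set $Y$), so Proposition 2.4 in \cite{An} applies. The main obstacle I anticipate is the bookkeeping in the identification of the quotient groupoid $\Gamma^+_{\phi}|_Y$ and its topology — in particular verifying that the isotropy is exactly $\mathbb Z$ (not a proper subgroup) at each relevant point and that the groupoid topology on the finite reduction is the expected one, so that the $C^*$-algebra is literally $C(\mathbb T)$, $C(\mathbb T)\otimes M_p$, or $C(\mathbb T)\oplus C(\mathbb T)$ rather than some quotient thereof.
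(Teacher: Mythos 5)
Your proposal is correct and follows essentially the same route as the paper: Corollary \ref{exposedcor} together with Lemmas \ref{tot}, \ref{Eroinv} and \ref{kluu} for the simplicity dichotomy and the location of the exposed points, Lemma \ref{ideal} for the extensions, the (discrete) groupoid structure of $\Gamma^+_{\phi}|_Y$ for identifying the quotients, and the argument via Corollary 2.18 of \cite{Th1} for the simplicity of $B$. The only harmless deviation is in the pure infiniteness of $B$: the paper obtains it immediately because $B$ is an ideal in $C^*_r\left(\Gamma^+_{\phi}\right)$, which is purely infinite by Proposition \ref{17}, whereas you re-run the local contractivity and essential freeness argument on the reduction — both work.
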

\begin{proof} Assume that none of the two cases 1) or 2) occur. It
  follows from Lemma \ref{tot} that there are no exposed points,
  and from Corollary \ref{exposedcor} that
  $C^*_r\left(\Gamma^+_{\phi}\right)$ is simple.

In case 2) it follows from Lemma \ref{tot} that there is exactly one
exposed point, $e$, which is a fixed point. From Lemma \ref{ideal} we
get then the extension
\begin{equation}\label{ext77}
\begin{xymatrix}{
0 \ar[r] & B \ar[r] & C^*_r\left(\Gamma^+_{\phi}\right) \ar[r] &  C^*_r\left(\Gamma^+_{\phi}|_{\{e\}}\right)
\ar[r] & 0 }
\end{xymatrix}
\end{equation}
where $B = C^*_r\left(\Gamma^+_{\phi}|_{\mathbb T \backslash
    \{e\}}\right)$. It is easy to see, cf. the proof of Lemma 4.11 in
\cite{Th3}, that $C^*_r\left(\Gamma^+_{\phi}|_{\{e\}}\right) \simeq
C(\mathbb T)$. Furthermore, $B$ is purely infinite because $B$ is an ideal in
$C^*_r\left(\Gamma^+_{\phi}\right)$ which is purely infinite by
Proposition \ref{17}. To conclude that $B$
is simple we argue as in the proof of Proposition 4.10 in \cite{Th3}:
The elements of $\mathbb T\backslash \{e\}$ with non-trivial isotropy
in $C^*_r\left(\Gamma^+_{\phi}|_{\mathbb T \backslash \{e\}}\right)$
are pre-periodic. It follows from Theorem \ref{11} that the
pre-periodic points are countable, whence $\mathbb T\backslash \{e\}$
must contain a point with trivial isotropy. By Corollary 2.18 of
\cite{Th1} it suffices therefore to show that $\mathbb T
\backslash \{e\}$ does not contain any non-trivial (relatively)
closed $\Ro$-invariant subsets. Let therefore $L$ be such a set. Then
$L \cup \{e\}$ is closed and $\Ro$-invariant in $\mathbb T$
and hence either equal to $\mathbb T$ or contained in $\{e\}$ by Lemma
\ref{4a} and Lemma \ref{tot}. It follows that $ L = \emptyset$ or $L =  \mathbb T
\backslash \{e\}$. This completes the proof in case 2).

In case 1) we argue as above, except that we use Lemma \ref{Eroinv} to
replace Lemma
\ref{tot}, and Lemma \ref{kluu} to determine $C^*_r\left(\Gamma^+_{\phi}|_{\mathcal E}\right)$. 
\end{proof}

To show by example that all the cases mentioned in Proposition
\ref{deg1} can occur consider the graph

\begin{center}
\begin{tikzpicture}[x=2cm,y=2cm]
  \draw[<->] (0,1.4) -- (0,0) -- (1.35,0);
  \draw (1,0.02) --++ (0,-0.04) node [below] {$1$};
  \draw (0.02,1) --++ (-0.04,0) node [left] {$1$};
  \draw[thick,line join=round] plot file {z.dat};  
\end{tikzpicture}
\end{center}
The graph describes the lift of an exact, and hence totally transitive
map $\phi$ of the circle of degree $1$ with an exceptional  fixed point. The corresponding $C^*$-algebra
$C^*_r\left(\Gamma^+_{\phi}\right)$ is an extension as in
(\ref{extI}). To show that also the extensions (\ref{extII}) occur consider the
graph
\bigskip
\begin{center}
\begin{tikzpicture}[x=2cm,y=2cm]
  \draw[<->] (0,1.4) -- (0,0) -- (1.35,0);
  \draw (1,0.02) --++ (0,-0.04) node [below] {$1$};
  \draw (0.02,1) --++ (-0.04,0) node [left] {$1$};
  \draw[thick,line join=round] plot file {x.dat};  
\end{tikzpicture}
\end{center}
This is the graph of the lift of a transitive, but not totally
transitive circle map $\phi$ of degree $1$ for which
$C^*_r\left(\Gamma^+_{\phi}\right)$ is an extension as in
(\ref{extII}) (with $ p = 4$). In the same way the following graph describes a transitive circle map
of degree $-1$ which is not totally transitive and for which $C^*_r\left(\Gamma^+_{\phi}\right)$ is an extension as in (\ref{extIII}).

\begin{center}
\begin{tikzpicture}[x=2cm,y=2cm]
  \draw[<->] (0,1.4) -- (0,0) -- (1.35,0);
  \draw (1,0.02) --++ (0,-0.04) node [below] {$1$};
  \draw (0.02,1) --++ (-0.04,0) node [left] {$1$};
  \draw[thick,line join=round] plot file {w.dat};  
\end{tikzpicture}
\end{center}

\subsubsection{$\deg \phi = 0$}

A point $z \in \mathbb T$ will be called \emph{an exceptional critical
  value} when $\phi^{-1}(z)  \subseteq \mathcal C_1$.

\begin{lemma}\label{ecv1} Assume that
  $\deg \phi = 0$ and that $\phi$ is surjective. There is
  at most one exceptional critical value, and for all other elements $x \in \mathbb
  T$ there are points $y_{\pm} \in
  \phi^{-1}(x)$ such that $\val\left(\phi, y_{\pm}\right) = (\pm,\pm)$.
\end{lemma}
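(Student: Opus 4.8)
The plan is to work with the graph of a lift of $\phi$ and reduce the whole statement to an elementary intermediate-value argument. Since $\deg\phi=0$, the lift $f:[0,1]\to\mathbb R$ of $\phi$ satisfies $f(0)=f(1)$, so it descends to a continuous function $g:\mathbb T\to\mathbb R$ with $\phi\bigl(e^{2\pi i t}\bigr)=e^{2\pi i g(e^{2\pi i t})}$. Being piecewise monotone, $g$ is strictly monotone on each of finitely many arcs covering $\mathbb T$, and in particular it is never constant on a non-degenerate arc. Using that $t\mapsto e^{2\pi i t}$ is a local orientation-preserving homeomorphism one gets, for $y\in\mathbb T$, that $\val(\phi,y)=(+,+)$ iff $g$ is strictly increasing near $y$, $\val(\phi,y)=(-,-)$ iff $g$ is strictly decreasing near $y$, and $y\in\mathcal C_1$ iff $g$ has a local extremum at $y$. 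I would then put $m=\min_{\mathbb T}g$ and $M=\max_{\mathbb T}g$; since $g(\mathbb T)=[m,M]$, surjectivity of $\phi$ is equivalent to $M-m\ge 1$ (and forces $m<M$).

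The heart of the argument will be the claim that for every $v$ with $m<v<M$ the point $e^{2\pi i v}$ has a preimage of valency $(+,+)$ and a preimage of valency $(-,-)$. To prove it I would set $A=\{y\in\mathbb T:g(y)>v\}$, which is a non-empty open proper subset of $\mathbb T$ (non-empty because $v<M$, proper because $v>m$), and pick a connected component of $A$; being a proper open subarc it has two endpoints $a,b\in\mathbb T$. At $a$ one has, by maximality of the component together with continuity, $g(a)=v$, with $g>v$ on the side of $a$ interior to the arc and $g\le v$ on the other side; combining this with the facts that $g$ is strictly monotone near $a$ on each side and is never constant, one checks that $g$ must be strictly increasing on a full neighbourhood of $a$ (even if $a$ happens to be one of the finitely many breakpoints of $g$). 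Hence $\val(\phi,a)=(+,+)$ and $\phi(a)=e^{2\pi i v}$. The symmetric computation at $b$, with the two sides interchanged, yields $\val(\phi,b)=(-,-)$ and $\phi(b)=e^{2\pi i v}$. I expect this local analysis at the endpoints of the component to be the only delicate point; everything else is bookkeeping.

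It then remains to vary the representative modulo $1$. If $M-m>1$, every residue class modulo $1$ has a representative $v\in(m,M)$, so by the claim every $x\in\mathbb T$ has preimages of both valencies and there is no exceptional critical value at all. If $M-m=1$, the same applies to every $x\neq x_0:=e^{2\pi i m}$, since such an $x$ still has a representative of its class inside the open interval $(m,M)$; thus the only candidate for an exceptional critical value is $x_0$. To finish I would observe that, $g$ taking values in $[m,m+1]$, one has $\phi^{-1}(x_0)=g^{-1}(m)\cup g^{-1}(M)$, i.e.\ $\phi^{-1}(x_0)$ is precisely the set of global minima and maxima of $g$; each such point is a local extremum of $g$, hence lies in $\mathcal C_1$, so $x_0$ is genuinely an exceptional critical value. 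This yields at most one exceptional critical value, together with the asserted existence of the points $y_\pm$ for every other point of $\mathbb T$.
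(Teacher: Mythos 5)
Your overall strategy --- pass to the lift, which descends to a real-valued function $g$ on $\mathbb{T}$ because $\deg\phi=0$, and then do elementary analysis of the level sets of $g$ --- is exactly what the paper's one-line proof (``look at the graph of a lift'') intends, and your bookkeeping with residues mod $1$ (the case split $M-m>1$ versus $M-m=1$, and the verification that $e^{2\pi i m}$ really is an exceptional critical value in the latter case) is correct and complete. However, there is a genuine error in your proof of the central claim. You assert that at an endpoint $a$ of a connected component of $A=\{g>v\}$ one has $g\le v$ on the side of $a$ exterior to that component, ``by maximality of the component''. This is false: since $a\notin A$ the component stops at $a$ no matter what happens beyond it, and the exterior side of $a$ may lie in a \emph{different} component of $A$. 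Concretely, if $g$ has a strict local minimum at $a$ with $g(a)=v$ (perfectly possible for $m<v<M$; e.g.\ a lift that rises from $0$ to $2$, falls to $1/2$, rises to $2$ again and falls back to $0$, with $v=1/2$ and $a$ the interior minimum), then $g>v$ on both sides of $a$, the two sides belong to two distinct components of $A$, and $a$ is a critical point with $\val(\phi,a)=(-,+)$ rather than a point of valency $(+,+)$ or $(-,-)$. So the two endpoints of a single component of $A$ need not supply the desired $y_+$ and $y_-$.

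The claim itself is true and the repair is short. Since $g$ is piecewise strictly monotone with finitely many pieces, $g^{-1}(v)$ is finite and non-empty, and each arc of $\mathbb{T}\setminus g^{-1}(v)$ is contained either in $\{g>v\}$ or in $\{g<v\}$; both types occur because $m<v<M$. Travelling once around the circle, the transitions from a $\{g<v\}$-arc to a $\{g>v\}$-arc and the transitions in the opposite direction alternate, so there is at least one of each kind. At a transition point $c$ the function $g$ equals $v$ and is strictly monotone on each side with opposite signs of $g-v$, hence strictly monotone on a full neighbourhood of $c$ (here your ``never constant on an arc'' observation does the work), and the two kinds of transition give $\val(\phi,c)=(+,+)$ and $\val(\phi,c)=(-,-)$ respectively. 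With this substitution the rest of your argument goes through unchanged.
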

\begin{proof} Look at the graph of a lift of $\phi$.
\end{proof}

\begin{lemma}\label{ecv2} Assume that $\phi$ is transitive and that
  $\deg \phi = 0$. If $y \in \mathbb T$ is an exposed point there is an exceptional critical value $e\in \mathbb
  T$ such that $\phi^2(e) = \phi(e) \neq e$, $\Ro(y) =
\left\{e,\phi(e)\right\}$ and $\phi^{-1}\left(\phi(e)\right)
\backslash \mathcal C_1 = \left\{e, \phi(e)\right\}$.
\end{lemma}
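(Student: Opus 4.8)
The plan is to extract everything from the finiteness of $\Ro(y)$ (Lemma \ref{4a}) by a branching analysis of backward orbits built on Lemma \ref{ecv1}. Write $A=\Ro(y)$; since $y$ is exposed, $A$ is finite, so by Lemma \ref{4a} every point of $A$ is post-critical but not pre-critical. In particular $A\cap\mathcal{C}_1=\varnothing$ and $\val(\phi^{j},x)\in\{(+,+),(-,-)\}$ for all $x\in A$ and $j\ge 1$. The elementary tool, obtained by iterating the composition rule of Lemma \ref{compx}, is the following: along a chain $x=v_{0}\leftarrow v_{1}\leftarrow\cdots\leftarrow v_{\ell}$ with $\phi(v_{i+1})=v_{i}$ we have $\val(\phi^{\ell},v_{\ell})=\val(\phi,v_{1})\bullet\cdots\bullet\val(\phi,v_{\ell})$; if every factor lies in $\{(+,+),(-,-)\}$ this product equals $(+,+)$ exactly when an even number of the $v_{i}$ have $\phi$-valency $(-,-)$, and in that case $v_{\ell}\in\Ro(x)$ (take $n=\ell$, $m=0$, or $n=\ell+m$ with $m\ge 1$, in the definition of $\Ro$). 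By Lemma \ref{ecv1}, every point of $\mathbb{T}$ except (at most) one exceptional critical value $e_{0}$ has two distinct non-critical preimages, one of each valency, so the backward orbit of such a point, traced through non-critical preimages, is a binary tree that fails to branch only at $e_{0}$ — whose preimages are all critical, and below which the cumulative valency is absorbed into $\{(+,-),(-,+)\}$ and never returns to $(+,+)$.

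With this in hand I would first show $A$ contains an exceptional critical value. If not, pick $x\in A$ and build its binary backward tree as above; since $e_{0}$ is a preimage only of $\phi(e_{0})$, it occurs at most once in the tree, so excising its (dead) subtree still leaves roughly $2^{\ell}$ surviving length-$\ell$ paths, of which, by the parity remark, of order $2^{\ell-1}$ end with cumulative valency $(+,+)$, hence in $\Ro(x)=A$. Choosing $x$ non-periodic — so that a point determines the first time its forward orbit meets $x$, whence distinct paths have distinct endpoints — this forces $A$ to have at least of order $2^{\ell-2}$ elements for every $\ell$, which is impossible. Hence the exceptional critical value $e:=e_{0}$ exists and lies in $A$; it is not critical since $A\cap\mathcal{C}_1=\varnothing$, and $\phi(e)\ne e$, since $\phi(e)=e$ would put $e$ in $\phi^{-1}(e)\subseteq\mathcal{C}_1$.

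Finally I would pin down $A$ using the same mechanism. Finiteness of $A$ forces the forward orbit of $e$ to be eventually periodic; the eventual cycle must have length one — say it is a fixed point $p$ — and $p$ must equal $\phi(e)$: a cycle of length $>1$, or a fixed point reached only after $k\ge 2$ steps, would allow one to branch backward off the cycle, or off the pre-periodic tail $\phi(e),\dots,\phi^{k-1}(e)$ with the single dead node $e$ removed, and manufacture infinitely many points of $A$. Once $\phi(e)$ is a fixed point we have $\phi(e)\in\Ro(e)$, and the two preimages $e$ and $\phi(e)$ of $\phi(e)$ lie in $\Ro(e)$; a further non-critical preimage of $\phi(e)$, or any point of $\Ro(e)$ outside $\{e,\phi(e)\}$, would again sprout an exponentially branching backward tree whose only dead end is $e$, contradicting finiteness. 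This yields $\phi^{2}(e)=\phi(e)\ne e$, $\Ro(y)=\Ro(e)=\{e,\phi(e)\}$, and $\phi^{-1}(\phi(e))\setminus\mathcal{C}_1=\{e,\phi(e)\}$. I expect the real work to be the bookkeeping in this branching argument: verifying carefully that deleting the single dead subtree at the exceptional critical value cannot stop the exponential growth of the $(+,+)$-cumulative nodes, and dealing cleanly with the case in which the base point of $A$ one starts from is itself periodic, where the implication "distinct paths, distinct endpoints" needs the extra input that the period, too, is constrained.
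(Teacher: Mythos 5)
Your overall strategy (backward branching via Lemma \ref{ecv1} plus the valency parity rule, driven by finiteness of $\Ro(y)$) is the same mechanism the paper uses, and the first part of your analysis can be made to work. But there is a genuine gap: you never invoke the transitivity of $\phi$ beyond what is packaged into Lemma \ref{4a}, and the conclusion $\Ro(y)=\{e,\phi(e)\}$ cannot be reached by finiteness alone. Concretely, consider the configuration where $\phi(e)$ is a fixed point with $\val(\phi,\phi(e))=(+,+)$, $\val(\phi,e)=(-,-)$, and $\phi^{-1}(\phi(e))\setminus\mathcal C_1=\{e,\phi(e)\}$. Then $\val(\phi^n,e)=(-,-)$ for every $n\ge 1$ while $\val(\phi^m,\phi(e))=(+,+)$ for every $m\ge 0$, so $\phi(e)\notin\Ro(e)$: your assertion ``once $\phi(e)$ is a fixed point we have $\phi(e)\in\Ro(e)$'' is false exactly here, and $\Ro(e)=\{e\}$, $\Ro(\phi(e))=\{\phi(e)\}$ are two distinct finite classes, contradicting the statement you want. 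The same configuration defeats your first step: the backward tree rooted at $x=\phi(e)$ has at each level exactly two non-critical children, $\phi(e)$ itself and the dead node $e$, so after excision it collapses to a single path and there is no exponential growth. (Your bookkeeping claim that $e_0$ ``occurs at most once in the tree'' is also wrong --- it occurs at most once \emph{per level}, since $\phi(e_0)$ can label a node at every level --- though in the non-degenerate situation the estimate $a_{\ell+1}\ge 2a_\ell-1$ still yields the contradiction.)

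The paper resolves this by letting the finiteness analysis terminate in three explicit cases --- two of which are precisely the degenerate configuration above (with $\Ro(y)$ a singleton) --- and then killing those two cases with a separate transitivity argument: if $\val(\phi,e)=(-,-)$ and $\val(\phi,\phi(e))=(+,+)$, the two closed arcs $J_1,J_2$ with $J_1\cap J_2=\{e,\phi(e)\}$ and $J_1\cup J_2=\mathbb T$ are each $\phi$-invariant (using $\phi^{-1}(e)\subseteq\mathcal C_1$ and $\phi^{-1}(\phi(e))\setminus\mathcal C_1=\{e,\phi(e)\}$), contradicting transitivity. You need to add this step; without it the lemma is not provable, since the degenerate configuration is consistent with everything else you use.
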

\begin{proof} The main part of the proof will be to show that there is
  an exceptional critical value $e$ such that one of the following holds:  
\begin{enumerate}
\item[i)] $\phi^2 (e) = \phi(e) \neq e$, $\val\left(\phi, e\right) = (-,-)$ and $\Ro(y) =\{\phi(e)\}$,
%\item[ii)] $\phi(e) = e$ and $\Ro(y)
%= \{e\}$, 
\item[ii)]   $\phi^2(e) = \phi(e) \neq e$, $\val\left(\phi,
    \phi(e)\right) = (+,+)$ and $\Ro(y) =
\left\{e\right\}$,
\item[iii)]  $\phi^2(e) = \phi(e) \neq e$, $\Ro(y) =
\left\{e,\phi(e)\right\}$.
\end{enumerate}

Assume first that $\Ro(y)$ does not contain an exceptional critical value. Let $z \in
  \Ro(y)$. By using Lemma \ref{ecv1} we can
  construct $y_k, k = 0,1,2,3, \dots $ such that $y_0 = z$, $\phi(y_k)
  = y_{k-1}$ and $\val\left(\phi, y_k\right) = (+,+), \ k \geq 1$. Then $y_k \in
  \Ro(y)$ for all $k$ so there are $k \neq k'$ such that $y_k =
  y_{k'}$. It follows that $z$ is periodic and that
  $\val\left(\phi, u\right) = (+,+)$ for all $u$ in the orbit
  $\orb(z)$ of
  $z$. Hence $\Orb(z) \subseteq \Ro(y)$. Since this conclusion holds
  for all $z \in \Ro(y)$ and since the forward orbits of elements from
  $\Ro(y)$ must intersect we conclude that $\Ro(y) = \Orb(y)$ and
  $\val\left(\phi, \phi^k(y)\right) = (+,+)$ for all $k \in \mathbb
  N$. Let $z \in \Ro(y)$. Using Lemma \ref{ecv1} again we
find $u_1,v_1 \in \phi^{-1}(z)$ such that $\val\left(\phi, u_1\right)
= (+,+)$ and $\val\left(\phi, v_1\right) = (-,-)$. Then $u_1 \in
\Ro(y)$ and $u_1$ is therefore an element of the orbit
of $y$. Since $v_1 \neq u_1$ (or since $\val\left(\phi,v_1\right) = (-,-)$), it follows that $v_1$ is not in the orbit
of $y$. If $v_1$ is not an exceptional critical value we can find $v_2 \in \phi^{-1}(v_1)$
such that $\val\left( \phi, v_2\right) = (-,-)$. It follows that $v_2
\in \Ro(y)$ and $v_2$ must therefore be an element of $\Orb(y)$. This
contradicts that $v_1$ is not, and we conclude that $v_1$ must be an
exceptional critical value $e$, which by Lemma \ref{ecv1} is unique. This shows that $z = \phi(e) $ and we conclude therefore that case i) occurs.

%To conclude that $\phi(y) = y$, assume for a contradiction that
%$\phi(y) \neq y$. Note that $\phi(y) \neq e$ because
%$\val\left(\phi,\phi(y)\right) = (+,+)$ while $\val\left(\phi,e\right)
%= (-,-)$. By Lemma \ref{ecv1} there is an element $z_1 \in \mathbb T$
%such that $\phi\left(z_1\right) = \phi(y)$ and $\val\left(
%  \phi,z_1\right) = (-,-)$. Note that $z_1 \neq e$ because $\phi(z_1) =
%\phi(y) \neq y = \phi(e)$. We can therefore use Lemma \ref{ecv1}
%repeatedly to construct a sequence $z_1,z_2, z_3, \dots$ such that $\phi\left(z_{k+1}\right) = %z_k$ and $\val\left(
%  \phi,z_k\right) = (-,-)$ for all $k \geq 1$. Since $z_k \in \Ro(y)$
%when $k$ is even the finiteness of $\Ro(y)$ and the periodicity of
%$\phi(y)$ implies that $\phi(y) = z_k$ for some $k \geq 1$, which is
%impossible because the valencies differ. Thus $\phi(y) = y$ and we have shown
%that case i) occurs when $\Ro(y)$ does not contain an exceptional
%critical value.

We consider then the case where $\Ro(y)$ contains an exceptional
critical value $e$. By looking at the graph of a lift of $\phi$ we see
that a non-critical exceptional critical value $e$ can not be fixed since the
degree is $0$. Thus $\phi(e) \neq e$ since exposed points are not critical. To see that $\phi(e)$ is a fixed point assume that
  it is not. Consider first the case where $\phi(e)$ is periodic, say
  of period $p > 1$. Since $\phi(e) \neq e$ it follows from Lemma
  \ref{ecv1} that there is a point $b_1 \in \phi^{-1}(\phi(e))$ such that
  $\val \left( \phi,b_1\right) \neq \val\left(\phi,e\right)$. Then
  $b_1 \notin \{e,\phi(e)\}$ and we use Lemma \ref{ecv1} again to find
  $b_2 \in \phi^{-1}(b_1)$ such that $\val\left(\phi, b_2\right) \neq
  \val \left(\phi, \phi(e)\right)$. It follows that $b_2 \notin
  \left\{e,\phi(e),b_1\right\}$. By requiring in each step that
  $\val\left(\phi, b_i\right) \neq \val\left(\phi,\phi(e)\right)$ we obtain
  through repeated application of Lemma
  \ref{ecv1} elements $b_i, i = 1,2, \dots, p+1$, such that
  $\phi\left(b_{k+1}\right) = b_k$ and $b_{k+1} \notin
  \left\{e,\phi(e), b_1,b_2, \dots, b_k\right\}$ for all $k = 1,2,
  \dots, p$. Then, for $j > p+1$ we require in each step instead
  that $\val\left(\phi^{j}, b_j\right) = \val(\phi,e)$. It is then
  still automatic that $b_{k+1} \notin
  \left\{e, b_1,b_2, \dots, b_k\right\}$ for all $k$, while the
  fact that $b_j \neq \phi(e)$ follows for $j \geq p+1$ because $j$ is
  larger than the period of $\phi(e)$. Since $b_j \in \Ro(e) =
  \Ro(y)$ when $j > p+1$, we have contradicted the assume finiteness
  of $\Ro(y)$. To get the same contradiction when $\phi(e)$ is not
  assumed to be periodic we proceed in the same way, except that the
  steps between $b_1$ and $b_{p+1}$ can be bypassed. In any case we conclude that
  $\phi^2(e) = \phi(e)$. We next argue, in a similar way, that
$\phi^{-1}\left(\phi(e)\right) \backslash \mathcal C_1 \subseteq \{e,\phi(e)\}$. Indeed, if
$b_1 \in \phi^{-1}\left(\phi(e)\right) \backslash \left( \{e,\phi(e)\}
  \cup \mathcal C_1\right)$
we use Lemma \ref{ecv1} to get a sequence $b_i$ such that
$\phi\left(b_{i+1}\right) = b_i, i \geq 1$, and $\val
\left(\phi,b_i\right) = (-,-), i \geq 2$. Then $i \neq i' \Rightarrow
b_i \neq b_{i'}$, and $b_i \in \Ro(e)$ for infinitely many $i$; again
contradicting the infiniteness of $\Ro(e)$. Since $e$ is not pre-critical by Lemma \ref{4a} we have shown that
$\phi^{-1}\left(\phi(e)\right) \backslash \mathcal C_1= \{e,\phi(e)\}$. If $\val\left(\phi,
  e\right) = (-,-)$ and $\val\left(\phi,\phi(e)\right) = (+,+)$ we
find now easily that $\Ro(y) = \Ro(e) = \{e\}$, which is case ii), and in all other cases that
$\Ro(y) = \Ro(e) = \{e,\phi(e)\}$, which is case iii).

Finally we argue that the cases i) and ii) are
impossible. Indeed, in both cases we must have that $\val(\phi,e) =
(-,-)$ and $\val(\phi,\phi(e)) = (+,+)$ since otherwise $e \in
\Ro(\phi(e))$. But then the two closed intervals $J_1$ and
$J_2$ defined such that $J_1 \cap J_2 = \{e, \phi(e)\}$ and $J_1 \cup
J_2 = \mathbb T$ are both $\phi$-invariant, which contradicts the
transitivity of $\phi$. It follows that only case iii) can
occur. 

%Furthermore, we know that $\phi^{-1}\left(\phi(e)\right)
%\backslash \mathcal C_1 \subseteq \{e, \phi(e)\}$. Since $e$ is an
%exceptional critical value we have in fact that
%$$
%\phi^{-1}\left(\{e, \phi(e)\}\right) \backslash \mathcal C_1 =  \phi^{-1}\left(\phi(e)\right)
%\backslash \mathcal C_1 \subseteq \{e, \phi(e)\} .
%$$
%The reversed inclusion follows since exposed points are not critical
%by Lemma \ref{4a}.
\end{proof}

\begin{lemma}\label{Atot} Assume that $\phi$ is transitive and $\deg
  \phi = 0$. Then there are exposed points if and only if $\phi$ is
  not totally transitive. 
\end{lemma}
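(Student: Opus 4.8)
The plan is to establish the two implications separately.

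\emph{If $\phi$ is not totally transitive there are exposed points.} Since $\phi$ is transitive but not totally transitive, Lemma~\ref{nottot} provides a global period $p>1$ and the cyclically permuted closed intervals $I_0,\dots,I_{p-1}$; let $\mathcal E$ be the (finite, non-empty) set of their endpoints, as in the paragraph preceding Lemma~\ref{mathcalE}. By Lemma~\ref{mathcalE}, $\phi^{-1}(\mathcal E)\setminus\mathcal C_1=\mathcal E$, and in particular $\phi(\mathcal E)\subseteq\mathcal E$ and $\mathcal E\cap\mathcal C_1=\emptyset$. Arguing verbatim as in the first paragraph of the proof of Lemma~\ref{Eroinv}, $\mathcal E$ is $\Ro$-invariant: for $e\in\mathcal E$ and $y\in\Ro(e)$ pick $i,j$ with $\phi^i(e)=\phi^j(y)$ and $\val(\phi^i,e)=\val(\phi^j,y)$; since every forward iterate of $e$ lies in $\mathcal E$ and misses $\mathcal C_1$, this common valency lies in $\{(+,+),(-,-)\}$, forcing $\phi^{j-1}(y)\notin\mathcal C_1$ and hence $\phi^{j-1}(y)\in\phi^{-1}(\mathcal E)\setminus\mathcal C_1=\mathcal E$; iterating this $j$ times gives $y\in\mathcal E$. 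Thus every point of the non-empty finite set $\mathcal E$ has finite $\Ro$-orbit, i.e.\ is exposed.

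\emph{If there is an exposed point, $\phi$ is not totally transitive.} Let $y$ be exposed. By Lemma~\ref{ecv2} there is an exceptional critical value $e$ with $x_0:=\phi(e)$ a fixed point, $\phi^2(e)=x_0\neq e$, $\Ro(y)=\{e,x_0\}$, and $\phi^{-1}(x_0)\setminus\mathcal C_1=\{e,x_0\}$; note $e,x_0\notin\mathcal C_1$, and since $x_0$ is a non-critical preimage of itself it is not an exceptional critical value, so by Lemma~\ref{ecv1} the two non-critical preimages $e,x_0$ of $x_0$ carry the two valencies $(+,+)$ and $(-,-)$. Since $\deg\phi=0$, $\phi$ lifts to a continuous $G:\mathbb T\to\mathbb R$ with $\phi(z)=e^{2\pi i G(z)}$; after a rotation assume $x_0=1$ with $G(x_0)=0$, write $e=e^{2\pi i\theta}$ with $\theta\in\,]0,1[$, and let $J_1=[0,\theta]$, $J_2=[\theta,1]$ be the two closed arcs with $J_1\cap J_2=\{e,x_0\}$. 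The hypothesis $\phi^{-1}(x_0)\setminus\mathcal C_1=\{e,x_0\}$ says precisely that $G$ takes integer values only at $x_0$, at $e$, and at its turning points.

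The heart of the proof is the following \emph{confinement} fact: because $G$ restricted to $J_1$ assumes integer values only at the endpoints $x_0,e$ and at turning points, consecutive turning-point values of $G$ along $J_1$ bound an open interval free of integers, so they lie in a common closed interval of length one; tracking the alternation of local maxima and minima one concludes that $G(J_1)$ lies in the closed unit interval with endpoint $0$ on the side into which $G$ first moves off $x_0$, and likewise for $J_2$ off $e$. Now feed in the valencies. If $\val(\phi,x_0)=(-,-)$ and $\val(\phi,e)=(+,+)$, then $G(J_1)\subseteq[-1,0]$, hence $G(e)=0$ (it cannot equal $-1$, as $G$ increases into $e$ along $J_1$), hence $G(J_2)\subseteq[0,1]$; the surjectivity of $\phi$ together with $\phi^{-1}(e)\subseteq\mathcal C_1$, applied via the same confinement fact to the level sets $G^{-1}(\theta)$ and $G^{-1}(\theta-1)$ that make up $\phi^{-1}(e)$, forces $G(J_1)=[\theta-1,0]$ and $G(J_2)=[0,\theta]$, whence $\phi(J_1)=J_2$ and $\phi(J_2)=J_1$. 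In the only other valency assignment, $\val(\phi,x_0)=(+,+)$ and $\val(\phi,e)=(-,-)$, the identical computation gives $\phi(J_1)=J_1$ and $\phi(J_2)=J_2$, contradicting transitivity of $\phi$; so that case does not occur (equivalently, this is the impossibility of cases i) and ii) in the proof of Lemma~\ref{ecv2}). Hence $\phi$ interchanges $J_1$ and $J_2$, so $\phi^2$ maps $\Int J_1$ into $J_1$ and is not transitive, i.e.\ $\phi$ is not totally transitive.

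The first implication is routine. The main obstacle is the lift analysis in the second implication: establishing the confinement fact with correct bookkeeping of the maxima/minima alternation, and then using surjectivity together with $\phi^{-1}(e)\subseteq\mathcal C_1$ to pin the ranges $G(J_1),G(J_2)$ down exactly — it is this exact pinning that yields the period-two interval decomposition and hence the failure of total transitivity.
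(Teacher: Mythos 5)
Your proof is correct and follows essentially the same route as the paper: the forward implication via the $\Ro$-invariance of $\mathcal E$ established in the first paragraph of the proof of Lemma \ref{Eroinv} (which, as you note, rests only on Lemma \ref{mathcalE} and so does not need the degree hypothesis), and the converse via Lemma \ref{ecv2} and the resulting decomposition $\mathbb T = J_1 \cup J_2$ with $\phi$ either preserving or interchanging the two arcs, the first case being excluded by transitivity. Your lift-based ``confinement'' analysis merely spells out in detail the dichotomy that the paper asserts directly by looking at the graph of the lift.
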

\begin{proof} If $\phi$ is not totally transitive there are exposed
  points by (the proof of) Lemma \ref{Eroinv}. Conversely, if there are exposed
  points it follows from Lemma \ref{ecv2} that there is an exceptional
  critical value $e$ such that $e \neq \phi(e) = \phi^2(e)$ and $\{e,
  \phi(e)\}$ is the set of exposed points. Furthermore,
  $\phi^{-1}(\phi(e)) \backslash \mathcal C_1 = \{e, \phi(e)\}$. The points $e$ and
  $\phi(e)$ define closed intervals $J_1$ and $J_2$ such that $\mathbb
  T = J_1 \cup J_2$, $J_1 \cap J_2 = \{e, \phi(e)\}$ and $\phi(J_i) =
  J_i, \ i =1,2$, or $\phi(J_1) = J_2$ and $\phi(J_2) = J_1$. The first
  case is ruled out by transitivity, and the second implies that
  $\phi$ is not totally transitive.
\end{proof}

\begin{prop}\label{deg0} Assume that $\phi$ is transitive and that
  $\deg \phi = 0$. Then $C^*_r\left(\Gamma_{\phi}\right)$ is simple if
  and only if $\phi$ is totally transitive. When $\phi$ is not totally
  transitive there is an extension
\begin{equation}\label{ext2}
\begin{xymatrix}{
0 \ar[r] & B \ar[r] & C^*_r\left(\Gamma^+_{\phi}\right) \ar[r] &
C(\mathbb T) \otimes M_2(\mathbb C)
\ar[r] & 0 }
\end{xymatrix}
\end{equation} 
where $B$ is simple and purely infinite.
\end{prop}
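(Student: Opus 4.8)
The plan is to follow the template of the proof of Proposition \ref{deg1}. First I would settle the simplicity criterion: since $\phi$ is transitive with $\deg\phi = 0$, Lemma \ref{Atot} says that $\phi$ is totally transitive if and only if there are no exposed points, while Corollary \ref{exposedcor} says that $C^*_r(\Gamma^+_{\phi})$ is simple if and only if there are no exposed points. Combining the two yields that $C^*_r(\Gamma^+_{\phi})$ is simple if and only if $\phi$ is totally transitive.

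Now assume that $\phi$ is not totally transitive, so that exposed points exist by Lemma \ref{Atot}. By Lemma \ref{ecv2} there is an exceptional critical value $e$ with $\phi^2(e) = \phi(e) \neq e$, with $\Ro(e) = \{e,\phi(e)\}$, and with $\phi^{-1}(\phi(e)) \setminus \mathcal C_1 = \{e,\phi(e)\}$; moreover $Y := \{e,\phi(e)\}$ is precisely the set of exposed points, hence a closed $\Ro$-invariant subset of $\mathbb T$. Applying Lemma \ref{ideal} to $Y$ gives the exact sequence
$$0 \longrightarrow B \longrightarrow C^*_r(\Gamma^+_{\phi}) \longrightarrow C^*_r(\Gamma^+_{\phi}|_Y) \longrightarrow 0$$
with $B = C^*_r(\Gamma^+_{\phi}|_{\mathbb T\setminus Y})$, and it remains to identify $C^*_r(\Gamma^+_{\phi}|_Y)$ and to show that $B$ is simple and purely infinite.

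For the quotient, note that $Y$ is a two-point set, so $\Gamma^+_{\phi}|_Y$ is a discrete groupoid over the two-point unit space $Y$, and it is transitive because $\Ro(e) = Y$. Since $\phi(e)$ is a non-critical fixed point, $\val(\phi^n,\phi(e)) \in \{(+,+),(-,-)\}$ is determined by the parity of $n$ (or is constantly $(+,+)$), so the isotropy group $\{k : (\phi(e),k,\phi(e)) \in \Gamma^+_{\phi}\}$ equals $\mathbb Z$ or $2\mathbb Z$, in either case isomorphic to $\mathbb Z$. A transitive discrete groupoid over a two-point set with isotropy group $\mathbb Z$ is isomorphic to the product of $\mathbb Z$ with the pair groupoid on two points, whence $C^*_r(\Gamma^+_{\phi}|_Y) \simeq C^*(\mathbb Z) \otimes M_2(\mathbb C) \simeq C(\mathbb T) \otimes M_2(\mathbb C)$, exactly as in the proof of Lemma 4.11 in \cite{Th3}. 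For $B$ itself: it is an ideal in the purely infinite $C^*$-algebra $C^*_r(\Gamma^+_{\phi})$ of Proposition \ref{17}, hence purely infinite. To see it is simple I would repeat the argument from the proof of Proposition \ref{deg1} (modelled on Proposition 4.10 in \cite{Th3}): the points of $\mathbb T\setminus Y$ with non-trivial isotropy in $\Gamma^+_{\phi}|_{\mathbb T\setminus Y}$ are pre-periodic, hence countable by Theorem \ref{11}, so $\mathbb T\setminus Y$ contains a point with trivial isotropy; by Corollary 2.18 in \cite{Th1} it then suffices to rule out non-trivial relatively closed $\Ro$-invariant subsets $L \subseteq \mathbb T\setminus Y$, and for such an $L$ the set $L\cup Y$ is closed and $\Ro$-invariant in $\mathbb T$, hence by Lemma \ref{4a} either all of $\mathbb T$ or a finite set of exposed points; since the exposed points are exactly $Y$, this forces $L = \emptyset$ or $L = \mathbb T\setminus Y$.

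The step I expect to be the main obstacle is the identification $C^*_r(\Gamma^+_{\phi}|_Y) \simeq C(\mathbb T)\otimes M_2(\mathbb C)$: one must describe the groupoid $\Gamma^+_{\phi}|_Y$ over the two-point orbit $\{e,\phi(e)\}$ precisely enough to recognize it as the product of $\mathbb Z$ with the pair groupoid, in particular to confirm that the isotropy is genuinely $\mathbb Z$ and not trivial. The remaining steps are a routine transcription of the arguments already carried out for $\deg\phi \in \{-1,1\}$.
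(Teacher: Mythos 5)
Your proposal is correct and follows exactly the route the paper takes: its proof of Proposition \ref{deg0} consists precisely of the remark that, with Lemma \ref{Atot} and Lemma \ref{ecv2} in hand, one repeats the arguments from the proof of Proposition \ref{deg1}, which is what you have carried out (including the correct identification of the quotient over the two-point exposed orbit as $C(\mathbb T)\otimes M_2(\mathbb C)$ via the $\mathbb Z$ or $2\mathbb Z$ isotropy).
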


\begin{proof} With Lemma \ref{Atot} and Lemma \ref{ecv2} at hand all the necessary
  arguments can be found in the proof of Proposition \ref{deg1}.

\end{proof}

The following graph describes a transitive circle map
of degree $0$ which is not totally transitive and for which $C^*_r\left(\Gamma^+_{\phi}\right)$ is an extension as in (\ref{ext2}).

\begin{center}
\begin{tikzpicture}[x=2cm,y=2cm]
  \draw[->] (0,-0.5) -- (1.1,-0.5);
  \draw[->] (0,-0.7) -- (0,0.7);
  \draw (0.02,0.5) --++ (-0.04,0) node [left] {$1$};
  \draw (0.02,0) --++ (-0.04,0) node [left] {$\tfrac12$};
  \draw (0.02,-0.5) --++ (-0.04,0) node [left] {$0$};
  \draw (0.5,-0.48) --++ (0,-0.04) node [below] {$\tfrac12$};
  \draw[thick,line join=round] plot file {y.dat};   
\end{tikzpicture}

\end{center}

\subsubsection{Simplicity}

We can now finally give a necessary and sufficient condition for
simplicity of $C^*_r\left(\Gamma^+_{\phi}\right)$.

\begin{thm}\label{S} The following conditions are equivalent.
\begin{enumerate}
\item[i)] $C^*_r\left(\Gamma^+_{\phi}\right)$ is simple.
\item[ii)] $\phi$ is totally transitive and there is no exceptional
  fixed point.
\item[iii)] $\phi$ is exact and there is no exceptional fixed point.
\end{enumerate}
\end{thm}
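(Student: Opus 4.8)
The plan is to establish the chain of equivalences by combining the simplicity criterion of Corollary~\ref{exposedcor} with the structural results proved in the degree-by-degree analysis. First I would show that i) implies ii): if $C^*_r\left(\Gamma^+_{\phi}\right)$ is simple, then by Lemma~\ref{what?} the map $\phi$ is transitive, so Corollary~\ref{exposedcor} applies and tells us there are no exposed points. It then remains to rule out the failure of total transitivity and the presence of an exceptional fixed point. If $\phi$ were transitive but not totally transitive, then by Lemma~\ref{Eroinv} (when $\deg\phi\in\{-1,1\}$, combined with the fact that $\mathcal E\neq\emptyset$) and by Lemma~\ref{Atot} (when $\deg\phi=0$) there would be exposed points when $\deg\phi\in\{-1,0,1\}$; and when $\lvert\deg\phi\rvert\geq 2$ Corollary~\ref{corabsx} shows every $\Ro(x)$ is dense, so in fact $\phi$ must then be totally transitive automatically. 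Once total transitivity is established, an exceptional fixed point $e$ satisfies $\phi^{-1}(e)\backslash\mathcal C_1 = \{e\}$, which by the proof of Lemma~\ref{tot} makes $e$ an exposed point, contradicting simplicity. Hence ii) holds.

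Next I would show that ii) implies i). Assume $\phi$ is totally transitive — in particular transitive — with no exceptional fixed point. By Corollary~\ref{exposedcor} it suffices to show there are no exposed points. If $\lvert\deg\phi\rvert\geq 2$ this is immediate from Proposition~\ref{2simpe} (equivalently Corollary~\ref{corabsx}). If $\deg\phi\in\{-1,1\}$, Lemma~\ref{tot} says that the only possible exposed point is an exceptional fixed point, which is excluded by hypothesis; so there are no exposed points and $C^*_r\left(\Gamma^+_{\phi}\right)$ is simple. If $\deg\phi=0$, Lemma~\ref{Atot} says that total transitivity already forces the absence of exposed points. In all cases Corollary~\ref{exposedcor} gives simplicity.

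Finally, the equivalence of ii) and iii) is exactly Lemma~\ref{schultz22}, which states that $\phi$ is exact if and only if $\phi$ is totally transitive; the clause ``there is no exceptional fixed point'' is identical in both formulations, so ii) $\Leftrightarrow$ iii) is immediate. Putting these three implications together — i) $\Rightarrow$ ii), ii) $\Rightarrow$ i), and ii) $\Leftrightarrow$ iii) — yields the theorem.

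The main obstacle is organizational rather than conceptual: one must carefully split into the cases $\lvert\deg\phi\rvert\geq 2$, $\lvert\deg\phi\rvert = 1$, and $\deg\phi=0$ and invoke the correct structural lemma in each case, since the relationship between total transitivity and the existence of exposed points is genuinely different across these regimes (automatic density of orbits when $\lvert\deg\phi\rvert\geq 2$, the role of the endpoint set $\mathcal E$ when $\lvert\deg\phi\rvert=1$, and the role of an exceptional critical value $e$ with $\phi^2(e)=\phi(e)\neq e$ when $\deg\phi=0$). The exceptional-fixed-point subtlety only arises in the degree $\pm 1$ cases, and the key input there is the identification in Lemma~\ref{tot} of an exceptional fixed point as the unique possible exposed point; getting that bookkeeping right is where the care is needed.
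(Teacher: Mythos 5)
Your proof is correct and follows essentially the same route as the paper: Lemma~\ref{what?} and Corollary~\ref{exposedcor} for i) $\Rightarrow$ ii), the degree-by-degree propositions for ii) $\Rightarrow$ i), and Lemma~\ref{schultz22} for ii) $\Leftrightarrow$ iii). The only soft spot is your claim that for $\left|\deg \phi\right| \geq 2$ the density of every $\Ro(x)$ ``automatically'' forces total transitivity; that implication is true but not immediate, and the paper closes it uniformly in the degree by observing that if $\phi$ were transitive but not totally transitive, the endpoint set $\mathcal E$ of Lemma~\ref{mathcalE} would be a non-empty, finite, $\Ro$-invariant set (the first paragraph of the proof of Lemma~\ref{Eroinv}, which does not use the degree hypothesis), so some $\Ro$-orbit would fail to be dense. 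Supplying that one sentence makes your case split for this step unnecessary and completes the argument.
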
 
\begin{proof} i) $\Rightarrow$ ii) : Transitivity of $\phi$ follows
  from Lemma \ref{what?}. And then $\phi$ must be totally transitive
  since otherwise the set $\mathcal E$ considered in Lemma
  \ref{mathcalE} will be non-empty, finite and $\Ro$-invariant, as
  shown in the first paragraph of the proof of Lemma \ref{Eroinv}, and
  this contradicts simplicity by Lemma \ref{simplicity}. The absence
  of an exceptional fixed point follows also from Lemma
  \ref{simplicity} since an exceptional fixed point is its own
  $\Ro$-orbit. 

The implication ii) $\Rightarrow$ i) follows from Propositions
  \ref{deg0}, \ref{deg1} and \ref{2simpe}, and the implication iii)
  $\Rightarrow$ ii) is trivial. The implication ii) $\Rightarrow$ iii) follows from Lemma \ref{schultz22}.
\end{proof}

\section{Nuclearity, UCT and a six-terms exact sequence}

\subsection{The Cuntz-Pimsner picture of $C^*_r\left(\Gamma^+_{\phi}\right)$}

To simplify notation, set
$$
R^+_{\phi} = \Gamma^+_{\phi}(0) .
$$
Then $C^*_r\left(R^+_{\phi}\right)$ is the fixed point algebra of \emph{the
gauge action} $\beta = \beta^c$ on $C^*_r\left(\Gamma^+_{\phi}\right)$
arising from the homomorphism $c : \Gamma^+_{\phi} \to
\mathbb Z$ defined such that $c(x,k,y) = k$, cf. \cite{Re}. For $n \in \mathbb N$, set 
$$
R^+_{\phi}(n) = \Gamma^+_{\phi}(0,n)
$$
which is an open sub-groupoid of $R^+_{\phi}$. Then $R^+_{\phi} =
\bigcup_n R^+_{\phi}(n)$ and
\begin{equation}\label{union}
C^*_r\left(R^+_{\phi}\right) = \overline{\bigcup_n
  C^*_r\left(R^+_{\phi}(n)\right)} .
\end{equation}

\begin{lemma}\label{cruX} Assume that
  $C^*_r\left(\Gamma^+_{\phi}\right)$ is simple. Let $x \in
  \mathbb T \backslash \mathcal C_1$. It follows that there are
  elements $z,z' \in \mathbb T\backslash \mathcal C_1$ such that
  $(x,1,z), \ (z',1,x) \in \Gamma^+_{\phi}$.
\end{lemma}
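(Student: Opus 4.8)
I would deduce the statement from the density in $\mathbb{T}$ of the \emph{$0$-orbit} $\{\, y \in \mathbb{T} : y \overset{0}{\sim} x \,\}$ of $x$, by two applications of the groupoid multiplication in $\Gamma^+_{\phi}$. First recall what the hypothesis gives: by Lemma \ref{simplicity} every $\Ro$-orbit is dense, by Theorem \ref{S} the map $\phi$ is transitive, and hence by Theorem \ref{11} it is conjugate, through an orientation-preserving homeomorphism of $\mathbb{T}$, to a uniformly piecewise linear map of some slope $s > 1$. The conclusion, the hypothesis $x \notin \mathcal{C}_1$, and membership in $\Gamma^+_{\phi}$ are all invariant under such a conjugacy (orientation-preserving homeomorphisms preserve valencies), so I will assume $\phi$ itself is uniformly piecewise linear with slope $s > 1$.

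Put $G = \{\, v \in \mathbb{T} : \val(\phi,v) = (+,+) \ \text{and}\ \phi(v) \notin \mathcal{C}_1 \,\}$ and $H = \phi(\{\, v : \val(\phi,v) = (+,+)\,\})$. Because $\phi$ is not locally injective it has a critical point, and at a critical point $\phi$ is strictly increasing on one side and strictly decreasing on the other; so $\{\, v : \val(\phi,v) = (+,+)\,\}$ is a non-empty open set on which $\phi$ is a local homeomorphism. Hence $H$ is non-empty and open, and, $\phi^{-1}(\mathcal{C}_1)$ being finite, so is $G$. Now if $v \in G$ then $(v,1,\phi(v)) \in \Gamma^+_{\phi}$ (take $n = 1$, $m = 0$ in the definition of $\overset{1}{\sim}$, using $\val(\phi,v) = (+,+) = \val(\id,\phi(v))$) with $\phi(v) \notin \mathcal{C}_1$; so if moreover $v \overset{0}{\sim} x$, so that $(x,0,v) \in \Gamma^+_{\phi}$, then
\[
(x,1,\phi(v)) = (x,0,v)(v,1,\phi(v)) \in \Gamma^+_{\phi}
\]
and $z = \phi(v)$ is as required. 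Similarly, if $x'' \in H$ pick $z' \in \phi^{-1}(x'')$ with $\val(\phi,z') = (+,+)$; then $z' \notin \mathcal{C}_1$, $(z',1,x'') \in \Gamma^+_{\phi}$, and if moreover $x'' \overset{0}{\sim} x$ then $(x'',0,x) \in \Gamma^+_{\phi}$ and
\[
(z',1,x) = (z',1,x'')(x'',0,x) \in \Gamma^+_{\phi},
\]
so this $z'$ works. Hence it suffices that the $0$-orbit of $x$ meet the non-empty open sets $G$ and $H$, which certainly follows if the $0$-orbit of $x$ is dense in $\mathbb{T}$.

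To see this, fix $n$ and let $I_n$ be the maximal open interval containing $x$ on which $\phi^n$ is monotone; its endpoints lie in the finite set $\bigcup_{j=0}^{n-1}\phi^{-j}(\mathcal{C}_1)$, and $\phi^n|_{I_n}$ is linear of slope $\pm s^n$, so $|I_n| \le s^{-n}$. Every other monotone branch $J$ of $\phi^n$ whose image contains $\phi^n(x)$ in its interior and has the same orientation as $\phi^n|_{I_n}$ contributes a point $y_J \in J$ with $\phi^n(y_J) = \phi^n(x)$ and $\val(\phi^n,y_J) = \val(\phi^n,x)$, that is $y_J \overset{0}{\sim} x$. As $n$ grows the number of such branches grows while the branches themselves shrink to points, so the $y_J$ become dense; this is where expansiveness ($s>1$) enters, through the facts that the branches of $\phi^n$ have length $\le s^{-n}$ while the branch count $N_n$ grows at rate $s$ and the branch images have average length $s^n/N_n$.

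The delicate step here --- and the one I expect to cost the most work --- is precisely this last density claim: one must exclude that $\phi^n(x)$ lies on the boundary of \emph{every} branch image containing it for all large $n$ (which forces $\phi^n(x)$ to be post-critical, and is ruled out by comparing the roughly $s^n$ preimages of $\phi^n(x)$ under $\phi^n$ with the roughly $\sum_{j<n}|\phi^{-j}(\mathcal{C}_1)|$ possible branch-image endpoints), and one must make the proliferation of the $y_J$ quantitative. One can instead bypass these estimates by treating the cases $|\deg\phi| \ge 2$, $|\deg\phi| = 1$ and $\deg\phi = 0$ separately, reading $z$ and $z'$ off Lemmas \ref{corabsx}, \ref{oooo} and \ref{ecv1} together with the analysis of exposed and exceptional points; this is more elementary but longer, the only awkward case being $\deg\phi = 0$ with $x$ an exceptional critical value.
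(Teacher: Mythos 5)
Your reduction step is correct: if the $R^+_{\phi}$-orbit $\Ro_0(x)=\{y: (x,0,y)\in\Gamma^+_{\phi}\}$ meets the open sets $G$ and $H$, then the groupoid multiplications $(x,0,v)(v,1,\phi(v))$ and $(z',1,x'')(x'',0,x)$ do produce the required $z$ and $z'$, and the verifications that $G$ and $H$ are non-empty and open are fine. But this reduction is the easy half; the entire content of the lemma is concentrated in the claim you defer, namely that $\Ro_0(x)$ is dense (or at least meets $G$ and $H$), and that claim is left with only a heuristic. Note that what you are proposing to prove is strictly \emph{stronger} than the lemma: $\Ro(x)=\bigcup_{k\in\mathbb Z}\Ro_k(x)$ is dense by Lemma \ref{simplicity}, but density of this countable union says nothing about the $k=0$ piece, and the paper explicitly warns (after Lemma \ref{minus1}) that $C^*_r\left(R^+_{\phi}\right)$ need not be simple even when $C^*_r\left(\Gamma^+_{\phi}\right)$ is, so $R^+_{\phi}$ is not minimal in general and any density assertion for its orbits needs a real argument. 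Your branch-counting sketch does not supply one for the hard cases $\deg\phi\in\{0,\pm1\}$: the monotone branches of $\phi^n$ whose image contains $\phi^n(x)$ \emph{with the correct orientation} need not be distributed over the whole circle, so the matching-valency preimages need not accumulate in a prescribed open set; excluding this is exactly the difficulty, and it is not a bookkeeping matter of comparing $s^{-n}$ with branch counts.

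For comparison, the paper does not go through $\Ro_0(x)$ at all. It first dispatches the case $\phi^k(x)\in\mathcal C_1$ for some $k\ge 1$ directly, and otherwise produces an element of $\Ro_{\pm1}(x)\backslash\mathcal C_1$ by hand: non-local-injectivity gives disjoint intervals $I_{\pm}$ with $\val(\phi,\cdot)=(\pm,\pm)$ and common image $I$, exactness gives $\phi^{N-1}(I)=\mathbb T$ so that \emph{every} point has both a $(+,+)$- and a $(-,-)$-preimage under $\phi^N$ landing in $I_{\pm}$, and then a case split (pre-periodic versus not) together with the finiteness of $\phi^N(\mathcal C_N)$ selects a point $a$ in the backward orbit of $\phi^{n}(x)$ avoiding $\phi^N(\mathcal C_N)$, whose $\phi^N$-preimages $y_{\pm}\in I_{\pm}$ realize both valencies and hence one of them matches $\val\left(\phi^{n},x\right)$. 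Your proposed fallback (reading $z,z'$ off Lemmas \ref{corabsx}, \ref{oooo}, \ref{ecv1}) recovers the $(z',1,x)$ half in the degree-$1$ case via Lemma \ref{oooo}, but gives no route to the $(x,1,z)$ half, which is where the counting with $\phi^N(\mathcal C_N)$ is unavoidable. As it stands the proposal is a plan with the decisive step missing.
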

\begin{proof} If $\phi^k(x) \in \mathcal C_1$ for some $k \geq 1$, set
  $z = \phi(x)$ and let $z'$ be any element of $\phi^{-1}(x)$. Then
  $(x,1,z), \ (z',1,x) \in \Gamma^+_{\phi}$. Assume
  therefore now
  that $\phi^n(x) \notin \mathcal C_1$ for all $n \geq 1$. Since
  $\phi$ is not locally injective there are open non-empty intervals
  $I_{\pm}$ and $I$ such that $\val(\phi,z) = (+,+)$ for all $z \in
  I_+$, $\val(\phi,z) = (-,-)$ for all $z \in I_-$ and $\phi(I_-) =
  \phi(I_+) = I$. It follows from Theorem \ref{S} that $\phi$ is exact and there is therefore an $N \in \mathbb N$
  such that $\phi^{N-1}(I) = \mathbb T$. 

We consider
  first the case where $x$ is pre-periodic
 to a finite orbit
  $\mathcal O$ of period $p$. Let $d \in \{-1,1\}$. If there is an $M \in \mathbb N$ such that
  $\phi^{-j}(\mathcal O) \subseteq \mathcal C_j \cup \bigcup_{k
    =0}^{j-1}\phi^{-k}(\mathcal O)$ for all $j > M$, it
  follows that $\Ro(x) \subseteq \bigcup_{j=0}^M \phi^{-j}(\mathcal
  O)$ which is a finite set. This is impossible since
  $C^*_r\left(\Gamma^{+}_{\phi}\right)$ is simple, cf. Corollary
  \ref{exposedcor}. Since $\mathcal O$ is finite it follows that there
  is a $z \in \mathcal O$ and an $n_0 \in \mathbb N$ such that
  $\phi^{-j}(z) \backslash (\mathcal C_j \cup \bigcup_{k=0}^{j-1}\phi^{-k}(\mathcal O))  \neq \emptyset$ for all $j
  \geq n_0$. Since $z \in \mathcal O$ there are $k,l \in \mathbb N$
  such that $\phi^l(z) =
  \phi^{kpN+d}(x)$. Note that 
$$
j \neq j'  
  \Rightarrow \left(\phi^{-j}(z) \backslash (\mathcal C_j \cup
    \bigcup_{k=0}^{j-1}\phi^{-k}(\mathcal O)) \right) \cap \left(\phi^{-{j'}}(z) \backslash
    (\mathcal C_{j'} \cup \bigcup_{k=0}^{j'-1}\phi^{-k}(\mathcal
    O))\right) = \emptyset .
$$ 
Since $\phi^N(\mathcal C_N)$ is a finite set there is therefore a $k' \geq k$ such that $k'pN-N-l \geq
  n_0$ and
$$
\phi^N\left(\mathcal C_N\right) \cap \left(\phi^{-k'pN + N+l}(z) \backslash
\mathcal C_{k'pN-N-l}\right)  = \emptyset.
$$
Choose an element $a \in \phi^{-k'pN + N+l}(z) \backslash
\mathcal C_{k'pN-N-l}$. Then $a = \phi^N(y_{\pm})$
 where
$y_{\pm} \in I_+ \cup I_-$ and $\val(\phi^N,y_+) =
(+,+), \ \val(\phi^N,y_-)  = (-,-)$. It follows that
$\phi^{k'pN}(y_{\pm}) = \phi^{kpN+d}(x) = \phi^{k'pN+d}(x)$ and that either $(x,d,y_+) \in \Gamma^+_{\phi}$ or $(x,d,y_-) \in
\Gamma^+_{\phi}$. When $d = 1$ this gives us $z$ such that $(x,1,z)
\in \Gamma^+_{\phi}$ and when $d = -1$ it gives us $z'$ such that
$(x,-1,z') \in \Gamma^+_{\phi}$.

Consider then the case where $x$ is not pre-periodic. If there are
infinitely many $n \in \mathbb N$, $n > N$, such that the only elements of
$\phi^{-N}\left(\phi^{n}(x)\right) \backslash \left\{
  \phi^{n-N}(x)\right\}$ are critical points for $\phi^N$, it
follows that $\phi^{n}(x) \in \phi^N\left(\mathcal C_N\right)$ for
infinitely many $n$. Since $\phi^N\left(\mathcal C_N\right)$ is a
finite set this contradicts that $x$ is not pre-periodic. It follows
that when $x$ is not pre-periodic there is an $n_0 \in \mathbb N$ such
that for all $n \geq n_0$ there is an element $y_n \in \phi^{-N}\left(\phi^{n}(x)\right)$ which is not in
the forward orbit $\left\{\phi^j(x) : j \in \mathbb N\right\}$ of $x$
and also not critical for $\phi^N$. Note that
$$
j  \neq j' \Rightarrow \left(\bigcup_{k \in \mathbb N}
  \phi^{-kN}\left(y_j\right) \right) \cap \left(\bigcup_{k \in \mathbb N}
  \phi^{-kN}\left(y_{j'}\right)\right) = \emptyset 
$$
when $j,j' \geq n_0$. Since $\phi^N(\mathcal C_N)$ is a finite set
there is therefore an $m  > 2$ such that $mN+d > n_0$ and 
$$
\left(\bigcup_{k \in \mathbb N}
  \phi^{-kN}\left(y_{mN+d}\right)\right) \cap \phi^N(\mathcal C_N) =
\emptyset.
$$  
Let $a \in \phi^{-(m-2)N}\left(y_{mN+d}\right)$. Choose $y_{\pm} \in
I_{\pm}$ such that $\phi^N(y_{\pm}) = a$. Then
$\phi^{mN}\left(y_{\pm}\right) = \phi^{mN +d}(x)$ and for one of
the elements $v$  in $\left\{y_+,y_-\right\}$ it holds that
$\val\left(\phi^{mN},v\right) = \val\left(\phi^{mN+d},x\right)$. Then
$(x,1,v) \in \Gamma^+_{\phi}$ when $d = 1$ and $(x,-1,v) \in
\Gamma^+_{\phi}$ when $d = -1$.

\end{proof}

\begin{lemma}\label{out} Assume that
  $C^*_r\left(\Gamma^+_{\phi}\right)$ is simple. Let $x \in \mathcal
  C_1$ and let $U$ be an open non-empty subset of $\mathbb T$. There are elements $\mu_1,\mu_2, \cdots, \mu_N \in
  \Gamma^+_{\phi}(1)$ such that $\gamma = \mu_1\mu_2 \mu_3 \cdots
  \mu_N$ is defined, $s(\gamma) =x$ and $r(\gamma) \in U \backslash
  \mathcal C_1$.
\end{lemma}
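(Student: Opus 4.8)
The plan is to build $\gamma$ as a concatenation of a single level-$1$ arrow out of the critical point $x$ with a decomposable level-$N$ arrow that ends in $U\setminus\mathcal C_1$. Throughout I use that simplicity of $C^*_r\left(\Gamma^+_{\phi}\right)$ forces $\phi$ to be transitive (Lemma \ref{what?}), hence exact (Theorem \ref{S}), forces $\Ro(z)$ to be dense in $\mathbb T$ for every $z$ (Lemma \ref{simplicity}), and excludes exposed points (Corollary \ref{exposedcor}); and that, since $\phi$ is not locally injective, there are non-empty open intervals $I_+,I_-,I$ with $\phi(I_\pm)=I$ and $\val(\phi,\cdot)\equiv(\pm,\pm)$ on $I_\pm$, together with $N_0\in\mathbb N$ satisfying $\phi^{N_0-1}(I)=\mathbb T$, exactly as in the proof of Lemma \ref{cruX}. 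Two bookkeeping facts will be used repeatedly. First, inspection of the composition table for $\bullet$ gives $\mathcal C_1\subseteq\mathcal C_n$ for all $n$, so $x\in\mathcal C_n$ and $\val\left(\phi^n,x\right)\in\{(+,-),(-,+)\}$ for every $n\geq1$. Second, if $(a,k,b)\in\Gamma^+_{\phi}$ is witnessed by $\phi^{k+m}(a)=\phi^m(b)$ and $\val\left(\phi^{k+m},a\right)=\val\left(\phi^m,b\right)=v$ with $m\geq1$, then $(a,k,b)$ is a product of $k$ elements of $\Gamma^+_{\phi}(1)$ as soon as for every $l$ with $m<l<m+k$ the fibre $\phi^{-l}\left(\phi^{k+m}(a)\right)$ contains a point $z$ with $\val\left(\phi^l,z\right)=v$: one takes such a point as the $j$-th intermediate vertex.

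The first task is to produce $\mu_1=(x,1,x_1)\in\Gamma^+_{\phi}(1)$ with $x_1\in\mathbb T\setminus\mathcal C_1$. If $\phi^k(x)\in\mathcal C_1$ for some $k\geq1$, then the valency computation in the proof of Lemma \ref{cruX} shows $\phi(x)\in\mathcal C_k$ and $(x,1,\phi(x))\in\Gamma^+_{\phi}$, and following the forward orbit of $x$ one reaches a non-critical point after finitely many level-$1$ steps, unless the forward orbit of $x$ stays entirely in $\mathcal C_1$. In the complementary situations — $\phi^k(x)\notin\mathcal C_1$ for all $k\geq1$, or the forward orbit of $x$ eventually cycling inside $\mathcal C_1$ — I would extract $x_1$ from the density of $\Ro(x)$: since $\mathcal C_1$ is finite and $\Ro(x)$ is dense, $\Ro(x)$ meets $\mathbb T\setminus\mathcal C_1$, and the point is that for suitable large $n$ the fibre $\phi^{-n}\left(\phi^{n+1}(x)\right)$ must contain a point $x_1\notin\mathcal C_1$ with $\val\left(\phi^n,x_1\right)=\val\left(\phi^{n+1},x\right)$ — if it did not for any $n$, the absence of exposed points (Corollary \ref{exposedcor}) would be contradicted — and then $(x,1,x_1)\in\Gamma^+_{\phi}$ is witnessed at level $n$.

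From the non-critical point $x_1$ I would run the construction in the proof of Lemma \ref{cruX}: one level-$1$ step carries $x_1$ to a point $v\in I_+\cup I_-$ which one may arrange so that $\phi^{N_0}(v)=a$ and $\val\left(\phi^{N_0},v\right)=(+,+)$ for any prescribed $a$ outside the finite set $\mathcal C_1\cup\phi^{N_0}\left(\mathcal C_{N_0}\right)$. Choosing $a$ in the non-empty open set $U\setminus\left(\mathcal C_1\cup\phi^{N_0}\left(\mathcal C_{N_0}\right)\right)$ and selecting the branch over $a$ so that the intermediate valencies $\val\left(\phi,\phi^{j}(v)\right)$ are all $(+,+)$, each of the forward steps from $v$ to $\phi^{N_0}(v)=a$ is, by the composition table, a genuine element of $\Gamma^+_{\phi}(1)$; this gives a decomposable level-$N$ arrow from $x$ to $w=a\in U\setminus\mathcal C_1$, and concatenating it with $\mu_1$ yields the required $\gamma=\mu_1\mu_2\cdots\mu_N$.

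The step I expect to be the main obstacle is the first one: producing a level-$1$ arrow out of the critical point $x$ landing at a non-critical point. The naive candidate $\phi(x)$ stays in $\mathcal C_1$ exactly when $x$ is pre-periodic to a cycle of critical points, so one must instead jump to a carefully chosen pre-image and keep track of its valency (via the composition table) while keeping it out of $\mathcal C_1$; it is precisely the density of $\Ro(x)$, i.e. the absence of exposed points, that guarantees such a pre-image exists. Once a non-critical vertex is in hand, steering the path into $U$ is a routine, if slightly tedious, adaptation of the argument already carried out in the proof of Lemma \ref{cruX}.
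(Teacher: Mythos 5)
There is a genuine gap, and it occurs at the very first step. You set out to produce $\mu_1=(x,1,x_1)\in\Gamma^+_{\phi}(1)$ with $r(\mu_1)=x$ for the critical point $x$, and then to prolong this forwards. But such an element need not exist: the remark following Lemma \ref{minus1} exhibits a situation, compatible with simplicity of $C^*_r\left(\Gamma^+_{\phi}\right)$, in which $r^{-1}(c)\cap\Gamma^+_{\phi}(1)=\emptyset$ for a critical point $c$ (two critical points with disjoint forward orbits, one of them infinite). The obstruction is the one you half-identify yourself: since $x\in\mathcal C_1$, the valency $\val\left(\phi^{m+1},x\right)$ is a fold for every $m$, so a partner $x_1$ with $\phi^m(x_1)=\phi^{m+1}(x)$ and $\val\left(\phi^m,x_1\right)=\val\left(\phi^{m+1},x\right)$ must itself lie in $\mathcal C_m$ over the same point, and neither the density of $\Ro(x)$ nor the absence of exposed points forces such a partner to exist with the specific degree $k=1$; $\Ro(x)$ being infinite says nothing about which integers $k$ are realized by arrows in the fibre $r^{-1}(x)$.

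Moreover, the arrow you build points the wrong way. With the paper's conventions $r(a,k,b)=a$ and $s(a,k,b)=b$, your final $\gamma=(x,N,a)$ has $r(\gamma)=x$ and $s(\gamma)=a$, whereas the lemma demands $s(\gamma)=x$ and $r(\gamma)\in U\setminus\mathcal C_1$; inverting $\gamma$ would replace the factors by elements of $\Gamma^+_{\phi}(-1)$, so this is not a cosmetic fix. The intended (and much shorter) argument goes backwards into $x$ rather than forwards out of it: simplicity gives exactness by Theorem \ref{S}, so $\bigcup_{j\geq 1}\phi^{-j}(x)$ is dense in $\mathbb T$ and one may choose $N$ and $z\in\phi^{-N}(x)\cap U\setminus\mathcal C_1$; then $\mu_i=\left(\phi^{i-1}(z),1,\phi^i(z)\right)$ does the job. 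Each $\mu_i$ genuinely lies in $\Gamma^+_{\phi}(1)$ precisely because the terminal point $x$ is critical: setting $j=N-i+1$ one gets $\val\left(\phi^{j+1},\phi^{i-1}(z)\right)=\val\left(\phi,x\right)=\val\left(\phi^{j},\phi^{i}(z)\right)$, since a fold is absorbing on the left for $\bullet$. The product is $(z,N,x)$, with source $x$ and range $z\in U\setminus\mathcal C_1$, as required. Note how the criticality of $x$, which is the obstacle in your approach, is exactly what makes the valency bookkeeping trivial in this one.
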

\begin{proof} Since $\phi$ is exact the backward orbit $\bigcup_{j
    =1}^{\infty}\phi^{-j}(x)$ is dense in $\mathbb N$.  so there is an $N \in
  \mathbb N$ and a $z \in \phi^{-N}(x) \cap U  \backslash \mathcal C_1$. Set
  $\mu_i = \left(\phi^{i-1}(z),1,\phi^i(z)\right)$.
\end{proof}

\begin{lemma}\label{specgen} Assume that
  $C^*_r\left(\Gamma^+_{\phi}\right)$ is simple. Then the first
  spectral subspace for the gauge action, i.e. the set
$$
E_1 = \left\{a \in C^*_r\left(\Gamma^+_{\phi}\right) : \
  \beta_{\lambda}(a) = \lambda a \ \forall \lambda \in \mathbb T
\right\},
$$
generates $C^*_r\left(\Gamma^+_{\phi}\right)$ as a $C^*$-algebra.
\end{lemma}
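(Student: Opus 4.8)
The plan is to show that $C^*_r\!\left(\Gamma^+_{\phi}\right)$ is generated by $C^*_r\!\left(R^+_{\phi}\right)$ together with $E_1$, and then to invoke the fact that the gauge action has a well-understood structure: a standard spectral-subspace argument shows that once $E_1$ and $E_1^* = E_{-1}$ are available, every spectral subspace $E_k$ (for $k\in\mathbb Z$) is contained in the $C^*$-algebra generated by $E_1$, because $E_k \supseteq E_1^k$ as linear subspaces and, more importantly, $E_k E_{-k} \subseteq C^*_r\!\left(R^+_{\phi}\right)$ with the span of $E_{-k}E_k$ densely contained in a full hereditary subalgebra when the algebra is simple. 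Since $C^*_r\!\left(\Gamma^+_{\phi}\right) = \overline{\sum_{k\in\mathbb Z} E_k}$ (this is the Fourier decomposition for the $\mathbb T$-action $\beta$), it therefore suffices to prove two things: (a) $C^*_r\!\left(R^+_{\phi}\right)$ is contained in the $C^*$-algebra generated by $E_1$; and (b) each $E_k$ with $|k|\ge 1$ is reachable from products of elements of $E_1$ and $E_1^*$.

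For (b), the key observation is that $E_1$ is spanned (densely) by functions supported in the basic open bisections $\Omega(\eta,U)$ with $\eta \in \mathcal T_1(\phi)$, i.e.\ by the characteristic-function-like elements $1_{\Omega(\eta,U)}$ coming from single steps $(x,1,\phi(x))$. Composing $N$ such bisections, as in Lemma~\ref{out}, produces elements of $\Gamma^+_{\phi}(N)$, and the product of the corresponding elements of $E_1$ lands in $E_N$; the content of Lemma~\ref{out} is precisely that enough such composable strings exist to cover (the relevant part of) $\Gamma^+_{\phi}(N)$. So the main work is to pass from "covering each point by a composable string" to "the products of the generators have dense span in $E_N$", which is a partition-of-unity argument on the étale groupoid: cover the open bisection carrying a given $f \in E_N$ by finitely many sets of the form $\Omega(\mu_1,U_1)\cdots\Omega(\mu_N,U_N)$ and use a subordinate partition of unity together with the fact that multiplication $C_c(G)\times C_c(G)\to C_c(G)$ respects such decompositions.

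For (a), I would argue that $C^*_r\!\left(R^+_{\phi}\right)$ is already generated inside $E_1 E_1^* + E_1^* E_1$: by~\eqref{union} it is the closure of $\bigcup_n C^*_r\!\left(R^+_{\phi}(n)\right)$, and an element of $R^+_{\phi}(n)$ is a triple $(x,0,y)$ with $\phi^n(x)=\phi^n(y)$ and matching valencies, hence factors as a length-$n$ forward string from $x$ followed by the reverse of a length-$n$ forward string from $y$ — that is, it sits in $\Gamma^+_{\phi}(n)\cdot\Gamma^+_{\phi}(-n)$. Here Lemma~\ref{cruX} and Lemma~\ref{out} are what guarantee that such forward strings through non-critical points exist in sufficient supply (and that critical points can be handled by first stepping away from $\mathcal C_1$), so again a partition-of-unity argument upgrades "every $(x,0,y)$ factors this way" to "$C_c(R^+_{\phi}(n))$ lies in the $C^*$-algebra generated by $E_1$". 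The hard part will be the bookkeeping in this covering argument — making sure the finitely many product-bisections can be chosen with domains small enough that the subordinate partition of unity multiplies correctly and so that the valency matching conditions are respected on each piece; the role of simplicity (via Theorem~\ref{S}, i.e.\ exactness) enters exactly here, through Lemmas~\ref{cruX} and~\ref{out}, to ensure the backward orbits are dense and the required composable strings always exist.
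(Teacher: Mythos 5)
Your plan rests on two claims that fail for this groupoid, so there is a genuine gap. First, the ``standard spectral-subspace argument'' you invoke needs the gauge action to be full, i.e.\ both $\overline{\Span}\,E_1E_1^*$ and $\overline{\Span}\,E_1^*E_1$ equal to $C^*_r\left(R^+_{\phi}\right)$. The paper points out (right after Lemma \ref{minus1}) that this is false in general even when $C^*_r\left(\Gamma^+_{\phi}\right)$ is simple: a critical point $c$ with infinite forward orbit disjoint from the orbits of the other critical points has $r^{-1}(c)\cap\Gamma^+_{\phi}(1)=\emptyset$, so the closed span of $E_1E_1^*$ is a proper ideal of $C^*_r\left(R^+_{\phi}\right)$. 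This is precisely why the paper's proof does not argue on all of $\mathbb T$: it restricts to $\Gamma^+_{\phi}|_{\mathbb T\setminus\mathcal C_1}$, where the restricted action \emph{is} full (both $V_1V_1^*$ and $V_1^*V_1$ span densely, via Lemma \ref{cruX}), concludes that this subalgebra is generated by $V_1\subseteq E_1$, and then treats the critical points separately using the elements $g_x$ manufactured from Lemma \ref{out} to cut a general $f\in C_c\left(\Gamma^+_{\phi}\right)$ into pieces supported over $\mathbb T\setminus\mathcal C_1$.

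Second, the groupoid-level factorizations behind your steps (a) and (b) are obstructed by the valency-matching built into $\Gamma^+_{\phi}$. The triple $\left(x,j,\phi^j(x)\right)$ belongs to $\Gamma^+_{\phi}$ only when $\val\left(\phi^j,x\right)=(+,+)$ or $\phi^j(x)$ is suitably pre-critical, so a general $(x,0,y)\in R^+_{\phi}(n)$ does \emph{not} factor as a forward string from $x$ times the reverse of a forward string from $y$; for the same reason $E_1$ is not densely spanned by functions on bisections through points of the form $(x,1,\phi(x))$, and $\Gamma^+_{\phi}(N)$ is not covered by $N$-fold products of bisections from $\Gamma^+_{\phi}(1)$ without substantial extra input --- this is exactly what Lemmas \ref{nyk} and \ref{mar7k} establish later, and only under the Markov-of-order-$k$ hypothesis. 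A repaired version of your one-sided idea does exist: one can first prove $\overline{\Span}\,E_1^*E_1=C^*_r\left(R^+_{\phi}\right)$ (Lemma \ref{minus1}, which needs only that every point of $\mathbb T$, critical or not, is the source of some element of $\Gamma^+_{\phi}(1)$) and then run the induction $E_k\subseteq\overline{\Span}\,E_{k-1}E_1$ using $a\left(a^*a\right)^{1/m}\to a$. But as written your argument relies on two-sided fullness and on factorizations that the valency conditions rule out.
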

\begin{proof} The gauge action $\beta$ on $C^*_r\left(\Gamma^+_{\phi}\right)$
  restricts to an action on $C^*_r\left(\Gamma^+_{\phi}|_{\mathbb T
      \backslash \mathcal C_1}\right)$. Let $V_1 = E_1 \cap C^*_r\left(\Gamma^+_{\phi}|_{\mathbb T
      \backslash \mathcal C_1}\right)$ be the first spectral subspace of the restricted action. We claim
that
$V_1V_1^*$ spans a dense subspace in the fixed point algebra
$C^*_r\left(\Gamma^+_{\phi}|_{\mathbb T
      \backslash \mathcal C_1}\right)^{\beta}$.
To show this observe that since the closed span of $V_1V_1^*$ is an ideal in $C^*_r\left(\Gamma^+_{\phi}|_{\mathbb T
      \backslash \mathcal C_1}\right)^{\beta}$ it suffices to show
  that the span of $V_1V_1^*$ contains an approximate unit for $C^*_r\left(\Gamma^+_{\phi}|_{\mathbb T
      \backslash \mathcal C_1}\right)$. Hence it suffices to show that
  $C_c\left(\mathbb T \backslash \mathcal C_1\right) \subseteq
  \Span V_1V_1^*$. Let $f \in C_c\left(\mathbb T \backslash \mathcal
    C_1\right)$. Let $x \in \supp f$. It follows from Lemma \ref{cruX}  that there is a bisection $U \subseteq \Gamma^+_{\phi}(1)\cap
  \Gamma^+_{\phi}|_{\mathbb T \backslash \mathcal C_1}$ such that $x
  \in r(U)$. There is therefore a function $g\in C_c(U)$ such
  that $gg^*\in C_c(\mathbb T \backslash \mathcal C_1)$ and $gg^*(x) =
  1$. In this way we get a finite collection $g_i, i = 1,2,\dots, N$,
  in $C_c\left(\Gamma^+_{\phi}(1)\cap
  \Gamma^+_{\phi}|_{\mathbb T \backslash \mathcal C_1}\right)$ such
that $g_ig_i^* \in C_c\left(\mathbb T \backslash \mathcal C_1\right)$
for all $i$ and $\sum_{i=1}^N g_ig_i^*(y) > 0$ for all $y \in \supp
f$. There is then a function $h \in C_c\left(\mathbb T \backslash
  \mathcal C_1\right)$ such that $fh\sum_{i=1}^Ng_ig_i^* = f$. Since
$fhg_i$ and $g_i$ are elements of $V_1$ for all $i$ this shows that $f \in \Span V_1V_1^*$.  

A similar argument shows that also $V_1^*V_1$ spans a dense
subspace of $C^*_r\left(\Gamma^+_{\phi}|_{\mathbb T
      \backslash \mathcal C_1}\right)^{\beta}$. Thus the
  restriction of the gauge action is full on $C^*_r\left(\Gamma^+_{\phi}|_{\mathbb T
      \backslash \mathcal C_1}\right)$ and it follows that $C^*_r\left(\Gamma^+_{\phi}|_{\mathbb T
      \backslash \mathcal C_1}\right)$ is generated by $V_1$, whence $C^*_r\left(\Gamma^+_{\phi}|_{\mathbb T
      \backslash \mathcal C_1}\right)$ is contained in the
  $C^*$-subalgebra of $C^*_r\left(\Gamma^+_{\phi}\right)$ generated by
  $E_1$.

Note that it follows
from Lemma \ref{out} that there are elements $g_x, x \in \mathcal C_1$, in the $*$-algebra
generated by $C_c\left(\Gamma^+_{\phi}(1)\right)$ such that $g_{x'}^*g_x = 0$ when $x\neq x'$,
$g_x^*g_x \in C(\mathbb T)$, $g^*_xg_x(x) = 1$ and $g_x$ is supported
in $r^{-1}(\mathbb T \backslash \mathcal C_1)$ for all
$x$. Consider then an element $f \in
C_c\left(\Gamma^+_{\phi}\right)$. Write
\begin{equation*}\label{summands1}
f = f - \sum_{x \in \mathcal C_1} g_x^*g_xf +
\sum_{x \in \mathcal C_1} g_x^*g_xf  = f - \sum_{x \in \mathcal
  C_1} g_x^*g_xf + \sum_{x \in \mathcal C_1} g_x^* h,
\end{equation*}
where $h = \sum_{x \in \mathcal C_1} g_xf$. Note that $f - \sum_{x \in \mathcal C_1} g_x^*g_xf $ and $h$ are both supported in
$r^{-1}\left(\mathbb T \backslash \mathcal C_1\right)$. To conclude
that $f$ is contained in the $C^*$-algebra generated by $E_1$ we may
therefore assume that $f$ is supported in
$r^{-1}\left(\mathbb T \backslash \mathcal C_1\right)$. Under this
assumption we write
\begin{equation}\label{summands2}
f = f - \sum_{x \in \mathcal C_1} fg_x^*g_x +
\sum_{x \in \mathcal C_1} fg_x^*g_x ,
\end{equation}
and note that $f - \sum_{x \in \mathcal C_1} fg_{x}^*g_{x}$ and $\sum_{x \in
  \mathcal C_1} fg_{x}^*$ are elements of $C_c\left(\Gamma^+_{\phi}|_{\mathbb T
    \backslash \mathcal C_1}\right)$. It
follows therefore from the first part of the proof that both are
elements of the
$C^*$-algebra generated by $E_1$. Since 
$$
\sum_{x \in \mathcal C_1}
fg_x^*g_x = \sum_{x' \in \mathcal C_1} \left(\sum_{x \in \mathcal C_1}
  fg_x^*\right)g_{x'}
$$
 it follows from (\ref{summands2}) that $f$ is in the $C^*$-algebra
generated by $E_1$.
\end{proof}

\begin{lemma}\label{minus1} Assume that
  $C^*_r\left(\Gamma^+_{\phi}\right)$ is simple. It follows that
  $E_1^*E_1$ has dense span in $C^*_r\left(R^+_{\phi}\right)$.
\end{lemma}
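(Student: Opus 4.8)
The plan is to use that $E_0 = C^*_r\left(R^+_{\phi}\right)$ is the fixed-point algebra of the gauge action $\beta$ and that $E_1$ is its first spectral subspace, so that $E_0 E_1 \subseteq E_1$, $E_1 E_0 \subseteq E_1$, and $E_1^* E_1 \subseteq E_0$. It follows at once that $\Span E_1^* E_1$ is a two-sided ideal in $E_0 = C^*_r\left(R^+_{\phi}\right)$, and hence $I := \overline{\Span E_1^* E_1}$ is a closed two-sided ideal in $C^*_r\left(R^+_{\phi}\right)$. Since $\mathbb{T}$ is compact, $C^*_r\left(R^+_{\phi}\right)$ is unital, its unit being the unit $1$ of the closed subalgebra $C(\mathbb{T}) \subseteq C^*_r\left(R^+_{\phi}\right)$, so it suffices to prove $1 \in I$; then $I = C^*_r\left(R^+_{\phi}\right)$. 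Since $I \cap C(\mathbb{T})$ is a closed ideal of $C(\mathbb{T})$, hence of the form $C_0(\mathbb{T} \setminus K)$ for some closed $K \subseteq \mathbb{T}$, this amounts to showing $K = \emptyset$.

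Two inputs give this. First, recall from the proof of Lemma \ref{specgen} that $V_1 = E_1 \cap C^*_r\left(\Gamma^+_{\phi}|_{\mathbb{T} \setminus \mathcal C_1}\right)$ satisfies $\overline{\Span V_1^* V_1} = C^*_r\left(\Gamma^+_{\phi}|_{\mathbb{T} \setminus \mathcal C_1}\right)^{\beta}$, which contains $C_0(\mathbb{T} \setminus \mathcal C_1)$; since $V_1 \subseteq E_1$ this gives $C_0(\mathbb{T} \setminus \mathcal C_1) \subseteq I$, and therefore $K \subseteq \mathcal C_1$. Second, I claim that $g_x^* g_x \in I$ for each of the elements $g_x$, $x \in \mathcal C_1$, constructed in the proof of Lemma \ref{specgen}: these satisfy $g_x^* g_x \in C(\mathbb{T})$ and $g_x^* g_x(x) = 1$, and — being assembled along the path $\mu_1 \cdots \mu_N$ supplied by Lemma \ref{out} — each $g_x$ is a finite product $g_x = f_1 f_2 \cdots f_N$ with $f_i \in C_c\left(\Gamma^+_{\phi}(1)\right) \subseteq E_1$. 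Granting the claim, $g_x^* g_x$ lies in $I \cap C(\mathbb{T})$ and does not vanish at $x$, so $x \notin K$; as $x \in \mathcal C_1$ was arbitrary and $K \subseteq \mathcal C_1$, we conclude $K = \emptyset$, hence $I \cap C(\mathbb{T}) = C(\mathbb{T})$, so $1 \in I$ and $I = C^*_r\left(R^+_{\phi}\right)$.

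It remains to prove the claim, and this is the step I expect to carry the real content. Write $g_x^* g_x = f_N^* \cdots f_1^* f_1 \cdots f_N$ and telescope from the inside out: put $c_1 = f_1^* f_1$ and $c_j = f_j^* c_{j-1} f_j$ for $2 \leq j \leq N$, so that $c_N = g_x^* g_x$. Now $c_1 = f_1^* f_1 \in E_1^* E_1$, which is contained both in $I$ and in $E_0$; and inductively, if $c_{j-1} \in E_0$ then $c_{j-1} f_j \in E_0 E_1 \subseteq E_1$, so that $c_j = f_j^* \left(c_{j-1} f_j\right) \in E_1^* E_1 \subseteq I \cap E_0$. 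Hence $g_x^* g_x = c_N \in I$, proving the claim. The one genuinely non-formal ingredient is the appeal to Lemma \ref{out}: it is what guarantees that every critical point $x$ is the source of a $\Gamma^+_{\phi}(1)$-path whose range lies off $\mathcal C_1$, which is exactly what produces, for each $x \in \mathcal C_1$, a non-negative element of $C(\mathbb{T})$ positive at $x$ that can be telescoped back into $\Span E_1^* E_1$; everything else is a formal consequence of the grading induced by the gauge action.
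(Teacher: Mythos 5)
Your argument is correct, and its skeleton agrees with the paper's: both proofs observe that the closed span of $E_1^*E_1$ is an ideal in $C^*_r\left(R^+_{\phi}\right)$ and then reduce to showing that this ideal contains the unit of $C(\mathbb T)$. The difference lies entirely in how the critical points are handled. The paper does not need Lemma \ref{out} or any telescoping: it notes that for $x\in\mathcal C_1$ and \emph{any} $z\in\phi^{-1}(x)$ one already has $(z,1,x)\in\Gamma^+_{\phi}(1)$, because the required valency identity $\val\left(\phi^{k+1},z\right)=\val\left(\phi^{k},x\right)$ holds automatically when $x$ is critical (a critical valency absorbs on the left in the monoid $\mathcal V$). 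Thus every point of $\mathbb T$, critical or not, lies in $s(U)$ for some bisection $U\subseteq\Gamma^+_{\phi}(1)$, and by compactness one gets finitely many $g_i\in C_c\left(\Gamma^+_{\phi}(1)\right)$ with $\sum_i g_i^*g_i$ strictly positive in $C(\mathbb T)$, hence invertible; writing $1=\sum_i (g_ih^*)^*g_i$ finishes the proof without passing through the ideal $I\cap C(\mathbb T)=C_0(\mathbb T\setminus K)$. Your route instead imports the $N$-fold products $g_x=f_1\cdots f_N$ built from Lemma \ref{out} and pushes them into $\Span E_1^*E_1$ by the telescoping $c_j=f_j^*c_{j-1}f_j$, using $E_1^*E_1\subseteq E_0$ and $E_0E_1\subseteq E_1$; this is valid (and the telescoping device is a nice reusable observation about spectral subspaces), but it is longer than necessary and obscures the one genuinely relevant fact, namely that $s^{-1}(x)\cap\Gamma^+_{\phi}(1)\neq\emptyset$ even for $x\in\mathcal C_1$ --- which is exactly the asymmetry with $r^{-1}(x)$ that the paper highlights right after this lemma when explaining why the gauge action need not be full.
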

\begin{proof} Since the closed span of $E_1^*E_1$ is an ideal in $C^*_r\left(R^+_{\phi}\right)$ it suffices to show
  that $E_1^*E_1$ contains $1 \in C(\mathbb T)$. Let $f \in C\left(\mathbb
    T\right)$. Let $x \in \mathbb T$. Provided $x \notin \mathcal C_1$
  it follows from Lemma \ref{cruX} that there is an element $\gamma
  \in \Gamma^+_{\phi}(1)$ such that $s(\gamma) = x$. When $x \in
  \mathcal C_1$ set $\gamma = (z,1,x)$, where $z \in
  \phi^{-1}(x)$. Then $s(\gamma) = x$. Hence, regardless of which $x$
  we consider there is a bisection $U \subseteq \Gamma^+_{\phi}(1)$ such that $x
  \in s(U)$. It follows that there is a function $g\in C_c(U)$ such
  that $g^*g\in C(\mathbb T)$ and $g^*g(x) =
  1$. In this way we get a finite collection $g_i, i = 1,2,\dots, N$,
  in $C_c\left(\Gamma^+_{\phi}(1)\right)$ such
that $g_i^*g_i \in C\left(\mathbb T\right)$
for all $i$ and $\sum_{i=1}^N g_i^*g_i > 0$. There is then a function $h \in C\left(\mathbb T\right)$ such that $h\sum_{i=1}^Ng_i^*g_i = 1$. Since
$g_ih^*$ and $g_i$ are elements of $E_1$ for all $i$, this completes
the proof.
\end{proof}

It should be observed that the gauge
action is generally not full. Even when
$C^*_r\left(\Gamma^+_{\phi}\right)$ is simple it can easily happen
that the ideal in $C^*_r\left(R^+_{\phi}\right)$ generated by
$E_1E_1^*$ is a proper ideal. Assume, for example, that $\phi$ has two critical points whose
  forward orbits do not intersect, and that one of them, say $c$,
  has infinite forward orbit. Then 
$r^{-1}(c) \cap \Gamma^+_{\phi}(1) = \emptyset$. Let $\omega$ be the state
on $C^*_r\left(R^+_{\phi}\right)$ such that $\omega(f) = f(c,0,c)$
when $f \in C_c\left(R^+_{\phi}\right)$. Then $\omega(fg^*) = 0$ for all
$f,g \in C_c\left(\Gamma^+_{\phi}(1)\right)$ and it follows that $\omega$
annihilates the ideal in $C^*_r\left(R^+_{\phi}\right)$ generated by $E_1E_1^*$. This asymmetry in the gauge action is responsible for some intriguing features of the KMS states.

Let $E_{-1} = E_1^*$ be the first negative spectral subspace. Then $E_{-1}$
is a $C^*_r\left(R^+_{\phi}\right)$-correspondence and we can consider
the associated Cuntz-Pimsner $C^*$-algebra $\mathcal O_{E_{-1}}$, cf. \cite{Ka}. We can
now show that this is another version of
$C^*_r\left(\Gamma^+_{\phi}\right)$ when the latter algebra is simple.

\begin{thm}\label{cunPi} Assume that
  $C^*_r\left(\Gamma^+_{\phi}\right)$ is simple. It follows that
  $C^*_r\left(\Gamma^+_{\phi}\right) \simeq \mathcal O_{E_{-1}}$ and
  there is an exact sequence
\begin{equation}\label{6termyes}
\begin{xymatrix}{
K_0\left( C^*_r\left(R^+_{\phi} \right)\right) \ar[rr]^-{\id -
  [E_{-1}]_0} & & K_0\left(
    C^*_r\left(R^+_{\phi} \right)\right) \ar[rr]^{\iota_0}  &
& K_0\left( C^*_r\left(\Gamma^+_{\phi}\right) \right)\ar[d] \\
K_1\left(C^*_r\left(\Gamma^+_{\phi}\right) \right)\ar[u]  & &
K_1\left(C^*_r\left(R^+_{\phi}\right) \right) \ar[ll]^{\iota_1}  & &  K_1\left(
  C^*_r\left(R^+_{\phi}\right)\right) \ar[ll]^-{\id -
 [E_{-1}]_1},
}\end{xymatrix}
\end{equation} 
where $\iota :
C^*_r\left(R^+_{\phi} \right) \to
C^*_r\left(\Gamma^+_{\phi}\right)$ is the inclusion map and 
$$
[E_{-1}] \in
KK\left(C^*_r\left(R^+_{\phi} \right), C^*_r\left(R^+_{\phi} \right)\right)
$$ 
is the KK-theory element represented by $E_{-1}$.
\end{thm}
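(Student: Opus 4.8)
The plan is to recognize $C^*_r\left(\Gamma^+_{\phi}\right)$ as a Cuntz--Pimsner algebra by verifying the hypotheses of Katsura's characterization (as already used in \cite{Th3}), and then to feed the resulting short exact sequence of Pimsner into the six-term $K$-theory sequence. First I would set up the relevant structure: the gauge action $\beta$ on $A := C^*_r\left(\Gamma^+_{\phi}\right)$ has fixed-point algebra $B := C^*_r\left(R^+_{\phi}\right)$, and $A$ is the crossed product of $B$ by the Hilbert bimodule $E_{-1} = E_1^*$. By \reflemma{specgen}, $E_1$ (equivalently $E_{-1}$) generates $A$ as a $C^*$-algebra. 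The key algebraic relations are supplied by \reflemma{minus1}, which gives that $E_1^*E_1 = E_{-1}E_{-1}^*$ has dense span in $B$, so that $E_{-1}$ is full as a right Hilbert $B$-module, and by the general groupoid structure the left action of $B$ on $E_{-1}$ is by (a priori non-injective, non-surjective) adjointable operators. The point — as the paragraph after \reflemma{minus1} explicitly warns — is that the gauge action need not be full, so $E_1E_1^*$ need not span a dense ideal in $B$; hence $E_{-1}$ need not be a Hilbert bimodule in the strict (invertible) sense, and one must invoke the full Cuntz--Pimsner construction $\mathcal O_{E_{-1}}$ of \cite{Ka} rather than a crossed-product-by-a-bimodule shortcut.

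The main step is therefore to check that the canonical surjection $\mathcal O_{E_{-1}} \to A$ coming from the universal property is an isomorphism. I would argue this exactly as in \cite{Th3}: the pair $(\iota, \text{creation operators})$ built from $E_{-1} \subseteq A$ is a covariant representation of $E_{-1}$ in the sense of Katsura, the covariance (Katsura's ideal condition) holding precisely because the left action restricted to the kernel of the creation map is accounted for by elements of $C_c(\Gamma^+_{\phi}(0))$ as analyzed via \reflemma{cruX} and \reflemma{out}; this gives a homomorphism $\mathcal O_{E_{-1}} \to A$, surjective by \reflemma{specgen}. For injectivity one uses that this homomorphism is $\beta$-equivariant for the gauge actions on both sides, is injective on the fixed-point algebra $B$ (it is the identity there), and then applies the gauge-invariant uniqueness theorem for Cuntz--Pimsner algebras, \cite{Ka}. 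This is where the simplicity hypothesis is actually used, through \reflemma{cruX}, \reflemma{out}, \reflemma{specgen} and \reflemma{minus1}, all of which were proved under the standing assumption that $C^*_r\left(\Gamma^+_{\phi}\right)$ is simple.

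Once the isomorphism $A \simeq \mathcal O_{E_{-1}}$ is established, the exact sequence (\ref{6termyes}) is immediate from Katsura's six-term exact sequence in $K$-theory for Cuntz--Pimsner algebras, \cite{Ka}: there is a general six-term sequence relating $K_*(B)$, the map $\id - [E_{-1}]$ induced on $K$-theory by the correspondence together with the embedding of the (Katsura) ideal, and $K_*(\mathcal O_{E_{-1}})$, with the connecting map $\iota_*$ induced by the inclusion $B \hookrightarrow \mathcal O_{E_{-1}}$. Here one must note that since $E_1 E_1^*$ need not generate $B$ as an ideal, the relevant ideal in Katsura's formulation is all of $B$ only when the left action is injective; in general one should record the sequence with the Katsura ideal, but under our hypotheses — as in \cite{Th3} — the left action of $B$ on $E_{-1}$ is injective (a consequence of exactness of $\phi$, via \reflemma{cruX}), so Katsura's ideal is $B$ itself and the sequence reduces to the displayed form with $\id - [E_{-1}]_*$ on $K_*(B) = K_*\left(C^*_r\left(R^+_{\phi}\right)\right)$. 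I expect the only real obstacle to be the careful verification of Katsura's covariance condition at the critical points — i.e.\ checking that the finitely many points of $\mathcal C_1$ where $r^{-1}(\cdot) \cap \Gamma^+_{\phi}(1)$ may degenerate do not obstruct covariance — but this is handled by the constructions in \reflemma{out} and \reflemma{specgen} in the same manner as the corresponding argument in \cite{Th3}, so the proof is a matter of transcription rather than new ideas.
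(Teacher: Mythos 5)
Your overall route is the paper's: exhibit the inclusions $C^*_r\left(R^+_{\phi}\right)\subseteq C^*_r\left(\Gamma^+_{\phi}\right)$ and $E_{-1}\subseteq C^*_r\left(\Gamma^+_{\phi}\right)$ as a covariant representation of the correspondence $E_{-1}$, invoke Lemma~\ref{specgen} together with Theorem 6.4 of \cite{Ka} for the isomorphism, and Theorem 8.6 of \cite{Ka} for the six-term sequence. But there is a concrete confusion in the one step that carries the whole weight, namely what Lemma~\ref{minus1} actually delivers. For $\xi,\eta\in E_{-1}=E_1^*$ the \emph{right} inner product is $\langle\xi,\eta\rangle=\xi^*\eta\in E_{-1}^*E_{-1}=E_1E_1^*$, whereas the rank-one operator $\theta_{\xi,\eta}$ is left multiplication by $\xi\eta^*\in E_{-1}E_{-1}^*=E_1^*E_1$. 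Hence the density of $E_1^*E_1$ in $C^*_r\left(R^+_{\phi}\right)$ (Lemma~\ref{minus1}) does \emph{not} say that $E_{-1}$ is full as a right Hilbert module --- fullness would require density of $E_1E_1^*$, which the remark following Lemma~\ref{minus1} explicitly says can fail even when $C^*_r\left(\Gamma^+_{\phi}\right)$ is simple. What the lemma does say is that left multiplication maps $C^*_r\left(R^+_{\phi}\right)$ \emph{onto} $\mathcal K\left(E_{-1}\right)$, and this single fact is what drives the proof: it makes the left action injective and by compact operators, so the representation is automatically covariant by Proposition 3.3 of \cite{Ka} (no pointwise verification at the critical points is needed), it makes Katsura's ideal equal to all of $C^*_r\left(R^+_{\phi}\right)$, and it is exactly the hypothesis under which Theorem 8.6 of \cite{Ka} yields the sequence in the displayed form with $\id-[E_{-1}]_*$ defined on $K_*\left(C^*_r\left(R^+_{\phi}\right)\right)$.

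Because of this mix-up your justification of the last step is insufficient as written: you argue that the Katsura ideal is all of $C^*_r\left(R^+_{\phi}\right)$ because the left action is injective, but injectivity alone does not suffice --- one also needs the left action to land in $\mathcal K\left(E_{-1}\right)$, and without that the sequence would involve the $K$-theory of a proper ideal rather than of $C^*_r\left(R^+_{\phi}\right)$. The repair is only a relabelling, since the needed inclusion (indeed equality) is precisely Lemma~\ref{minus1} once the two products $E_1E_1^*$ and $E_1^*E_1$ are assigned their correct roles; with that correction your argument coincides with the paper's proof.
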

\begin{proof} The inclusions $C^*_r\left(R^+_{\phi}\right) \subseteq
  C^*_r\left(\Gamma^+_{\phi}\right)$ and $E_{-1} \subseteq
  C^*_r\left(\Gamma^+_{\phi}\right)$ define a representation of the
  $C^*$-correspondence $E_{-1}$ in $C^*_r\left(\Gamma^+_{\phi}\right)$
  as defined by Katsura in Definition 2.1 of \cite{Ka}. Observe that
  it follows from Lemma \ref{minus1} that the representation is
  injective in the sense of Katsura, and that
\begin{equation}\label{cruxx}
 \psi_t\left(C^*_r\left(R^+_{\phi}\right)\right) = \mathcal K(E_{-1}),
\end{equation} 
in
    the notation from \cite{Ka}. It follows now from Proposition 3.3 in \cite{Ka} that our
representation of $E_{-1}$ is covariant in the sense of Definition 3.4 of
\cite{Ka}. Then the isomorphism $\mathcal O_{E_{-1}} \simeq
C^*_r\left(\Gamma^+_{\phi}\right)$ follows from Lemma \ref{specgen}
above and Theorem 6.4 in \cite{Ka}. Thanks to (\ref{cruxx}) we get now the
stated six-terms exact sequence from Theorem 8.6 in \cite{Ka}.

\end{proof}

\subsection{The structure of $C^*_r\left(R^+_{\phi}(k)\right)$ and
  some consequences}\label{extMar3II}

Fix a natural number $k$ and let $\mathcal D$ be a finite subset of $\mathbb T$ such
that $\mathcal C_k \subseteq \phi^{-k}(\mathcal D)$. Let $c_1
<c_2<
\cdots < c_{N}$ be a numbering of the elements in
$\mathcal D$ and set $c_0 = c_N$. Let $I_i =
\left]c_{i-1},c_{i}\right[, i = 1,2, \dots,N$. For each $i$ we fix a homeomorphism $\psi_{I_i} :
]0,1[ \to \left]c_{i-1},c_i\right[$ such that $\lim_{t \to 0} \psi_{I_i}(t) = c_{i-1}$
and $\lim_{t \to 1} \psi_{I_i}(t) = c_i$. Let
$\mathcal I_k$ be the set of connected components of $\mathbb
T \backslash \phi^{-k}(\mathcal D)$. Since $\mathcal C_k \subseteq \phi^{-k}(\mathcal D)$, the map $x
\mapsto \val \left(\phi^k,x\right)$ is constant on each $I \in
\mathcal I_k$ and we set $\val\left(\phi^k, I\right) =
\val\left(\phi^k,x\right), x \in I$. 
Set
$$
\mathcal I_k^{(2)} = \left\{ (I,J) \in \mathcal I_k
  \times \mathcal I_k : \ \phi^k(I) = \phi^k(J), \ \val
  \left(\phi^k ,I\right) = \val
    \left(\phi^k, J\right) \right\} .
$$
Let $\mathbb B_k$ denote
the finite-dimensional $C^*$-algebra generated by the matrix units
$e_{I,J}$, where $(I,J) \in \mathcal I_k^{(2)}$. Similarly, we let
$\mathbb A_k$ be the
  finite-dimensional $C^*$-algebra generated by the matrix units
  $e_{x,y}$ where $x,y \in \phi^{-k}(\mathcal D)$, $\phi^k(x) = \phi^k(y)$ and
  $\val\left(\phi^k,x\right) = \val\left(\phi^k, y\right)$.

When $x \in \phi^{-k}(\mathcal D)$
and $I \in \mathcal I_k$, write $I > x$ when $x \in
\overline{I}$ and $y > x$ for all $y \in I$, and $I < x$ when $x \in
\overline{I}$ and $y < x$ for all $y \in I$. Define a
  $*$-homomorphism $I_k : \mathbb A_k \to \mathbb B_k$
  such that
$$
I_k\left(e_{x,y}\right) = \sum_{I,J} e_{I,J}
$$
where we sum over the set of pairs $(I,J) \in  \mathcal I_k^{(2)}$
with the properties that $x < I, y < J$ and $\val\left(\phi^k, I\right)
= (+,+)$, or $I < x, J<y$ and $\val \left( \phi^k,I\right) = (-,-)$.

%\begin{important}(More details) Let $x,y,x',y' \in \phi^{-k}(\mathcal
%  D)$. Then 
%$$
%I_k(e_{x,y})I_k(e_{x',y'}) = \sum_{I,J, I',J'}e_{I,J}e_{I',J'}
%$$
%is zero unless there is an $J$ and $I'$ such that $J = I'$. This happens only if $y = x'$. So
%assume that $y = x'$. Then $\sum_{I,J, I',J'}e_{I,J}e_{I',J'} =
%\sum_{I,J'}e_{I,J'} = I_k(e_{x,y'}) = I_k(e_{x,y}e_{x',y'})$. It is
%clear that $I_k(e_{x,y}^*) = I_k(e_{x,y})^*$. This
%shows that $I_k$ is a $*$-homomorphism.
%\end{important}

Similarly, we define a $*$-homomorphism
$U_k : \mathbb A_k \to \mathbb B_k$ such that
$$
U_k\left(e_{x,y}\right) = \sum_{I,J} e_{I,J}
$$
where we sum over the set of pairs $(I,J) \in  \mathcal I_k^{(2)}$
with the properties that $I < x, J < y$ and
$\val\left(\phi^k,I\right) =(+,+)$, or $x <I, y < J$ and
$\val\left(\phi^k, I\right) = (-,-)$. 
Let $(I,J) \in \mathcal I_k^{(2)}$ such that $\phi^k(I) =
\phi^k(J) = I_i$. Let $\lambda_I : {I_i} \to {I}$ be the inverse of
$\phi^k : {I} \to {I_i}$, and similarly $\lambda_J : {I_i} \to {J}$ the inverse of
$\phi^k : {J} \to {I_i}$. Then
$$
\left(\lambda_I \circ \psi_{I_i}(t), 0, \lambda_J \circ \psi_{I_i}(t)\right) \in
R^+_{\phi}(k)
$$
for all $t \in ]0,1[$. Notice that the limits 
$$\overline{\lambda_I}(c_i)
= \lim_{x \to c_i} \lambda_I(x) \ \text{and} \ \overline{\lambda_I}\left(c_{i-1}\right) = \lim_{x \to c_{i-1}}
\lambda_I(x)
$$ 
both exist. Let $f \in C_c\left(R^+_{\phi}(k)\right)$. Then the function
$$
]0,1[ \ni t \mapsto f\left(\lambda_I \circ \psi_{I_i}(t), 0,\lambda_J \circ
  \psi_{I_i}(t)\right)
$$
has a unique continuous extension $f_{I,J} : [0,1] \to \mathbb
C$. This is because
$$
\lim_{t \to 1} f\left(\lambda_I \circ \psi_{I_i}(t), 0,\lambda_J \circ
  \psi_{I_i}(t)\right) = 0
$$
when $\left(\overline{\lambda_I}(c_i), 0,\overline{\lambda_J}(c_i) \right) \notin R^+_{\phi}(k)$
and
$$ 
\lim_{t \to 1} f\left(\lambda_I \circ \psi_{I_i}(t),0, \lambda_J \circ
  \psi_{I_i}(t)\right) = f\left(\overline{\lambda_I}(c_i),0,\overline{\lambda_J}(c_i) \right)
$$
when $\left(\overline{\lambda_I}(c_i), 0,\overline{\lambda_J}(c_i)
\right) \in R^+_{\phi}(k)$. Similarly,
$$
\lim_{t \to 0} f\left(\lambda_I \circ \psi_{I_i}(t), 0,\lambda_J \circ
  \psi_{I_i}(t)\right) = 0
$$
when $\left(\overline{\lambda_I}(c_{i-1}), 0,\overline{\lambda_J}(c_{i-1})
\right) \notin R^+_{\phi}(k)$,
and
$$ 
\lim_{t \to 0} f\left(\lambda_I \circ \psi_{I_i}(t), 0, \lambda_J \circ
  \psi_{I_i}(t)\right) = f\left(\overline{\lambda_I}(c_{i-1}),0,\overline{\lambda_J}(c_{i-1}) \right)
$$
when $\left(\overline{\lambda_I}(c_{i-1}), 0,\overline{\lambda_J}(c_{i-1})
\right) \in R^+_{\phi}(k)$.

We can then define a $*$-homomorphism $b :
C_c\left(R^+_{\phi}(k)\right) \to C\left([0,1], \mathbb
  B_k\right)$ such that
$$
b(f) = \sum_{I,J} f_{I,J}e_{I,J} . 
$$
%when $f \in C_c\left(R^+_{\phi}(k)\right)$. 

%\begin{important}(More details) Let $f,g \in
%  C_c\left(R^+_{\phi}(k)\right)$. Let $(I,J) \in \mathcal
%  I_{k}^{(2)}$. Then
%$$
%(fg)_{I,J}(t) = \sum_{z} f\left(\lambda_I\circ
%  \psi_{I_i}(t),z\right)g\left(z,\lambda_J \circ \psi_{I_i}(t)\right),
%$$
%$t \in ]0,1[$, where $I_i = \phi^k(I) = \phi^k(J)$.
%A term here is only non-zero when $z \in \mathbb T \backslash
%\phi^{-k}(\mathcal D)$, $\val(\phi^k,z) = \val\left(\phi^k,I\right)
%= \val(\phi^k,J)$ and $\phi^k(z) =\psi_{I_i}(t)$. This happens if and only
%if 
%$z = \lambda_{I'}\circ \psi_{I_i}(t)$ for some $I' \in \mathcal I_k$. Note
%that $\phi^k(I') = \phi^k(I) \ (= I_i)$, i.e. $(I,I') \in \mathcal I_k^{(2)}$. Hence
%$$
%(fg)_{I,J}(t) = \sum_{I'} f\left(\lambda_I\circ
%  \psi_{I_i}(t),\lambda_{I'} \circ \psi_{I_i}(t)\right)g\left(\lambda_{I'}
%  \circ \psi_{I_i}(t),\lambda_J \circ \psi_{I_i}(t)\right).
%$$  
%This shows that $(fg)_{I,J} = \sum_{I'} f_{I,I'}g_{I',J}$, proving
%that $b(fg) = b(f)b(g)$. It should be obvious that $b(f^*) = b(f)^*$,
%proving that $b$ is a $*$-homomorphism.
%\end{important}

We can also define a $*$-homomorphism $a :
C_c\left(R^+_{\phi}(k)\right) \to \mathbb A_k$ such that
$$
a(f) = \sum_{(x,y) \in \mathcal A_k} f(x,0,y)e_{x,y} ,
$$
where $\mathcal A_k = \left\{ (x,y) \in \mathbb T^2 : \  (x,0,y) \in
  R^+_{\phi}(k) \right\}$.
%when $f \in C_c\left(R^+_{\phi}(k)\right)$. 
By construction
$I_k\left(a(f)\right) = b(f)(0)$ and $U_k(a(f)) = b(f)(1)$.

For $x
\in \mathbb T$, let $\pi_x$ be the $*$-representation used to define
the norm on $C^*_r\left(R^+_{\phi}(k)\right)$, cf. (\ref{pix}). When $x
\notin \phi^{-k}(\mathcal D)$ there is an $i$ and a $t$ such that
$\phi^k(x) = \psi_{I_i}(t)$. Then
\begin{equation}\label{norm1} 
\left\|\pi_x(f)\right\| =
\left\| \sum_{(I,J) \in B} f_{I,J}(t) e_{I,J}\right\|,
\end{equation}
where $B = \left\{(I,J) \in \mathcal I_k^{(2)} : \ \phi^k(I) =
  \phi^k(J) = I_i \right\}$. When $x \in
\phi^{-k}(\mathcal D)$, we find that 
\begin{equation}\label{norm2}
\left\|\pi_x(f)\right\| = \left\|\sum_{(z,y) \in A}
  f(z,0,y) e_{z,y} \right\|,
\end{equation}
where $A = \left\{(z,y) \in \mathbb T^2 : \ (z,0,y) \in
  R^+_{\phi}(k), \ \phi^k(z)= \phi^k(x) , \ \val(\phi^k,z) =
  \val(\phi^k,x) \right\}$. By combining (\ref{norm1}) and (\ref{norm2})
we find that $f \to
(a(f),b(f))$ is isometric and extends to an injective $*$-homomorphism 
\begin{equation*}\label{nhomiso}
\mu_k : C^*_r\left(R^+_{\phi}(k)\right) \to \left\{ (a,b) \in \mathbb A_k \oplus C\left([0,1], \mathbb B_k \right) : \
  I_k(a) = b(0), \ U_k(a) = b(1) \right\} .
\end{equation*}

\begin{lemma}\label{iso1II} $\mu_k $ is an isomorphism.
\end{lemma}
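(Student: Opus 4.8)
The plan is to exploit that $\mu_k$ is already known to be injective, hence isometric, so that it remains only to show it is surjective. I would do this by realising $\mu_k$ as a morphism between two short exact sequences and concluding by the five lemma.

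Put $Y = \phi^{-k}(\mathcal D)$, a finite (hence closed) subset of $\mathbb{T}$; it is invariant for $R^+_{\phi}(k)$, since $(x,0,y) \in R^+_{\phi}(k)$ and $x \in Y$ force $\phi^k(y) = \phi^k(x) \in \mathcal D$, so $y \in Y$. Arguing as in Lemma \ref{ideal} one obtains an exact sequence
\[
0 \longrightarrow C^*_r\!\left(R^+_{\phi}(k)|_{\mathbb{T} \backslash Y}\right) \longrightarrow C^*_r\!\left(R^+_{\phi}(k)\right) \xrightarrow{\ \overline a\ } C^*_r\!\left(R^+_{\phi}(k)|_{Y}\right) \longrightarrow 0 ,
\]
in which the quotient map $\overline a$ is the continuous extension of the restriction-of-functions homomorphism, i.e. of the map $a$ of the construction. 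Since $Y$ is finite, $R^+_{\phi}(k)|_{Y}$ is a finite groupoid and $C^*_r\!\left(R^+_{\phi}(k)|_{Y}\right)$ is the finite-dimensional algebra spanned by the matrix units $e_{x,y}$, which is precisely $\mathbb A_k$. On the ideal side I would use that $\mathbb{T} \backslash Y = \bigsqcup_{I \in \mathcal I_k} I$, that $\phi^k$ carries each $I$ homeomorphically onto one of the intervals $I_i$, and that the maps $t \mapsto \left(\lambda_I\circ\psi_{I_i}(t), 0, \lambda_J\circ\psi_{I_i}(t)\right)$ trivialise the groupoid over each $I_i$; together with the norm formula (\ref{norm1}) this shows that $b$ restricts to an isometric $*$-homomorphism of $C^*_r\!\left(R^+_{\phi}(k)|_{\mathbb{T} \backslash Y}\right)$ into $C_0(]0,1[,\mathbb B_k)$, and it is onto, because for each $(I,J) \in \mathcal I_k^{(2)}$ with $\phi^k(I) = \phi^k(J) = I_i$ and each $g \in C_c(]0,1[)$ the function supported on the bisection $\left\{\left(\lambda_I\circ\psi_{I_i}(t),0,\lambda_J\circ\psi_{I_i}(t)\right) : t \in \supp g\right\}$ lies in $C_c\!\left(R^+_{\phi}(k)|_{\mathbb{T} \backslash Y}\right)$ and is sent by $b$ to $g\,e_{I,J}$.

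On the other hand the codomain of $\mu_k$ sits in the exact sequence
\[
0 \longrightarrow C_0(]0,1[,\mathbb B_k) \longrightarrow \left\{(a,b) : I_k(a) = b(0),\ U_k(a) = b(1)\right\} \xrightarrow{\ (a,b)\mapsto a\ } \mathbb A_k \longrightarrow 0 ,
\]
the last map being onto because $\mathbb B_k$ is finite-dimensional, so a continuous path in $\mathbb B_k$ from $I_k(a)$ to $U_k(a)$ exists. Since $I_k(a(f)) = b(f)(0)$ and $U_k(a(f)) = b(f)(1)$, the homomorphism $\mu_k = (a,b)$ maps the first sequence to the second, inducing the identity on the quotient $\mathbb A_k$ and, by the preceding paragraph, the isomorphism $b$ on the ideal $C_0(]0,1[,\mathbb B_k)$. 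As $\mu_k$ is injective, the five lemma then yields that $\mu_k$ is an isomorphism.

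The step needing genuine care is the identification of $C^*_r\!\left(R^+_{\phi}(k)|_{\mathbb{T} \backslash Y}\right)$ with $C_0(]0,1[,\mathbb B_k)$ via $b$: one must check that $b$ maps this ideal \emph{onto} $C_0(]0,1[,\mathbb B_k)$, not merely into it, and that the way the intervals $I, J$ are attached to the points of $Y$ at the endpoints $c_{i-1}$ and $c_i$ is exactly what the homomorphisms $I_k$ and $U_k$ record. Once this bookkeeping is done the rest is a routine diagram chase, the exactness of the top row being obtained exactly as in Lemma \ref{ideal}.
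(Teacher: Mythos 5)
Your argument is correct and is essentially the paper's own proof repackaged: the paper likewise splits off the finite part over $\phi^{-k}(\mathcal D)$ by choosing $g\in C_c\left(R^+_{\phi}(k)\right)$ with prescribed values there, and then realizes the remaining $C_0(]0,1[)\otimes\mathbb B_k$ summands as limits of functions supported on the bisections $\left\{\left(\lambda_I\circ\psi_{I_i}(t),0,\lambda_J\circ\psi_{I_i}(t)\right):t\in\,]0,1[\right\}$, which is exactly your identification of the ideal. Your five-lemma formulation is fine (and, as you can check from the diagram chase, exactness of the top row at the middle term is not even needed for surjectivity, so the appeal to Lemma \ref{ideal} is harmless).
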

\begin{proof} It remains to show that $\mu_k$ is surjective. Let $(a,b)
  \in \mathbb A_k \oplus C\left([0,1], \mathbb B_k \right)$ have
  the properties that $
  I_k(a) = b(0), \ U_k(a) = b(1)$. Then 
$a = \sum_{(x,y) \in A}\lambda_{x,y} e_{x,y}$, where 
$$
A = \left\{ (x,y) \in \phi^{-k}(\mathcal D) \times \phi^{-k}(\mathcal
  D) : \ (x,0,y) \in R^+_{\phi}(k) \right\}
$$ 
and $\lambda_{x,y} \in \mathbb C$. Since $A$ is a finite set there is a function $g \in
  C_c\left(R^+_{\phi}(k)\right)$ such that $g(x,0,y) = \lambda_{x,y}$ for
  all $(x,y) \in A$. Then $(a,b) = \mu_k(g) + (0,b')$, where $b' \in
  C_0(]0,1[) \otimes \mathbb B_k$. Write $b' = \sum_{(I,J)\in
    \mathcal I_k^2} b'_{I,J} e_{I,J}$, where $b'_{I,J} \in
  C_0(]0,1[)$. For each $(I,J)$ in the last sum there is an interval
  $I_i$ such that $\phi^k(I) = \phi^k(J) = I_i$. Then 
$$
\mathcal U = \left\{\left(\lambda_I\circ \psi_{I_i}(t), 0, \lambda_J \circ
    \psi_{I_i}(t)\right) : \ t \in ]0,1[\right\} 
$$
is an open subset of $R^+_{\phi}(k)$ and we can define a function
$h_{I,J}$ on $\mathcal U$ such that
$$
h_{I,J} \left(\lambda_I\circ \psi_{I_i}(t), 0, \lambda_J \circ
    \psi_{I_i}(t)\right) = b'_{I,J}(t) .
$$
Then $h_{I,J} \in C_0(\mathcal U)$ and we can choose a sequence $\{h_n\}
  \subseteq C_c(\mathcal U)$ such that $\lim_{n \to \infty} h_n = h_{I,J}$,
  uniformly on $R^+_{\phi}(k)$. Since $\mathcal U$ is a bi-section in
  $R^+_{\phi}$ it follows that $\lim_{n \to \infty}
  \mu_k(h_n) = \left(0,b'_{I,J}e_{I,J}\right)$, cf. Lemma 2.4 in
  \cite{Th1}, proving that
  $b'_{I,J}e_{I,J}$ is in the range of $\mu_k$ for all $(I,J)$. It
  follows that $(a,b)$ is in the range of $\mu_k$.
\end{proof}

It follows from Lemma \ref{iso1II} that $C^*_r\left(R^+_{\phi}(k)\right)$
is a recursive sub-homogeneous $C^*$-algebra in
the sense of N.C. Phillips. In combination
with (\ref{union}) it follows that $C^*_r\left(R^+_{\phi}\right)$ is
an ASH-algebra with no dimension growth, cf. \cite{T}.

%It follows from Lemma \ref{iso1II} that there is an extension
%\begin{equation*}
%\begin{xymatrix}{
%0 \ar[r] & S \mathbb B_k \ar[r] & C^*_r\left(
%  R^+_{\phi}(k)\right) \ar[r] & \mathbb A_k \ar[r] & 0
%}\end{xymatrix}
%\end{equation*}
%Another way to view this extension is to observe that 
%$\phi^{-k}(\mathcal D)$ is a finite $R^+_{\phi}(k)$-invariant subset such that (the proof of)
%Lemma \ref{ideal} gives us the extension
%\begin{equation}\label{diag7II}
%\begin{xymatrix}{
%0 \ar[r] & C^*_r\left(R^+_{\phi}(k)|_{\mathbb T \backslash \phi^{-k}(\mathcal D)}\right) \ar[r] & C^*_r\left(
%  R^+_{\phi}(k)\right) \ar[r] & C^*_r\left(R^+_{\phi}|_{\phi^{-k}(\mathcal D)}\right) \ar[r] & 0 .
%}\end{xymatrix}
%\end{equation}
%What we have done above is to identify the ideal and quotient in this extension. 

\begin{cor}\label{nucUCT2} The $C^*$-algebra
  $C^*_r\left(\Gamma^+_{\phi}\right)$ is nuclear and satisfies the universal coefficient theorem (UCT) of Rosenberg
  and Schochet, \cite{RS}.
\end{cor}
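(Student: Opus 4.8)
The plan is to deduce both statements from the amenability of the groupoid $\Gamma^+_{\phi}$, which in turn follows from the structural analysis of $R^+_{\phi}=\Gamma^+_{\phi}(0)$ carried out above. Recall the cocycle $c\colon\Gamma^+_{\phi}\to\mathbb Z$, $c(x,k,y)=k$, whose kernel is precisely $R^+_{\phi}$. First I would check that each open subgroupoid $R^+_{\phi}(n)=\Gamma^+_{\phi}(0,n)$ is amenable. Such a groupoid is \emph{principal} --- the cocycle value $0$ forces every isotropy group to be trivial --- and all of its orbits are finite of uniformly bounded size, since $(x,0,y)\in R^+_{\phi}(n)$ forces $y\in\phi^{-n}(\phi^{n}(x))$, a set whose cardinality is at most the number of monotone laps of $\phi^{n}$. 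A principal \'etale groupoid all of whose orbits are finite is amenable: one may, for instance, take the uniform probability measures on the range fibres, which form an invariant Borel system. Alternatively, $R^+_{\phi}(n)$ is a locally closed subgroupoid, with the same unit space $\mathbb T$, of the groupoid $\Gamma_{\phi}(0,n)$ of Lemma \ref{closed}, which is compact and hence proper and amenable, so that $R^+_{\phi}(n)$ inherits amenability.

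Since $R^+_{\phi}=\bigcup_n R^+_{\phi}(n)$ is an increasing union of open subgroupoids, $R^+_{\phi}$ is amenable; and since $R^+_{\phi}=\ker c$ and $\mathbb Z$ is amenable, $\Gamma^+_{\phi}$ is amenable. Both steps use only the standard permanence properties of amenability for \'etale groupoids --- stability under increasing unions of open subgroupoids, and under continuous cocycles into discrete amenable groups --- for which I would refer to Anantharaman-Delaroche and Renault. Once $\Gamma^+_{\phi}$ is known to be amenable, the conclusions are immediate: $C^*_r(\Gamma^+_{\phi})=C^*(\Gamma^+_{\phi})$ is nuclear, and, since $\Gamma^+_{\phi}$ is moreover second countable, locally compact and Hausdorff (Lemma \ref{etale}), Tu's theorem that the $C^*$-algebra of such an amenable groupoid lies in the bootstrap class gives that $C^*_r(\Gamma^+_{\phi})$ satisfies the universal coefficient theorem of \cite{RS}.

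The step demanding the most care is the amenability of the blocks $R^+_{\phi}(n)$: these are not proper groupoids --- the cardinality of the range fibres jumps across $\phi^{-n}(\mathcal C_1)$ and the other adjustment points appearing in the proof of Lemma \ref{closed} --- so one cannot argue from properness of $R^+_{\phi}(n)$ itself, and must pass either through the compact overgroupoid $\Gamma_{\phi}(0,n)$, or through the invariant Borel measure system together with the coincidence of measurewise and topological amenability for Hausdorff \'etale groupoids. I note finally that when $C^*_r(\Gamma^+_{\phi})$ is simple there is a more self-contained alternative: by Theorem \ref{cunPi} the algebra is a Cuntz-Pimsner algebra over $C^*_r(R^+_{\phi})$, which is ASH and hence nuclear and in the bootstrap class, whence nuclearity follows from Katsura's results and the UCT from the six-term exact sequence of Theorem \ref{cunPi}; the route through amenability is preferable only in that it settles the non-simple case with no extra effort.
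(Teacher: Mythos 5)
Your overall architecture coincides with the paper's: both proofs reduce the corollary to the topological amenability of $\Gamma^+_{\phi}$, obtained by first establishing amenability of the kernel $R^+_{\phi}$ of the cocycle $c$ and then passing to the whole groupoid, after which nuclearity is immediate and the UCT follows from Tu's theorem. Where you diverge is in how amenability of $R^+_{\phi}$ is obtained. The paper does not argue dynamically at all: it uses the structure theorem of Lemma \ref{iso1II} (already established for the K-theory computations) to see that $C^*_r\left(R^+_{\phi}\right)$ is an inductive limit of sub-homogeneous $C^*$-algebras, hence nuclear, and then runs the equivalence ``$C^*_r$ nuclear $\Leftrightarrow$ groupoid amenable'' (Corollary 6.2.14(ii) and Theorem 3.3.7 in \cite{A-DR}) backwards to get amenability of $R^+_{\phi}$. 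Your direct argument via uniform measures on the finite orbits of the principal groupoids $R^+_{\phi}(n)$, or via the compact overgroupoid $\Gamma_{\phi}(0,n)$, is plausible but lands exactly on the technical point you yourself flag: the fibre cardinalities jump, so the uniform system is only Borel, and you must then invoke the equivalence of Borel/measurewise and topological amenability anyway (or justify that a locally closed subgroupoid of a proper non-\'etale groupoid inherits amenability). Since that equivalence is the same machinery the paper uses, your route saves nothing and costs a delicate verification; the paper's route is essentially free given Lemma \ref{iso1II}. Each approach buys something: yours is self-contained at the level of dynamics and does not need the recursive sub-homogeneous description; the paper's is shorter and reuses work already done.

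Two further cautions. First, the permanence statement ``$\ker c$ amenable and $\mathbb Z$ amenable imply $G$ amenable'' for a continuous cocycle $c:G\to\mathbb Z$ is not an off-the-shelf result in \cite{A-DR}; the paper treats it as the nontrivial step, citing Spielberg's Proposition 9.3 in \cite{Sp}, and must first upgrade $C^*_r\left(R^+_{\phi}\right)$ to the full groupoid $C^*$-algebra via Proposition 6.1.8 of \cite{A-DR} before that result applies. You should cite something of this strength rather than folklore. Second, your closing remark about the simple case via Theorem \ref{cunPi} and Katsura is exactly the paper's first observation, and, as you say, it does not cover the non-simple case, which is why the amenability route is needed at all.
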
 
\begin{proof} Since $C^*_r\left(R^+_{\phi}\right)$ is the inductive limit of a sequence
  of sub-homogeneous $C^*$-algebras, it is nuclear and satisfies
  the UCT. The corollary follows therefore from Theorem \ref{cunPi}
  and \cite{Ka} when $C^*_r\left(\Gamma^+_{\phi}\right)$ is simple. To obtain the same conclusion in general we must use a different and even more indirect path. First observe that the nuclearity of $C^*_r\left(R^+_{\phi}\right)$ implies that $R^+_{\phi}$ is (topologically) amenable by Corollary 6.2.14 (ii) and Theorem 3.3.7 in \cite{A-DR}, and then from Proposition 6.1.8 in \cite{A-DR} that $C^*_r\left(R^+_{\phi}\right)$ equals the full groupoid $C^*$-algebra of $R^+_{\phi}$. Since $C^*_r\left(R^+_{\phi}\right)$ is the fixed-point algebra of the gauge action it follows then from a recent result of Spielberg, Proposition 9.3 in \cite{Sp}, that $\Gamma^+_{\phi}$ is also (topologically) amenable. Then Corollary 6.2.14 (i) from \cite{A-DR} shows that $C^*_r\left(\Gamma^+_{\phi}\right)$ is nuclear, and the work of Tu in \cite{Tu} (more precisely, Lemme 3.5 and Proposition 10.7 in \cite{Tu}) implies that it satisfies the UCT. 
\end{proof}

We do not consider the full groupoid $C^*$-algebra here,
  \cite{Re}, but observe in passing that the previous proof established the topological amenability of $\Gamma^+_{\phi}$; a fact which implies that
  the full and
  reduced $C^*$-algebras of $\Gamma^+_{\phi}$ are canonically
isomorphic by Proposition 6.1.8 in \cite{A-DR}.

\section{Markov maps}\label{MarMaps}

We say that
$\phi$ is \emph{Markov} when $\phi(\mathcal C_1) \subseteq \mathcal
C_1$. 

\begin{lemma}\label{simpmarko} Assume that $\phi$ is Markov. Then
  $C^*_r\left(\Gamma^+_{\phi}\right)$ is simple if and only if $\phi$
  is transitive.
\end{lemma}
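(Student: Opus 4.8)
The plan is to deduce the statement entirely from results already established in the excerpt. First, the implication that simplicity of $C^*_r\left(\Gamma^+_{\phi}\right)$ forces $\phi$ to be transitive requires nothing about the Markov property: it is precisely Lemma~\ref{what?}. So the only thing to prove is the converse, and for that I would combine Lemma~\ref{simplicity} with Lemma~\ref{4a}.

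Assume then that $\phi$ is transitive and Markov. By Lemma~\ref{simplicity} it suffices to show that $\Ro(x)$ is dense in $\mathbb{T}$ for every $x\in\mathbb{T}$, and I would argue this by contradiction. Suppose $\Ro(x_0)$ is not dense for some $x_0\in\mathbb{T}$. Since $\Ro(x_0)$ is a $\Gamma^+_{\phi}$-orbit and the orbits of a groupoid partition its unit space, $\Ro(y)=\Ro(x_0)$ for every $y\in\Ro(x_0)$; in particular $\Ro(x_0)$ is a non-empty $\Ro$-invariant subset of $\mathbb{T}$ which, by assumption, is not dense. Lemma~\ref{4a} then forces $\Ro(x_0)$ to be finite and to consist of points that are post-critical and not pre-critical. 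On the other hand, the Markov condition $\phi(\mathcal{C}_1)\subseteq\mathcal{C}_1$ gives $\phi^{n}(\mathcal{C}_1)\subseteq\mathcal{C}_1$ for every $n\geq 1$ by iteration, so every post-critical point lies in $\mathcal{C}_1$ and is therefore also pre-critical, as $\mathcal{C}_1\subseteq\bigcup_{n\geq 0}\phi^{-n}(\mathcal{C}_1)$. This contradicts the last assertion of Lemma~\ref{4a}. Hence $\Ro(x)$ is dense for every $x$, and $C^*_r\left(\Gamma^+_{\phi}\right)$ is simple by Lemma~\ref{simplicity}.

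I do not expect a genuine obstacle here; the only point that must be handled with a little care is the routine observation that a single $\Gamma^+_{\phi}$-orbit is automatically $\Ro$-invariant, which is what lets one apply Lemma~\ref{4a} to $\Ro(x_0)$. An alternative, essentially equivalent, route would run through Theorem~\ref{S}: a Markov transitive $\phi$ is necessarily totally transitive, since otherwise the finite set $\mathcal{E}$ of Lemma~\ref{mathcalE} would be an $\Ro$-invariant set of post-critical points, excluded exactly as above; and it admits no exceptional fixed point, since such a point is its own $\Ro$-orbit and hence a finite $\Ro$-invariant set, again forbidden by the same argument. This works but is longer, because it disposes of total transitivity and of exceptional fixed points separately, so I would take the direct route above.
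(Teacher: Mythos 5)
Your proof is correct and follows essentially the same route as the paper: the paper also handles simplicity $\Rightarrow$ transitivity via Lemma~\ref{what?} and, for the converse, observes that the Markov condition makes every post-critical point critical (hence pre-critical), so Lemma~\ref{4a} rules out any non-dense $\Ro$-invariant set, i.e.\ any exposed point. Your extra remarks (that an $\Ro$-orbit is $\Ro$-invariant, and the alternative route through Theorem~\ref{S}) are fine but add nothing beyond the paper's one-line argument.
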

\begin{proof} The Markov condition implies that all post-critical points
are critical and it follows therefore from Lemma \ref{4a} that a transitive Markov map has no
exposed points and therefore also that
$C^*_r\left(\Gamma^+_{\phi}\right)$ is simple when $\phi$ is
transitive and Markov.
\end{proof}

\begin{lemma}\label{mar1X} Assume that $\phi$ is Markov and
  transitive. There is a natural number $k \in \mathbb N$ with the property
  that when $j \geq k$ and
  $x \in \phi\left(\mathbb T \backslash \mathcal C_1\right)$, there are elements
  $y_{\pm} \in \mathbb T$ such that $\phi^j\left(y_{\pm}\right) = x$,
  $\val \left(\phi^j,y_+\right) = (+,+)$ and $\val
  \left(\phi^j,y_-\right) = (-,-)$.
\end{lemma}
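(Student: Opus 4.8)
The plan is to reduce the statement to exactness of $\phi$ together with a single ``folding'' of $\phi$, tracking the valency monoid $\mathcal V$ carefully through Lemma \ref{compx}. First I would note that $\phi$ is \emph{exact}: by Lemma \ref{simpmarko} the Markov and transitivity hypotheses make $C^*_r(\Gamma^+_\phi)$ simple, and then Theorem \ref{S} gives exactness. Two elementary consequences of $\phi(\mathcal C_1)\subseteq\mathcal C_1$ will be used repeatedly: the post-critical set $\bigcup_{n\ge1}\phi^n(\mathcal C_1)$ equals the finite set $\phi(\mathcal C_1)$ and is contained in $\mathcal C_1$, so \emph{no non-critical point is post-critical}; and $\phi(\mathbb T\setminus\mathcal C_1)=\mathbb T\setminus E$, where $E=\{x:\phi^{-1}(x)\subseteq\mathcal C_1\}\subseteq\phi(\mathcal C_1)$ is finite. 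So it suffices to find $k$ such that for $j\ge k$ every point of $\mathbb T\setminus E$ is simultaneously of the form $\phi^j(y)$ with $\val(\phi^j,y)=(+,+)$ and of the form $\phi^j(y)$ with $\val(\phi^j,y)=(-,-)$; write $W^+_j$ and $W^-_j$ for these two subsets of $\mathbb T$.

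Next I would fix a folding. Since $\phi$ is not locally injective it has a critical point $c^*$ (a local extremum), and as in the proof of Lemma \ref{cruX} one obtains a small open arc $I$ on one side of $\phi(c^*)$ with $I\cap\mathcal C_1=\emptyset$, together with open subarcs $I_+,I_-$ flanking $c^*$, with $\val(\phi,\cdot)=(+,+)$ on $I_+$ and $\val(\phi,\cdot)=(-,-)$ on $I_-$, such that $\phi(I_+)=\phi(I_-)=I$. Because $\phi$ is exact, the backward orbit of $\mathcal C_1$ is dense, so $I$ may be chosen with both endpoints \emph{pre-critical}; combined with surjectivity of $\phi$ this yields a $K$ with $\phi^m(I)=\mathbb T$ and $\phi^m(\partial I)\subseteq\phi(\mathcal C_1)$ for all $m\ge K$.

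The crux is the inclusion $\phi^m(I\setminus\mathcal C_m)\subseteq W^+_{m+1}\cap W^-_{m+1}$ for every $m$. For $y\in I_\pm$ and $w=\phi(y)\in I$, Lemma \ref{compx} and the composition table for $\bullet$ give $\val(\phi^{m+1},y)=\val(\phi^m,w)$ when $y\in I_+$ and $\val(\phi^{m+1},y)=\val(\phi^m,w)\bullet(-,-)$ when $y\in I_-$; since $(+,+)\bullet(+,+)=(-,-)\bullet(-,-)=(+,+)$, letting $y$ range over $I_+$ and over $I_-$ realises $\phi^m(w)$ in $W^+_{m+1}$ and in $W^-_{m+1}$ for every $w\in I$ with $\val(\phi^m,w)\in\{(+,+),(-,-)\}$, i.e. for every $w\in I\setminus\mathcal C_m$. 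For $m\ge K$ the set $\phi^m(I\setminus\mathcal C_m)$ is $\mathbb T$ minus a finite set $Z_m$ made of $\phi^m$-images of the endpoints of the maximal monotone subarcs of $I$ for $\phi^m$; these endpoints lie in $\partial I\cup(\mathcal C_m\cap I)$, and the Markov condition together with $I\cap\mathcal C_1=\emptyset$ forces $\phi^m(\mathcal C_m\cap I)\subseteq\phi(\mathcal C_1)$, while $\phi^m(\partial I)\subseteq\phi(\mathcal C_1)$ by the choice of $K$. Hence $Z_m\subseteq\phi(\mathcal C_1)$, so $W^+_j\cap W^-_j\supseteq\mathbb T\setminus\phi(\mathcal C_1)$ for all $j\ge K+1$.

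Finally one must recover the finitely many points of $\phi(\mathcal C_1)\setminus E$, and keeping $Z_m$ and this last step honest is the part I expect to be the main obstacle. Given $p\in\phi(\mathcal C_1)\setminus E$, choose a non-critical preimage $q$ with $\phi(q)=p$; by the Markov remark $q$ is not post-critical, hence $q\notin\phi(\mathcal C_1)$, so $q\in W^+_{j-1}\cap W^-_{j-1}$ once $j-1\ge K+1$. Prepending the map $\phi$ to the two witnesses for $q$ and using the composition rules for $\bullet$ (namely $\val(\phi,q)\bullet(+,+)=\val(\phi,q)$ and $\val(\phi,q)\bullet(-,-)$ its sign-reversal, $\val(\phi,q)$ being $(+,+)$ or $(-,-)$) yields a $(+,+)$-witness and a $(-,-)$-witness for $p$, so $p\in W^+_j\cap W^-_j$. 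Taking $k=K+2$ then gives $W^+_j\cap W^-_j\supseteq(\mathbb T\setminus\phi(\mathcal C_1))\cup(\phi(\mathcal C_1)\setminus E)=\mathbb T\setminus E=\phi(\mathbb T\setminus\mathcal C_1)$ for all $j\ge k$, which is the assertion, with $y_\pm$ the corresponding witnesses.
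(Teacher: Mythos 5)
Your proof is correct, and the essential ingredients coincide with the paper's: the folding $I_\pm\to I$ around a critical point, exactness (obtained via Lemma \ref{simpmarko} and Theorem \ref{S}) to get $\phi^N(I)=\mathbb T$, the valency computation $\val(\phi^{m+1},y_\pm)=\val(\phi^m,w)\bullet(\pm,\pm)$, and the Markov property to keep the intermediate point non-critical. Where you differ is in the endgame, which you correctly identified as the delicate part, and here the paper is more economical. Instead of first covering $\mathbb T\setminus\phi(\mathcal C_1)$ by $\phi^m(I\setminus\mathcal C_m)$ (which costs you the pre-critical-endpoint refinement of $I$ and the bookkeeping of the exceptional set $Z_m$) and then recovering $\phi(\mathcal C_1)\setminus E$ by a separate prepending step, the paper performs your ``prepending'' move at the outset for \emph{every} $x$: it writes $x=\phi(u)$ with $u\in\mathbb T\setminus\mathcal C_1$, uses $\phi^{j-2}(I)=\mathbb T$ to pick $z\in I$ with $\phi^{j-2}(z)=u$, and observes that $u\notin\mathcal C_1$ together with $\phi(\mathcal C_1)\subseteq\mathcal C_1$ forces $z\notin\mathcal C_{j-1}$, so $\val(\phi^{j-1},z)\in\{(+,+),(-,-)\}$ automatically; composing with $z_\pm\in I_\pm$ then finishes with $k=N+2$. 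In other words, pulling back the non-critical preimage $u$ rather than $x$ itself makes your two-case split, the set $E$, and the control of $\partial I$ unnecessary. Both arguments are valid; yours just pays for treating $x$ directly.
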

\begin{proof} Since $\phi$ is not locally injective there are
  non-empty open intervals $I_{\pm}$ such that $\val\left(\phi,x\right) =
    (+,+)$ for all $x \in I_+$, $\val \left(\phi,y \right) = (-,-)$
    for all $y \in I_-$ and $I = \phi(I_+)= \phi(I_-)$ is an open
    non-empty interval. Since $\phi$ is exact (by Lemma
    \ref{simpmarko} and Theorem \ref{S}) there is an $ N \in
    \mathbb N$ such that $\phi^N(I) = \mathbb T$. Set $k =N + 2$ and
    let $j \geq k$. Consider an element $x \in \phi\left(\mathbb T
      \backslash \mathcal C_1\right)$. Then $x = \phi(u)$ for some $u
    \in \mathbb T  \backslash \mathcal C_1$. There is an element $z \in I$
    such that $\phi^{j-2}(z) = u$ and elements $z_{\pm} \in I_{\pm}$ such
    that $z_{\pm} \in I_{\pm}$ and $\phi(z_{\pm}) = z$. Then
    $\phi^j(z_{\pm}) = x$ and the Markov condition implies that
    $z_{\pm}$ are not critical for $\phi^{j-1}$ since $u \notin
    \mathcal C_1$. Note that $\val\left(\phi^j, z_{\pm}\right) =
    \val\left(\phi^{j-1},z\right)
    \bullet \val\left( \phi,  z_{\pm}\right) $. If $\val \left(\phi^{j-1},z\right) = (+,+)$,
    set $y_+ = z_+$ and $y_- = z_-$, and if $\val
    \left(\phi^{j-1},z\right) = (-,-)$, set $y_+ = z_-$ and $y_- = z_+$.  
\end{proof}

In the following we say that a Markov map $\phi$ is of \emph{order
  $k$} when the conclusion of Lemma \ref{mar1X} holds; i.e. when
\begin{enumerate}\label{k}
\item[a)] for all $j \geq k$ and all $x \in \phi(\mathbb T \backslash
  \mathcal C_1)$ there are elements
  $y_{\pm} \in \mathbb T$ such that $\phi^j\left(y_{\pm}\right) = x$,
  $\val \left(\phi^j,y_+\right) = (+,+)$ and $\val
  \left(\phi^j,y_-\right) = (-,-)$.
\end{enumerate}

Besides our standing assumptions (that $\phi : \mathbb T \to \mathbb
T$ is continuous, piecewise monotone and not locally injective), we now also
assume that $\phi$ is a Markov map of order $k$.

\begin{lemma}\label{nyk} Assume that $(x,l,y) \in \Gamma^+_{\phi}(l,n)$, where
  $l \in \mathbb Z, \ n \in \mathbb N$ and $l+n \geq k+1$. It follows that there is an element $z \in \mathbb T$
  such that $(x,1,z) \in \Gamma^+_{\phi}(1,k)$ and $(z,l-1,y) \in
  \Gamma^+_{\phi}(l -1,n)$.
\end{lemma}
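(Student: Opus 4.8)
The plan is to obtain $z$ by peeling a single iterate of $\phi$ off $x$, i.e.\ to find $z\in\mathbb T$ with $\phi^{k+1}(x)=\phi^{k}(z)$ and $\val(\phi^{k+1},x)=\val(\phi^{k},z)$; this pair of conditions is exactly the assertion $(x,1,z)\in\Gamma^{+}_{\phi}(1,k)$. Once such a $z$ is found, the requirement $(z,l-1,y)\in\Gamma^{+}_{\phi}(l-1,n)$ will follow from the hypothesis $(x,l,y)\in\Gamma^{+}_{\phi}(l,n)$ together with $l+n-1\ge k$ (a consequence of $l+n\ge k+1$): applying $\phi^{\,l+n-1-k}$ to $\phi^{k}(z)=\phi^{k+1}(x)$ gives $\phi^{l+n-1}(z)=\phi^{l+n}(x)=\phi^{n}(y)$, and Lemma~\ref{compx} converts $\val(\phi^{k},z)=\val(\phi^{k+1},x)$ into
\[
\val(\phi^{l+n-1},z)=\val(\phi^{\,l+n-1-k},\phi^{k+1}(x))\bullet\val(\phi^{k+1},x)=\val(\phi^{l+n},x)=\val(\phi^{n},y),
\]
the last equality being the defining property of $(x,l,y)\in\Gamma^{+}_{\phi}(l,n)$. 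All exponents occurring here are non-negative since $l+n\ge k+1\ge2$, and $\Gamma^{+}_{\phi}(1,k)$, $\Gamma^{+}_{\phi}(l-1,n)$ are defined because $k\ge1$, $(l-1)+n\ge k\ge1$ and $n\ge1$. So everything reduces to producing the right $z$ over the point $\phi^{k+1}(x)$.

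For this I would split into two cases according to whether $\phi(x)\in\mathcal C_{k}$. If $\phi(x)\in\mathcal C_{k}$, take $z=\phi(x)$; then $\phi^{k}(z)=\phi^{k+1}(x)$, and since $\val(\phi^{k},\phi(x))\in\{(+,-),(-,+)\}$ and the corresponding rows of the composition table for $\bullet$ are constant, Lemma~\ref{compx} gives $\val(\phi^{k+1},x)=\val(\phi^{k},\phi(x))\bullet\val(\phi,x)=\val(\phi^{k},\phi(x))=\val(\phi^{k},z)$, as wanted. If $\phi(x)\notin\mathcal C_{k}$, I would first note that $x\notin\mathcal C_{1}$: otherwise the Markov condition would give $\phi(x)\in\phi(\mathcal C_{1})\subseteq\mathcal C_{1}\subseteq\mathcal C_{k}$ (using $\mathcal C_{1}\subseteq\mathcal C_{k}$, immediate from the table), a contradiction; hence $\val(\phi,x)\in\{(+,+),(-,-)\}$. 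Next, writing $\val(\phi^{k},\phi(x))=\val(\phi,\phi^{k}(x))\bullet\val(\phi^{k-1},\phi(x))$ and using that a product $w\bullet v$ lying in $\{(+,+),(-,-)\}$ forces $w\in\{(+,+),(-,-)\}$ (again read off the table), we get $\val(\phi,\phi^{k}(x))\in\{(+,+),(-,-)\}$, i.e.\ $\phi^{k}(x)\notin\mathcal C_{1}$, so that $\phi^{k+1}(x)=\phi(\phi^{k}(x))\in\phi(\mathbb T\backslash\mathcal C_{1})$. Moreover $\val(\phi^{k+1},x)=\val(\phi^{k},\phi(x))\bullet\val(\phi,x)$ lies in $\{(+,+),(-,-)\}$, since both factors do and this set is closed under $\bullet$. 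Now invoke property (a) of a Markov map of order $k$, with $j=k$, applied to $\phi^{k+1}(x)\in\phi(\mathbb T\backslash\mathcal C_{1})$: it produces $z_{+},z_{-}\in\mathbb T$ with $\phi^{k}(z_{\pm})=\phi^{k+1}(x)$ and $\val(\phi^{k},z_{\pm})=(\pm,\pm)$, and we take $z=z_{+}$ or $z=z_{-}$ according to whether $\val(\phi^{k+1},x)$ equals $(+,+)$ or $(-,-)$.

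I expect the only genuine subtlety to be in the second case: one must be sure that $\val(\phi^{k+1},x)$ cannot be $(+,-)$ or $(-,+)$, for otherwise the two preimages supplied by the order-$k$ property would not have the correct valency. The argument above rules this out precisely because $\phi(x)\notin\mathcal C_{k}$ together with the Markov condition forces both $\val(\phi,x)$ and $\val(\phi^{k},\phi(x))$ into the sub-monoid $\{(+,+),(-,-)\}$ of $(\mathcal V,\bullet)$. Everything else — the nesting $\mathcal C_{1}\subseteq\mathcal C_{2}\subseteq\cdots$, the closure of $\{(+,+),(-,-)\}$ under $\bullet$, and the two implications about products $w\bullet v$ — is a direct inspection of the composition table, and the remainder is an unwinding of the definition of $\Gamma^{+}_{\phi}(k,n)$ together with repeated use of Lemma~\ref{compx}.
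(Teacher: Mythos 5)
Your proof is correct and follows essentially the same route as the paper's: a two-case split in which $z=\phi(x)$ works when a folding valency makes the relevant row of the $\bullet$-table constant, and otherwise condition a) of the order-$k$ property supplies $z$, followed by the same valency computation via Lemma~\ref{compx} to get $(z,l-1,y)\in\Gamma^+_{\phi}(l-1,n)$. Your case split on $\phi(x)\in\mathcal C_k$ is equivalent under the Markov condition to the paper's split on $\phi^k(x)\in\mathcal C_1$, and you in fact make explicit a point the paper leaves implicit, namely that $\val\left(\phi^{k+1},x\right)\in\{(+,+),(-,-)\}$ in the second case so that condition a) really does produce a preimage of the required valency.
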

\begin{proof} If $\phi^k(x) \in \mathcal C_1$, set $z
  = \phi(x)$. It follows from Lemma \ref{compx} and the composition
  table for $\bullet$ that
\begin{equation*}
\begin{split}
& \val\left( \phi^{k+1}, x\right) = \val\left( \phi, \phi^k(x)\right)
\bullet \val\left( \phi^k,x\right) = \\
&\val\left( \phi, \phi^k(x)\right)  \bullet \val \left(\phi^{k-1},
  \phi(x)\right) = \val\left(\phi^k,\phi(x)\right),
\end{split}
\end{equation*}
showing that $(x,1,z) \in \Gamma^+_{\phi}(1,k)$. Now note that it
follows from Lemma \ref{compx} and the composition
  table for $\bullet$ first that $\phi^k(x)$ is critical for
  $\phi^{n+l-k}$ and then, as above, that
\begin{equation*}
\begin{split}
& \val\left( \phi^{n}, y\right) = \val\left( \phi^{n+l}, x\right) = \val\left( \phi^{n+l-k}, \phi^k(x)\right)
\bullet \val\left( \phi^k,x\right) = \\
&\val\left( \phi^{n+l-k}, \phi^k(x)\right)  \bullet \val \left(\phi^{k-1},
  z\right) = \val\left(\phi^{n+l-1},z\right).
\end{split}
\end{equation*}
This shows that
$(z,l-1,y) \in
  \Gamma^+_{\phi}(l -1,n)$.

If instead $\phi^k(x) \notin \mathcal C_1$ it follows from condition
a) that there is a $z \in
\mathbb T$ such that $\phi^{k+1}(x) = \phi^k(z)$ and
$\val\left(\phi^k,z\right) = \val\left(\phi^{k+1},x\right)$. Then $(x,1,z)
\in \Gamma^+_{\phi}(1,k)$ and
$(z,l-1,y) \in \Gamma^+_{\phi}(l-1,n)$ since 
\begin{equation*}
\begin{split}
& \val \left(
  \phi^{l-1+n},z\right)  = \val \left(
  \phi^{l+n-k-1},\phi^k(z)\right) \bullet \val\left(\phi^k,z\right)\\
&=  \val \left(
  \phi^{l+n-k-1},\phi^{k+1}(x)\right) \bullet
\val\left(\phi^{k+1},x\right) 
= 
\val\left(\phi^{l+n},x\right) = 
\val\left(\phi^{n},y\right)  
\end{split}
\end{equation*}
when $l+ n \geq k+2$, while 
\begin{equation*}
\begin{split}
& \val \left(
  \phi^{l-1+n},z\right)  = \val \left( \phi^k,z\right) \\
&=  \val \left(
  \phi^{k+1},x\right) = \val\left(\phi^{l+n}, x\right) = 
\val\left(\phi^{n},y\right)  
\end{split}
\end{equation*}
when $l+n = k+1$.
\end{proof}

\begin{lemma}\label{mar7k} Assume that $n \geq k+1$. Let $(x,0,y) \in
  \Gamma^+_{\phi}(0,n)$. There are elements $z_1,z_2 \in \mathbb T$
  such that $(z_1,0,z_2) \in \Gamma^+_{\phi}(0,n-1)$, $(x,1,z_1),
  (y,1,z_2) \in \Gamma^+_{\phi}(1,k)$ and
$$
(x,0,y) = (x,1,z_1)(z_1,0,z_2)(z_2,-1,y)
$$
in $\Gamma^+_{\phi}$.
\end{lemma}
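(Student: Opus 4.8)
The plan is to deduce the factorization directly from Lemma~\ref{nyk} by applying it twice, the second time after passing to an inverse.

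First I would feed $(x,0,y)\in\Gamma^+_{\phi}(0,n)$ into Lemma~\ref{nyk} with $l=0$; the index sum is $0+n=n\geq k+1$, so the lemma yields a point $z_1\in\mathbb T$ with $(x,1,z_1)\in\Gamma^+_{\phi}(1,k)$ and $(z_1,-1,y)\in\Gamma^+_{\phi}(-1,n)$. Taking the groupoid inverse of the second membership gives $(y,1,z_1)\in\Gamma^+_{\phi}$, and reading off the defining relations of $\Gamma^+_{\phi}(-1,n)$ — namely $\phi^{n-1}(z_1)=\phi^n(y)$ and $\val(\phi^{n-1},z_1)=\val(\phi^n,y)$ — one sees that in fact $(y,1,z_1)\in\Gamma^+_{\phi}(1,n-1)$. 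Now I apply Lemma~\ref{nyk} a second time, to $(y,1,z_1)\in\Gamma^+_{\phi}(1,n-1)$ with $l=1$: the relevant index sum is $1+(n-1)=n\geq k+1$, so there is $z_2\in\mathbb T$ with $(y,1,z_2)\in\Gamma^+_{\phi}(1,k)$ and $(z_2,0,z_1)\in\Gamma^+_{\phi}(0,n-1)$. Inverting the last membership (and using that $\Gamma^+_{\phi}(0,m)$ is symmetric under inversion, since its defining conditions $\phi^m(x)=\phi^m(y)$, $\val(\phi^m,x)=\val(\phi^m,y)$ are symmetric in $x,y$) gives $(z_1,0,z_2)\in\Gamma^+_{\phi}(0,n-1)$, which is exactly the middle factor we want.

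It then remains to verify the identity in $\Gamma^+_{\phi}$. The pairs are composable because the endpoints match (at $z_1$ and then at $z_2$), and by the definition of the product and inversion,
$$
(x,1,z_1)(z_1,0,z_2)(z_2,-1,y) = (x,1,z_2)(z_2,-1,y) = (x,0,y),
$$
where we also used $(z_2,-1,y)=(y,1,z_2)^{-1}$.

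There is essentially no genuine obstacle here; the proof is pure bookkeeping built on Lemma~\ref{nyk}. The one point needing a little care — and the place where the hypothesis $n\geq k+1$ gets used twice over — is confirming that the inverse of $(z_1,-1,y)$ lies in $\Gamma^+_{\phi}(1,n-1)$, not merely in $\Gamma^+_{\phi}(1,n)$, so that the index sum in the second application of Lemma~\ref{nyk} is still $n\geq k+1$. Everything else (composability of the three factors, and the computation of their product) is immediate from the groupoid operations.
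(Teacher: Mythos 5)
Your proof is correct and follows essentially the same route as the paper's: one application of Lemma~\ref{nyk} with $l=0$ to produce $z_1$, inversion of $(z_1,-1,y)$ to land in $\Gamma^+_{\phi}(1,n-1)$, and a second application with $l=1$ to produce $z_2$. The extra bookkeeping you supply (checking the index shift under inversion and the symmetry of $\Gamma^+_{\phi}(0,n-1)$) is exactly what the paper leaves implicit.
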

\begin{proof} We obtain from Lemma \ref{nyk} an element $z_1 \in
  \mathbb T$ such that $(x,1,z_1) \in \Gamma^+_{\phi}(1,k)$,
  $(z_1,-1,y) \in \Gamma^+_{\phi}(-1,n)$ and $(x,0,y) = (x,1,z_1)(z_1,-1,y)$. Then $(y,1,z_1) \in
  \Gamma^+_{\phi}(1,n-1)$ and a second application of Lemma
  \ref{nyk} gives a $z_2 \in
  \mathbb T$ such that $(y,1,z_2) \in \Gamma^+_{\phi}(1,k)$,
  $(z_2,0,z_1) \in \Gamma^+_{\phi}(0,n-1)$ and $(y,1,z_1) = (y,1,z_2)(z_2,0,z_1)$.
\end{proof}

 Let $E$ be the closure of
  $C_c\left(\Gamma^+_{\phi}(1,k)\right)$ in
  $C^*_r\left(\Gamma^+_{\phi}\right)$. Then
$$
E^*E \subseteq C^*_r\left(R^+_{\phi}(k)\right), \
C^*_r\left(R^+_{\phi}(k+1)\right)E \subseteq E, \ E
C^*_r\left(R^+_{\phi}(k)\right)\subseteq E.
$$
We can therefore, in the natural way, consider $E$ as a $C^*$-correspondence on $C^*_r\left(R^+_{\phi}(k)\right)$ in the
  sense of Katsura, \cite{Ka}. Let $\mathcal O_E$ denote the
  corresponding $C^*$-algebra, cf. Definition 3.5 in \cite{Ka}. We aim
  now to show that $\mathcal O_E$ is isomorphic to
  $C^*_r\left(\Gamma^+_{\phi}\right)$.

\begin{lemma}\label{algcon2k}
\begin{enumerate}
\item[1)] $\overline{\Span EE^*} = C^*_r\left(R^+_{\phi}(k+1)\right)$.
\item[2)] $C^*_r\left(R^+_{\phi}(n)\right)  \subseteq
  EC^*_r\left(R^+_{\phi}(n-1)\right)E^*$ when $n \geq k+1$.
\item[3)] $C^*_r\left(\Gamma^+_{\phi}\right)$ is generated by $E$.
\end{enumerate}
\end{lemma}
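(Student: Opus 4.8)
The plan is to establish the three assertions in turn, the common engine being the following. From Lemmas~\ref{nyk} and~\ref{mar7k} one reads off \emph{set-level} factorizations of the relevant pieces of $\Gamma^{+}_{\phi}$; by Lemma~\ref{mod2} all germs occurring there are realized by honest local transfers defined on open sets, so these factorizations in fact take place on open bisections; and one then transports everything to the $C^{*}$-level by routine partition-of-unity arguments together with density of the relevant $C_{c}$-algebras. I shall invoke this mechanism without spelling out the (standard) partitions of unity each time.

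For part~1), the inclusion $\overline{\Span EE^{*}}\subseteq C^{*}_{r}\left(R^{+}_{\phi}(k+1)\right)$ is a direct groupoid computation: a product $(a,1,b)(a',1,b')^{-1}$ with both factors in $\Gamma^{+}_{\phi}(1,k)$ is defined only when $b=b'$, and it then equals $(a,0,a')\in\Gamma^{+}_{\phi}(0,k+1)$. For the reverse inclusion I would apply Lemma~\ref{nyk} with $l=0$ and $n=k+1$ (legitimate, since its hypothesis is $l+n\geq k+1$, not $l\geq 1$) to obtain $(x,0,y)=(x,1,z)(y,1,z)^{-1}$ with $(x,1,z),(y,1,z)\in\Gamma^{+}_{\phi}(1,k)$, i.e.\ $R^{+}_{\phi}(k+1)\subseteq\Gamma^{+}_{\phi}(1,k)\cdot\Gamma^{+}_{\phi}(1,k)^{-1}$ at the set level; a partition of unity then writes any $f\in C_{c}\left(R^{+}_{\phi}(k+1)\right)$ as a finite sum $\sum_{i}g_{i}h_{i}^{*}$ with $g_{i},h_{i}\in C_{c}\left(\Gamma^{+}_{\phi}(1,k)\right)\subseteq E$, so $f\in\Span EE^{*}$, and density of $C_{c}\left(R^{+}_{\phi}(k+1)\right)$ finishes. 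Part~2) is the same argument one level up: Lemma~\ref{mar7k} (with the given $n\geq k+1$) factors each element of $R^{+}_{\phi}(n)$ as a factor in $\Gamma^{+}_{\phi}(1,k)$, times an element of $R^{+}_{\phi}(n-1)$, times an inverse of a factor in $\Gamma^{+}_{\phi}(1,k)$, so a partition of unity gives $C_{c}\left(R^{+}_{\phi}(n)\right)\subseteq E\,C^{*}_{r}\left(R^{+}_{\phi}(n-1)\right)E^{*}$, and the asserted inclusion follows by density.

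For part~3), let $\mathcal B$ be the $C^{*}$-subalgebra of $C^{*}_{r}\left(\Gamma^{+}_{\phi}\right)$ generated by $E$; then $E^{*}\subseteq\mathcal B$ as well. First I would show $C^{*}_{r}\left(R^{+}_{\phi}(n)\right)\subseteq\mathcal B$ for every $n$: for $n\leq k+1$ this is immediate from part~1), since $R^{+}_{\phi}(n)$ is an open subgroupoid of $R^{+}_{\phi}(k+1)$; for $n>k+1$ it follows by induction from part~2). Hence $C^{*}_{r}\left(R^{+}_{\phi}\right)=\overline{\bigcup_{n}C^{*}_{r}\left(R^{+}_{\phi}(n)\right)}\subseteq\mathcal B$ by~(\ref{union}). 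Next I would show $C_{c}\left(\Gamma^{+}_{\phi}(l,n)\right)\subseteq\mathcal B$ for all $l\in\mathbb Z$ and all admissible $n$. The case $l=0$ is contained in the previous point. For $l\geq 1$, since $\Gamma^{+}_{\phi}(l,n)$ is open in $\Gamma^{+}_{\phi}(l,n')$ for $n'\geq n$, one may assume $n\geq k$; then iterating Lemma~\ref{nyk} $l$ times (the index stays $\geq k+1$ throughout precisely because $n\geq k$) factors $\Gamma^{+}_{\phi}(l,n)$, at the set level, as a product of $l$ copies of $\Gamma^{+}_{\phi}(1,k)$ followed by one copy of $R^{+}_{\phi}(n)$, so a partition of unity puts $C_{c}\left(\Gamma^{+}_{\phi}(l,n)\right)$ inside $\overline{\Span\, E^{l}C^{*}_{r}(R^{+}_{\phi}(n))}\subseteq\mathcal B$. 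For $l\leq -1$, inversion is a homeomorphism carrying $\Gamma^{+}_{\phi}(l,n)$ onto $\Gamma^{+}_{\phi}(-l,n+l)$, so $C_{c}\left(\Gamma^{+}_{\phi}(l,n)\right)=C_{c}\left(\Gamma^{+}_{\phi}(-l,n+l)\right)^{*}\subseteq\mathcal B^{*}=\mathcal B$ by the case just treated. Since $C_{c}\left(\Gamma^{+}_{\phi}\right)$ is the linear span of the spaces $C_{c}\left(\Gamma^{+}_{\phi}(l,n)\right)$ and is dense in $C^{*}_{r}\left(\Gamma^{+}_{\phi}\right)$, this yields $\mathcal B=C^{*}_{r}\left(\Gamma^{+}_{\phi}\right)$, the inclusion $\mathcal B\subseteq C^{*}_{r}\left(\Gamma^{+}_{\phi}\right)$ being trivial.

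I expect the main obstacle to be not a hard estimate but the bookkeeping: one must verify that Lemma~\ref{nyk} can genuinely be iterated the required number of times (its index $l+n$ has to stay $\geq k+1$, which is why $n$ is enlarged first in part~3), and one must carry out carefully the passage from the pointwise factorizations in Lemmas~\ref{nyk} and~\ref{mar7k} to factorizations on open bisections---this is exactly where Lemma~\ref{mod2} serves to upgrade germs to genuine local transfers defined on open neighbourhoods---so that the invocations of partitions of unity are legitimate. Everything else is routine manipulation in the reduced $C^{*}$-algebra of an \'etale groupoid.
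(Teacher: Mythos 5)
Your proposal is correct and follows essentially the same route as the paper: both derive the set-level factorizations from Lemmas~\ref{nyk} and~\ref{mar7k}, upgrade them to factorizations over open bisections, and pass to the $C^*$-level by a compactness/partition-of-unity argument, with part~3) reduced to the $l=0$ case (handled by 1), 2) and (\ref{union})) via iteration for $l\geq 1$ and adjoints for $l\leq -1$. The only cosmetic difference is that in part~1) you invoke Lemma~\ref{nyk} directly with $l=0$, $n=k+1$, where the paper cites Lemma~\ref{mar7k}; the resulting factorization $(x,0,y)=(x,1,z)(y,1,z)^{-1}$ is the same.
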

\begin{proof}
1) It is clear that $EE^* \subseteq
C^*_r\left(R^+_{\phi}(k+1)\right)$. Let $h \in
C_c\left(R^+_{\phi}(k+1)\right)$. To show that $h$ is contained in
$\overline{\Span E E^*}$
we may assume that $h$ is supported in a bi-section $U$. Set $K =
r\left(\supp h\right) \subseteq \mathbb T$. Let $x \in K$. There is
then a unique $y \in \mathbb T$ such that $(x,0,y) \in
U$. It follows from Lemma \ref{mar7k} that there
is a $z \in \mathbb T$ such that $(x,1,z), (y,1,z) \in
\Gamma^+_{\phi}(1,k)$. Choose open bi-sections $W_1 \subseteq \Gamma^+_{\phi}(1,k), \ W_2 \subseteq \Gamma^+_{\phi}(-1,k+1)$ containing
$(x,1,z)$ and $(z,-1,y)$, respectively, such that $W_1W_2 \subseteq U$. We can then define functions $f \in C_c(W_1), \ g \in C_c(W_2)$ such that $f(\gamma) = h(\gamma')$ in a neighborhood of $(x,1,z)$, where $\gamma' \in U$ is determined by the condition that $r(\gamma) = r(\gamma')$, and $g(\gamma) = 1$ in a neighborhood of $(z,-1,y)$. There is then an open neighborhood $V_x$ of $x$ such that
$fg(\gamma) = h(\gamma)$ for all $\gamma \in r^{-1}(V_x)$. By compactness of $K$ we get in this way a finite collection of functions $\psi_i \in C(\mathbb T), f_i ,g_i \in C_c\left(\Gamma^+_{\phi}(1,k)\right)$ such that $h = \sum_i \psi_if_ig_i$. Since $\psi_if_i, g_i^* \in E$ this shows that $h$ is in the span of $EE^*$.

2) follows from Lemma \ref{mar7k} in a similar way.

3) By arguments similar to those used for 1) it follows from Lemma
\ref{nyk} that $C_c\left(\Gamma^+_{\phi}(l)\right) \subseteq \Span EC_c\left(\Gamma^+_{\phi}(l-1)\right)$ for all $l \in \mathbb
Z$. Since $C_c\left(\Gamma^+_{\phi}(l)\right)^* =
C_c\left(\Gamma^+_{\phi}(-l)\right)$ and since $\bigcup_{l \in \mathbb
  Z} C_c\left(\Gamma^+_{\phi}(l)\right)$ is dense in
$C^*_r\left(\Gamma^+_{\phi}\right)$ it suffices then to show that
$C^*_r\left(R^+_{\phi}\right)$ is contained in the $C^*$-algebra
generated by $E$. This follows from 1), 2) and (\ref{union}).
\end{proof}

Let $\mathbb L(E)$ denote the $C^*$-algebra of adjointable operators
on the Hilbert $C^*_r\left(R^+_{\phi}(k)\right)$-module $E$ and 
$\mathbb K(E)$ the ideal in $\mathbb L(E)$ consisting of the 'compact' operators.

\begin{lemma}\label{mar4k}
Define $\pi : C^*_r\left(R^+_{\phi}(k+1)\right) \to \mathbb L(E)$ such
that $\pi(a)b= ab$. Then $\pi$ is injective and
$\pi\left(C^*_r\left(R^+_{\phi}(k+1)\right)\right) = \mathbb K(E)$.
\end{lemma}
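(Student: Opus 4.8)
The plan is to derive everything from part 1) of Lemma \ref{algcon2k}, which identifies $\overline{\Span EE^*}$ with $C^*_r\left(R^+_{\phi}(k+1)\right)$. First I would check that $\pi$ really takes values in $\mathbb L(E)$ and is a $*$-homomorphism: for $a \in C^*_r\left(R^+_{\phi}(k+1)\right)$ left multiplication by $a$ maps $E$ into $E$ because $C^*_r\left(R^+_{\phi}(k+1)\right)E \subseteq E$, and it is adjointable with adjoint $\pi(a^*)$ since $\langle a\xi, \eta\rangle = \xi^*a^*\eta = \langle \xi, a^*\eta\rangle$ for all $\xi,\eta \in E$, where $\langle \xi,\eta\rangle = \xi^*\eta$ is the $C^*_r\left(R^+_{\phi}(k)\right)$-valued inner product on $E$. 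Linearity and multiplicativity of $\pi$ are immediate.

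For injectivity, suppose $\pi(a) = 0$, i.e. $a\xi = 0$ for all $\xi \in E$. Then $a\,\xi\eta^* = 0$ for all $\xi,\eta \in E$, so $a$ annihilates $\Span EE^*$ and hence, by continuity of multiplication, the whole of $\overline{\Span EE^*} = C^*_r\left(R^+_{\phi}(k+1)\right)$ by part 1) of Lemma \ref{algcon2k}. Since $a$ itself lies in $C^*_r\left(R^+_{\phi}(k+1)\right)$, applying this to an approximate unit for $C^*_r\left(R^+_{\phi}(k+1)\right)$ gives $a = 0$. Thus $\pi$ is an injective $*$-homomorphism, and therefore isometric; in particular it has closed range and respects passing to closures of linear subspaces of its domain.

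To compute the range, recall that $\mathbb K(E)$ is by definition the closed linear span of the rank-one operators $\theta_{\xi,\eta}$, $\xi,\eta \in E$, where $\theta_{\xi,\eta}(\zeta) = \xi\langle\eta,\zeta\rangle = \xi\eta^*\zeta$ for $\zeta \in E$. Hence $\theta_{\xi,\eta} = \pi\left(\xi\eta^*\right)$, with $\xi\eta^* \in \Span EE^* \subseteq C^*_r\left(R^+_{\phi}(k+1)\right)$, so that $\mathbb K(E) = \overline{\pi\left(\Span EE^*\right)}$. Since $\pi$ is isometric this equals $\pi\left(\overline{\Span EE^*}\right) = \pi\left(C^*_r\left(R^+_{\phi}(k+1)\right)\right)$, once more by part 1) of Lemma \ref{algcon2k}. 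This is the asserted equality.

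I do not expect a genuine obstacle here: the argument is the standard identification of a $C^*$-correspondence's algebra of compact operators with the left-multiplication operators, and the only points needing a little care are the adjointability check for $\pi$ (which rests on the concrete form of the inner product) and the observation that injectivity of $\pi$ promotes it to an isometry, so that it commutes with closures. All the substantive work — in particular that $\overline{\Span EE^*}$ is \emph{exactly} $C^*_r\left(R^+_{\phi}(k+1)\right)$ — has already been carried out in Lemma \ref{algcon2k}.
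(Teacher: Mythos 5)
Your proof is correct and follows exactly the route the paper intends: the paper's proof consists of the single sentence that the lemma follows from part 1) of Lemma \ref{algcon2k}, and your argument is precisely the standard filling-in of that claim (adjointability of left multiplication, injectivity via an approximate unit for $\overline{\Span EE^*}$, and the identification $\theta_{\xi,\eta} = \pi\left(\xi\eta^*\right)$ combined with isometry of $\pi$).
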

\begin{proof} This follows from 1) in Lemma \ref{algcon2k}.
\end{proof}

\begin{thm}\label{katsura2k} Assume that $\phi$ is Markov of order $k$. Then
\begin{equation}\label{isoIIk}
C^*_r\left(\Gamma^+_{\phi}\right) \simeq \mathcal O_E,
\end{equation}
and there is a six-terms exact sequence
\begin{equation}\label{6termIIk}
\begin{xymatrix}{
K_0\left( C^*_r\left(R^+_{\phi}(k)\right)\right) \ar[rr]^-{\id -
  [E]_0} & & K_0\left(
    C^*_r\left(R^+_{\phi}(k)\right)\right) \ar[rr]^{\iota_0}  &
& K_0\left( C^*_r\left(\Gamma^+_{\phi}\right) \right)\ar[d] \\
K_1\left(C^*_r\left(\Gamma^+_{\phi}\right) \right)\ar[u]  & &
K_1\left(C^*_r\left(R^+_{\phi}(k)\right) \right) \ar[ll]^{\iota_1}  & &  K_1\left( C^*_r\left(R^+_{\phi}(k)\right)\right) \ar[ll]^-{\id -
 [E]_1}
}\end{xymatrix}
\end{equation} 
where $\iota :
C^*_r\left(R^+_{\phi}(k)\right) \to
C^*_r\left(\Gamma^+_{\phi}\right)$ is the inclusion map and $[E]$ is the KK-theory element defined from $\pi :
C^*_r\left(R^+_{\phi}(k)\right) \to \mathbb K(E)$.
\end{thm}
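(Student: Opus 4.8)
The strategy is exactly the one used for Theorem~\ref{cunPi}: exhibit a Katsura-covariant injective representation of the correspondence $E$ inside $C^*_r\left(\Gamma^+_{\phi}\right)$ and invoke the gauge-invariant uniqueness theorem together with Katsura's six-terms sequence. First I would take the obvious representation: the inclusion $C^*_r\left(R^+_{\phi}(k)\right) \subseteq C^*_r\left(\Gamma^+_{\phi}\right)$ as the coefficient map and the inclusion $E \subseteq C^*_r\left(\Gamma^+_{\phi}\right)$ as the representation of the correspondence, both of which are compatible with the module operations by the relations $E^*E \subseteq C^*_r\left(R^+_{\phi}(k)\right)$, $C^*_r\left(R^+_{\phi}(k+1)\right)E \subseteq E$ and $EC^*_r\left(R^+_{\phi}(k)\right) \subseteq E$ recorded just before Lemma~\ref{algcon2k}. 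By Lemma~\ref{mar4k} we have $\pi\bigl(C^*_r\left(R^+_{\phi}(k+1)\right)\bigr) = \mathbb K(E)$ with $\pi$ injective; Katsura's ideal $J_E$ (the largest ideal on which the left action is injective and lands in $\mathbb K(E)$) therefore contains $C^*_r\left(R^+_{\phi}(k+1)\right)$, and the covariance condition $\psi_t(\pi(a)) = \phi_0(a)$ for $a \in J_E$ is automatic on $C^*_r\left(R^+_{\phi}(k+1)\right)$ because $\psi_t$ is implemented there by the obvious rank-one operators coming from elements of $C_c\left(\Gamma^+_{\phi}(1,k)\right)$. For the full covariance one uses Lemma~\ref{algcon2k}(1): $\overline{\Span EE^*} = C^*_r\left(R^+_{\phi}(k+1)\right)$ forces $J_E = C^*_r\left(R^+_{\phi}(k+1)\right)$ exactly, so covariance need only be checked there.

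The second ingredient is that the representation is ``big enough'' and ``not too big''. The surjectivity of the induced map $\mathcal O_E \to C^*_r\left(\Gamma^+_{\phi}\right)$ is precisely Lemma~\ref{algcon2k}(3), which says $C^*_r\left(\Gamma^+_{\phi}\right)$ is generated by $E$. For injectivity I would use the gauge-invariant uniqueness theorem (Theorem~6.4 in \cite{Ka}): the gauge action $\beta$ on $C^*_r\left(\Gamma^+_{\phi}\right)$ restricts to the coefficient algebra $C^*_r\left(R^+_{\phi}(k)\right)$ as the identity (since $\Gamma^+_{\phi}(0,k) \subseteq R^+_{\phi}$ lies in the kernel of the cocycle $c$) and scales $E = \overline{C_c\left(\Gamma^+_{\phi}(1,k)\right)}$ by the first character, so it implements the canonical circle action on $\mathcal O_E$; since $\pi$ is injective on $C^*_r\left(R^+_{\phi}(k)\right)$, Katsura's theorem gives $\mathcal O_E \simeq C^*_r\left(\Gamma^+_{\phi}\right)$. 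Finally, the six-terms exact sequence (\ref{6termIIk}) is the one Katsura attaches to any Cuntz--Pimsner algebra in Theorem~8.6 of \cite{Ka}; the only point to verify is that the relevant $KK$-element is the one induced by $\pi : C^*_r\left(R^+_{\phi}(k)\right) \to \mathbb K(E)$, and that the map labelled $\iota_*$ is indeed inclusion-induced, both of which are built into Katsura's formulation once the isomorphism above is in place.

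\textbf{Expected main obstacle.} The delicate point is the covariance condition, i.e. pinning down Katsura's ideal $J_E$ and showing the representation is covariant on it. Here the asymmetry of the gauge action noted in the remark before Lemma~\ref{minus1} --- the fact that the ideal generated by $E E^*$ in $C^*_r\left(R^+_{\phi}\right)$ can be proper --- is exactly what makes $J_E = C^*_r\left(R^+_{\phi}(k+1)\right)$ a \emph{proper} ideal of $C^*_r\left(R^+_{\phi}(k)\right)$ in general, so one genuinely needs the Markov-of-order-$k$ hypothesis (through Lemmas~\ref{mar7k} and~\ref{algcon2k}) to identify $J_E$ precisely and not merely to bound it. Once one checks via Lemma~\ref{algcon2k}(1) that $\overline{\Span EE^*}$ equals $C^*_r\left(R^+_{\phi}(k+1)\right)$ --- neither larger nor smaller --- the covariance is forced and the rest is a direct appeal to \cite{Ka}. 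I expect the write-up to be short, mirroring the proof of Theorem~\ref{cunPi} almost verbatim with $R^+_{\phi}$ replaced by $R^+_{\phi}(k)$, $E_{-1}$ by $E$, and Lemma~\ref{minus1}/\ref{specgen} replaced by Lemma~\ref{algcon2k}(1) and (3) and Lemma~\ref{mar4k}.
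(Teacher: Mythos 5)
Your proposal follows essentially the same route as the paper: the paper's proof is a three-line appeal to Lemma \ref{mar4k} together with Proposition 3.3 of \cite{Ka} for covariance of the pair of inclusions, Lemma \ref{algcon2k}(3) with Theorem 6.4 of \cite{Ka} for the isomorphism, and Theorem 8.6 of \cite{Ka} for the six-terms sequence. One small correction to your aside on $J_E$: since $C^*_r\left(R^+_{\phi}(k)\right) \subseteq C^*_r\left(R^+_{\phi}(k+1)\right)$ (the inclusion goes this way, not the other), Katsura's ideal here is all of the coefficient algebra $C^*_r\left(R^+_{\phi}(k)\right)$ --- its left action is the restriction of the injective map $\pi$ of Lemma \ref{mar4k} and lands in $\mathbb K(E)$ --- so covariance holds on the whole coefficient algebra, exactly as Proposition 3.3 of \cite{Ka} then guarantees.
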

\begin{proof} Let $t : E \to  C^*_r\left(\Gamma^+_{\phi}\right)$ and
  $\iota : C^*_r\left(R^+_{\phi}(k)\right) \to
  C^*_r\left(\Gamma^+_{\phi}\right)$ be
  the embeddings. It follows then from Lemma \ref{mar4k} above and
  Proposition 3.3 in \cite{Ka} that $(\iota, t)$ is covariant in the
  sense of Definition 3.4 in \cite{Ka}. The isomorphism (\ref{isoIIk})
  follows then from 3) in Lemma \ref{algcon2k} and Theorem 6.4 in
  \cite{Ka}. Then the 6-terms exact sequence (\ref{6termIIk}) follows
 from Lemma \ref{mar4k} above and Theorem 8.6 in \cite{Ka}.  
\end{proof}

\subsection{The linking groupoid}

\begin{lemma}\label{morita3k} Let $(x,0,y) \in R^+_{\phi}(k)$. There is
  an element $z \in \mathbb T$ such that 
$$
(z,1,x),(z,1,y) \in
  \Gamma^+_{\phi}(1,k)
$$ 
and $(x,0,y) = (x,-1,z)(z,1,y)$ in $\Gamma^+_{\phi}$.
\end{lemma}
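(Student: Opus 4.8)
The plan is first to reduce the statement to the existence of a single point $z \in \mathbb T$ with $(z,1,x) \in \Gamma^+_{\phi}(1,k)$. Unwinding the definition of $\Gamma^+_{\phi}(1,k)$, this means exactly $\phi^{k+1}(z) = \phi^k(x)$ and $\val(\phi^{k+1},z) = \val(\phi^k,x)$. Since $(x,0,y) \in R^+_{\phi}(k) = \Gamma^+_{\phi}(0,k)$ gives $\phi^k(x) = \phi^k(y)$ and $\val(\phi^k,x) = \val(\phi^k,y)$, these very same two conditions immediately show $(z,1,y) \in \Gamma^+_{\phi}(1,k)$ as well; and then $(x,-1,z)(z,1,y) = (x,-1+1,y) = (x,0,y)$ holds automatically in $\Gamma^+_{\phi}$, with $(x,-1,z) = (z,1,x)^{-1}$ lying in $\Gamma^+_{\phi}$. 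So the whole lemma comes down to producing that one point $z$.

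To produce $z$ I would split on whether $\phi^{k-1}(x) \in \mathcal C_1$. If $\phi^{k-1}(x) \notin \mathcal C_1$, then, using $\phi(\mathcal C_1) \subseteq \mathcal C_1$, none of $\phi^0(x),\dots,\phi^{k-1}(x)$ is critical, so by iterating Lemma \ref{compx} (and the fact that $\{(+,+),(-,-)\}$ is closed under $\bullet$) we get $\val(\phi^k,x) \in \{(+,+),(-,-)\}$; moreover $\phi^k(x) = \phi(\phi^{k-1}(x)) \in \phi(\mathbb T \setminus \mathcal C_1)$, so condition a) in the definition of ``order $k$'', applied with $j = k+1 \ge k$, yields $y_{\pm}$ with $\phi^{k+1}(y_{\pm}) = \phi^k(x)$ and $\val(\phi^{k+1},y_{\pm}) = (\pm,\pm)$; take $z$ to be whichever of $y_+,y_-$ matches $\val(\phi^k,x)$.

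If instead $\phi^{k-1}(x) \in \mathcal C_1$, then $\phi^k(x) \in \mathcal C_1$, and since $\val(\phi,\phi^{k-1}(x)) \in \{(+,-),(-,+)\}$ is a left-absorbing element for $\bullet$, Lemma \ref{compx} gives $\val(\phi^k,x) = \val(\phi,\phi^{k-1}(x))$. Now use that $\phi^k$ is surjective (because $\phi$ is) to choose any $z$ with $\phi^k(z) = \phi^{k-1}(x)$. Then $\phi^{k+1}(z) = \phi(\phi^{k-1}(x)) = \phi^k(x)$, and because $\phi^k(z) = \phi^{k-1}(x) \in \mathcal C_1$ the same left-absorption gives $\val(\phi^{k+1},z) = \val(\phi,\phi^k(z)) = \val(\phi,\phi^{k-1}(x)) = \val(\phi^k,x)$; so $z$ has the required properties, and the proof is complete by the reduction in the first paragraph.

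The step I expect to need the most care is exactly the critical case $\phi^{k-1}(x) \in \mathcal C_1$: there condition a) is of no help, since it only supplies preimages of valency $(+,+)$ or $(-,-)$, whereas $\val(\phi^k,x)$ is then a fold $(+,-)$ or $(-,+)$; the device of instead hitting $\phi^{k-1}(x)$ itself by $\phi^k$ and letting the single fold at $\phi^k(z)$ propagate is what rescues this case, and it uses the Markov hypothesis precisely to keep $\phi^k(z)$ critical. Beyond that, the argument is routine bookkeeping with the defining conditions of the sets $\Gamma^+_{\phi}(l,n)$ and the composition table for $\bullet$.
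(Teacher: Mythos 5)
Your proof is correct and follows essentially the same route as the paper: the same case split on whether $\phi^{k-1}(x)\in\mathcal C_1$, using left-absorption of folds under $\bullet$ in the critical case and the order-$k$ condition in the non-critical case. The only (harmless) variations are that you apply condition a) with $j=k+1$ to $\phi^k(x)$ where the paper applies it with $j=k$ to $\phi^{k-1}(x)$, and your closing remark slightly misattributes to the Markov hypothesis what is really the case hypothesis (namely that $\phi^k(z)=\phi^{k-1}(x)$ is critical).
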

\begin{proof} If $\phi^{k-1}(x) \in \mathcal C_1$, choose any $z \in
  \phi^{-k}\left( \phi^{k-1}(x)\right)$ and note that $\val \left(
    \phi^{k+1},z\right) = \val\left( \phi, \phi^{k-1}(x)\right) =
  \val\left(\phi^k , x\right)$. It follows that $(z,1,x), (z,1,y) \in
  \Gamma^+_{\phi}(1,k)$.

If $\phi^{k-1}(x)  \notin \mathcal C_1$ it follows from the Markov
condition and the surjectivity of $\phi$ that $\phi^{k-1}(x) \in
\phi\left(\mathbb T \backslash \mathcal C_1\right)$. Furthermore, the
Markov condition also ensures that $\val\left( \phi^{k-1},x\right) \in
(\pm,\pm)$. Since $\phi$ is
Markov of order $k$ there is therefore an element $z \in \mathbb T$ such that
$\val\left(\phi^k,z\right) = \val \left(\phi^{k-1},x\right)$ and
$\phi^k(z) = \phi^{k-1}(x)$. It
follows that $(z,1,x), (z,1,y) \in \Gamma^+_{\phi}(1,k)$. 
 \end{proof}

It follows from Lemma \ref{nyk} and Lemma \ref{morita3k} that $\Gamma^+_{\phi}(1,k)$ is a $R^+_{\phi}(k+1)-R^+_{\phi}(k)$-equivalence in the sense of \cite{SW} and hence from Theorem 13 in \cite{SW} that $C^*_r\left(R^+_{\phi}(k)\right)$ is Morita equivalent to $C^*_r\left(R^+_{\phi}(k+1)\right)$. We need a detailed description of the corresponding linking algebra. To this end we introduce the the linking groupoid $\mathcal L$ as follows, cf. Lemma 3 of
\cite {SW}. As a topological space $\mathcal L$ is the disjoint union
$$
\mathcal L = R^+_{\phi}(k+1) \sqcup R^+_{\phi}(k) \sqcup
\Gamma^+_{\phi}(1,k) \sqcup \Gamma^+_{\phi}(-1,k+1) .
$$
To give $\mathcal L$ a groupoid structure, define $r,s : \mathcal L \to
\mathbb T \sqcup \mathbb T \subseteq R^+_{\phi}(k+1) \sqcup
R^+_{\phi}(k) \subseteq \mathcal L$ to be the maps coming from the
range and source maps on $\Gamma^+_{\phi}$, but such that $r$ takes
$R^+_{\phi}(k+1) \sqcup \Gamma^+_{\phi}(1,k)$ to the first copy of
$\mathbb T$, and $R^+_{\phi}(k) \sqcup \Gamma^+_{\phi}(-1,k+1)$ to the
second while $s$ takes $R^+_{\phi}(k+1) \sqcup \Gamma^+_{\phi}(-1,k+1)$ to the first copy of
$\mathbb T$, and $R^+_{\phi}(k) \sqcup \Gamma^+_{\phi}(1,k)$ to the
second. We then set $\mathcal L^{(2)} = \left\{ (\gamma, \gamma') \in
  \mathcal L \times \mathcal L: \ s(\gamma) = r(\gamma') \right\}$ and
define the product $\gamma \gamma'$ for $(\gamma, \gamma') \in
\mathcal L^{(2)}$ to be the same as the product in
$\Gamma^+_{\phi}$. It is then straightforward to verify that $\mathcal
L$ is a second countable \'etale locally compact Hausdorff
groupoid. 

The $C^*$-algebra of the reduction of $\mathcal L$ to the first copy of $\mathbb T$ is in a natural way identified with $C^*_r\left(R^+_{\phi}(k+1)\right)$
and in the same way the $C^*$-algebra of the reduction of $\mathcal L$ to the second copy of
$\mathbb T$ is identified with
$C^*_r\left(R^+_{\phi}(k)\right)$. These identifications give us
embeddings $a : C^*_r\left(R^+_{\phi}(k+1)\right) \to C_r^*(\mathcal
L)$ and $b : C^*_r\left(R^+_{\phi}(k)\right) \to C_r^*(\mathcal
L)$ onto full corners of $C^*_r(\mathcal L)$. To see how $a$ and $b$
are related to $[E]$, let $f \in C_c(\mathcal L)$ and write $f =
f_{11} + f_{12} + f_{21} + f_{22}$ where $f_{11} \in
C_c\left(R^+_{\phi}(k+1)\right)$, $f_{12} \in
C_c\left(\Gamma^+_{\phi}(1,k)\right)$, $f_{21} \in
C_c\left(\Gamma^+_{\phi}(-1,k+1)\right)$ and $f_{22} \in
C_c\left(R^+_{\phi}(k)\right)$. We can then define $\Psi(f) \in
\mathbb L\left(E \oplus R^+_{\phi}(k)\right)$ such that
$$
\Psi(f)(e,g) = \left(f_{11}e + f_{12}g, f_{21}e + f_{22} g \right) .
$$ 
It follows from Theorem 13 in \cite{SW} and Corollary 3.21 in \cite{RW} that $\Psi$ extends to a $*$-isomorphism 
$$
C^*_r(\mathcal L) \to
\mathbb K\left(E \oplus C^*_r\left(R^+_{\phi}(k)\right)\right).
$$ 
Let $
\mathbb K(E) \to \mathbb K\left(E \oplus
  C^*_r\left(R^+_{\phi}(k)\right)\right)$ and $ 
C^*_r\left(R^+_{\phi}(k)\right) \to \mathbb K\left(E \oplus
  C^*_r\left(R^+_{\phi}(k)\right)\right)$ be the canonical
embeddings. Then

\begin{equation*}
\begin{xymatrix}{
C^*_r\left(R^+_{\phi}(k+1)\right) \ar[r]^-a \ar[d]^-{\pi} &
C^*_r(\mathcal L) \ar[d]^-{\Psi} &
C^*_r\left(R^+_{\phi}(k)\right) \ar[l]_-b \ar@{=}[d] \\
\mathbb K(E) \ar[r] & \mathbb K\left(E \oplus
  C^*_r\left(R^+_{\phi}(k)\right)\right) &
C^*_r\left(R^+_{\phi}(k)\right) \ar[l] 
}
\end{xymatrix}
\end{equation*} 
commutes, when $\pi$ is the isomorphism from Lemma \ref{mar4k}. By
relating this diagram to the description of $[E]_*$ given by
Definition 8.3 and Appendix A in \cite{Ka} we find that

%It follows from \cite{SW} and \cite{RW} that
%$C^*_r\left(\mathcal L\right)$ is isomorphic to the linking algebra
%$\mathbb K\left(E \oplus C^*_r\left(R^+_{\phi}\right)\right)$ used to
%describe $[E]_*$ in \cite{Ka}. In this way we see that the map
%${\alpha_E}_*$ is the composition ${\alpha_E}_* = b_*^{-1}
%\circ a_*$ and
\begin{equation}\label{comp}
[E]_* = b_*^{-1} \circ a_* \circ \rho_* . 
\end{equation}
To study $[E]_*$ further we need information on the structure of $C^*_r\left(\mathcal L\right)$ and hence the following observation.

\begin{lemma}\label{kstorjIIk} $\phi^j\left(\mathcal C_j\right) =
  \phi^{j+1}\left(\mathcal C_{j+1}\right)$ for all $j \geq 1$.
\end{lemma}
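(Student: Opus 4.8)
The plan is to prove the two inclusions $\phi^j(\mathcal C_j)\subseteq\phi^{j+1}(\mathcal C_{j+1})$ and $\phi^{j+1}(\mathcal C_{j+1})\subseteq\phi^j(\mathcal C_j)$ separately, in each case by a short computation with valencies based on Lemma \ref{compx} and the composition table for $\bullet$. As a preliminary step I would extract two elementary facts about the monoid $\mathcal V$ from the table: (i) for $a,b\in\mathcal V$ the product $a\bullet b$ lies in $\{(+,-),(-,+)\}$ if and only if $a$ or $b$ does; and (ii) if $a\in\{(+,-),(-,+)\}$ then $a\bullet b=a$ for every $b\in\mathcal V$ (the rows of the table indexed by $(+,-)$ and $(-,+)$ are constant, and the columns indexed by $(+,-)$ and $(-,+)$ contain only entries in $\{(+,-),(-,+)\}$). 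Using (i) together with the factorization $\val(\phi^{i+1},x)=\val(\phi,\phi^i(x))\bullet\val(\phi^i,x)$ coming from Lemma \ref{compx}, one gets at once that $\mathcal C_i\subseteq\mathcal C_{i+1}$ for every $i\geq 1$, hence $\mathcal C_1\subseteq\mathcal C_j$ for all $j\geq 1$; I would record this at the outset.

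For the inclusion $\phi^j(\mathcal C_j)\subseteq\phi^{j+1}(\mathcal C_{j+1})$ I would take $x\in\mathcal C_j$ and, using surjectivity of $\phi$, choose $y$ with $\phi(y)=x$. Then Lemma \ref{compx} gives $\val(\phi^{j+1},y)=\val(\phi^j,x)\bullet\val(\phi,y)$, and since $\val(\phi^j,x)\in\{(+,-),(-,+)\}$ by assumption, fact (ii) shows this equals $\val(\phi^j,x)$; hence $y\in\mathcal C_{j+1}$, and as $\phi^{j+1}(y)=\phi^j(x)$ we conclude $\phi^j(x)\in\phi^{j+1}(\mathcal C_{j+1})$. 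This half uses only surjectivity, not the Markov hypothesis.

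For the reverse inclusion $\phi^{j+1}(\mathcal C_{j+1})\subseteq\phi^j(\mathcal C_j)$ I would take $y\in\mathcal C_{j+1}$ and apply fact (i) to $\val(\phi^{j+1},y)=\val(\phi^j,\phi(y))\bullet\val(\phi,y)$: either $\val(\phi^j,\phi(y))\in\{(+,-),(-,+)\}$, i.e.\ $\phi(y)\in\mathcal C_j$, or $\val(\phi,y)\in\{(+,-),(-,+)\}$, i.e.\ $y\in\mathcal C_1$, in which case the Markov condition $\phi(\mathcal C_1)\subseteq\mathcal C_1$ together with $\mathcal C_1\subseteq\mathcal C_j$ again gives $\phi(y)\in\mathcal C_j$. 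Either way $\phi^{j+1}(y)=\phi^j(\phi(y))\in\phi^j(\mathcal C_j)$. The only genuine use of the Markov assumption is this last step — without it the forward orbit of a critical value may be infinite and the identity fails — and the only real ``work'' is the finite case-check establishing (i) and (ii); so I do not expect any substantive obstacle, merely bookkeeping.
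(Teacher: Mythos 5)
Your proof is correct and follows essentially the same route as the paper: the forward inclusion is exactly the observation $\phi^{-1}(\mathcal C_j)\subseteq\mathcal C_{j+1}$ combined with surjectivity, and the reverse inclusion in both arguments reduces to showing $\phi(\mathcal C_{j+1})\subseteq\mathcal C_j$ via the Markov hypothesis. The only difference is packaging: the paper encodes the valency bookkeeping in the identity $\mathcal C_{j+1}=\mathcal C_1\cup\phi^{-1}(\mathcal C_1)\cup\cdots\cup\phi^{-j}(\mathcal C_1)$, whereas you carry it out directly from the composition table for $\bullet$; these are the same computation.
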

\begin{proof}

The inclusion $\phi^j\left(\mathcal C_j\right) \subseteq
\phi^{j+1}\left(\mathcal C_{j+1}\right)$ is a general fact and follows
from the observation that $\phi^{-1}\left(\mathcal C_j\right)
\subseteq \mathcal C_{j+1}$. For the converse, note that $\mathcal C_{j+1} = \mathcal C_1 \cup
\phi^{-1}(\mathcal C_1) \cup \cdots \cup \phi^{-j}\left(\mathcal
  C_1\right)$ and by the Markov property $\phi\left(C_{j+1}\right) \subseteq \phi(\mathcal C_1) \cup
\phi\left(\phi^{-1}(\mathcal C_1)\right) \cup \cdots \cup \phi\left(\phi^{-j}\left(\mathcal
  C_1\right)\right) \subseteq \mathcal C_1 \cup \mathcal C_1 \cup
\phi^{-1}(\mathcal C_1) \cup \cdots \cup \phi^{-j+1}(\mathcal C_1) =
\mathcal C_j$,
which implies the desired inclusion. 
\end{proof}

\begin{cor}\label{corTTTIIk} $\phi^j(\mathcal C_j) = \phi(\mathcal C_1)$
  when $j \geq 1$.
\end{cor}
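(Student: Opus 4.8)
The plan is to obtain this as an immediate consequence of Lemma \ref{kstorjIIk} by induction on $j$. The base case $j = 1$ is the tautology $\phi^1(\mathcal C_1) = \phi(\mathcal C_1)$. For the inductive step, assume $\phi^j(\mathcal C_j) = \phi(\mathcal C_1)$ for some $j \geq 1$; Lemma \ref{kstorjIIk} supplies the equality $\phi^{j+1}(\mathcal C_{j+1}) = \phi^j(\mathcal C_j)$, and combining the two gives $\phi^{j+1}(\mathcal C_{j+1}) = \phi(\mathcal C_1)$, which closes the induction.

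Equivalently, one simply telescopes the chain of equalities $\phi(\mathcal C_1) = \phi^2(\mathcal C_2) = \phi^3(\mathcal C_3) = \cdots = \phi^j(\mathcal C_j)$ furnished by repeated application of Lemma \ref{kstorjIIk}. There is essentially no obstacle to overcome here; the only point worth recording is that Lemma \ref{kstorjIIk}, and hence this corollary, relies on the standing Markov hypothesis $\phi(\mathcal C_1) \subseteq \mathcal C_1$ that is in force throughout this section — it is exactly that inclusion which yields $\phi^{j+1}(\mathcal C_{j+1}) \subseteq \phi^j(\mathcal C_j)$, the reverse inclusion being automatic since $\phi^{-1}(\mathcal C_j) \subseteq \mathcal C_{j+1}$. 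I would therefore just write out the one-line induction argument.
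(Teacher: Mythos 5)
Your proof is correct and matches the paper's intent exactly: the corollary is stated without proof precisely because it follows by telescoping (equivalently, a one-line induction on) the chain of equalities $\phi^j(\mathcal C_j) = \phi^{j+1}(\mathcal C_{j+1})$ from Lemma \ref{kstorjIIk}, with the base case $j=1$ being a tautology. Your remark about which inclusion uses the Markov hypothesis and which is automatic is also consistent with the paper's proof of that lemma.
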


Set $\mathcal D =
\phi(\mathcal C_1)$. Then $\mathcal C_j \subseteq \phi^{-j}(\mathcal D)$
by Corollary \ref{corTTTIIk}. Note that $\phi^{-k-1}\left(\mathcal D\right) \sqcup
\phi^{-k}(\mathcal D)$ is an $\mathcal L$-invariant subset of $\mathbb
T \sqcup \mathbb T$. We get therefore an extension
\begin{equation}\label{ki1IIk}
\begin{xymatrix}{
0 \ar[r] & C^*_r \left(\mathcal L|_{(\mathbb T \sqcup \mathbb T)
    \backslash (\phi^{-k-1}(\mathcal D)\sqcup \phi^{-k}(\mathcal D))}\right) \ar[r] & C^*_r(\mathcal L)
\ar[r] &   C^*_r \left(\mathcal L|_{\phi^{-k-1}(\mathcal D)\sqcup \phi^{-k}(\mathcal D)}\right)
\ar[r] & 0
}\end{xymatrix}
\end{equation}

By using the same procedure as in Section \ref{extMar3II} we can give the
following alternative description of this extension. As in Section
\ref{extMar3II} let $\mathcal I_{k+1}$ be the set of intervals of connected
components in $\mathbb T \backslash \phi^{-k-1}(\mathcal D)$, considered as subsets of
the first copy of $\mathbb T$ in $\mathbb T  \sqcup \mathbb T$, and
$\mathcal I_k$ the set of intervals of connected components of
$\mathbb T \backslash \phi^{-k}( \mathcal D)$, considered as subsets of
the second copy of $\mathbb T$ in $\mathbb T  \sqcup \mathbb T$. Let $\mathbb B_{\mathcal L}$ be the finite-dimensional
$C^*$-algebra generated by the matrix units $e_{I,J}, I,J \in \mathcal
I_{k+1} \cup \mathcal I_{k}$, subject to the conditions that
\begin{enumerate}
\item[a)] $\phi^{k+1}(I) = \phi^{k+1}(J)$ and $\val
  \left(\phi^{k+1},I\right) = \val \left( \phi^{k+1},J\right)$ when
  $I,J \in \mathcal I_{k+1}$,
\item[b)] $\phi^{k}(I) = \phi^{k}(J)$ and $\val
  \left(\phi^{k},I\right) = \val \left( \phi^{k},J\right)$ when
  $I,J \in \mathcal I_{k}$,
\item[c)] $\phi^{k+1}(I) = \phi^{k}(J)$ and $\val
  \left(\phi^{k+1},I\right) = \val \left( \phi^{k},J\right)$ when $I
  \in \mathcal I_{k+1}$ and $J \in \mathcal I_k$,
\item[d)] $\phi^{k}(I) = \phi^{k+1}(J)$ and $\val
  \left(\phi^{k},I\right) = \val \left( \phi^{k+1},J\right)$ when
  $I \in \mathcal I_k$ and $J \in \mathcal I_{k+1}$.
\end{enumerate}
Then 
$$
C^*_r \left(\mathcal L|_{(\mathbb T \sqcup \mathbb T)
    \backslash (\phi^{-k-1}(\mathcal D)\sqcup \phi^{-k}(\mathcal
    D))}\right) \simeq S\mathbb B_{\mathcal L} ,
$$
where we have used the notation $SD$ for the suspension of a $C^*$-algebra $D$, i.e. $SD =
C_0(\mathbb R) \otimes D \simeq C_0(]0,1[) \otimes D$.

Similarly, we can describe the $C^*$-algebra $C^*_r \left(\mathcal
  L|_{\phi^{-k-1}(\mathcal D)\sqcup \phi^{-k}(\mathcal D)}\right)$ as the finite-dimensional
$C^*$-algebra $\mathbb A_{\mathcal L}$ generated by the matrix units $e_{x,y},
x,y \in \phi^{-k-1}(\mathcal D)\sqcup \phi^{-k}(\mathcal D) \subseteq \mathbb T \sqcup \mathbb T$, subject to the conditions that
\begin{enumerate}
\item[e)] $\phi^{k+1}(x) = \phi^{k+1}(y)$ and $\val
  \left(\phi^{k+1},x\right) = \val \left( \phi^{k+1},y\right)$ when
  $x,y \in \phi^{-k-1}(\mathcal D)$,
\item[f)] $\phi^k(x) = \phi^k(y)$ and $\val
  \left(\phi^k,x\right) = \val \left( \phi^k,y\right)$ when
  $x,y \in \phi^{-k}(\mathcal D)$,
\item[g)] $\phi^{k+1}(x) =\phi^k(y)$ and $\val
  \left(\phi^{k+1},x\right) = \val \left( \phi^k,y\right)$ when $x
  \in \phi^{-k-1}(\mathcal D)$ and $y \in \phi^{-k}(\mathcal D)$,
\item[h)] $\phi^k(x) = \phi^{k+1}(y)$ and $\val
  \left(\phi^k,x\right) = \val \left( \phi^{k+1},y\right)$ when
  $x \in \phi^{-k}(\mathcal D)$ and $y \in \phi^{-k-1}(\mathcal D)$. 
\end{enumerate}
Then 
$$
C^*_r \left(\mathcal
  L|_{\phi^{-k-1}(\mathcal D)\sqcup \phi^{-k}(\mathcal D)}\right)  \simeq \mathbb
A_{\mathcal L},
$$
and the extension (\ref{ki1IIk}) takes the form
\begin{equation}\label{ki2II}
\begin{xymatrix}{
0 \ar[r] &  S\mathbb B_{\mathcal L} \ar[r] & C^*_r(\mathcal L)
\ar[r] &   \mathbb
A_{\mathcal L}
\ar[r] & 0
}\end{xymatrix}
\end{equation}
This extension is compatible with the description of $C^*_r\left(R^+_{\phi}(k)\right)$ and $C^*_r\left(R^+_{\phi}(k+1)\right)$ coming from Section \ref{extMar3II} in the sense that we get a commutative diagram of $*$-homomorphisms
\begin{equation}\label{diag19II}
\begin{xymatrix}{
0 \ar[r] & S\mathbb B_{k+1} \ar[r] \ar[d] & C^*_r\left(
  R^+_{\phi}(k+1)\right) \ar[r] \ar[d]^-{a} & \mathbb A_{k+1} \ar[r] \ar[d]
& 0 \\
0 \ar[r] & S\mathbb B_{\mathcal L}
\ar[r] & C^*_r\left( \mathcal L\right) \ar[r] & \mathbb A_{\mathcal L} \ar[r] & 0 \\
0 \ar[r] &  S\mathbb B_k \ar[u] \ar[r]  & C^*_r\left(
  R^+_{\phi}(k)\right) \ar[u]_-{b} \ar[r]  & \mathbb A_k
\ar[u] \ar[r]
& 0 .
}\end{xymatrix}
\end{equation}
%such that ${\alpha_{E}}_* = b_*^{-1} \circ a_*$. 
From the six-terms
exact sequences of the upper and lower extensions in (\ref{diag19II})
we get for $ j = k$ and $j = k+1$ the identifications
\begin{equation}\label{kerid}
K_0\left(C^*_r\left(  R^+_{\phi}(j)\right)\right) = \ker \left( \left(I_j\right)_0
      - \left(U_j\right)_0 \right) 
\end{equation}
and
\begin{equation}\label{cokerid}
K_1\left( C^*_r\left( R^+_{\phi}(j)\right) \right) = \coker \left( \left(I_j\right)_0
      - \left(U_j\right)_0 \right) ,
\end{equation}
where $I_j, U_j : \mathbb A_j \to \mathbb B_j$
are the $*$-homomorphisms introduced in Section \ref{extMar3II}. Then
$b_0^{-1} \circ a_0$ is realized as a homomorphism
$$
b_0^{-1} \circ a_0 : \ker \left( \left(I_{k+1}\right)_0
      - \left(U_{k+1}\right)_0 \right) \to \ker \left( \left(I_k\right)_0
      - \left(U_k\right)_0 \right) 
$$ 
and $b_1^{-1} \circ a_1$ as a
    homomorphism 
$$
b_1^{-1} \circ a_1 : \coker \left( \left(I_{k+1}\right)_0
      - \left(U_{k+1}\right)_0 \right) \to \coker \left( \left(I_k\right)_0
      - \left(U_k\right)_0 \right) .
$$

Note that the full matrix summands in each of the $C^*$-algebras $\mathbb B_k, \mathbb
B_{\mathcal L}$ and
$\mathbb B_{k+1}$ are in one-to-one correspondence with $\mathcal
I(\pm) = \mathcal I \times \{(+,+),(-,-)\}$, where $\mathcal I$ are
the intervals of connected components in $\mathbb T \backslash \mathcal D$. In $\mathbb
B_k$, for example, the element $(I, (+,+)) \in \mathcal I(\pm)$ labels the $C^*$-subalgebra of $\mathbb B_k$ generated by the matrix units $e_{I',J'}$ where $I',J'$ are intervals in
$\mathcal I_k$ such that $\phi^k(I') = \phi^k(J') = I$ and $\val\left(\phi^k,I'\right) = \val\left(\phi^k,J'\right) =( +,+)$, while the element $(I, (-,-)) \in \mathcal I(\pm)$ labels the $C^*$-subalgebra of $\mathbb B_k$ generated by the matrix units $e_{I',J'}$ where $I',J'$ are intervals in
$\mathcal I_k$ such that $\phi^k(I') = \phi^k(J') = I$ and $\val\left(\phi^k,I'\right) = \val\left(\phi^k,J'\right) =( -,-)$. Similarly, the full matrix
summands in $\mathbb A_k, \mathbb A_{\mathcal L}$ and $\mathbb A_{k+1}$ are in one-to-one
correspondence with the subset $\mathcal D(\pm)$ of $\mathcal D  \times
\mathcal V$ consisting of the pairs $(d,v) \in \mathcal D \times
\mathcal V$ for which there exists an element $x \in
\phi^{-k}(d)$ such that $\val\left(\phi^k,x\right) = v$. By using
these labels we get isomorphisms 
$$
K_0\left(\mathbb A_k\right) \simeq
K_0\left(\mathbb A_{k+1}\right) \simeq K_0\left(\mathbb A_{\mathcal
    L}\right) \simeq \mathbb Z^{\mathcal D(\pm)}
$$
and 
$$
K_0\left(\mathbb B_k\right) \simeq
K_0\left(\mathbb B_{k+1}\right) \simeq K_0\left( \mathbb B_{\mathcal
    L}\right) \simeq \mathbb Z^{\mathcal I(\pm)} \simeq \mathbb Z^{\mathcal I} \oplus
\mathbb Z^{\mathcal I}
$$ 
with the property that by making them
identifications the maps ${b_0}^{-1} \circ {a_0}$ and
${b_1}^{-1} \circ {a_1}$ become
identities. Therefore,
in order to obtain the desired description of $[E]_*$ we must
determine what the map $\rho_*
: K_*\left(C^*_r\left( R^+_{\phi}(k)\right)\right) \to
K_*\left(C^*_r\left( R^+_{\phi}(k+1)\right)\right)$ becomes under
these identifications.

\subsection{A description of $\rho : C^*_r\left(R^+_{\phi}(k)\right)
  \to  C^*_r\left(R^+_{\phi}(k+1)\right)$}

Using the notation from Section \ref{extMar3II}, set
$$
\mathbb D_j = \left\{ (a,f)  \in
  \mathbb A_j, f \in C\left([0,1],\mathbb B_j\right) : \ f(0) =
  I_j(a), \ f(1) = U_j(a) \right\}
$$
when $j \geq 1$. It follows from Lemma \ref{iso1II} that there are
isomorphisms $\mu_k :  C^*_r\left(R^+_{\phi}(k)\right) \to \mathbb
D_k$ and $\mu_{k+1} :  C^*_r\left(R^+_{\phi}(k+1)\right) \to \mathbb
D_{k+1}$. There is therefore a unique $*$-homomorphism $\Phi : \mathbb
D_k \to \mathbb D_{k+1}$ such that
\begin{equation}\label{diag32II}
\begin{xymatrix}{
C^*_r\left(R^+_{\phi}(k)\right) \ar[d]_-{\rho} \ar[r]^-{\mu_k} &
\mathbb D_k  \ar[d]^{\Phi}\\
C^*_r\left(R^+_{\phi}(k+1)\right) \ar[r]^-{\mu_{k+1}} &
\mathbb D_{k+1} 
}\end{xymatrix}
\end{equation}
commutes. The $*$-homomorphism $\Phi$ is given by the formula
\begin{equation}\label{form1II}
\Phi(a,f) = (\chi(a) +\mu(f), \varphi(f))
\end{equation}
where 
$$
\chi : \mathbb A_k  \to \mathbb A_{k+1}, \ \ \mu : C\left([0,1],\mathbb B_k\right) \to \mathbb A_{k+1}, \ \ \varphi : C\left([0,1],\mathbb B_k\right) \to C\left([0,1],\mathbb B_{k+1}\right)
$$
are $*$-homomorphisms such that $\chi$ and $\mu$ have orthogonal
ranges. We describe them one by one. The easiest to define is $\chi$;
it is simply given by the formula
$$
\chi(e_{x,y}) = e_{x,y}
$$
when $\phi^k(x) = \phi^k(y) \in \mathcal D$ and
$\val\left(\phi^k,x\right) = \val\left(\phi^k,y\right)$. 

To define $\mu$ and $\varphi$ we use the homeomorphisms $\psi_J :
]0,1[ \to J \in \mathcal I$ that were introduced in Section
\ref{extMar3II}. Consider $f \in C[0,1]$ and a pair $(I,J) \in \mathcal
I_k^{(2)}$. Then
$$
\mu(f \otimes e_{I,J}) = \sum_{(x,y) \in N_{I,J}}  \ f\left( {\psi_{\phi^k(I)}}^{-1}\left(\phi^k(x)\right)\right) e_{x,y}
$$
where 
$$
N_{I,J} = \left\{(x,y) \in \phi^{-k-1}(\mathcal D)^2 : \ x \in
  I, \ y \in J, \ \phi^k(x) = \phi^k(y) \notin \mathcal D\right\} .
$$ 
Similarly, $\varphi$ is given by
\begin{equation}\label{form2II}
\varphi\left( f \otimes e_{I,J}\right) = \sum_{(I_1,J_1) \in M_{I,J}}
f_{I_1}  \otimes
      e_{I_1,J_1}
\end{equation}
where
$$
M_{I,J} = \left\{ (I_1,J_1) \in \mathcal I_{k+1}^{(2)} : \ I_1
  \subseteq I, \ J_1 \subseteq J \right\} 
$$
and $f_{I_1} \in C[0,1]$ is the continuous extension of the function
$$
]0,1[ \ni t \mapsto f\circ \psi_{\phi^k(I)}^{-1} \circ \left(
  \phi|_{\phi^k(I) \cap \phi^{-1}\left(\phi^{k+1}(I_1)\right)}\right)^{-1} \circ
  \psi_{\phi^{k+1}(I_1)}(t).
$$
(We remark that this description of $\rho$ is almost identical with the
one given in Lemma 3.5 in \cite{Th5} in a different setting.)

%\begin{important} Til Thomas: Det er i princippet lige ud af
%  landevejen at checke at denne opskrift er rigtig; alts� at
%  diagrammet (\ref{diag32II}) kommuterer, men jeg tror ikke jeg endnu
%  har sat mig ned og verificeret det. Du m� derfor meget gerne g�re det!!
%\end{important}

Define $p_j : \mathbb D_j \to \mathbb A_j$ such that $p_j(a,f) = a$, and note that $p_{k+1} \circ \Phi$ is homotopic to $\left(\chi
  + \mu \circ I_k\right) \circ p_k$ so that
\begin{equation}\label{diag43II}
\begin{xymatrix}{
 \mathbb D_k
\ar[r]^-{p_k} \ar[d]^-{\Phi}
& \mathbb A_k \ar[d]^-{\chi + \mu \circ I_k}  \\
 \mathbb D_{k+1} \ar[r]^-{p_{k+1}}
& \mathbb A_{k+1}   
}\end{xymatrix}
\end{equation}
commutes up to homotopy. It follows that 
$$
\rho_0 = \chi_0  + \mu_0
\circ \left(I_k\right)_0
$$ on $K_0(\mathbb D_k) = \ker \left(\left(I_k\right)_0 - \left(U_k\right)_0\right)$. 

\subsection{An algorithm for the calculation of $K_*\left(C^*_r\left(\Gamma^+_{\phi}\right)\right)$}
For each $(d,v) \in \mathcal D(\pm)$ let $[d,v]$ be the corresponding
element in the standard basis for $\mathbb Z^{\mathcal D(\pm)}$. For
each $d \in \mathcal D$ we let $I^+_d$ be the interval $I^+_d \in
\mathcal I$ such that $d < I^+_d$. We can
then define a homomorphism $A : \mathbb Z^{\mathcal D(\pm)} \to \mathbb
Z^{\mathcal D(\pm)}$ such that
$$
A[d,v] = \left[\phi(d), \val\left(\phi,d\right)\right]
$$
when $v = (+,-)$,
\begin{equation*}\label{formA}
\begin{split}
&A[d,v] =  \left[\phi(d),
    \val\left(\phi,d\right)\right]  \\
& \ \ \ \ \ \ \ \ \ \ \ \ + \sum_{z \in I^+_d \cap
    \phi^{-1}(\mathcal D)} \ \left( \left[\phi(z), \val\left(
        \phi,z\right) \bullet (+,+)\right] +  \left[\phi(z), \val
      \left( \phi, z\right) \bullet (-,-)\right]\right)
\end{split}
\end{equation*}
when $v = (-,+)$,
\begin{equation*}\label{formB}
%\begin{split}
A[d,v] =  \left[\phi(d),
    \val\left(\phi,d\right)\right] \ \ \  + \sum_{z \in I^+_d \cap
    \phi^{-1}(\mathcal D)} \  \left[\phi(z), \val\left(
        \phi,z\right) \bullet (+,+)\right] 
%\end{split}
\end{equation*}
when $v = (+,+)$ and finally
\begin{equation*}\label{formA}
%\begin{split}
A[d,v] =  \left[\phi(d),
    \val\left(\phi,d\right)\right] \ \ \ + \sum_{z \in I^+_d \cap
    \phi^{-1}(\mathcal D)} \  \left[\phi(z), \val\left(
        \phi,z\right) \bullet (-,-)\right] 
%\end{split}
\end{equation*}
when $v = (-,-)$.
 
 Under our identification of $K_0\left(\mathbb
  A_k\right)$ and $K_0(\mathbb A_{k+1})$ with $\mathbb Z^{\mathcal
  D(\pm)}$, the homomorphism $\rho_0 = \chi_0  + \mu_0
\circ  \left(I_k\right)_0$ is given by $A$. In the following we let $\tilde{A}$
denote the restriction of $A$ to $\ker \left( \left(I_k\right)_0 - \left(U_k\right)_0\right)
  \subseteq \mathbb Z^{\mathcal
  D(\pm)}$.

Since $\left(i_j\right)_1 : K_1\left(S\mathbb B_j\right) \to
K_1\left(\mathbb D_j\right)$ is surjective and $K_1\left(S\mathbb
  B_k\right)$ is free there is automatically a homomorphism $B :
K_1\left(S\mathbb B_k\right) \to K_1\left(S\mathbb B_{k+1}\right)$
such that the diagram
\begin{equation}\label{diag43bII}
\begin{xymatrix}{
K_1\left(S\mathbb B_k\right) \ar[d]_-{B} \ar[r]^-{{(i_k})_1}  &
K_1(\mathbb D_k) \ar[d]^-{\Phi} \\
K_1\left(S\mathbb B_{k+1}\right)  \ar[r]^-{{(i_{k+1})}_1}  & K_1\left(\mathbb D_{k+1}\right)  
}\end{xymatrix}
\end{equation}
commutes. In fact, there are generally many choices since
$\left(i_{k+1}\right)_1$ has a kernel. We describe now a way to choose
$B$ such that it is easy to determine from $\phi$. To this end we need some notation. For each $I \in \mathcal I$, $v \in
\left\{(+,+),(-,-)\right\}$, let $[I,v]$ be the corresponding standard
basis element in $\mathbb Z^{\mathcal I(\pm)}$. For $v \in \left\{(+,+), (-,-)\right\}$, set
$$
(-1)^v = \begin{cases} 1 & \ \text{when} \ v = (+,+) \\ -1 & \
  \text{when} \ v = (-,-) . \end{cases}
$$

Let $J \in \mathcal I$ and choose $J_1 \in \mathcal I$ such that $J_1
\subseteq \phi(J)$. We choose $J_1$ such that $J_1 = \phi(J_X)$
where $J_X \subseteq J$ is an open subinterval and $J_X \cap
\mathcal C_1 = \emptyset$. Let $I \in \mathcal I_k$ be such that $\phi^k(I) =
J$ and $\val\left( \phi^k,I\right) =  v \in (\pm,\pm)$. Since $\phi^k$ is
injective on $I$ there is a unique open subinterval $I_1 \subseteq I$
such that $\phi^k(I_1) = J_X$. Note that $\phi^{k+1}$ is injective on $I_1$
and $\phi^{k+1}(I_1) = J_1$. In particular, $I_1 \in \mathcal
I_{k+1}$. Let $u \in C_c(I_1) \subseteq C_c\left(R^+_{\phi}(k)\right)$
be a positively oriented path in $\mathbb T - 1$ of degree 1. Then 
$$
(u \circ \lambda_I \circ \psi_J) \otimes e_{I,I}
$$
represents $(-1)^v [J,v] \in \mathbb Z^{\mathcal I(\pm)} \simeq K_1(S\mathbb B_k)$. By using (\ref{form1II}) and
(\ref{form2II}) we find that $\mu_{k+1} \circ \rho(u) \in S\mathbb
B_{k+1} \subseteq \mathbb D_{k+1}$, and in $S\mathbb B_{k+1}$ is the element 
$$
\left(u \circ \lambda_I \circ \left(\phi|_{\phi^k({I}) \cap
    \phi^{-1}
    \left(\phi^{k+1}\left({I_1}\right)\right)}\right)^{-1}
\circ \psi_{J_1}\right) \otimes e_{I_1,I_1} = \left(u \circ \lambda_{I_1} \circ
    \psi_{J_1}\right) \otimes e_{I_1,I_1}
$$
which corresponds to $(-1)^v (-1)^w \left[J_1,w \bullet v\right]$ in
$\mathbb Z^{\mathcal I(\pm)}$ where $w =
\val \left(\phi,J_X\right)$. Therefore a recipe for a construction of
$B$ reads as follows: %Since $K_1\left(S\mathbb B_{k}\right)$ is a
%free group it follows that we can define a homomorphism $C
%: K_1\left(S\mathbb B_{k}\right) \to K_1\left(S \mathbb
%  B_{k+1}\right)$ such that
%\begin{equation}\label{diag43bII}
%\begin{xymatrix}{
%K_1\left(S\mathbb B_k\right) \ar[d]^-{C} \ar[r]^-{{i_k}_*}  &
%K_1(\mathbb D_k) \ar[d]^-{\Phi} \\
%K_1\left(S\mathbb B_{k+1}\right)  \ar[r]^-{{i_{k+1}}_*}  & K_1\left(\mathbb D_{k+1}\right)  
%}\end{xymatrix}
%\end{equation}
%commutes. Under the identifications $K_1\left(S\mathbb B_k\right) =
%K_1\left(S \mathbb B_{k+1}\right) = \mathbb Z^{\mathcal I(\pm)}$ the homomorphism $C$ becomes an endomorphism
%$B$ on $\mathbb Z^{\mathcal I(\pm)}$ which can be described as
%follows. 
For each
$J \in \mathcal I$ choose an open interval $J' \subseteq J$ such that
$J' \cap \mathcal C_1 = \emptyset$ and $\phi(J') \in \mathcal I$. Then
$$
B[J,v] = (-1)^{\val(\phi,J')} \left[\phi(J'), \val\left(\phi,J'\right)\bullet v\right] .
$$
The construction of $B$ generally depends on many choices which lead
to different homomorphisms. But the commutativity of (\ref{diag43bII})
guarantees that they all leave $\im \left(\left(I_k\right)_0 - \left(U_k\right)_0\right)
\subseteq \mathbb Z^{\mathcal I(\pm)}$
globally invariant and induce the same endomorphism $\tilde{B}$ of $\coker
\left(\left(I_k\right)_0 - \left(U_k\right)_0\right)$.

In view of the 6 terms exact sequence (\ref{6termIIk})
it now follows that the endomorphisms 
\begin{equation*}
\begin{split}
&\tilde{A} : \ker
\left(\left(I_k\right)_0 - \left(U_k\right)_0\right) \to \ker
\left(\left(I_k\right)_0 - \left(U_k\right)_0\right) , \\
&  \tilde{B} : \coker
\left(\left(I_k\right)_0 - \left(U_k\right)_0\right) \to \coker
\left(\left(I_k\right)_0 - \left(U_k\right)_0\right) 
\end{split}
\end{equation*}
determine
$K_0\left(C^*_r\left(\Gamma^+_{\phi}\right)\right)$ and
$K_1\left(C^*_r\left(\Gamma^+_{\phi}\right)\right)$ in the sense that
there are extensions
\begin{equation}\label{diag34II}
\begin{xymatrix}{
0 \ar[r] & \coker \left( 1- \tilde{A}\right) \ar[r]
& K_0\left(C^*_r\left(\Gamma^+_{\phi}\right)\right) \ar[r] & \ker
\left(1 - \tilde{B}\right) \ar[r] &  0
}\end{xymatrix}
\end{equation}
and
\begin{equation}\label{diag35II}
\begin{xymatrix}{
0 \ar[r] & \coker \left( 1- \tilde{B}\right) \ar[r]
& K_1\left(C^*_r\left(\Gamma^+_{\phi}\right)\right) \ar[r] & \ker
\left(1 - \tilde{A}\right) \ar[r] &  0 .
}\end{xymatrix}
\end{equation}
Note that the last extension is always split and hence
$$
K_1\left(C^*_r\left(\Gamma^+_{\phi}\right)\right) \simeq \coker \left(
  1- \tilde{B}\right) \oplus \ker
\left(1 - \tilde{A}\right) .
$$
%It is not clear if this is also the case for the first extension, (\ref{diag34II}). 

To identify the $C^*$-algebra from its $K$-theory groups it is
important to know which element of
$K_0\left(C_r^*\left(\Gamma^+_{\phi}\right)\right)$ represents the
unit $1$ of $C^*_r\left(\Gamma^+_{\phi}\right)$. Note therefore that
$[1] \in K_0\left(C^*_r\left(\Gamma^+_{\phi}\right)\right)$ is the
image of $[1] \in K_0\left(C^*_r\left(R^+_{\phi}(k)\right)\right)$
under the map $\iota_0$ in (\ref{6termIIk}). Under the identification
$K_0\left(\mathbb A_k\right) = \mathbb Z^{\mathcal
  D(\pm)}$ we have that
$$
[1] = \sum_{(d,v) \in \mathcal D(\pm)} m(d,v) [d,v]
$$
in $K_0\left(\mathbb A_k\right)$, where $m(d,v) = \#\left\{ x \in \phi^{-k}(d): \
  \val\left(\phi^k,x\right) = v \right\}$. Since $I_k$ and $U_k$ are
unital $*$-homomorphisms this element is always in
$\ker\left(\left(I_k\right)_0 - \left(U_k\right)_0\right)$ and gives
therefore rise to an element of $\coker \left(1- \tilde{A}\right)$
which under the embedding $\coker \left(1- \tilde{A}\right) \subseteq
K_0\left(C_r^*\left(\Gamma^+_{\phi}\right)\right)$ from
(\ref{diag34II}) gives us the
element representing $[1] \in K_0\left(C^*_r\left(\Gamma^+_{\phi}\right)\right)$.

\begin{example}\label{exx1} In this example we show with a fair amount
  of details how to complete the K-theory calculations for a family of
  Markov maps by using the recipe described above. Let $m,k \in \mathbb N$ and let $g_{m,k} : [0,1] \to
  \mathbb R$ be the continuous piecewise linear map with the
  properties that $g_{m,k}(0) = 0$, $g_{m,k}$ has slope $2m$ on
  $\left[0,\frac{1}{2}\right]$ and slope $-2k$ on
  $\left[\frac{1}{2},1\right]$. Then $g_{m,k}$ is the lift of a
  piecewise monotone map $\phi_{m,k}$ on the circle which is exact and not locally injective. To simplify the calculations we assume that $m \geq 2, k \geq 2$ or that $m =k$. Then $\phi_{m,k}$ is Markov of order $1$ and $C^*_r\left(\Gamma^+_{\phi_{m,k}}\right)$ is simple, unital, separable,
  purely infinite and satisfies the UCT. By the Kirchberg-Phillips
  classification result the algebra is therefore determined by its K-theory
  groups and the position of the unit in the $K_0$-group.

Note that $1$ is the only
critical value and that
$$
\mathcal D(\pm) = 1 \times \mathcal V = \{ (1,(-,+)), (1,(+,-)),
(1,(+,+)), (1,(-,-))\}.
$$
There is only one interval in $\mathcal I$, namely $I = \mathbb T
\backslash \{1\}$, and $\mathcal I(\pm) = \left\{(I,(+,+)), (I,(-,-)\right\}$. When we take the elements of $\mathcal D(\pm)$ and $\mathcal I(\pm)$
in that order we find that
$$
\left(I_1\right)_0 = \left( \begin{matrix} 1 & 0  & 1 & 0 \\ 1 & 0 & 0
    & 1    \end{matrix} \right), \\ 
\left(U_1\right)_0 = \left( \begin{matrix} 0 & 1 & 1 & 0  \\ 0 & 1 & 0 & 1  \end{matrix} \right),
$$
It follows that 
$$
\ker \left(\left(I_1\right)_0 - \left(U_1\right)_0\right) = \left\{ (z,z,u,v) \in \mathbb Z^4 :
  \ z,u,v \in \mathbb Z \right\} \simeq \mathbb Z^3,
$$ 
and %$\im \left(\left(I_1\right)_0 - \left(U_1\right)_0\right) = \left\{ (z,z) \in \mathbb Z^2 :
%  \ z \in \mathbb Z \right\} \simeq \mathbb Z$ , 
%so that
$$
\coker \left(\left(I_1\right)_0 - \left(U_1\right)_0\right) \ \simeq
\ \mathbb Z.
$$
The matrix $A$ is 
$$
A = \left( \begin{matrix} 1 & 1 & 1 & 1 \\ 2 & 0 & 1 & 1 \\ m+k-2 & 0 &
    m-1& k-1 \\ m+k-2 & 0 & k-1 & m-1 \end{matrix} \right).
$$
and its restriction $\tilde{A}$ to $\ker \left( \left(I_1\right)_0 -
  \left(U_1\right)_0\right) = \mathbb Z^3$ is
$$
\tilde{A} = \left( \begin{matrix} 2 & 1 & 1  \\ m+k-2 &
    m-1& k-1 \\ m+k-2 & k-1 & m-1 \end{matrix} \right).
$$
Hence 
$$
\tilde{A} -1 = \left( \begin{matrix} 1 & 1 & 1  \\ m+k-2 &
    m-2& k-1 \\ m+k-2 & k-1 & m-2 \end{matrix} \right).
$$
After a couple of row-operations we get from $\tilde{A} -1$ the matrix
$$
\left( \begin{matrix} 1 & 1 & 1  \\ 0 &
   k & m-1 \\ 0 & m-1 & k \end{matrix} \right).
$$
Hence $\ker \left(\tilde{A}-1\right) = 0$ when $k \neq m-1$ while
$\ker \left(\tilde{A} -1 \right) \simeq \mathbb Z$ when $k = m-1$.
To determine $\coker \left( \tilde{A} -1 \right)$ note that this
cokernel is the same as the cokernel of 
$$
\left(\begin{matrix} k & m-1\\ m-1 & k\end{matrix} \right) .
$$
Let $g$ be the greatest common divisor of $k$ and $m-1$ when $m \geq 1$ and set $g=k$ when $m=1$. There are then
$x,y \in \mathbb Z$ such that $x \frac{k}{g} + y\frac{m-1}{g}  =
1$. Then
$$
\left(\begin{matrix} x & y\\ -\frac{m-1}{g}  & \frac{k}{g}\end{matrix}
\right) \in GL_2(\mathbb Z)
$$
and
$$
\left(\begin{matrix} x & y\\ -\frac{m-1}{g}  & \frac{k}{g}\end{matrix} \right) \left(\begin{matrix} k & m-1\\ m-1 & k\end{matrix} \right) = 
\left(\begin{matrix} g & 0\\ 0 & g\end{matrix} \right)
\left(\begin{matrix} 1 &  x\frac{m-1}{g} + y \frac{k}{g} \\  0 &
    \frac{k^2 - (m-1)^2}{g^2} \end{matrix} \right)
$$
After a final column operation this shows that the Smith normal form of $
\left(\begin{smallmatrix} k & m-1\\ m-1 & k\end{smallmatrix} \right)$
is
$$
\left(\begin{matrix} g & 0 \\ 0 & \frac{k^2 - (m-1)^2}{g} \end{matrix} \right) .
$$
Hence
$$
\coker \left( \tilde{A} -1 \right) \simeq \mathbb Z_g \oplus \mathbb
Z_{\frac{k^2 - (m-1)^2}{g}} .
$$
From the recipe for the matrix $B$ it is easily seen that we can
choose it to be the identity matrix in this case. It follows that the endomorphism
$\tilde{B}$ of $\coker \left(\left(I_1\right)_0 -
  \left(U_1\right)_0\right)$ is the identity and hence $\ker
\left(\tilde{B} - 1\right) \simeq \coker\left(\tilde{B} -1 \right)
\simeq \coker \left(\left(I_1\right)_0 -
  \left(U_1\right)_0\right) \simeq \mathbb Z$. 
All in all we get the conclusion that
$$
K_0\left(C^*_r\left(\Gamma^+_{\phi_{m,k}}\right)\right) \simeq \mathbb
Z^2 \oplus \mathbb Z_{m-1}, \ \
K_1\left(C^*_r\left(\Gamma^+_{\phi_{m,k}}\right)\right) \simeq \mathbb
Z^2
$$
when $k = m-1$ and
$$
K_0\left(C^*_r\left(\Gamma^+_{\phi_{m,k}}\right)\right) \simeq \mathbb
Z \oplus \mathbb Z_g \oplus \mathbb Z_{\frac{k^2 - (m-1)^2}{g}}, \ \
K_1\left(C^*_r\left(\Gamma^+_{\phi_{m,k}}\right)\right) \simeq \mathbb
Z
$$
when $k \neq m-1$. In the first case the class of the unit is
represented by $m-1 \in \mathbb Z_{k^2 - (m-1)^2}$ and in the second
by $\left(-x,\frac{m-1}{g}\right) \in \mathbb Z_g \oplus \mathbb
Z_{\frac{k^2 - (m-1)^2}{g}}$.

Let $\phi_{-m,-k}(z) = \overline{\phi_{m,k}(z)}$. Then $\phi_{-m,-k}$
is also exact and Markov, and the calculation of
$K_*\left(C^*_r\left(\Gamma^+_{\phi_{-m,-k}}\right)\right)$ can
proceed exactly as above. In comparison the roles of $k$ and $m$ are
interchanged and we find that
$$
K_0\left(C^*_r\left(\Gamma^+_{\phi_{-m,-k}}\right)\right) \simeq \mathbb
Z^2 \oplus \mathbb Z_{k-1}, \ \
K_1\left(C^*_r\left(\Gamma^+_{\phi_{-m,-k}}\right)\right) \simeq \mathbb
Z^2
$$
when $m = k-1$ and
$$
K_0\left(C^*_r\left(\Gamma^+_{\phi_{-m,-k}}\right)\right) \simeq \mathbb
Z \oplus \mathbb Z_{g'} \oplus \mathbb Z_{\frac{m^2 - (k-1)^2}{g'}}, \ \
K_1\left(C^*_r\left(\Gamma^+_{\phi_{-m,-k}}\right)\right) \simeq \mathbb
Z
$$
when $m \neq k-1$, where $g'$ is now the greatest common divisor of
$m$ and $k-1$, or $g'= m$ when $k=1$.

\end{example}


\begin{thebibliography}{WWWW} %antallet af W'er er bredeste ind


\bibitem[AdRR]{AdRR} Ll. Alsed\'a, M.A. del R\`io and
  J.A. Rodr\`iguez, {\em A splitting theorem for transitive maps},
  J. Math. Anal. Appl. {\bf 232} (1999), 359-375.


\bibitem[An]{An} C. Anantharaman-Delaroche, {\em Purely infinite $C^*$-algebras arising from dynamical systems}, Bull. Soc. Math. France {\bf 125} (1997), 199--225.

\bibitem[A-DR]{A-DR} C. Anantharaman-Delaroche and J. Renault, {\em
    Amenable groupoids}, Monographies de L'Enseignement Math�matique,
  vol. 36, L'Enseignement Math�matique, Geneva, 2000. 


\bibitem[AK]{AK} J. Auslander and Y. Katznelson, {\em Continuous maps
    of the circle without periodic points}, Israel Jour. Math. {\bf
    32} (1979), 375-381. 


\bibitem[AT]{AT} K.K.S. Andersen and K. Thomsen, {\em The $C^*$-algebra of an affine map on the 3-torus}, Documenta
Mathematica {\bf 17} (2012), 531-558.


%\bibitem[BC]{BC} L.S. Block and W.A. Coppel, {\em Dynamics in One
%    Dimension}, LNM 1513, Springer Verlag, 1992.


%\bibitem[Br]{Br} L. Brown, {\em Stable isomorphism of hereditary subalgebras of $C^*$-algebras}, Pacific J. Math. {\bf %71} (1977), 335--348.



%\bibitem[CT]{CT} T.M. Carlsen and K. Thomsen, {\em The structure of
%    the $C^*$-algebra of a locally injective surjection},
%  Ergod. Th. \& Dynam. Sys. {\bf 32} (2012), 1226-1248.

\bibitem[CM]{CM} E.M. Coven and I. Mulvey, {\em Transitivity and the
    centre for maps of the circle}, Ergod. Th. \& Dynam. Sys. {\bf 6}
  (1986), 1-8. 


\bibitem[De]{De} V. Deaconu, {\em Groupoids associated with endomorphisms}, Trans. Amer. Math. Soc. {\bf 347} (1995), 1779-1786.



\bibitem[DS]{DS} V. Deaconu and F. Shultz, {\em $C^*$-algebras
    associated with interval maps}, Trans. Amer. Math. Soce. {\bf 359}
  (2007), 1889-1924.



%\bibitem[HKO]{HKO} G. Hara\'anczyk, D. Kwietniak and P. Oprocha, {\em
%    Topological structure and entropy of mixing graph maps'}, arXiv:1111.0566v1.


\bibitem[Ka]{Ka} T. Katsura, {\em On $C^*$-algebras associated with
    $C^*$-correspondences}, J. Func. Analysis {\bf 217} (2004), 366-401.




%\bibitem[KT]{KT} A. Kishimoto and H. Takai, {\em Some Remarks on
%    $C^*$-Dynamical Systems with a Compact Abelian Group}, Publ. RIMS
%  {\bf 14} (1978), 383-397. 



%\bibitem[M]{M} B. Mesland, {\em Groupoid cocycles and $K$-theory},
%  M\"unster J. Math. {\bf 4} (2011), 227-250.


\bibitem[Ph]{Ph} N. C. Phillips, {\em A classification theorem for nuclear
    purely infinite simple $C^*$-algebras},
  Doc. Math. {\bf 5} (2000), 49-114.


\bibitem[PS]{PS} I. Putnam and J. Spielberg, {\em The structure of $C^*$-algebras associated with hyperbolic dynamical systems}, J. Func. Anal. {\bf 163} (1999), 279--299.



\bibitem[RW]{RW} I. Raeburn and D. P. Williams, {\em Morita
    Equivalence and Continuous-Trace $C^*$-algebras}, American
  Mathematical Society, 1998.



\bibitem[Re]{Re} J. Renault, {\em A Groupoid Approach to $C^*$-algebras},  LNM 793, Springer Verlag, Berlin, Heidelberg, New York, 1980.  


\bibitem[RS]{RS} J. Rosenberg and C. Schochet, {\em The K\"unneth theorem and the universal coefficient theorem for Kasparov's generalized K-functor}, Duke J. Math. {\bf 55} (1987), 337-347.



\bibitem[S]{S} F. Shultz, {\em Dimension groups for interval maps II:
    The transitive case}, Ergod. Th. \& Dynam. Sys. {\bf 27} (2007), 1287-1321.

\bibitem[SW]{SW} A. Sims and D. Williams, {\em Renault's equivalence theorem for reduced groupoid $C^*$-algebras}, arXiv:1002.3093v2
 

\bibitem[Sp]{Sp} J. Spielberg, {\em Groupoids and $C^*$-algebras for categories of paths}, ArXiv:1111.6924v3.

\bibitem[Th1]{Th1} K. Thomsen, {\em Semi-\'etale groupoids and
  applications}, Annales de l'Institute Fourier {\bf 60} (2010), 759-800.


\bibitem[Th2]{Th2} K. Thomsen, {\em $KMS$ states and conformal
   measures}, Comm. Math. Phys. {\bf 316} (2012), 615-640. DOI: 10.1007/s00220-012-1591-z 


\bibitem[Th3]{Th3} K. Thomsen, {\em The groupoid $C^*$-algebra of a
    rational map}, J. Non-commutative Geometry, to appear.

\bibitem[Th4]{Th4} K. Thomsen, {\em The $C^*$-algebra of the
    exponential function}, Proc. Amer. Math. Soc., to appear.

\bibitem[Th5]{Th5} K. Thomsen, {\em The homoclinic and heteroclinic
    $C^*$-algebras of a generalized one-dimensional solenoid},
  Ergod. Th. \& Dynam. Sys. {\bf 30} (2010), 263-308.


%\bibitem[Th6]{Th6} K. Thomsen, {\em Circle maps and KMS states}, Preprint, December, 2012.

\bibitem[T]{T} A. Toms, {\em K-theoretic rigidity and slow dimension
    growth}, Invent. Math., to appear, Arxiv:0910.2061v2. 

\bibitem[Tu]{Tu} J.-L. Tu, {\em La conjecture de Baum-Connes pour les feuilletages moyennables}, K-theory {\bf 17} (1999), 215-264.

\bibitem[Y]{Y} K. Yokoi, {\em Strong transitivity and graph maps},
  Bull. Polish Acad. Math. {\bf 53} (2005), 377-388. 


\bibitem[W]{W} P. Walters, {\em An introduction to ergodic theory},
  Springer Verlag, New York, 1979.






\end{thebibliography}
\end{document}